\documentclass[11pt]{amsart}
\usepackage{graphicx} % Required for inserting images
\usepackage{longtable}
\usepackage{lscape}
\usepackage{amsmath, amsfonts, amssymb, amsthm, dsfont, mathrsfs, wasysym, graphics, graphicx, url, hyperref, hypcap, makecell,
xargs, multicol, pdflscape, multirow, hvfloat, array, ae, aecompl, pifont, mathtools, a4wide, float, blkarray, overpic, nicefrac, stmaryrd, anyfontsize, yfonts, array, tabularx, fontawesome, paralist}

\usepackage{tikz,tikz-cd}
\usepackage{csvsimple}
\usepackage{svg}
\usepackage{fullpage}
\usepackage[margin=1in]{geometry}
\usepackage{booktabs}
\usepackage{comment}

\usepackage[noabbrev,capitalise]{cleveref}

\numberwithin{equation}{section}
\theoremstyle{plain}
\newtheorem{thmUniv}{Theorem}

\newtheorem{theorem}{Theorem}[section]

\newtheorem{lemma}[theorem]{Lemma}
\newtheorem{conjecture}{Conjecture}

\newtheorem{corollary}[theorem]{Corollary}
\newtheorem{proposition}[theorem]{Proposition}

\newtheorem{formulation}{Formulation}
\theoremstyle{definition}
\newtheorem{definition}[theorem]{Definition}
\newtheorem{remark}[theorem]{Remark}

\newtheorem{example}[theorem]{Example}
\newtheorem{question}[theorem]{Question}

\AddToHook{env/theorem/begin}{\crefalias{section}{theorem}}
\AddToHook{env/proposition/begin}{\crefalias{theorem}{proposition}}
\AddToHook{env/corollary/begin}{\crefalias{theorem}{corollary}}
\AddToHook{env/lemma/begin}{\crefalias{theorem}{lemma}}
\AddToHook{env/conjecture/begin}{\crefalias{theorem}{conjecture}}
\AddToHook{env/definition/begin}{\crefalias{theorem}{definition}}
\AddToHook{env/example/begin}{\crefalias{theorem}{example}}
\AddToHook{env/remark/begin}{\crefalias{theorem}{remark}}
\AddToHook{env/observation/begin}{\crefalias{theorem}{observation}}
\AddToHook{env/question/begin}{\crefalias{theorem}{question}}
\AddToHook{env/notation/begin}{\crefalias{theorem}{notation}}
\AddToHook{env/assumption/begin}{\crefalias{theorem}{assumption}}
\AddToHook{env/convention/begin}{\crefalias{theorem}{convention}}
\newcommand{\good}{$\c{AO}$-Hamiltonian}
\newcommand{\V}{N}
\newcommand{\Arcs}{F}
\newcommand{\node}{node} 
\newcommand{\nodes}{nodes} 
\newcommand{\arc}{arc} 
\newcommand{\arcs}{arcs} 
\newcommand{\combdir}{\wedge}

\usepackage[T1]{fontenc}
\usepackage{bbm}
\usepackage[shortlabels, inline]{enumitem}
\usepackage{pdflscape}
\usepackage[normalem]{ulem}
\usepackage{marginnote}

\hypersetup{colorlinks=true, citecolor=darkblue, linkcolor=darkblue}

\newcommand{\R}{\mathbb{R}} % reals
\newcommand{\N}{\mathbb{N}} % naturals
\newcommand{\Z}{\mathbb{Z}} % integers
\newcommand{\C}{\mathbb{C}} % complex
\renewcommand{\c}[1]{{\mathcal{#1}}} % call letters
\renewcommand{\b}[1]{{\boldsymbol{#1}}} % bold letter
\newcommand{\Bool}[1][n]{\c Bool_{#1}}
\newcommand{\cube}[1][n]{Q_{#1}}
\newcommand{\WeakOrder}[1][n]{\c Weak_{#1}}

\renewcommand{\emptyset}{\varnothing} % prettier emptyset
\renewcommand{\epsilon}{\varepsilon} % prettier epsilon

\newcommand{\ssm}{\smallsetminus} % small set minus

\newcommand{\one}{{1\!\!1}} % the all one vector
\newcommandx{\ones}[1][1=n]{\one_{#1}} % the all one vector of length n
\newcommand{\eqdef}{\coloneqq}%{\mbox{\,\raisebox{0.2ex}{\scriptsize\ensuremath{\mathrm:}}\ensuremath{=}\,}} % :=
\DeclareMathOperator{\cons}{cons} % nbr facets

\DeclareMathOperator{\rank}{rk}

\DeclareMathOperator{\Aut}{Aut}

\newcommand{\ie}{\textit{i.e.}~} % id est
\newcommand{\eg}{\textit{e.g.}~} % exempli gratia
\newcommand{\aka}{\textit{a.k.a.}~} % also known as
\definecolor{darkblue}{rgb}{0,0,0.7} % darkblue color
\definecolor{green}{RGB}{57,181,74} % green color
\definecolor{violet}{RGB}{147,39,143} % violet color
\newcommand{\darkblue}{\color{darkblue}} % darkblue command
\newcommand{\defn}[1]{\textsl{\darkblue #1}} % emphasis of a definition
\newcommand{\mathdefn}[1]{{#1}} % emphasis of a definition
\newcommand{\symmdiff}{\triangle}

\newcommand{\pol}[1][P]{\mathsf{#1}}%polytope

\newcommand{\polZ}{\pol[Z]}

\newcommand{\AO}[1][G]{\c{AO}(#1)}
\newcommandx{\AOD}[1][1 = D]{\c{AO}(#1)}
\newcommandx{\AODk}[2][1 = D, 2 = k]{\c{AO}_{#2}(#1)}
\newcommand{\CO}[1][G]{\c{CO}(#1)}
\newcommandx{\COD}[1][1 = D]{\c{CO}(#1)}
\newcommandx{\CODk}[2][1 = D, 2 = k]{\c{CO}_{#2}(#1)}
\newcommandx{\M}[2][1=\b A, 2 = H']{\AO[{#2|#1}]}

\newcommandx{\ZG}[1][1=G]{\polZ_{#1}} %Graphical zonotope
\newcommandx{\ZGKnm}[2][1=n,2=m]{\polZ_{#1, #2}}

\newcommand{\yes}{\textcolor{green}{\textbf{\ding{51}}}}
\newcommand{\unknown}{\textcolor{red}{\textbf{$\leftarrow$}}}

\makeatletter
\newcommand{\sectionnotoc}[1]{%
  {\let\addcontentsline\@gobblethree
   \section*{#1}}%
}

\newcommand{\subsectionnotoc}[1]{%
  {\let\addcontentsline\@gobblethree
   \subsection{#1}}%
}
\makeatother

\newcommand{\circled}[1]{%
  \tikz[baseline=(n.base)]{
    \node (n) {$#1$};
    \ifnum#1<10
      \draw (n) circle (5.25pt);
    \else
      \draw (n) circle (6.5pt);
    \fi
  }%
}

\usepackage{todonotes}

\newcommand{\OEIS}[1]{\href{https://oeis.org/#1}{\cite[#1]{OEIS}}} % oeis

\AtEndDocument{\vfill{\footnotesize%
\textsc{Universit\"at Kassel, Germany} \\
Leonie M\"uhlherr: \texttt{leonie.muehlherr@mathematik.uni-kassel.de} 

\textsc{Universit\"at Osnabrück, Germany} \\
Germain Poullot: \texttt{germain.poullot@uni-osnabrueck.de}
}}

\title{Hamiltonicity of graphs of acyclic orientations\\and acyclic polynomials}
\author{Leonie M\"uhlherr and Germain Poullot}
\date{\today}

\usepackage[style=alphabetic,doi=false,isbn=false,url=false,eprint=true,
backend=biber,giveninits=true,maxalphanames=5,maxcitenames=3,maxbibnames=8,hyperref]{biblatex}

\begin{document}

\begin{abstract}
We study the graph $\AO$ of acyclic orientations of a graph $G$.
Two acyclic orientations are adjacent in this graph if they disagree on the orientation of a single arc.
In particular, we focus on the Hamiltonicity of the graphs $\AO$.

Using two methods of pattern lacing which generalize the zig-zag method of Brenner, Cardinal, McConville, Merino and M\"utze, we characterize which multipaths are {\good}.
Moreover, we give a criterion for the gluing of a multipath on a given graph to preserve $\c{AO}$-Hamiltonicity.
Building towards an inductive certification of $\c{AO}$-Hamiltonicity via the (open) ear decomposition of 2-connected graphs, we propose three ways of gluing several multipaths to a given graph.

In addition, we define the acyclic polynomials to encapsulate both the number of acyclic orientations of a graph and the ``parity problem'' proposed by Savage, Squire and West:
if $-1$ is not a root of the acyclic polynomial of $G$, then $G$ is not {\good}.
We explore numerous properties of the acyclic polynomials, proving that they are not instances of the famous Tutte--Whitney polynomials, but that they too exhibit a partial deletion-contraction phenomenon.
\end{abstract}

\maketitle

\vspace{-1.15cm}
\tableofcontents
\vspace{-1.33cm}

% \listoffigures

% \clearpage
\sectionnotoc{Acknowledgements}

The authors deeply thank Martina Juhnke for welcoming LM many times in Osnabr\"uck, as well as Mieke Fink for asking about what became \Cref{prop:SumAcyclicPolynomials}, and Torben Donzelmann and Thiago Holleben for motivating us to write \Cref{thm:ChordalGraphsAreGammaAlternating}.
Furthermore, the authors are grateful to Jean Cardinal, Torsten M\"utze, Vincent Pilaud and Francesco Verciani for many interesting comments, discussions, and literature recommendations.
We acknowledge the CRM conference ``Combinatorial Geometries and Geometric Combinatorics'' and the PAGCAP workshop ``Beyond Permutahedra and Associahedra'', in late 2025, where this project started.

LM was supported by German Science Foundation grant 522790373.

\sectionnotoc{Use of AI declaration}

The only use we made of AI (namely ChatGPT using GPT-5.5) was to correct mistakes in English writing, to search the literature and to improve some minor explanations of the existing literature (\eg replacing ``the number of maximal cones at a given distance from a base maximal cone in the dual graph of a hyperplane arrangement'' by ``the Whitney numbers''), to improve the readability of the table in Appendix~\ref{app:GraphExamples}, and to parallelize our implementation of the acyclic signature in order to obtain the results of \Cref{exm:ClassesOfNonAOHgraphs}.
All statements, proofs, concepts, notations, figures, references to the literature, etc., were made by us, as humans, using non-AI-powered tools (as far as we are aware).
Notably, computations of Hamiltonian cycles and acyclic polynomials were run using SageMath~\cite{Sage} and our own pieces of code, not relying on AI-powered computations, except for the parallelization required in \Cref{exm:ClassesOfNonAOHgraphs} for graphs of more than $22$ edges.

% \newpage
\vspace{0.25cm}
\section{Introduction}\label{sec:Intro}

The motivation for this article spans across various mathematical disciplines.
At its core, the question we want to answer is for which graphs the acyclic orientations can be enumerated in a nice way.
One can make this statement more precise by formulating it via Gray codes, which are enumerations where consecutive objects differ only by a small change.
The original and most common example is the enumeration of binary strings, see \cite{Knuht2011-ArtOfProgramming} for more background.
Acyclic orientations of a graph can be encoded as binary strings, each bit coding for an edge, and $0$,~$1$ representing its two possible orientations.
Yet, in this representation, it is hard to check whether a binary string encodes an acyclic orientation or not.
We are interested in the following problem.

\begin{formulation}
Classify the graphs $G$ for which the list of acyclic orientations admits a Gray code. 
\end{formulation}

The reader unfamiliar with this version of the problem may take a look at \Cref{tab:CubeGraph4,tab:PermutahedralGraph4} for some typical examples. 
In this article, the main perspective on this question is the graph-theoretical one: 
We study the graph of acyclic orientations $\AO$ whose nodes are the acyclic orientations of a graph $G$, and where two orientations are neighboring if and only if they differ in flipping the orientation of exactly one arc.
We can restate our problem:

\begin{formulation}\label{for1}
Classify the graphs $G$ whose graph of acyclic orientations $\AO$ is Hamiltonian. 
\end{formulation}

This is the research angle of Savage, Squire and West in their seminal work \cite{SavageSquireWest93-GrayCodesAcyclicOrientations}.
They achieved a classification for some notable example classes, such as cycles, wheel-graphs, ladder-graphs, and chordal graphs (see \cite{SavageSquireWest93-GrayCodesAcyclicOrientations} or here below \Cref{exm:KnownAOhamiltonicity}).
This quest for efficient ways to produce all the acyclic orientations of a graph was then pursued by numerous authors,
see \eg Squire \cite{Squire1998-GeneratingAcyclicOrientations} and Barbosa--Szwarcfiter \cite{BarbosaSzwarcfiter1999-GeneratingAcyclicOrientations} for generating problems.
Most prominently, Pruesse and Ruskey \cite{PruesseRuskey1995-PrismOfAcylicOrientationGraph} showed that the prism over the graph of acyclic orientations of any graph is Hamiltonian.
Furthermore, Pilaud \cite{Pilaud2024-AcyclicReorientations} studied the properties of the poset obtained from the graph of acyclic orientations of $G$ by orienting it away from a base (acyclic) orientation $D$.
Remarkably, he determined under which conditions on $G$ and $D$ it is a lattice.
In particular, the rank function of this poset gives the coefficients of the acyclic polynomials that we study in \Cref{sec:NonHamiltonicity}.
We also refer to Savage's survey \cite{Savege97-SurveyGrayCodes} for a presentation of the famous Steinhaus--Johnson--Trotter algorithm \cite{Johnson63-GenerationOfPermutations} which produces a Hamiltonian cycle in the graph of acyclic orientations for the complete graph.
The interested reader may consult \cite[Figure~5]{BrennerCardinalMcConvilleMerinoMutze25-CombinatorialGenerationV} for a comprehensive survey on algorithms and Hamiltonicity of graphs of acyclic orientations of combinatorial families beyond graphs.

% , Pruesse--Ruskey \cite{PruesseRuskey1995-PrismOfAcylicOrientationGraph} for fundamental properties of the graph of acyclic orientations, Pilaud \cite{Pilaud2024-AcyclicReorientations} for the characterization of the graphs of acyclic orientations which turn into lattices once chosen a base orientation, or Savage's survey \cite{Savege97-SurveyGrayCodes} for a presentation of the famous Steinhaus--Johnson--Trotter algorithm \cite{Johnson63-GenerationOfPermutations} which produces a Hamiltonian cycle in the graph of acyclic orientations for the complete graph.

The results of Savage, Squire and West strongly suggest that parity considerations (such as ``Is $C_n$ a cycle with $n$ even or odd?'') play a key role in determining whether $\AO$ is Hamiltonian or not.
In particular, they establish a simple yet powerful criterion for deciding that $\AO$ is not Hamiltonian:
fixing any orientation $D$ of $G$, if $\AO$ is Hamiltonian, then the number of acyclic orientations of $G$ disagreeing with $D$ on the orientation of an \emph{even} number of arcs is equal to the number of acyclic orientations of $G$ disagreeing with $D$ on an \emph{odd} number of arcs.
When a graph does not satisfy this condition, Savage, Squire and West \cite{SavageSquireWest93-GrayCodesAcyclicOrientations} speak of a \emph{parity problem} (see also \Cref{lem:ParityArgument}).

As a tribute to their foundational work, we extend their methods in \Cref{ssec:Prelude} to study Cartesian products of graphs, and prove:

\begin{thmUniv}[{\Cref{cor:CubePrismsEtcAreNotAOH}}]\label{thmZ}
The following graphs are not {\good}:
\begin{compactenum}
\item Ladder graphs $L_n \eqdef P_n \times K_2$ for any $n$ even
\item Book graphs $B_n \eqdef K_{1, n} \times K_2$ for any $n$ odd
\item Prism graphs $\mathrm{Pr}_n \eqdef C_n\times K_2$ for any $n$ even
\item Toroidal grid graphs $T_{4, m} \eqdef C_4\times C_m$ for any $m$ even
\item Cube graphs $Q_n = K_2 \times \dots\times K_2$, with $n$ copies of $K_2$, for any $n\geq 2$,
\item More generally, the graph $T\times Q_n$ for any tree $T$ where either $n \geq 2$ or $T$ has an even number of nodes,
\item Thick wheel graphs $\mathrm{TW}_n \eqdef W_n \times K_2$ for any $n$ odd, where $W_n$ is $C_n$ together with a node adjacent to all nodes of $C_n$,
\item Any $(T,\, n_1, \dots, n_s)$-tree of cycles for any tree $T$ and any $n_1, \dots, n_s\geq 1$, if there is some $i\in [d]$ such that $n_i$ is even and if $\sum_{i=1}^s (n_i-1)$ is odd.
\end{compactenum}
\end{thmUniv}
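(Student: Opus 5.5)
The plan is to use the parity argument (\Cref{lem:ParityArgument}). Fix a reference orientation $D$ of $G$. For every acyclic orientation $A$, let $\delta(A)$ be the number of arcs on which $A$ and $D$ disagree. Set
\[
s(G) \eqdef \sum_{A \in \AO} (-1)^{\delta(A)} .
\]
The graph $\AO$ is bipartite, with the two sides given by the parity of $\delta$. So if it is Hamiltonian then $s(G) = 0$. It therefore suffices to show $s(G)\neq 0$ for every graph in the list. Changing $D$ only multiplies $s(G)$ by $\pm1$, so whether $s(G)$ vanishes does not depend on $D$.

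\textbf{Step 1: local rules for $s$.} First I would compute $s$ for the basic pieces.
\begin{itemize}
\item For a forest with $e$ edges, every orientation is acyclic, so the sum is a full product of $(1-1)$ factors and $s = 0$ whenever $e\geq 1$.
\item For a cycle $C_n$, all $2^n$ orientations are acyclic except the two directed ones, so $s(C_n) = 0 - (\pm1 \pm 1)$. With $D$ the directed cycle, the two excluded orientations have $\delta = 0$ and $\delta = n$, which gives $s(C_n) = -(1+(-1)^n)$. This is $-2$ for $n$ even and $0$ for $n$ odd.
\item Gluing along a cut vertex is a product: the set of acyclic orientations is a product, so $s(G_1\cdot_v G_2)=s(G_1)s(G_2)$.
\item A bridge contributes a factor $0$.
\end{itemize}
So for trees of cycles I would need a different normalization, presumably the one the paper sets up in \Cref{ssec:Prelude}. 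That is, work with the value at $-1$ of the acyclic polynomial where trees are handled through contraction. Concretely, deletion--contraction of a bridge $e$ gives
\[
\mathrm{AO}(G) = 2\,\mathrm{AO}(G/e),
\]
and the signed analogue reads $s = (1-1)\cdot(\dots)$. Item 8 only makes sense if the signed count there is defined relative to a sign twisted by the tree edges. I would check the exact statement of \Cref{lem:ParityArgument} and of the tree-of-cycles definition, expecting that the cycles are attached along the tree $T$ by shared edges or vertices so that the product rule yields a nonzero value $\pm 2^{k}$ exactly under the stated parity conditions.

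\textbf{Step 2: Cartesian products with $K_2$.} For $G\times K_2$, I would orient the copy-edges (the rungs) and evaluate $s$ by first summing over the orientations of the rungs. The key is a sign-reversing involution: flip the rung at the smallest vertex whose flip preserves acyclicity. The fixed points are configurations where no rung can be flipped freely. I would aim for a recursion expressing $s(G\times K_2)$ through signed counts of $G$ weighted by ordered set partitions/compatible pairs of orientations of $G$.
\begin{itemize}
\item For $L_n$, $B_n$ and $\mathrm{Pr}_n$, the answer should reduce to a transfer-matrix computation along the path or cycle. Its trace or entries give nonzero values exactly for the stated parities of $n$.
\item For $Q_n$ I would induct, using $Q_n = Q_{n-1}\times K_2$ together with the tree case $T\times Q_n$ (item 6), with $T=K_2$ as the base.
\item $T_{4,m}$ and $\mathrm{TW}_n$ are handled by the same transfer-matrix method around the cycle.
\end{itemize}

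\textbf{Main obstacle.} The main obstacle is the transfer-matrix/involution computation for products. Acyclicity in $G\times K_2$ is a global condition, since cycles can use rungs, so the involution must be chosen carefully. I would first try orienting each rung by a global linear order, i.e.\ encode an acyclic orientation by its two layer orientations plus rung directions compatible with a common topological order. I would then cancel pairs differing on the rung of the last vertex in a linear extension, and verify the surviving terms by small cases (computer check for $n\le 4$) before proving the general sign pattern.
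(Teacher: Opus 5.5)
\textbf{There is a genuine gap: the proposal never proves $\sigma(G)\neq 0$ for any listed family.}

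Your reduction to showing that the signed count is nonzero is sound, and the target is true for every listed graph. Each has an even number of edges and satisfies $\psi(G)\equiv 2 \pmod 4$. When $|E|$ is even, the complement involution $X\mapsto E\setminus X$ pairs up the even-rank acyclic orientations, so their number $e$ is even and $\sigma=2e-\psi\equiv 2\pmod 4$. The problems are these:
\begin{itemize}
\item Step~2 is a research programme rather than an argument: an involution on rungs you do not define, a transfer matrix you do not compute, and a computer check for small $n$.
\item Your induction for $Q_n$ is circular, since item~6 with $T=K_2$ is $Q_{n+1}$ itself.
\item Step~1 misreads item~8. A $(T,n_1,\dots,n_s)$-tree of cycles is an iterated $2$-sum: each new cycle is attached by gluing a path of length $n_i-1$ between the endpoints of an existing edge. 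So the graph is $2$-connected and has no bridges, and $T$ only records which cycle is glued onto which. Your concern that bridges kill the signature does not arise.
\item A ``twisted'' sign would not give an obstruction anyway. The connected bipartite graph $\mathcal{AO}(G)$ has only one bipartition, namely rank parity.
\end{itemize}

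The missing idea is to work with $\psi(G)\bmod 4$ instead of $\sigma$. The relevant criterion is: a graph with an even number of edges and $4\nmid\psi(G)$ is not $\mathcal{AO}$-Hamiltonian, which is the pairing argument above read backwards.

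An induction on $\sigma$ alone is structurally handicapped. It vanishes identically on graphs with an odd number of edges, so it carries no information through intermediate graphs of odd size. Gluing an even cycle onto a tree of cycles flips edge parity, as does every edge deletion. Moreover, $\sigma$ has only a partial deletion--contraction rule.

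By contrast, $\psi$ satisfies Stanley's recursion $\psi(G)=\psi(G/e)+\psi(G\setminus e)$ and is always even. This gives three robust tools modulo $4$:
\begin{itemize}
\item disconnected graphs have $\psi\equiv 0$;
\item gluing a path of length $\ell\geq 2$ onto an existing edge multiplies $\psi$ by the odd number $2^\ell-1$;
\item deleting or contracting all $2^k$ copies of an edge in $G\times K_{2^k}$ gives $\sum_j\binom{2^k}{j}\psi(H(j))$, where every middle term vanishes mod $4$. Hence $G\mapsto\psi(G\times Q)\bmod 4$ obeys the same recursion as $\psi\bmod 4$.
\end{itemize}

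With these tools each family falls out:
\begin{itemize}
\item Each leaf of $T$ is a path of length $3$ glued on a rung of $T\times K_2$, so stripping leaves gives $\psi(T\times K_2)\equiv 2$.
\item The recursion transfers this to $T\times Q_n$ and to $Q_n$.
\item One deletion--contraction on the two copies of a cycle edge gives $\psi(\mathrm{Pr}_n)\equiv\psi(L_n)+\psi(\mathrm{Pr}_{n-1})$.
\item $C_4=Q_2$ gives $T_{4,m}$.
\item A similar two-step recursion handles $\mathrm{TW}_n$.
\item Trees of cycles start from an even cycle with $\psi=2^n-2\equiv 2$ and then glue paths on edges.
\end{itemize}
None of this requires controlling acyclicity across rungs, which is exactly the obstacle your involution leaves unresolved.
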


This result motivates us to define the rank with respect to $D$ of an acyclic orientation $A$, denoted $\rank_D A$, as the number of arcs on the orientation of which $A$ and $D$ disagree.
We encapsulate the numbers of acyclic orientations of each rank in a polynomial $\Psi(D; t) \eqdef \sum_{A\in \AO} t^{\rank_D A}$ called the \emph{acyclic polynomial} (\Cref{sec:NonHamiltonicity}).
In particular, constructing the \emph{acyclic signature} as $\sigma(D) \eqdef \Psi(D; -1)$ (and $\sigma(G) \eqdef |\sigma(D)|$ for any orientation $D$ of $G$), we can rephrase the parity problem as deciding whether $\sigma(D) = 0$ or not.
See \Cref{tab:SignatureEasyFacts} for an overview, and \Cref{ssec:AcyclicSignature} for various properties of the acyclic signature. 

Our acyclic polynomial should not be confused with the \emph{acyclic orientation polynomial} recently defined in \cite{HwangJungLeeOhYu2021-OtherAcylicPolynomial}: there, the authors assign a variable to each node of a graph, and associate each acyclic orientation to the product of the variables assigned to the sinks of the orientation.

\begin{table}
\centering
\renewcommand{\arraystretch}{1.3}
\begin{tabular}{lll}
\toprule
Graph $G$ & $\sigma(G)$ & Proof by \\ 
\midrule
Tree & $0$ & \Cref{exm:AcyclicPolynomialTrees} \\
Cycle $C_n$ & $2 \cdot (n+1 \mod 2) $ & \Cref{exm:AcyclicPolynomialCycles} \\
Complete bipartite graph $K_{2, 3}$ & $6$ & \Cref{exm:AcyclicPolynomialK23} \\
Complete bipartite graph $K_{m, n}$ & $0$ iff $m$ and $n$ odd & Appendix~\ref{app_bipartite} \\
Complete graph $K_n$ & $0$ & \Cref{exm:AcyclicPol:CompleteGraphs} \\ 

Graph $G$ with connected components $\V_1, \dots, \V_r$ & $\prod_k\sigma(G[\V_k])$ & \Cref{prop:AcyclicPol:Disconnected} \\
$1$-sum $G = H\oplus_1 H'$ & $\sigma(H) \cdot \sigma(H')$ & \Cref{prop:AcyclicPol:1sums} \\
$G$ with simplicial node $v$ with $b$ neighbors & $\sigma(G\ssm\{v\}) \cdot (b+1 \mod 2)$ & \Cref{lem:AcyclicPol:SimplicialNodeAdd} \\
Chordal graph & $0$ & \Cref{prop:AcyclicPol:ChordalGraph} \\
Odd number of edges & $0$ & \Cref{cor:OddEdgeNbImpliesSignature0} \\ 
\bottomrule
\end{tabular}
\renewcommand{\arraystretch}{1}
\caption{First examples and first properties of the acyclic signature.}
\label{tab:SignatureEasyFacts}
\end{table}

Stanley \cite{Stanley-acyclicOrientations} famously proved that the number of acyclic orientations of a graph $G$ can be recovered from its chromatic polynomial $\chi(G; x)$ by evaluating it at $-1$, \ie $\Psi(D; 1) = \chi(G; -1)$ for any orientation $D$ of $G$ (see \Cref{rmk:ChromaticPolynomial} for a discussion of this relation).
Since then, the study of the chromatic polynomial itself has grown into a branch of Combinatorics, and the focus on the evaluation of this polynomial at $-1$ together with the combinatorial reciprocity theorems it suggests \cite{GreeneZaslavsky1983-InterpretationOfWhitneyNumbers,GebhardSagan2000-SinksInAcyclicOrientations,Gessel2001-AcyclicOrientations,Lass2001-AcyclicOrientationsAndChromaticPolynomial,BernardiNadeau2020-CombinatorialReciprocity} is of particular relevance for our work.

As the chromatic polynomial can be computed exploiting a deletion-contraction phenomenon, it is natural to ask whether the acyclic polynomial also admits one.
The main objective of \Cref{ssec:AcyclicPolynomial} is to show that the acyclic polynomials partially admit a deletion-contraction phenomenon (\Cref{prop:Psi0Recursion,prop:Psi+Recursion}) which extends to the acyclic signatures (\Cref{thm:AcyclicSignatureContractionDeletionPhenomenon}), allowing for an algorithm that computes the acyclic signatures of some graphs (\Cref{rmk:ContractionDeletionAlgorithmForSignature}).
Furthermore, we prove in \Cref{rmk:NotATutteWhitneyInstance} that the acyclic polynomial is not an evaluation of the famous Tutte--Whitney polynomial (see  \cite{Whitney1935,Tutte1954,Crapo1969-TuttePolynomial} for the original articles, or \cite{Oxley2011-MatroidTheory} for a comprehensive presentation in the broader context of matroids).
Yet, it seems that there is more to this story, since recent results show that properties of acyclic orientations, besides counting, are encoded in the Tutte--Whitney polynomial, see \eg the work of Backman \cite{Backman2018-PartialGraphOrientationsAndTuttePolynomial}.

\medskip
The question of acyclic orientations of graphs also recently arose in the field of hyperplane arrangements \cite{BrennerCardinalMcConvilleMerinoMutze25-CombinatorialGenerationVII,KorberSchniedersStrickerWalizadeth25-HamiltonianCyclesSupersolvable}, where the acyclic orientations of a graph are in one-to-one correspondence with the chambers of the corresponding graphic arrangement, that is the arrangement of hyperplanes $\mathcal{A}_G := \{\b x\in \R^\V ~;~ x_i = x_j\}$ for $\{i, j\}$ an edge of $G$ with node set $\V$.

The graph of acyclic orientations $\AO$ is known in this context as the tope graph of the graphic arrangement, describing the neighboring structure of the chambers.
Equivalently $\AO$ is also the vertex-edge graph of the zonotope associated to the arrangement, known under the names \emph{acyclotopes} \cite{zaslavskygraphcloring} and \emph{graphical zonotopes} \cite{Postnikov2009,PadrolPilaudPoullot2025DeformedGraphicalZonotopes,PadrolPoullot2025IndecomposabilityAndBeyond}.
The interested reader may consult \cite[Section~2]{Stanley2007-IntroHyperplaneArrangements} or \cite[Section~1.1]{OrientedMatroids-RedBook} for a detailed presentation.
The interplay of properties of the tope graph, the arrangement and the zonotope is an active field of research \cite{koizumi2026magnitudehomologytopegraphs, bach2026acyclotopestocyclotopes}.  
(Note that our question need not be restricted to the graphic case in this setting.)

\begin{formulation}
Classify all (graphic) arrangements whose tope graph admits a Hamiltonian cycle.
Equivalently, classify all graphs such that the vertex-edge graph of the associated graphical zonotope (\aka acyclotope) is Hamiltonian.
\end{formulation}

For example, it is known that the tope graph of so-called reflection arrangements, of which $\mathcal{A}_{K_\ell}$ is an example, admits a Hamiltonian cycle \cite{conwaysloanewilks}. 
Recently, there has been another development for this more general setting as two groups of researchers independently found that the tope graph of so-called \emph{supersolvable arrangements} are Hamiltonian \cite{BrennerCardinalMcConvilleMerinoMutze25-CombinatorialGenerationVII, KorberSchniedersStrickerWalizadeth25-HamiltonianCyclesSupersolvable}. 
Applying this result to graphic arrangements recovers the result for chordal graphs from Savage, Squire and West \cite[Section~4.1]{SavageSquireWest93-GrayCodesAcyclicOrientations}. 
Furthermore, some of the observations we make in this work about the evaluations of the acyclic polynomial for graphs are already known in this more general hyperplane setting, employing different names and arguments (see \Cref{rem:hyperplane_setting}). 

In the second main part of our work, \Cref{sec:MultiPathGluing}, we prove that some operations can be performed on a graph whose graph of acyclic orientations is Hamiltonian, preserving this property.
% , such as adding a simplicial (\Cref{ssec:AddingSimpliceNode}) node which is more or less a direct implication of work by previous authors. 
We introduce two methods of \emph{pattern lacing} (\Cref{cor:SelfLacingIntoAnAOcycle,thm:ConcatenatingReplacingPatternsWithSharedEndPoints}) which can be seen as generalizing the zig-zag method described in \cite{BrennerCardinalMcConvilleMerinoMutze25-CombinatorialGenerationVII}.
This way, we prove that, under a certain parity condition, adding a path of even length (or two paths of odd lengths) between nodes $u$ and $v$ preserves having a Hamiltonian graph of acyclic orientations.
A graph consisting of several paths between two nodes $u$ and $v$ is called a \emph{multipath}.
Using these path gluings together with our results on the acyclic signature, we establish (``$G$ is {\good}'' means ``$\AO$ is Hamiltonian''):

\begin{thmUniv}[{\Cref{cor:MultiPathIsGood,exm:NewAOhamiltonianGraphs}}]\label{thmA}
We have the following positive results:
\begin{enumerate}[(i)]
\item A multipath $G$ is {\good} if and only if the number of edges of $G$ is odd.
\item Any $2$-sum of two cycle graphs $C_n$ and $C_m$ is {\good} if and only if $n + m$ is even.
\item For a tree $T = ([s], E)$ and numbers $n_1, \dots, n_s \geq 3$, a $(T,\, n_1, \dots, n_s)$-tree of cycles is any graph isomorphic to a $2$-sum $H \oplus_2 C_{n_s}$ where $H$ is a $(T\ssm s,\, n_1, \dots, n_{s-1})$-tree of cycles.
If $n_p$ is odd for all $p\in [s]$, then any $(T,\, n_1, \dots, n_s)$-tree of cycles is {\good}, for any tree $T$.
\item If $uv\in E$ is an edge of a {\good} graph $G = (\V, E)$, and $C_n$ is a cycle graph, then the $2$-sum $G\oplus_2 C_n$ is {\good} if and only if $n$ is odd.
\end{enumerate}
\end{thmUniv}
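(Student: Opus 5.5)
Theorem~\ref{thmA} combines two ingredients: a non-Hamiltonicity criterion through the acyclic signature, and a constructive gluing theorem. For every ``only if'' direction I use the parity argument (\Cref{lem:ParityArgument}). If $\AO$ is Hamiltonian, then $\AO$ is bipartite, with sides given by the parity of $\rank_D$, so $\sigma(G)=0$ is necessary. For a multipath with $k$ paths of lengths $\ell_1,\dots,\ell_k$ between $u$ and $v$, I would compute $\Psi(D;-1)$ by conditioning on the orientations of the paths. An orientation is cyclic exactly when two paths are directed oppositely between $u$ and $v$, that is, one from $u$ to $v$ and another from $v$ to $u$. Summing $(-1)^{\rank}$ over all orientations of a single path gives $0$. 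The contribution of a path oriented directedly one way is $\pm1$. By inclusion--exclusion, $\sigma$ equals the sum over the acyclic configurations, namely ``no path is directed $v\to u$'' plus ``no path is directed $u\to v$'' minus ``no path is directed either way''. This yields a product of factors $(0-(\pm1))$ per path, which is $\pm 1$ each, and the sum is $\pm1\pm1$. Tracking the signs should give $0$ exactly when $\sum \ell_i$ is odd. The same computation, together with the contraction/deletion behaviour of $\sigma$ (\Cref{thm:AcyclicSignatureContractionDeletionPhenomenon}), handles (ii) and the necessity in (iv): a $2$-sum along $uv$ relates $\sigma(G\oplus_2 C_n)$ to $\sigma(G)$ and $\sigma(G/uv)$, and with $\sigma(G)=0$ it should reduce to $\pm\sigma(G/uv)\cdot(\text{term depending on } n \bmod 2)$, vanishing iff $n$ is odd. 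I need to confirm that $\sigma(G/uv)\neq 0$ in the relevant case, or else argue that the nonvanishing term comes from $\sigma(G\setminus uv)$.

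For sufficiency I use the pattern lacing results (\Cref{cor:SelfLacingIntoAnAOcycle,thm:ConcatenatingReplacingPatternsWithSharedEndPoints}). These say that gluing a path of even length, or two paths of odd length, between $u$ and $v$ onto an {\good} graph preserves {\good}ness under a parity condition. For (i) I induct on the number of paths. The base case is a single edge $K_2$, or an odd cycle, which is {\good} by \Cref{exm:KnownAOhamiltonicity}. A multipath with an odd number of edges can always be built from one path by adding even-length paths one at a time, or odd-length paths in pairs, keeping the edge count odd at every step. If a direct $u$--$v$ edge or an odd path is present, I choose it as the base. Otherwise all paths are even, and I start from an even cycle glued with one more even path; I must check that the parity hypothesis of the lacing theorem holds there.

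For (iv), gluing $C_n$ along the edge $uv$ means attaching a path of length $n-1$ between $u$ and $v$, which is even when $n$ is odd, so the lacing corollary applies directly given that $G$ is {\good}. Part (ii) is the special case $G=C_m$ together with the cases $n,m$ both even. I would handle those by viewing $C_n\oplus_2 C_m$ as a multipath with three paths of lengths $1$, $n-1$, $m-1$ and applying (i): the edge count is $n+m-1$, which is odd iff $n+m$ is even. This also gives the necessity in (ii) from (i). Part (iii) follows from (iv) by induction on $s$, removing a leaf $s$ of $T$, since all $n_p$ are odd.

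I expect the main obstacle to be verifying the parity hypothesis of the lacing theorems at each inductive step, which presumably concerns the rank of the Hamiltonian cycle's endpoints or the orientation of $uv$ along it. This hypothesis has to be maintained through the induction, and I would strengthen the inductive statement to carry it along.
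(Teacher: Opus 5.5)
\paragraph{Main gap: the necessity half of (iv).} This is where your proposal breaks, and it cannot be repaired, because that direction is false as stated.

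To see why no signature argument can work, write $G\oplus_2 C_n = G\sqcup P_{n-1}$ along $u,v$. Since $uv\in E(G)$, every acyclic orientation of $G$ has $A(u,v)=\pm1$, so all $u\not\leftrightarrow v$ terms vanish. Put $m=|E(G)|$ and $f=\sigma^{u\to v}(D)$. Evaluating \Cref{prop:AcyclicPolyDecomposition,prop:Psi+Recursion} at $t=-1$ gives $\sigma(D)=\bigl(1+(-1)^m\bigr)f$ and $\sigma(G\oplus_2 C_n)=-\bigl((-1)^{n-1}+(-1)^m\bigr)f$. If $m$ is even and $G$ is $\mathcal{AO}$-Hamiltonian, then $\sigma(D)=0$ forces $f=0$, hence $\sigma(G\oplus_2 C_n)=0$ for every $n$. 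So no signature computation, whether through $\sigma(G/uv)$ or through $\sigma(G\setminus uv)$, can detect the parity of $n$.

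In fact there are counterexamples. Let $G$ be the path $u$--$v$--$w$ (a tree, so $\mathcal{AO}(G)\cong Q_2$) and take $n=4$. Then $G\oplus_2 C_4$ is $C_4$ with the pendant edge $vw$. This is a $1$-sum, so its graph of acyclic orientations is $\mathcal{AO}(C_4)\times K_2$, which is Hamiltonian by the Pruesse--Ruskey theorem recalled in the introduction.

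Here is an explicit cycle. Encode orientations of $C_4$ by bit strings relative to a cyclic base orientation, so that $\mathcal{AO}(C_4)$ is $Q_4$ minus $0000$ and $1111$. Removing these two words from the Gray code of \Cref{tab:CubeGraph4} leaves the paths $P_1=(0001,\dots,1101)$ and $P_2=(1110,1010,1011,1001,1000)$. The following visits all $28$ vertices and closes up:
\begin{itemize}
\item $P_1$ in copy $0$;
\item $P_1$ backwards in copy $1$;
\item $1001,1011$ in copy $1$;
\item $1011,1010,1110$ in copy $0$;
\item $1110,1010,1000$ in copy $1$;
\item $1000,1001$ in copy $0$, then back to $0001$ in copy $0$.
\end{itemize}
So (iv) holds only in the ``if'' direction, which is all that the paper's cited \Cref{cor:MultiPathGluingOnArcIsGood} actually provides. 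The ``only if'' survives in special cases such as $G=K_2$ or $G=C_m$, where $G\oplus_2 C_n$ is a multipath and (i) applies.

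\paragraph{Smaller point: the multipath signature.} Your signed inclusion--exclusion drops its third term. The configurations with no directed $u$--$v$ path contribute $\prod_p\bigl(-1-(-1)^{\ell_p}\bigr)$, which equals $(-2)^s$ when all $\ell_p$ are even. The correct value is
\[
\sigma(D^{\boldsymbol\ell}_{u\to v})=(-1)^s\bigl(1+(-1)^{\sum_p\ell_p}\bigr)-\delta\,(-2)^s,
\]
with $\delta=1$ if all $\ell_p$ are even and $\delta=0$ otherwise, in agreement with \Cref{thm:AcyclicSignatureMultiPaths}. This is still non-zero whenever $\sum_p\ell_p$ is even and $s\geq 2$. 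It vanishes for $s=1$, however, so your ``$\pm1\pm1$'' would wrongly declare an even path (a tree, hence $\mathcal{AO}$-Hamiltonian) non-Hamiltonian. Statement (i) must be read with at least two paths.

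\paragraph{Smaller point: the sufficiency induction.} Your ``all paths even'' branch is vacuous: an odd total forces some odd $\ell_p$. Start from that odd path, or from the edge $uv$ if present. The hypothesis you must carry through the induction is the $0$-evenness of the direction sequence $\boldsymbol{A}(u,v)$ (\Cref{def:zero_evenness}).
\begin{itemize}
\item It holds for every Hamiltonian cycle of $Q_\ell$ with $\ell$ odd, because the only two orientations with $A(u,v)\neq 0$ lie in opposite colour classes.
\item \Cref{thm:PathGluingEvenIsGood,thm:PathGluing2OddIsGood} again output $0$-even cycles, so the hypothesis propagates.
\end{itemize}
With this, your sufficiency argument is exactly the paper's \Cref{cor:MultiPathGluingIsGood}. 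Your odd-path base is in fact cleaner than the paper's start from two isolated nodes, which does not literally meet that corollary's hypotheses.
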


This theorem subsumes the one of Savage, Squire and West: ``For a cycle  $C_n$ on $n$ nodes, $\AO[C_n]$ is Hamiltonian if and only if $n$ is odd'' \cite[below Lemma~3.1 \& Theorem~4.2]{SavageSquireWest93-GrayCodesAcyclicOrientations}.
The above \Cref{thmA} is a specialization of the more general \Cref{cor:MultiPathGluingIsGood} made easier for the introduction.

The motivation behind considering this type of operation arises from the fact that if $\AO[H]$ is Hamiltonian for all the $2$-connected components of $G$, then $\AO$ is also Hamiltonian.
Moreover, it is well-known (see \cite[Section~3.1]{diestel} for details and figures) that $2$-connected graphs admit an (open) ear decomposition, \ie we can construct any $2$-connected graph by starting with a cycle and subsequently adding paths between pairs of nodes.
Therefore, exploring all the ways of adding paths to a graph that are compatible with the $\c{AO}$-Hamiltonicity of said graph means iteratively constructing a large example class with hopes of reaching a complete classification.
We show (recall that a node is \emph{simplicial} if all its neighbors pairwise share an edge, \ie form a clique):

\begin{thmUniv}[Simplified version \Cref{cor:AddingSimplicialNodesAndGluingMultiPath}]\label{thmB}
Let $G$ be a graph such that $\AO$ is Hamiltonian.
If $G'$ is obtained from $G$ by either
\begin{enumerate}[(i)]
\item the addition of a simplicial node $x_\circ$ on a clique of odd cardinality in $G$ and the (optional) gluing of a multipath of even cardinality at some nodes $u, v$ with $u\ne x_\circ$ and $v\ne x_\circ$; or 
\item the (optional) addition of a simplicial node and the gluing of a multipath of even cardinality at some nodes $u, v$ that share an edge in $G$;
\end{enumerate}

Then $\AO[G']$ is Hamiltonian.
\end{thmUniv}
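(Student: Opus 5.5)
Both items will be handled with the same two steps: first add the simplicial node to obtain an intermediate graph $G_1$ with $\AO[G_1]$ Hamiltonian, and then glue the multipath onto $G_1$ using the multipath gluing criterion (\Cref{cor:MultiPathGluingIsGood}). That criterion is proved by the two pattern-lacing methods (\Cref{cor:SelfLacingIntoAnAOcycle,thm:ConcatenatingReplacingPatternsWithSharedEndPoints}). Its hypotheses are that the base graph is {\good} and that a parity or compatibility condition on the pair $u,v$ holds. A multipath of even cardinality consists of one path of even length or two paths of odd lengths. In practice this condition means the Hamiltonian cycle of $\AO[G_1]$ must traverse the flips of the relative order of $u$ and $v$ in a controlled, "laceable" way.

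\textbf{Step 1: the simplicial node.} Let $x_\circ$ be attached to a clique $K$ of size $b$. An acyclic orientation of $G_1$ restricts to one of $G$, and the orientations of the $b$ new edges compatible with it correspond to the $b+1$ insertion positions of $x_\circ$ in the linear order that the acyclic orientation induces on $K$. For each fixed $A \in \AO$, the fiber over $A$ is therefore a path on $b+1$ nodes in $\AO[G_1]$. Moreover, flipping an arc of $G$ whose endpoints are not both in $K$ lifts to every position of $x_\circ$, and flipping an arc inside $K$ lifts at all positions except one. This is the zig-zag (Steinhaus--Johnson--Trotter style) setting. I will traverse the Hamiltonian cycle $A_1,\dots,A_N$ of $\AO$, sweep each fiber alternately up and down, and move between consecutive fibers at an end position. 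End positions always lift, since there $x_\circ$ is a source or a sink for $K$.

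\textbf{Closing the zig-zag and the two cases.} The zig-zag closes up only if $N$ is even, and $N$ is even because $G$ is {\good} and not a single vertex. In case (i), $b$ odd means each fiber has an even number $b+1$ of nodes. Then every fiber path joins the two ends of opposite parity, which gives the additional freedom needed to place the multipath anywhere away from $x_\circ$. In case (ii), I will instead use that $uv$ is an edge of $G$. The relative orientation of $u$ and $v$ is then already an arc of $G_1$, so the criterion of \Cref{cor:MultiPathGluingIsGood} for gluing along an existing edge applies. This is the same mechanism as in \Cref{thmA}(iv). If the simplicial node is omitted, case (ii) is exactly that criterion.

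\textbf{Step 2 and main obstacle.} The main obstacle is to check that the cycle built in Step 1 still meets the hypothesis required for gluing the multipath at $u,v$ in case (i). That hypothesis is that the cycle can be cut into segments on which the orientation of $\{u,v\}$, or the reachability between them, is constant, with these segments alternating suitably in number and parity. My first attempt will be to start from such a segmentation of the cycle of $\AO$ and lift each segment to the zig-zag over its fibers. Since $u,v\neq x_\circ$, the $u$–$v$ status is constant on each fiber. It then remains to check that the segment lengths are multiplied by $b+1$, which is even, so that the needed parity is preserved. That parity check is exactly where the oddness of the clique is used.
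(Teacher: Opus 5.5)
Your proposal is correct and is essentially the paper's argument: the Steinhaus--Johnson--Trotter zig-zag of \Cref{thm:AddingSimplicialNodePreservesAOHamiltonian} (add the simplicial node), followed by the multipath gluing of \Cref{cor:MultiPathGluingIsGood}, or of \Cref{cor:MultiPathGluingOnArcIsGood} when $uv\in E$. The condition you describe only loosely is precisely that $\b B(u,v)$ is $0$-even, and in case (i) this is created rather than merely preserved: $\b B(u,v)$ is $\b A(u,v)$ with each entry repeated $b+1$ times, so for an arbitrary Hamiltonian cycle $\b A$ of $\AO$ every maximal run of zeros has even length when $b$ is odd, while in case (ii) $\b B(u,v)$ has no zeros at all.
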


% \begin{thmUniv}[Simplified version \Cref{cor:AddingSimplicialNodesAndGluingMultiPath}]\label{thmB}
% Let $G$ be a graph such that $\AO$ is Hamiltonian, and $G_0, G_1, \dots , G_k$ with $G_0 = G$ be a sequence of graphs.
% If $G_{i+1}$ is obtained from $G_{i}$ by either
% \begin{enumerate}[(i)]
% \item the addition of a simplicial node $x_\circ$ on a clique of odd cardinality and the gluing of a multipath of even cardinality at some nodes $u, v$ with $u\ne x_\circ$ and $v\ne x_\circ$; or 
% \item the addition of a simplicial node (or of none) and the gluing of a multipath of even cardinality at some nodes $u, v$ that share an edge.
% \end{enumerate}

% Then all $\AO[G_i]$ are Hamiltonian.
% \end{thmUniv}

A more flexible yet notation-heavy version of Theorem \ref{thmB} is proven in \Cref{cor:AddingSimplicialNodesAndGluingMultiPath}.
We also detail a third way of gluing several multipaths on different pairs of nodes in \Cref{ssec:StronglyInternallyEven}.

\medskip 
This article is organized as follows:
In \Cref{sec:Prelim}, we summarize all the graph-theoretic preliminaries that are needed for the present work, especially on orientations, acyclic orientations, but also the usual examples of the Gray code for binary sequences and the Steinhaus--Johnson--Trotter algorithm for permutations.
In \Cref{sec:AcyclicPolynomial} we introduce the acyclic polynomial and explain its fundamental properties.
Notably, we decompose it into partial acyclic polynomials (\Cref{prop:AcyclicPolyDecomposition}), and prove a deletion-contraction formula in certain cases (\Cref{prop:Psi0Recursion,prop:Psi+Recursion}).
This allows us to better understand the acyclic signature of graphs (\Cref{ssec:AcyclicSignature}) and to compute the acyclic signature of multipaths (\Cref{thm:AcyclicSignatureMultiPaths}), exemplifying how to certify that $\AO$ is not Hamiltonian for some graph $G$.
On the constructive side of Hamiltonicity considerations, in \Cref{sec:MultiPathGluing}, we give explicit constructions of Hamiltonian cycles for graphs obtained via graph operations such as path gluings or addition of a simplicial node.

Generalizations of strategies of pattern lacing can be found in Appendix~\ref{appendix:GeneralPatternLacing}, while Appendix~\ref{app_bipartite} is devoted to complete bipartite graphs, and Appendix~\ref{app:ParityArgumentMultiPathDirectCount} proposes an alternative proof that multipaths of even total length are not {\good} using a counting argument.
In Appendix~\ref{app:GraphExamples}, we give $22$ explicit examples of well-known graphs and the properties of their graph of acyclic orientations, for the reader to enjoy.

% \clearpage
\section{Preliminaries}\label{sec:Prelim}

We denote $\mathdefn{[n]} \eqdef \{1, \dots, n\}$, and $[i, j] = \{i, i+1, \dots, j\}$.
Moreover, let $\mathdefn{2^X} \eqdef \{Y ~;~ Y\subseteq X\}$ and $\mathdefn{\binom{X}{k}} \eqdef \{Y\in 2^X ~;~ |Y| = k\}$, where $|Y|$ is the size (\ie cardinality) of the set $Y$.
For two sets $X, Y$ we denote $\mathdefn{X\sqcup Y}$ for $X\cup Y$ when $X$ and $Y$ are disjoint sets.
The \defn{Cartesian product} of $X$ and $Y$ is $\mathdefn{X\times Y} \eqdef \{(x, y) ~;~ x\in X, y\in Y\}$, and the \defn{symmetric difference} of $X$ and $Y$ is $\mathdefn{X\symmdiff Y} \eqdef (X\cup Y) \ssm (X\cap Y) = (X\ssm Y)\sqcup(Y\ssm X)$.

An (undirected) \defn{graph} $\mathdefn{G} \eqdef (\V, E)$ is composed of a set of \defn{\nodes} $\V$, and a set of \defn{edges} $E \subseteq \binom{[n]}{2}$.
A \defn{digraph} (or simple directed graph) $\mathdefn{D} \eqdef (\V, \Arcs)$ is composed of a set of \nodes ~$\V$ and a set of \defn{\arcs} (or directed edges) $\Arcs = \bigl\{ (i, j) ~;~, i, j \in \V\bigr\}$, such that $\{i, j\} \ne \{k, \ell\}$ for $(i, j), (k, \ell) \in \Arcs$.
To shorten notation we write $ij$ for $\{i, j\}$ and $(i,j)$ when the context is clear. We denote by $N(G), E(G), \Arcs(G)$ the sets of nodes, edges and arcs of a given (di)graph $G$.
For a given node $i\in \V$, the nodes $j\in \V$ such that $ij\in E$ are called the \defn{neighbors} of $i$.
 
% For a \node ~$i\in \V$, its set of \defn{neighbors} (or adjacent \nodes) in a graph $G = (\V, E)$ is $\mathdefn{N(i)} \eqdef \{j\in \V ~;~ ij\in E\}$; in a digraph $D = (\V, \Arcs)$, its set of \defn{out-neighbors} is $\mathdefn{N_{\Out}(i)}\eqdef \{j\in \V ~;~ ij\in \Arcs\}$ while its set of \defn{in-neighbors} is $\mathdefn{N_{\In}(i)}\eqdef \{j\in \V ~;~ ji\in \Arcs\}$.
% \germain{Here commented: defn of (in/out-)neighbors}
For a (di)graph $G = (\V, E)$, and some $U\subseteq \V$, the associated \defn{induced sub(di)graph} is defined as $\mathdefn{G[U]} \eqdef \bigl(U, \{ij \in E ~;~ i, j\in U\}\bigr)$.

For a digraph $D = (\V, \Arcs)$, the associated \defn{underlying graph} is the graph $\mathdefn{\underline{D}} \eqdef (\V, E)$ where $E = \bigl\{\{i, j\} ~;~ (i, j) \in \Arcs\bigr\}$; conversely an \defn{orientation} of $G$ is a digraph $D$ such that $\underline{D} = G$.
We denote the set of orientations of $G$ by $\mathdefn{\c O(G)}$.
Similarly, for a digraph $D$, we call orientations of $D$ the orientations of $\underline{D}$, and denote $\c O(D)$ for $\c O(\underline{D})$.

A (di)graph $G = (\V, E)$ is \defn{bipartite} if there exists $\V_1, \V_2\subseteq \V$ such that $\V_1 \sqcup \V_2 = \V$ and there is no $ij\in E$ with $i, j\in \V_1$ nor with $i,j\in \V_2$.

\begin{example}
The following examples are well-studied in the literature.
\begin{enumerate}[(i)]
\item For $n\geq 1$, the \defn{$n^{\text{th}}$ cube-graph} $\mathdefn{\cube}$ is the graph on the set of \nodes~ $2^{[n]}$ where $X, Y \subseteq [n]$ are linked by an edge if and only if $|X\symmdiff Y| = 1$.
Moreover, the \defn{$n^{\text{th}}$~Boolean lattice} $\mathdefn{\Bool}$ is the orientation of $\cube$ with \arcs~$(X, Y)$ for $X \subseteq Y$ and  $|Y\ssm X| = 1$.
Cube-graphs and Boolean lattices are bipartite: $X$ and $Y$ belong to the same part if and only if $|X|$ and $|Y|$ have the same parity.
See \Cref{fig:AO_graphs_examples} (top left) for an illustration.

\item For $n\geq 1$, the \defn{$n^{\text{th}}$ complete graph} $\mathdefn{K_n}$ is the graph on the set of \nodes~$[n]$ where all $i$ and $j$ are linked by an edge for $i, j\in [n]$.
Complete graphs are not bipartite.
A subset of \nodes~$U\subseteq\V$ is a \defn{clique} in a graph $G = (\V, E)$ if its induced subgraph $G[U]$ is isomorphic to a complete graph; and a \node~ $i$ is \defn{simplicial} if its set of neighbors is a clique.

\item For $n\geq 1$, the \defn{$n^{\text{th}}$ permutahedral graph} $\mathdefn{\Pi_n}$ whose nodes are the permutations of $[n]$, where $\sigma$ and $\sigma'$ are linked by an edge if there exists $i\in [n-1]$ such that $\sigma' = \sigma\circ (i~\,~ i+1)$.
Moreover, the \defn{$n^{\text{th}}$ weak (Bruhat) order} $\mathdefn{\WeakOrder}$ is the orientation of $\Pi_n$ with arcs $(\sigma, \sigma')$ if $\sigma' = \sigma\circ (i~\,~ i+1)$ and $\sigma(i) < \sigma(i+1)$.
Permutahedral graphs and weak orders are bipartite.
\end{enumerate}
\end{example} 

\begin{figure}
    \centering
    \includegraphics[width=0.475\linewidth]{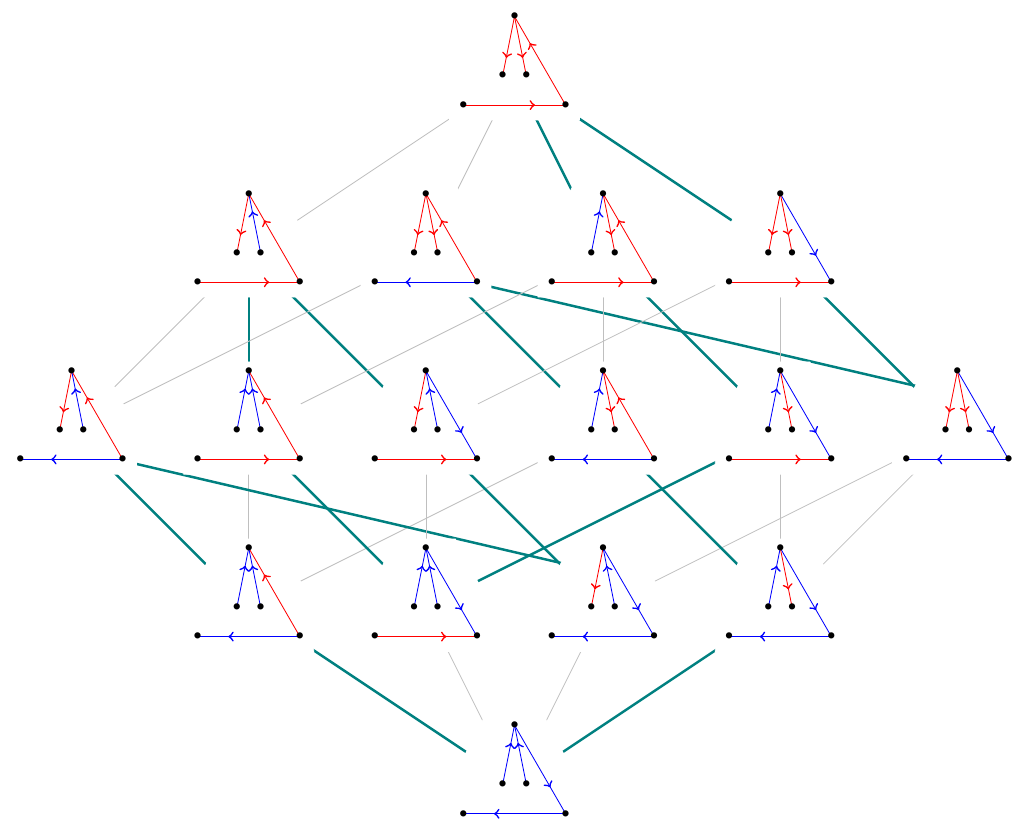}\hfill
    \includegraphics[width=0.475\linewidth]{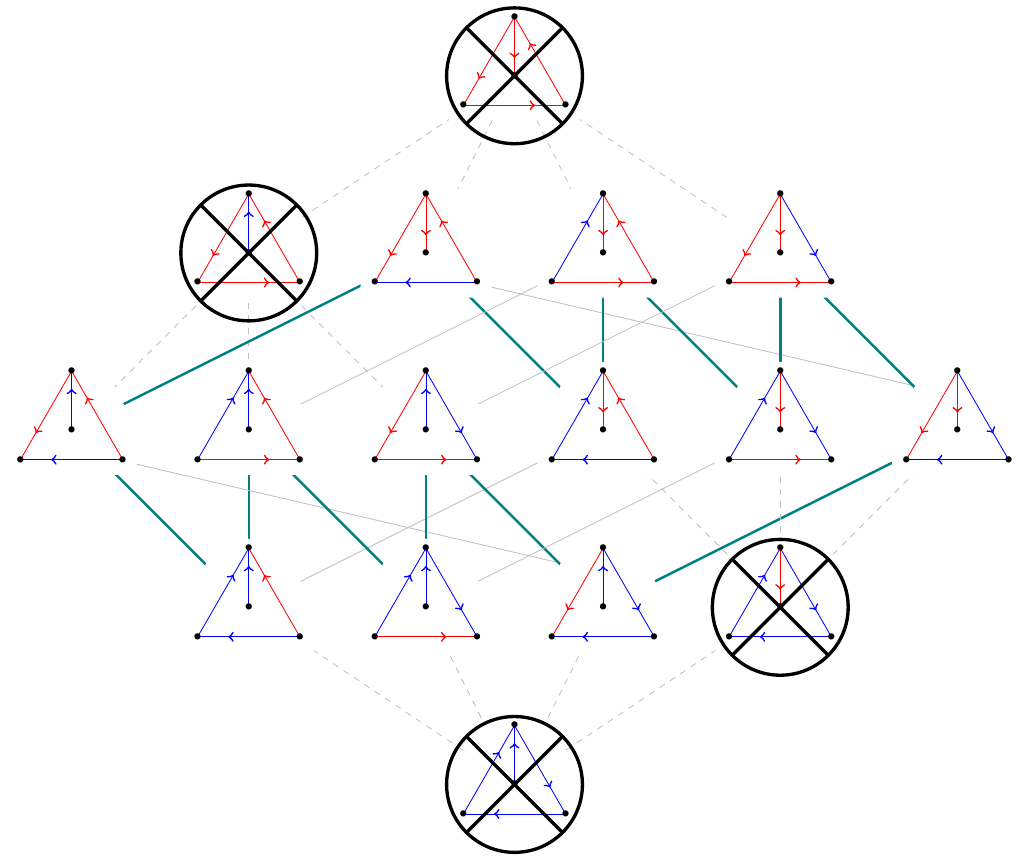}
    
    \includegraphics[width=0.475\linewidth]{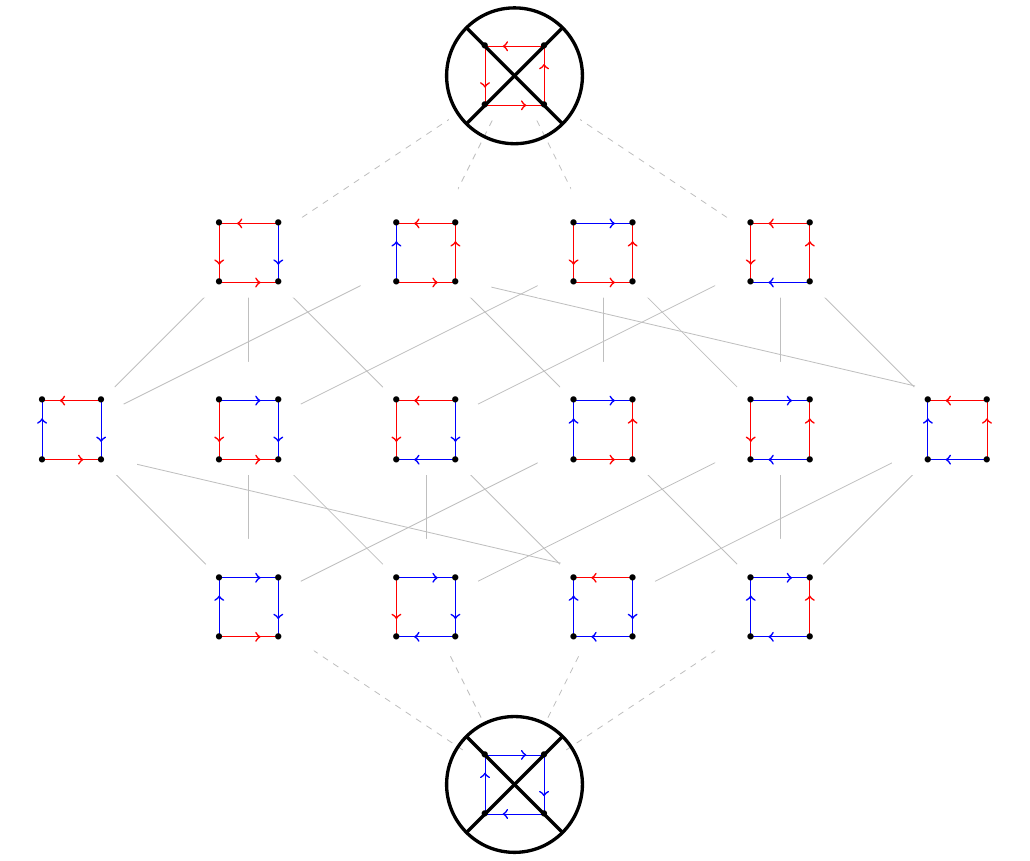}%
    \caption{
    The graphs of acyclic orientations of three graphs on $4$ arcs, drawn as a Boolean lattice $\c Bool_4$ with some elements removed.
    Each is a ranked poset where the chosen based orientation is in \textcolor{blue}{blue}.
    (Top left) For $G$ a tree, $\AO$ is a cube which is Hamiltonian as indicated by the \textcolor{teal}{teal cycle}.
    (Top right) For $G$ the $1$-sum of an arc $K_2$ and a triangle $C_3$, the graph $\AO$ is the Cartesian product $\AO[K_2]\times \AO[C_3] = K_2\times C_6$ which is Hamiltonian as indicated by the \textcolor{teal}{teal cycle}.
    (Bottom) For $G$ an even cycle, $\AO$ is not Hamiltonian because there are $8$ elements of odd ranks but $6$ elements of even ranks.
    }
    \label{fig:AO_graphs_examples}
\end{figure}

\subsection{On general orientations}\label{ssec:Prelim:GeneralOrientations}
% For a \arc $ij$, its \defn{re-orientation} is the \arc $ji$.
For any fixed orientation $D = (\V, \Arcs)$ of $G = (\V, E)$, the set of orientations of $G$ is in natural bijection with~$2^E$:
Namely, for $\omega \subseteq E$, let $\mathdefn{\Arcs^\omega} \eqdef \left\{\begin{array}{cl}
    ij & \text{if } ij\in \omega \\
    ji & \text{if } ij\notin \omega \\ 
\end{array} ~;~ ij\in \Arcs\right\}$. 

The \defn{re-orientation of $D$ with respect to $\omega$} is $\mathdefn{D^\omega} \eqdef (\V, \Arcs^\omega)$, and it is immediate that $\omega \mapsto D^\omega$ defines a bijection $2^{E} \to \c O(G)$. We define the \defn{(re-orientation) rank} $\mathdefn{\rank_D(D^\omega)} \eqdef |\omega|$, see \Cref{fig:AO_graphs_examples} for orientations drawn in ranked posets. 
With respect to $D$, for two orientations $D_1$ and $D_2$ of $G$, we can find $\omega_1,\omega_2 \subseteq E$ such that $D_1 = D^{\omega_1}$ and $D_2 = D^{\omega_2}$ and we write $\mathdefn{D_1 \preceq_D D_2}$ if $\omega_1 \subseteq \omega_2$.

When we choose a fixed orientation $D$ of $G$ in order to express all the other orientations of $G$ as a re-orientation of $D$, we call $D$ a \defn{base orientation} and denote the set of orientations of some rank $k\geq 0$ by
$\mathdefn{\c O_{k}(D)} \eqdef \{D'\in \c O(G) ~;~ \rank_D(D') = k\}$.
Using the bijection $\omega \mapsto D^\omega$, we get $|\c O_{k}(D)| = \left|\binom{E}{k}\right| = \binom{|E|}{k}$, for any base orientation $D$ and any~$k$.

\begin{remark}\label{rmk:SumEvenOddBinomNewton}
Using Newton's binomial formula, one has, for $m \geq 1$:
$$\left(\sum_{k \text{ even}} \binom{m}{k}\right) - \left(\sum_{k \text{ odd}} \binom{m}{k}\right) = \sum_{k=0}^m \binom{m}{k} (-1)^k = (1-1)^m = 0$$
Hence, $\sum_{k \text{ even}} \binom{m}{k} = \sum_{k \text{ odd}} \binom{m}{k} = 2^{m-1}$ for any $m\geq 1$.
We emphasize that this implies that
\begin{equation*} \label{equ:odd_even_orientations} 
\sum_{k \text{ even}} |\c O_{k}(D)| = \sum_{k \text{ odd}} |\c O_{k}(D)| = 2^{|E| - 1}, 
\end{equation*} 
for any base orientation $D$ of $G$.
This will prove useful in the following sections, \eg \Cref{ssec:AcyclicSignature}.
\end{remark}

Besides, note that for any $D_1 = D^{\omega_1}$ and $D_2 = D^{\omega_2}$, we can define the \defn{difference} between~$D_1$ and~$D_2$ as $\mathdefn{D_1 \symmdiff D_2} \eqdef \omega_1\symmdiff \omega_2$ which does not depend on the base orientation $D$:
these are the edges of $\underline{D}$ which are not oriented the same way in $D_1$ as in $D_2$.
Two orientations $D_1, D_2$ of $G$ are \defn{adjacent} if $|D_1\symmdiff D_2| = 1$.
The \defn{graph of orientations} (respectively \defn{digraph of orientations} with respect to some base orientation $D$) of $G$ is the graph whose \nodes~ are the orientations of $G$, and where two orientations are linked by an edge (resp. \arc) if they are adjacent (resp. adjacent with $D_1 \subseteq_D D_2$), see \Cref{fig:AO_graphs_examples} (top left).
We denote $\mathdefn{\c {O}(G)}$ (resp. $\mathdefn{\c O(D)}$) this (di)graph, where we slightly abuse notation and reuse the notation used for the set of orientations.
Fixing any base orientation $D\in \c O(G)$, the bijection $\omega \mapsto D^\omega$ defines an isomorphism of graphs from the graph of orientations $\c O(G)$ to the cube-graph $Q_{|E|}$, and from the digraph of orientations $\c O(D)$ to the Boolean lattice $\Bool[|E|]$.
Note that each  base orientation $D\in \c O(G)$ defines a different isomorphism $\c O(G) \to Q_{|E|}$, and a different orientation $\c O(D)$ of $\c O(G)$.

\subsection{Cycles, Hamiltonian graphs}\label{ssec:Prelim:CyclesAndHamiltonianGraphs}
A \defn{(directed) path} in a (di)graph $G$ is a sequence of different \nodes~ $i_1, \dots, i_r\in \V(G)$ such that $i_pi_{p+1}$ is a (directed) edge in $G$ for all $p\in [r-1]$.
A \defn{(directed) cycle} is a (directed) path $i_1, \dots, i_r\in \V$ such that $i_ri_1$ is a (directed) edge in $G$.
A graph is \defn{Hamiltonian} (resp. \defn{path-Hamiltonian}) if it admits a cycle (resp. a path) containing all its \nodes.
Clearly, a Hamiltonian graph is path-Hamiltonian.

We state here a well-known proposition concerning Hamiltonicity of bipartite graphs and give a short proof for the sake of self-containment.

\begin{proposition}\label{prop:Bipartite(Path)Hamilitonian}
Let $G = (\V, E)$ be a graph admitting a bipartition $\V = \V_1\sqcup \V_2$.
\begin{enumerate}[(i)]
\item If $G$ is Hamiltonian, then $|\V_1| = |\V_2|$; \label{item:BipartiteHamiltonian}
\item If $G$ is path-Hamiltonian with path $i_1, \dots, i_r$ where $i_1\in \V_1$, then either $|\V_1| = |\V_2|$ and~$i_r\in \V_2$, or $|\V_1| = |\V_2| + 1$ and $i_r\in \V_1$. \label{item:BipartitePathHamiltonian}
\end{enumerate}
\end{proposition}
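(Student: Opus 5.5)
The key observation is that along any path in a bipartite graph the nodes alternate between the two parts. Since every edge $i_pi_{p+1}$ joins $\V_1$ to $\V_2$, we get $i_p\in\V_1$ exactly when $p$ has the same parity as the index of the first node lying in $\V_1$. We first prove (ii) and then deduce (i) from it.

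For (ii), let $i_1,\dots,i_r$ be a Hamiltonian path with $i_1\in\V_1$. By alternation, $i_p\in\V_1$ if and only if $p$ is odd. Because the path visits every node exactly once, $|\V_1|$ equals the number of odd indices in $[r]$, namely $\lceil r/2\rceil$. Likewise $|\V_2|$ equals the number of even indices, namely $\lfloor r/2\rfloor$. We then split on the parity of $r$. If $r$ is even, then $|\V_1|=|\V_2|$ and $i_r\in\V_2$. If $r$ is odd, then $|\V_1|=|\V_2|+1$ and $i_r\in\V_1$.

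For (i), take a Hamiltonian cycle $i_1,\dots,i_r$. We may assume $r\geq 3$; the degenerate cases with at most two nodes either have no cycle under the paper's convention or are trivial. Since $i_ri_1$ is an edge, $i_r$ and $i_1$ lie in different parts. The sequence $i_1,\dots,i_r$ is also a Hamiltonian path, and after possibly swapping the names of $\V_1$ and $\V_2$ we may assume $i_1\in\V_1$. Applying (ii), the case $i_r\in\V_1$ is excluded because $i_r$ must lie in $\V_2$. Hence we are in the other case, which gives $|\V_1|=|\V_2|$.

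There is no real obstacle here. The only points needing care are the counting of odd and even indices in (ii), and the symmetric relabelling of the two parts in (i), which is harmless because the conclusion $|\V_1|=|\V_2|$ is symmetric in $\V_1$ and $\V_2$.
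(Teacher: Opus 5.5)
Your proof is correct and rests on the same idea as the paper's: consecutive nodes of a Hamiltonian path or cycle alternate between $\V_1$ and $\V_2$. The paper uses the shift map $i_p\mapsto i_{p+1}$ as a bijection from $\V_1$ (or $\V_1\ssm\{i_r\}$) onto $\V_2$ and proves (i) directly, whereas you count odd and even positions and derive (i) from (ii) via the closing edge $i_ri_1$; this is only a reorganisation, and your parity count also avoids the slip in the paper's second case of (ii), where $|\V_1|=|\V_2|+1$ is written for $|\V_1|=|\V_2|$.
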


\begin{proof}
\ref{item:BipartiteHamiltonian}
If $G$ is Hamiltonian, let $i_1, \dots, i_r$ be a cycle with $\V = \{i_1, \dots, i_r\}$.
As $i_pi_{p+1}\in E$ for all $p\in [r-1]$, we get that $i_p\in \V_1$ if and only if $i_{p+1}\in \V_2$ and conversely, by definition of the bipartition.
So $i_p \mapsto i_{p+1}$ for $p\in [r-1]$ and $i_r\mapsto i_1$ defines an involution of $\V$ mapping $\V_1$ onto~$\V_2$
Thus, $|\V_1| = |\V_2|$.

\ref{item:BipartitePathHamiltonian}
If $i_r\in \V_1$, then $i_p\mapsto i_{p+1}$ for $p\in [r-1]$ defines an involution of $\V\ssm\{i_r\}$ mapping $\V_1\ssm\{i_r\}$ onto $\V_2$, hence $|\V_1| = |\V_2| + 1$.
Else, $i_p\mapsto i_{p+1}$ for $p\in [r-1]$ and $i_r\mapsto i_1$ defines an involution of~$\V$ mapping $\V_1$ onto $\V_2$, hence $|\V_1| = |\V_2| + 1$. 
\end{proof}

\begin{example}\label{exm:CubeGraphIsGood}
The cube-graph is bipartite and Hamiltonian.
As it is a fundamental example supporting the constructions of \Cref{sec:MultiPathGluing}, we detail the properties of Hamiltonian cycles of the cube-graphs.

There are two ways to encode the nodes of $Q_n$, either as subsets $X\subseteq[n]$, or as strings of bits $\b\varepsilon = (\varepsilon_1, \dots, \varepsilon_n)\in \{0, 1\}^n$.
The bijection is given by $X(\b \varepsilon) = \{i\in [n] ~;~ \varepsilon_i = 1\}$, and reciprocally, $\varepsilon(X)_i = 1$ if and only if $i\in X$.

The bipartition of $Q_n$ is given by the parts $\{X\subseteq[n]~;~|X| \text{ is even}\} \simeq \{\b \varepsilon\in \{0, 1\}^n ~;~ \sum_i\varepsilon_i \text{ is even}\}$ and $\{X\subseteq[n]~;~|X| \text{ is odd}\} \simeq \{\b \varepsilon\in \{0, 1\}^n ~;~ \sum_i\varepsilon_i \text{ is odd}\}$.
In particular, the nodes $\emptyset \simeq (0, \dots, 0)$ and $[n] \simeq (1, \dots, 1)$ belong to the same part if and only if $n$ is even.

There are many ways to construct Hamiltonian cycles in $Q_n$, see \OEIS{A066037} for a count of them for small $n$.
First, note that $Q_2$ is a cycle on $4$ elements, so it trivially admits a Hamiltonian cycle.
Then, by induction, given two Hamiltonian cycles $\b C = (X_1, \dots, X_{2^n})$ and $\b C' = (Y_1, \dots, Y_{2^n})$ of $Q_n$ with a shared arc $(X_{2^n}, X_1) = (Y_{2^n}, Y_1)$ (\eg $\b C = \b C'$), one can construct a Hamiltonian cycle of $Q_{n+1}$ as:
$$\bigl(X_1,\, X_2,\, \dots,\, X_{2^n},\,\, Y_{2^n}\cup\{n+1\},\, Y_{2^n-1}\cup\{n+1\},\, \dots,\, Y_1\cup\{n+1\}\bigr) \,.$$

In term of sequences of bits, one constructs a highly symmetric Hamiltonian cycle $\bigl(\varepsilon^{(1)}, \dots, \varepsilon^{(2^n)}\bigr)$ as follows.
For each $i\in [n]$, the sequence $\bigl(\varepsilon_i^{(1)}, \varepsilon_i^{(2)}, \dots, \varepsilon_i^{(2^n)}\bigr)$ starts with a sequence of $2^{n-i}$ copies of $0$, then alternates between $2^{n-i+1}$ copies of $1$ and $2^{n-i+1}$ copies of $0$, until it ends with again $2^{n-i}$ copies of $0$.
Such a construction is illustrated by the columns of \Cref{tab:CubeGraph4}.

% The nodes of $Q_n$ are encoded by bitstrings of length $n$, thus there are $2^n$ strings to consider. Enumerate the bits from right to left with $0, \cdots, n-1$. For $i \in 0, \cdots, n-1$ consider the sequence $L_i = (0^{2^i}, 1^{2^i})$ and its reverse $L_i^{rev}$.\germain{Now, we have defined the cube-graph in order to not have to dive into such technical details. $L_i$ is just $[i]$ and $L_i^{rev}$ is $[n]\ssm[i+1]$ or $[i+1, n]$, for instance. Can I rewrite ?}\leonie{You can rewrite but keep the table please, I think it is quite instructive. You can also move it to a different place} For each $i$ alternate the bit sequence in the $i$-column between $L_i$ and $L_i^{rev}$. By construction, two neighboring bitsequences will only differ in one entry. See \Cref{tab:CubeGraph4} for an example for $n = 4$.

\begin{table}[th!] %h!
    \centering 
    \begin{tabular}{ccc|ccc}
        
        $k$ & String $\b \varepsilon^{(k)}$ & Subset $X_k$ & $k$ & String $\b \varepsilon^{(k)}$ & Subset $X_k$ \\ 
        \midrule 
        1 & 0000 & $\emptyset$ & 9 & 1100 & $\{1, 2\}$ \\ 
        2 & 0001 & $\{4\}$ & 10 & 1101 & $\{1, 2, 4\}$ \\ 
        3 & 0011 & $\{3, 4\}$ & 11 & 1111 & $\{1, 2, 3, 4\}$ \\ 
        4 & 0010 & $\{3\}$ & 12 & 1110  & $\{1, 2, 3\}$ \\
        
        5 & 0110& $\{2, 3\}$ & 13 & 1010 & $\{1, 3\}$ \\ 
        6 & 0111 & $\{2, 3, 4\}$ & 14 & 1011 & $\{1, 3, 4\}$ \\ 
        7 & 0101 & $\{2, 4\}$ & 15 & 1001 & $\{1, 4\}$ \\ 
        8 & 0100 & $\{2\}$ & 16 & 1000 & $\{1\}$ \\ 
        \bottomrule  
    \end{tabular} 
    \caption{
    A Gray code for $4$-bit binary strings, \aka a Hamiltonian cycle of $Q_4$.
    The $i^{\text{th}}$ column of the strings $\varepsilon^{(k)}$ alternates between $2^{5-i}$ copies of $1$ and $2^{5-i}$ copies of $0$ with a starting offset of $2^{4-i}$ copies of $0$.
    The subset of $[4]$ associated to each string of bits is indicated in the third column.
    }
    % \caption{Gray code for four-bit binary strings, encoding the Hamiltonian cycle on the cube-graph $Q_4$. The horizontal lines separate the sequences $L_i$ and $L_i^{rev}$.} 
    \label{tab:CubeGraph4} 
\end{table}

\end{example}

\begin{example}\label{exm:PermutahedralGraphIsGood}
The permutahedral graph $\Pi_n$ is bipartite and Hamiltonian.
On the one hand, since two adjacent permutations in $\Pi_n$ differ by a transposition, the usual signature of permutation provides a bipartition of $\Pi_n$:
no two even (resp. odd) permutations share an edge in $\Pi_n$.

We shortly present the famous Steinhaus--Johnson--Trotter algorithm \cite{Johnson63-GenerationOfPermutations} which yields a Hamiltonian cycle of the permutahedral graph.
See the survey of Savage \cite[Section 3]{Savege97-SurveyGrayCodes} for a more detailed presentation of the subject.

First, for a permutation $\tau$ of $n$ elements and $i\in [n+1]$, we define the permutation $\tau^{(i)}$ of $n+1$ elements by
$$\tau^{(i)}(j) = \left\{\begin{array}{ll}
\tau(j) & \text{if } j < i \\
n+1 & \text{if } j = i \\
\tau(j-1) & \text{if } j > i
\end{array}\right. \,.$$

The graph $\Pi_3$ trivially admits a Hamiltonian cycle since it is a cycle-graph on $6$ elements, \eg $123-132-312-321-231-213$.
By induction, let $(\sigma_1, \dots, \sigma_{n!})$ be a Hamiltonian cycle of $\Pi_n$,
then the following is a Hamiltonian cycle of $\Pi_{n+1}$, see \Cref{tab:PermutahedralGraph4}:
$$\bigl(\sigma_1^{(n+1)}, \dots, \sigma_1^{(1)},\, \sigma_2^{(1)}, \dots, \sigma_2^{(n+1)},\, \sigma_3^{(n+1)}, \dots, \sigma_3^{(1)},\, \dots,\, \sigma_{n!}^{(1)}, \dots, \sigma_{n!}^{(n+1)}\bigr) \,.$$

\begin{table}[th!] %h!
    \centering 
    \begin{tabular}{ccc|ccc|ccc|ccc}
        
        $i$ & $j$ & $\sigma_i^{(j)}$ & $i$ & $j$ & $\sigma_i^{(j)}$ & $i$ & $j$ & $\sigma_i^{(j)}$ & $i$ & $j$ & $\sigma_i^{(j)}$\\ 
        \midrule 
        1 & 4 & $123\textcolor{red}{\textbf{4}}$ & 2 & 3 & $13\textcolor{red}{\textbf{4}}2$ & 4 & 1 & $\textcolor{red}{\textbf{4}}321$ & 5 & 2 & $2\textcolor{red}{\textbf{4}}31$ \\ 
        1 & 3 & $12\textcolor{red}{\textbf{4}}3$ & 2 & 4 & $132\textcolor{red}{\textbf{4}}$ & 4 & 2 & $3\textcolor{red}{\textbf{4}}21$ & 5 & 1 & $\textcolor{red}{\textbf{4}}231$ \\ 
        1 & 2 & $1\textcolor{red}{\textbf{4}}23$ & 3 & 4 & $312\textcolor{red}{\textbf{4}}$ & 4 & 3 & $32\textcolor{red}{\textbf{4}}1$ & 6 & 1 & $\textcolor{red}{\textbf{4}}213$\\ 
        1 & 1 & $\textcolor{red}{\textbf{4}}123$ & 3 & 3 & $31\textcolor{red}{\textbf{4}}2$ & 4 & 4 & $321\textcolor{red}{\textbf{4}}$ & 6 & 2 & $2\textcolor{red}{\textbf{4}}13$\\
        
        2 & 1 & $\textcolor{red}{\textbf{4}}132$ & 3 & 2 & $3\textcolor{red}{\textbf{4}}12$ & 5 & 4 & $231\textcolor{red}{\textbf{4}}$ & 6 & 3 & $21\textcolor{red}{\textbf{4}}3$\\ 
        2 & 2 & $1\textcolor{red}{\textbf{4}}32$ & 3 & 1 & $\textcolor{red}{\textbf{4}}312$ & 5 & 3 & $23\textcolor{red}{\textbf{4}}1$ & 6 & 4 & $213\textcolor{red}{\textbf{4}}$\\ 
        \bottomrule  
    \end{tabular} 
    \caption{
    The Hamiltonian cycle of $\Pi_4$ obtained using the Steinhaus--Johnson--Trotter algorithm.
    Each permutation differs from the previous one by a transposition of two consecutive values.
    }
    \label{tab:PermutahedralGraph4} 
\end{table}

\end{example}

\subsection{Graph of acyclic orientations}\label{ssec:Prelim:AOgraph}

A directed graph is \defn{acyclic} if it contains no directed cycles, otherwise it is \defn{cyclic}.
For a (di)graph~$G$, its \defn{graph of acyclic orientations} $\mathdefn{\AO}$ is the subgraph of its graph of orientations $\c O(G)$ induced on the set of acyclic orientations of $G$, see \Cref{fig:AO_graphs_examples}.
Correspondingly, fixing a base orientation $D$ of $G$, the associated \defn{digraph of acyclic orientations} $\mathdefn{\AOD}$ is the subdigraph of its digraph of orientations~$\c O(D)$ induced on the set of acyclic orientations of $G$.
As sub(di)graphs of bipartite (di)graphs, $\AO$ and $\AOD$ are bipartite.
They are graded by the rank functions $\rank_D$.
We denote $\mathdefn{\AODk} \eqdef \{A \in \AO ~;~ \rank_D A = k\}$.

We denote $\mathdefn{\CO}$, $\mathdefn{\COD}$ and $\mathdefn{\CODk}$ the counterparts for cyclic orientations.

\begin{example}\label{exm:AOCompleteGraph}
For $n\geq 1$, the acyclic orientations of the complete graph $K_n$ are in bijection with the permutations of $[n]$.
Indeed, any orientation $D$ of $K_n$ yields a reflexive and transitive relation: $i \leq j$ if and only if there is a directed path from $i$ to $j$ in $D$.
If this orientation is acyclic, then this relation is anti-symmetric, \ie it is a permutation of $[n]$.
Two acyclic orientations of $K_n$ are adjacent if they differ on the orientation of a single arc, that is if the associated permutations differ on the order of two consecutive elements.
Hence the graph of acyclic orientations of the complete graph $\AO[K_n]$ is isomorphic to the permutahedral graph $\Pi_n$.
Furthermore, choosing any base acyclic orientation $D$ of $K_n$, the digraph of acyclic orientations $\AOD$ is isomorphic to the weak Bruhat order $\WeakOrder$.
\end{example}

Our main motivation in the following sections is analyzing Formulation \ref{for1}, thus we say that $G$ is \defn{\good} if $\AO$ is Hamiltonian.

%\begin{question} 
%For which\germain{Cite some papers.} graph $G$ is the graph of acyclic orientations $\AO$ Hamiltonian?
%\end{question}

\begin{example}\label{exm:KnownAOhamiltonicity}
The following are well-known:
\begin{enumerate}[(i)]
\item Trees are {\good}, see \Cref{fig:AO_graphs_examples} (top left):
Since cube-graphs are Hamiltonian, and no orientation of a tree $T$ can be cyclic, we have that $\AO[T] \simeq \cube$ is Hamiltonian.

\item Complete graphs are {\good}:
The permutahedral graph is Hamiltonian since the Steinhaus--Johnson--Trotter algorithm yields a Hamiltonian cycle on $\Pi_n$, see \Cref{exm:PermutahedralGraphIsGood} (or \cite{Johnson63-GenerationOfPermutations} or the survey \cite[Section~3]{Savege97-SurveyGrayCodes}); and the graph of acyclic orientations of the complete graph $K_n$ is isomorphic to $\Pi_n$, see \Cref{exm:AOCompleteGraph}.

\item The cycle graph $C_n$ (see \Cref{fig:AO_graphs_examples} (bottom), the wheel graph $W_n$ (a cycle graph together with a node connected to all others), and the ladder graph $L_n$ (two disjoint paths on $(a_1, \dots, a_n)$ and $(b_1, \dots, b_n)$ together with edges $a_ib_i$ for $i\in[n]$) are {\good} ~if and only if $n$ is odd, see \cite{SavageSquireWest93-GrayCodesAcyclicOrientations}.

\item Chordal graphs (see \Cref{fig:AO_graphs_examples} (top right)) are {\good}, see \cite[Theorem~4.1]{SavageSquireWest93-GrayCodesAcyclicOrientations} and \cite{BrennerCardinalMcConvilleMerinoMutze25-CombinatorialGenerationVII,KorberSchniedersStrickerWalizadeth25-HamiltonianCyclesSupersolvable}.

\item Complete bipartite graphs $K_{m, n}$ (disjoint sets of nodes $(a_1, \dots, a_m)$ and $(b_1, \dots, b_n)$ with edges $a_ib_j$ for all $i\in [m]$ and $j\in [n]$) are not {\good} if either $m$ or $n$ is even.
The authors of \cite{SavageSquireWest93-GrayCodesAcyclicOrientations} proved it using a parity argument, see \Cref{lem:ParityArgument}.
As we will discuss further in Appendix~\ref{app_bipartite}, the case where $n$ and $m$ are both odd is open.

\item We give $22$ examples of small graphs in Appendix~\ref{app:GraphExamples}, deciding $\c{AO}$-Hamiltonicity for some of them.
\end{enumerate}
\end{example}

We are especially interested in operations on graphs that preserve $\c{AO}$-Hamiltonicity.
In particular, we repeatedly use the following lemma, called the \emph{parity problem} in \cite{SavageSquireWest93-GrayCodesAcyclicOrientations}:

\begin{definition}\label{def:signature}
For a graph $G$, and $D$ an acyclic orientation of $G$, the associated \defn{acyclic signature} is $\mathdefn{\sigma(D)} \eqdef \sum_{k} (-1)^k \cdot |\AODk|$.
\end{definition}

\begin{lemma}\label{lem:ParityArgument}
Let $D$ be an orientation of a graph $G$.
If $\sigma(D) \ne 0$, then $G$ is not {\good}.
\end{lemma}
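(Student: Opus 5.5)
The plan is to prove the contrapositive: assume $G$ is {\good} and show that $\sigma(D) = 0$. The argument rests on \Cref{prop:Bipartite(Path)Hamilitonian}\ref{item:BipartiteHamiltonian}, applied to the bipartition of $\AO$ given by the parity of the rank function $\rank_D$.

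First, fix the orientation $D$ of $G$. The statement allows $D$ to be any orientation, while \Cref{def:signature} was phrased for acyclic $D$. However, the formula $\sum_k (-1)^k |\AODk|$ makes sense for any base orientation, since $\rank_D$ is defined on all of $\c O(G)$. So I will work with that formula directly.

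Set
\[
V_1 \eqdef \bigcup_{k \text{ even}} \AODk
\qquad\text{and}\qquad
V_2 \eqdef \bigcup_{k \text{ odd}} \AODk .
\]
I claim this is a bipartition of $\AO$. Suppose two acyclic orientations $A_1 = D^{\omega_1}$ and $A_2 = D^{\omega_2}$ are adjacent. Then $|\omega_1 \symmdiff \omega_2| = 1$, so $|\omega_1|$ and $|\omega_2|$ differ by exactly one. Hence $\rank_D A_1$ and $\rank_D A_2$ have opposite parities, and every edge of $\AO$ joins $V_1$ to $V_2$. This is the same observation as the bipartition of $Q_{|E|}$ in \Cref{exm:CubeGraphIsGood}, restricted to the induced subgraph $\AO$.

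Now suppose $\AO$ is Hamiltonian. Then \Cref{prop:Bipartite(Path)Hamilitonian}\ref{item:BipartiteHamiltonian} gives $|V_1| = |V_2|$, which means
\[
\sigma(D) = \sum_k (-1)^k |\AODk| = |V_1| - |V_2| = 0 .
\]
This contradicts $\sigma(D) \neq 0$, so $G$ is not {\good}.

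There is one degenerate case to address. If $G$ has no edges, then $\AO$ has a single node and $\sigma(D) = 1$. A single node should not count as a Hamiltonian cycle, consistent with the convention that cycles have at least three nodes, so the lemma holds here as well. Beyond this convention check, I expect no real obstacle: the argument is a direct application of the bipartite criterion.
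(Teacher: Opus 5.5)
Your proof is correct and follows the paper's argument: the parity of $\rank_D$ gives a bipartition of $\AO$, because adjacent orientations have ranks differing by one, and the bipartite criterion for Hamiltonian graphs then forces $\sigma(D) = 0$. Your two additional remarks are fine and are left implicit in the paper. First, the signature formula makes sense for a non-acyclic base orientation. Second, the edgeless case holds under the paper's literal definition of a cycle, since a one-node cycle would require a loop; no ``at least three nodes'' convention is needed.
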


\begin{proof}
If two orientations are adjacent, then their ranks differ by $1$.
Hence a bipartition of $\AO$ is given by the parts $\bigcup_{k \text{ even}} \AODk$ and $\bigcup_{k\text{ odd}} \AODk$.
The contrapositive of \Cref{prop:Bipartite(Path)Hamilitonian}~\ref{item:BipartiteHamiltonian} and the definition of the acyclic signature yield the claim.
\end{proof}

\begin{remark}\label{rmk:PartiyArgumentForCyclicOrientations}
We can state a similar lemma for cyclic orientations:
If $\sum_{k} (-1)^k \cdot |\CODk| \ne 0$, then $G$ is not {\good}.

Indeed, for all $k$, we have $\c O_{k}(D) = \AODk\sqcup \CODk$, and by \Cref{rmk:SumEvenOddBinomNewton}, we know that $\sum_{k} (-1)^k \cdot  |\c O_{k}(D)| = 0$, so applying \Cref{lem:ParityArgument} gives the claim.
\end{remark}

\Cref{ssec:AcyclicSignature} is devoted to the properties and the computation of acyclic signatures.

\subsection{Operations on graphs}\label{ssec:Prelim:OperationsOnGraphs}

For the convenience of the reader, we recall here the principal operations that we will use on (di)graphs. For  (di)graphs $G = (\V, E), G' = (\V', E')$, we define:
\begin{itemize}
\item For some (directed) edge $e\in E$, the \defn{(edge) deletion} $\mathdefn{G\ssm e} \eqdef \bigl(\V, E\ssm\{e\}\bigr)$, and for some $\omega \subseteq E$, the deletion $\mathdefn{G\ssm \omega} \eqdef \bigl(\V, E\ssm \omega\bigr)$.

\item For a pair of \nodes~ (forming an edge or not) $i, j \in \V$, the \defn{contraction} $\mathdefn{G/_{ij}} = (\V', E')$ with $\V' = \bigl(\V \ssm \{i, j\}\bigr) \cup \{v_\circ\}$ for some arbitrary $v_\circ$, and $E'$ is the image (trimmed of its loops and multi-edges) of $E$ by the map $i\mapsto v_\circ$, $j\mapsto v_\circ$ and $k\mapsto k$ for $k\in \V\ssm\{i, j\}$.
For some $\{e_1, \dots, e_r\}\subseteq \binom{\V}{2}$, we define the contraction $\mathdefn{G/_Y} \eqdef (\dots (G/_{e_1})/_{e_2} \dots )/_{e_r}$.
The isomorphism class of $G/_Y$ does not depend on the order of the pairs of \nodes.
% \item For some $i\in \V$, the \defn{\node~ deletion} $\mathdefn{G\ssm i} \eqdef G\bigl[\V\ssm\{i\}\bigr]$,
% \item For some $i\in \V$, the \defn{\node~ contraction} $\mathdefn{G /_{i}} \eqdef G/_{\{e\in E ~;~ i\in e\}}$,
%\end{itemize}

%For two (di)graphs $G = (\V, E)$ and $G' = (\V', E')$, we define:
%\begin{itemize}
\item The \defn{union} $\mathdefn{G\cup G'} \eqdef (\V\sqcup \V', E\sqcup E')$.
\item $\mathdefn{G \times G'}$ the \defn{Cartesian product} on the node set $N \times N'$ and edge set $$\Bigl\{\bigl((x,x'), (x,y')\bigr) ~;~ x'y' \in E',\, x\in \V\Bigr\} \cup \Bigl\{\bigl((x, x'), (y, x')\bigr) ~;~ xy \in E,\, x'\in \V'\Bigr\} \,.$$
% \item For two cliques $K$ of $G$ and $K'$ of $G'$ with the same size $k$, and a bijection $\varphi : K \to K'$, the \defn{$k$-sum} of $G$ and $G'$ is the graph $\mathdefn{G \oplus_\varphi G'} \eqdef (G\cup G')/_{\left\{(i, \varphi(i)) ~;~ i\in K\right\}}$.
% We abbreviate to $\mathdefn{G \oplus_k G'}$ when $K$, $K'$ and $\varphi$ are clear from the context.
\item For disjoint cliques $K_1, \dots, K_r$ of $G$ and disjoint cliques $K_1', \dots, K_r'$ of $G'$ with $|K_p| = |K_p'|$ for all $p\in [r]$, and bijections $\varphi_p : K_p \to K'_p$, the \defn{gluing} of $G$ and $G'$ along $\varphi_1, \dots, \varphi_r$ is the graph $\mathdefn{G \oplus_{\varphi_1, \dots, \varphi_r} G'} \eqdef (G\cup G')/_{\{(i, \varphi_p(i) ~;~ i\in K_i,\, p\in [r]\}}$. We abbreviate $\mathdefn{G\oplus G'}$ when $K_1, \dots, K_r$, $K'_1, \dots, K_r'$, $\varphi_1, \dots, \varphi_p$ are clear from the context.
When $r = 1$, the gluing of $G$ and $G'$ on $K$ and $K'$ with respect to $\varphi$ is called a \defn{$k$-sum} where $k \eqdef |K| = |K'|$, denoted $\mathdefn{G\oplus_{\varphi}G'}$ or $\mathdefn{G\oplus_k G'}$ when $K, K', \varphi$ are clear from the context.
\end{itemize}

% \newpage
\section{Non-$\c{AO}$-Hamiltonicity criteria: modularity, and acyclic polynomials}\label{sec:NonHamiltonicity}\label{sec:AcyclicPolynomial}

\subsection{Prelude: Expanding on the modularity criterion of Savage, Squire and West}\label{ssec:Prelude}

The following \Cref{lem:NbAOgluePathOnEdge,thm:PrismOverTreeNotGood} are directly inspired from the method employed in the proof of \cite[Theorem~3.2]{SavageSquireWest93-GrayCodesAcyclicOrientations} where Savage, Squire and West showed that ladder graphs $L_n \eqdef P_n\times K_2$ are not {\good} if $n$ is even.
They prove the reciprocal in \cite[Section~4.2]{SavageSquireWest93-GrayCodesAcyclicOrientations}, but this is not the purpose of the present section, rather its aim is to develop a useful tool out of the ideas around the number of acyclic orientations modulo $4$.
In particular, we want to understand the $\c{AO}$-Hamiltonicity of the Cartesian product $G \times Q_n$ where $Q_n$ is the $n^{\text{th}}$ cube-graph.

The reader inclined to learn more about classes of graphs which are not {\good} is invited to jump directly to \Cref{cor:CubePrismsEtcAreNotAOH}.
We will need the following two lemmata which are already proven in \cite{SavageSquireWest93-GrayCodesAcyclicOrientations}.
We will discuss their proofs in \Cref{ssec:AcyclicPolynomial}.

\begin{lemma}\label{lem:ParityNbAO}
For any graph $G$, the number of acyclic orientations $\psi(G)$ is even.
\end{lemma}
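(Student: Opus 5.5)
The quickest route is a fixed-point-free involution on the set of acyclic orientations of $G$: reversing every arc. If $A$ is an acyclic orientation of $G$, let $A^{\mathrm{rev}}$ be the orientation obtained by reversing all its arcs. A directed cycle $i_1, \dots, i_r$ in $A^{\mathrm{rev}}$ would give the directed cycle $i_r, \dots, i_1$ in $A$. Hence $A^{\mathrm{rev}}$ is again acyclic, and $A \mapsto A^{\mathrm{rev}}$ is an involution on the acyclic orientations of $G$.

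We then check that this involution has no fixed points. This requires $G$ to have at least one edge: an edge $ij$ oriented as $(i,j)$ in $A$ is oriented as $(j,i)$ in $A^{\mathrm{rev}}$, so $A \ne A^{\mathrm{rev}}$. In the language of \Cref{ssec:Prelim:GeneralOrientations}, if $A = D^\omega$ then $A^{\mathrm{rev}} = D^{E\ssm\omega}$, and $\omega \ne E\ssm\omega$ as soon as $E \neq \emptyset$. The acyclic orientations therefore split into pairs $\{A, A^{\mathrm{rev}}\}$, and $\psi(G)$ is even.

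The only real obstacle is the degenerate case $E = \emptyset$. There the unique (empty) orientation is acyclic and $\psi(G) = 1$, so the statement must be read for graphs with at least one edge. Under that convention the lemma follows. As a cross-check one could instead use Stanley's $\psi(G) = |\chi(G;-1)|$ together with deletion--contraction $\psi(G) = \psi(G\ssm e) + \psi(G/_e)$, inducting from the single edge, where $\psi = 2$. But the involution argument is cleaner and does not rely on anything beyond the definitions.
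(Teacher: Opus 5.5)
Your argument is correct and is essentially the paper's: there the evenness of $\psi(G)$ comes from \Cref{lem:CompletementIsInvolution}, i.e.\ the fixed-point-free involution $X \mapsto E\ssm X$ (reversing all arcs) on the acyclic orientations. Your caveat about $E=\emptyset$ is also right, and the paper's statement overlooks it: under the paper's own convention $\Psi(D;t)=1$ for edgeless digraphs one gets $\psi(G)=1$, so the lemma, and the fixed-point-freeness claimed in \Cref{lem:CompletementIsInvolution}, require $G$ to have at least one edge.
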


\begin{lemma}[{\cite[Lemma~3.1]{SavageSquireWest93-GrayCodesAcyclicOrientations}}]\label{lem:DivisibilityBy4Criterion}
If $G$ has an even number of edges and $\psi(G)$ is not divisible by~$4$, then $G$ is not {\good}.
\end{lemma}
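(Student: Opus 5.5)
We prove \Cref{lem:DivisibilityBy4Criterion} by contraposition, using the parity criterion. Let $m \eqdef |E|$ be even and let $\psi(G) = |\AO|$ be the number of acyclic orientations. Suppose $G$ is {\good}; we show $4 \mid \psi(G)$.

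\textbf{Step 1: the two parts have equal size.} Fix any base orientation $D$ and write
$$a_{\mathrm{even}} \eqdef \sum_{k \text{ even}} |\AODk|, \qquad a_{\mathrm{odd}} \eqdef \sum_{k \text{ odd}} |\AODk|.$$
By \Cref{lem:ParityArgument}, if $G$ is {\good} then $a_{\mathrm{even}} = a_{\mathrm{odd}} = \psi(G)/2$. So it suffices to find one base orientation $D$ for which $a_{\mathrm{even}}$ is even.

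\textbf{Step 2: pairing by total reversal.} Reversing all arcs is an involution $A \mapsto \overline{A}$ on $\AO$: a digraph is acyclic if and only if its reverse is. If $A = D^\omega$, then $\overline{A} = D^{E\ssm\omega}$, so
$$\rank_D \overline{A} = m - \rank_D A.$$
Since $m$ is even, $\rank_D A$ and $\rank_D \overline{A}$ have the same parity. Thus the involution preserves the set of acyclic orientations of even rank.

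\textbf{Step 3: the involution has no fixed points.} A fixed point would need $\overline{A} = A$, which is impossible as soon as $E \neq \emptyset$. If $E = \emptyset$, then $\psi(G) = 1$ and the statement is degenerate, so we may assume $E \neq \emptyset$. Hence the even-rank acyclic orientations split into pairs $\{A, \overline{A}\}$, and $a_{\mathrm{even}}$ is even.

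\textbf{Conclusion.} Since $\psi(G) = 2a_{\mathrm{even}}$ and $a_{\mathrm{even}}$ is even, $4 \mid \psi(G)$. This contradicts the hypothesis, so $G$ is not {\good}.

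Note that \Cref{lem:ParityNbAO} follows from the same reversal involution without any parity hypothesis on $m$. No step here is a real obstacle. The only point to get right is Step 2: the pairing respects rank parity precisely because $m$ is even, and this is where that hypothesis is used.
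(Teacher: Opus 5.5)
Your proof is correct and is essentially the paper's own argument (given later as \Cref{thm:DivisibilityBy4}): \Cref{lem:ParityArgument} forces the even-rank and odd-rank acyclic orientations to be equinumerous, and the fixed-point-free reversal involution $X\mapsto E\ssm X$ of \Cref{lem:CompletementIsInvolution} preserves rank parity because $|E|$ is even, so the even-rank part has even size and $4$ divides $\psi(G)$. Your separate treatment of $E=\emptyset$ is, if anything, more careful than the paper, whose \Cref{lem:CompletementIsInvolution} asserts fixed-point-freeness without excluding that case.
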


\begin{figure}
    \centering

    \begin{subfigure}{0.65\textwidth}
        \centering
        \includegraphics[width=\textwidth]{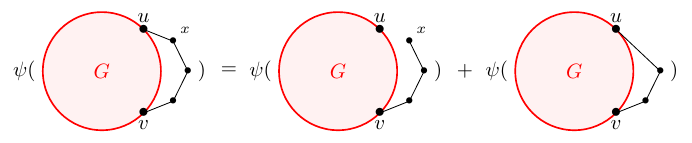}
        % \caption{}
        \label{fig:NbAOgluingPath}
    \end{subfigure}
    \hfill
    \begin{subfigure}{0.25\textwidth}
        \centering
        \includegraphics[width=\textwidth]{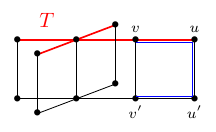}
        % \caption{}
        \label{fig:PrismOverTree}
    \end{subfigure}

    \caption{(Left) Decomposition of the number of acyclic orientations $\psi\bigl(\oplus^{\ell}_{u, v} G\bigr)$.
    (Right) Prism over a tree $T\times K_2$. In \textcolor{red}{red} the tree $T$, in \textcolor{blue}{blue} the path of length $3$ that is glued between the nodes $v$ and $v'$, through the leaf $u$ of $T$.}
    \label{fig:NbAOGluingPathAndPrismOverTree}
\end{figure}

\begin{lemma}\label{lem:NbAOgluePathOnEdge}
Let $G = (\V, E)$ be a graph with an edge $uv\in E$ and $\ell\geq 2$.
Then:
$$\psi\bigl(\oplus_{u, v}^{\ell} G\bigr) = \bigl(2^{\ell} - 1\bigr) \cdot\psi(G) \,.$$

In particular, $\psi(G)$ is divisible by $4$ if and only if $\psi\bigl(\oplus_{u, v}^{\ell} G\bigr)$ is divisible by $4$.
\end{lemma}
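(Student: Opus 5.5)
The plan is a direct count. Here $\oplus_{u,v}^{\ell} G$ denotes the graph obtained from $G$ by gluing a new path of length $\ell$ between $u$ and $v$. This path has $\ell-1$ new internal nodes and $\ell$ new edges, all disjoint from $G$ except at its endpoints $u$ and $v$. An orientation of $\oplus_{u,v}^{\ell} G$ is a pair $(A,P)$, where $A$ is an orientation of $G$ and $P$ is one of the $2^{\ell}$ orientations of the new path. We fix $A$ and count the $P$ for which $(A,P)$ is acyclic.

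First, I will locate the possible directed cycles. The internal nodes of the new path have degree $2$, so a directed cycle of $(A,P)$ either avoids every internal node, and is then a directed cycle of $A$, or passes through one internal node and hence through the whole path. In the second case the path is coherently oriented, say from $x$ to $y$ with $\{x,y\}=\{u,v\}$, and the cycle is completed by a directed path from $y$ to $x$ in $A$. Consequently, if $A$ is cyclic then every $(A,P)$ is cyclic. If $A$ is acyclic, then $(A,P)$ is cyclic exactly when $P$ is coherent from $x$ to $y$ and $A$ contains a directed path from $y$ to $x$.

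Next, I use the hypothesis $uv\in E$, which is the key point. Suppose $A$ is acyclic and, without loss of generality, orients the edge as $u\to v$. Then $A$ contains no directed path from $v$ to $u$, since such a path together with $u\to v$ would form a directed cycle in $A$. So the coherent orientation of the new path from $u$ to $v$ creates no cycle. On the other hand, the single arc $u\to v$ is itself a directed path from $u$ to $v$ in $A$. So the coherent orientation of the new path from $v$ to $u$ always creates a cycle. The remaining $2^{\ell}-2$ orientations of the path are not coherent and cannot close a cycle. Hence exactly $2^{\ell}-1$ of the $2^{\ell}$ choices of $P$ are valid for each acyclic $A$, and summing over $A$ gives
\[
\psi\bigl(\oplus_{u,v}^{\ell} G\bigr) = \bigl(2^{\ell}-1\bigr)\cdot\psi(G).
\]
The condition $\ell\geq 2$ only guarantees that the new path has an internal node, so the glued graph stays simple.

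Finally, for the divisibility statement: $2^{\ell}-1$ is odd and hence invertible modulo $4$. Therefore $4 \mid \psi(G)$ if and only if $4 \mid (2^{\ell}-1)\psi(G)$.

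The only delicate step is the cycle classification in the first step, namely justifying that a cycle meeting an internal node must traverse the entire path. This follows from the degree-$2$ observation, so I expect no real obstacle.
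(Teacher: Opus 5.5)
Your proof is correct, but it takes a different route from the paper's. The paper inducts on $\ell$ using Stanley's deletion--contraction on the edge $ux$, where $x$ is the neighbor of $u$ on the new path:
\begin{itemize}
\item deleting $ux$ leaves $G$ with a pendant path of length $\ell-1$, which contributes $2^{\ell-1}\psi(G)$ by multiplicativity over $1$-sums;
\item contracting $ux$ gives $\oplus_{u,v}^{\ell-1}G$;
\item at $\ell=2$ the contracted path becomes parallel to the existing edge $uv$ and disappears, so the recursion stops.
\end{itemize}
Together these give $\psi(\oplus_{u,v}^{\ell}G)=\bigl(\sum_{k=1}^{\ell-1}2^k\bigr)\psi(G)+\psi(G)$.

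You instead compute the fibers of the restriction map $\mathcal{AO}(\oplus_{u,v}^{\ell}G)\to\mathcal{AO}(G)$ directly. Cyclic orientations of $G$ have empty fiber, and each acyclic one has exactly $2^{\ell}-1$ extensions. The hypothesis $uv\in E$ is used to forbid exactly one of the two coherent orientations of the new path and to allow the other.

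Your argument is more elementary, since it needs neither Stanley's recursion nor the $1$-sum product formula. It is also more informative. It is exactly the edge-bipartition viewpoint of \Cref{lem:BipartitionEdgesYieldsCartesianProductAO}, and it extends verbatim to non-adjacent $u,v$ as $\psi(\oplus_{u,v}^{\ell}G)=2^{\ell}\psi(G)-\#\{A\in\mathcal{AO}(G) ~;~ A(u,v)\neq 0\}$. It can also be refined by rank to track acyclic polynomials. The paper's route, in turn, fits the deletion--contraction (modulo $4$) machinery on which the rest of that subsection is built.
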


\begin{proof}
Let $x$ be the node neighboring $u$ on the induced path of length $\ell$ of $\oplus_{u, v}^{\ell} G$.
We perform a deletion-contraction on the edge $ux$.
By Stanley's formula, we get the following (where $P_{\ell-1}$ denotes a path of length $\ell -1$), see \Cref{fig:NbAOGluingPathAndPrismOverTree} (left):
$$\psi\bigl(\oplus_{u, v}^{\ell} G\bigr) = \psi(G \oplus_1 P_{\ell -1}) + \psi\bigl(\oplus_{u, v}^{\ell - 1} G)$$

Recall that $\psi(H\oplus_1 H') = \psi(H)\cdot \psi(H')$.
We get $\psi(G \oplus_1 P_{\ell -1}) = \psi(G) \cdot 2^{\ell - 1}$.
We proceed by induction, until $\ell = 2$, where the contraction $\oplus_{u, v}^{\ell - 1} G$ is just $G$ itself since $uv\in E$.
Hence, we get:
$$\psi\bigl(\oplus_{u, v}^{\ell} G\bigr) = \left(\sum_{k = 1}^{\ell-1} 2^k\right) \cdot \psi(G)  + \psi(G) = \bigl(2^{\ell} - 1\bigr)\cdot \psi(G)$$

Note that $\bigl(2^{\ell} - 1\bigr)$ is odd for $\ell\geq 2$, so it is co-prime with $4$.
Thus, by Euclid's lemma, $4$ divides $\psi\bigl(\oplus_{u, v}^{\ell} G\bigr)$ if and only if $4$ divides $\psi(G)$.
\end{proof}

% \begin{lemma}\label{lem:NbAOLeafSuppressionInTree}
% Let $T$ be a tree with a leaf $u$, and $K_2$ the graph with two nodes and one edge.
% Then
% $$\psi(T\times K_2) = 7 \cdot \psi\bigl((T\ssm u)\times K_2\bigr) \,.$$
% \end{lemma}

% \begin{proof}

% \end{proof}
Recall that $K_2$ is the graph with two nodes and one edge. 
\begin{theorem}\label{thm:PrismOverTreeNotGood}
Let $T$ be a tree. We have $\psi(T\times K_2) \equiv 2 \mod4$.
Moreover, if $T$ has an even number of nodes (\ie an odd number of edges), then $T\times K_2$ is not {\good}.
\end{theorem}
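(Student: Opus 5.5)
Induction on the number of nodes of $T$, with \Cref{lem:NbAOgluePathOnEdge} doing the work.

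\emph{Base case.} If $T$ is a single node, then $T\times K_2 = K_2$ and $\psi(K_2) = 2 \equiv 2 \mod 4$. (Alternatively, starting from $T=K_2$: then $T\times K_2 = C_4$, and $\psi(C_4) = 2^4-2 = 14 \equiv 2 \mod 4$.)

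\emph{Inductive step.} Suppose $T$ has at least two nodes, and pick a leaf $u$ of $T$ with unique neighbor $w$. Let $T' \eqdef T\ssm u$. In $T\times K_2$, the two copies $(u,0),(u,1)$ of $u$ are joined to each other and to $(w,0)$ and $(w,1)$ respectively. Apart from the edge $(u,0)(u,1)$, these nodes have no other neighbors. Hence $T\times K_2$ is obtained from $T'\times K_2$ by gluing a path of length $3$,
$$(w,0) - (u,0) - (u,1) - (w,1),$$
between the endpoints of the edge $(w,0)(w,1)$ of $T'\times K_2$. This is exactly the situation of \Cref{fig:NbAOGluingPathAndPrismOverTree} (right), i.e.\ $T\times K_2 = \oplus^{3}_{(w,0),(w,1)}(T'\times K_2)$.

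By \Cref{lem:NbAOgluePathOnEdge} with $\ell = 3$,
$$\psi(T\times K_2) = 7\cdot \psi(T'\times K_2).$$
By the induction hypothesis $\psi(T'\times K_2) \equiv 2 \mod 4$, and $7 \equiv 3 \mod 4$, so $\psi(T\times K_2) \equiv 6 \equiv 2 \mod 4$.

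\emph{Second claim.} If $T$ has $n$ nodes, it has $n-1$ edges, so $T\times K_2$ has $2(n-1)+n = 3n-2$ edges. This number is even exactly when $n$ is even. In that case $\psi(T\times K_2)$ is not divisible by $4$ by the first claim, and \Cref{lem:DivisibilityBy4Criterion} shows that $T\times K_2$ is not {\good}.

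\emph{Main obstacle.} The only delicate point is checking that the leaf removal really is a path gluing on an existing edge: we need that $(w,0)(w,1)$ is an edge of $T'\times K_2$, and that the inner path nodes $(u,0),(u,1)$ have degree exactly $2$. Both follow immediately from the definition of the Cartesian product, since $u$ has the single neighbor $w$ in $T$.
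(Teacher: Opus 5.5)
Your proof is correct and follows the paper's argument: removing a leaf of $T$ corresponds to gluing a path of length $3$ on the rung $(w,0)(w,1)$, \Cref{lem:NbAOgluePathOnEdge} multiplies $\psi$ by $7$, the base case is $\psi(K_2)=2$, and the second claim follows from the edge count $3n-2$ together with \Cref{lem:DivisibilityBy4Criterion}. The only cosmetic difference is that you track the residue directly ($7\cdot 2\equiv 2 \bmod 4$). The paper instead propagates non-divisibility by $4$ and then uses the evenness of $\psi$ (\Cref{lem:ParityNbAO}) to conclude $\psi\equiv 2 \bmod 4$.
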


\begin{proof}
Let $n$ be the number of nodes of $T$ (so that $n-1$ is its number of edges).
Consider a leaf $u$ of $T$, and $v$ its neighboring node in $T$.
We denote $T'$ and $T''$ the two copies of $T$ in $T\times K_2$.
Hence, there is an edge $v'v''$ in $T\times K_2$, and the graph $T\times K_2$ is isomorphic to $\oplus_{v'v''}^{3} (T\ssm \{u\}) \times K_2$, see \Cref{fig:NbAOGluingPathAndPrismOverTree} (right).
By \Cref{lem:NbAOgluePathOnEdge}, we get that if $\psi\bigl((T\ssm\{u\})\times K_2\bigr)$ is not divisible by $4$, then neither is $\psi(T\times K_2)$.
By iteratively stripping $T$ of its leaves, we obtain that if $4$ does not divide $\psi(\{x\}\times K_2)$ where $x$ denotes any arbitrarily chosen node of $T$, then $4$ does not divide $\psi(T \times K_2)$.
Since $\psi(\{x\}\times K_2) = \psi(K_2) = 2$, and the number of acyclic orientations of a graph is always even, we get the first claim.

Moreover, the number of edges of $T\times K_2$ is $3n-2$ which is even if and only if $n$ is even.
We deduce by \Cref{lem:DivisibilityBy4Criterion} that if $n$ is even, then $T\times K_2$ is not {\good}.
\end{proof}

%We now go further in the study of Cartesian products of graphs.

\begin{proposition}\label{prop:NbAOmod4}
Let $e$ be an edge of a graph $G$, and $Q = K_{m_1} \times \dots \times K_{m_s}$ be a Cartesian product of cliques where $m_i = 2^{k_i}$ for some $k_i \geq 1$ for all $i\in [s]$.
Then, we have:
$$\psi(G \times Q) \equiv \psi\bigl((G/_e) \times Q\bigr) + \psi\bigl((G\ssm e) \times Q\bigr) \mod 4\,.$$
% Let $G$ be a graph and $K_{2^n}$ be a complete graph on $2^n$ nodes, then:\germain{Sadly, this does not work: the very end of the proof does not work (the base case of the induction) for $n \ne 1$.}\germainadd{Yet, one should state clearly how it works for $\dots\times K_2$, it is very interesting: it depends on the parity of the number of trees (not forests) that a deletion-contraction algorithm produces on the input $G$}
% $$\psi(G\times K_{2^n}) \equiv \psi(G) \quad \mod 4 \,.$$
\end{proposition}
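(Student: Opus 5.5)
The plan is to apply Stanley's deletion--contraction recursion $\psi(H) = \psi(H\ssm f) + \psi(H/_f)$, as in the proof of \Cref{lem:NbAOgluePathOnEdge}, to every copy of $e$ inside $G\times Q$, and then kill the mixed terms modulo $4$ by a symmetry argument on $Q$. Write $e = uv$ and let $V(Q)$ be the node set of $Q$. For each $q\in V(Q)$, the edge $f_q \eqdef \bigl((u,q),(v,q)\bigr)$ lies in $G\times Q$, and these $|V(Q)|$ edges are pairwise disjoint.

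\textbf{Step 1: expansion over subsets.} Apply the recursion successively to all the $f_q$. For each $S\subseteq V(Q)$, let $H_S$ be the graph obtained from $G\times Q$ by contracting the $f_q$ with $q\in S$ and deleting those with $q\notin S$. We obtain
$$\psi(G\times Q) = \sum_{S\subseteq V(Q)} \psi(H_S) \,.$$
To justify this, check that after contracting some of the $f_q$, the remaining $f_{q'}$ are still genuine edges. They cannot become loops, since $(u,q')$ and $(v,q')$ are only merged by contracting $f_{q'}$ itself. Any parallel edges that arise, for example $(u,q)(u,q')$ and $(v,q)(v,q')$ once both $f_q$ and $f_{q'}$ are contracted, are trimmed, which does not affect acyclicity. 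The resulting graph is also independent of the order of the operations. The two extreme terms are $H_\emptyset = (G\ssm e)\times Q$ and $H_{V(Q)} = (G/_e)\times Q$. Hence it suffices to show that $\sum_{\emptyset\ne S\subsetneq V(Q)} \psi(H_S)\equiv 0 \mod 4$.

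\textbf{Step 2: a free $2$-group action.} Identify the nodes of $K_{2^{k_i}}$ with $\mathbb{F}_2^{k_i}$. Every permutation of the nodes of a clique is an automorphism, so translations give an action of the $2$-group $\Gamma \eqdef \mathbb{F}_2^{k_1}\times\dots\times\mathbb{F}_2^{k_s}$ on $Q$ by automorphisms, and this action is free and transitive on $V(Q)$. Each $g\in\Gamma$ induces the automorphism $\mathrm{id}\times g$ of $G\times Q$, which maps $f_q$ to $f_{gq}$. Consequently $H_S\cong H_{gS}$, and $\psi$ is constant on each $\Gamma$-orbit of subsets $S$.

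\textbf{Step 3: counting orbits modulo $4$.} Orbit sizes are powers of $2$.
\begin{itemize}
\item Since $\Gamma$ is transitive on $V(Q)$, an orbit of size $1$ consists of a $\Gamma$-invariant $S$, which must be $\emptyset$ or $V(Q)$. These are excluded from the sum.
\item An orbit of size $2$ contributes $2\psi(H_S)$. Each $H_S$ still has edges, because $k_i\ge 1$, so $\psi(H_S)$ is even by \Cref{lem:ParityNbAO}, and the contribution is divisible by $4$.
\item Orbits of size at least $4$ contribute a multiple of $4$.
\end{itemize}
This gives the claim.

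I expect the main obstacle to be Step 1. One must carefully verify that iterated deletion--contraction on the family $\{f_q\}$ is legitimate, namely that the edges stay non-loops, that the simplification of multi-edges is harmless, and that the terms are indexed exactly by subsets $S$. The symmetry step is where the hypothesis $m_i = 2^{k_i}$ is used: it is what provides a free transitive $2$-group action on $V(Q)$.
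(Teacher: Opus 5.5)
Your proof is correct, and it takes a somewhat different route from the paper's. The paper first treats a single factor $K_m$ with $m=2^k$. It uses the full symmetric group on $V(K_m)$, so that $\psi(H(X))$ depends only on $|X|$, and groups the $2^m$ terms as $\sum_j \binom{m}{j}\psi(H(j))$. It then kills the middle terms with Lucas's theorem ($\binom{2^k}{j}$ is even for $0<j<2^k$) together with the evenness of $\psi$. To reach products of several cliques, it argues by induction on the number of factors: it observes that $H\mapsto\psi(H\times K_{2^k}) \bmod 4$ obeys the same deletion--contraction rule and is even-valued, and repeats the argument one factor at a time.

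You instead handle all of $Q$ at once with the translation group $\mathbb{F}_2^{k_1}\times\dots\times\mathbb{F}_2^{k_s}$. This is a $2$-group acting simply transitively on $V(Q)$ by automorphisms, and orbit--stabilizer replaces Lucas's theorem. In fact your argument with $s=1$ reproves the parity of $\binom{2^k}{j}$, since all orbits on $j$-subsets have even size.

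What your route buys:
\begin{itemize}
\item It avoids the induction over factors, which the paper only sketches.
\item It makes transparent what is actually needed: any graph $Q$ with at least one edge admitting a vertex-transitive $2$-group of automorphisms works, e.g.\ Cayley graphs of $2$-groups such as $C_{2^k}$, not only products of cliques of size $2^k$.
\item You correctly check that each $H_S$ still has an edge before invoking evenness of $\psi$. The paper states that lemma without this hypothesis, even though it fails for edgeless graphs.
\end{itemize}
The paper's route, in exchange, gives a fully explicit grouping of the terms by $|X|$.
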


\begin{proof}
Let $m \eqdef 2^k$ for some $k\geq 1$.
We start by proving the case $s = 1$, that is $\psi(G \times K_m) \equiv \psi\bigl((G/_e) \times K_m\bigr) + \psi\bigl((G\ssm e) \times K_m\bigr) \mod 4$.
For $u\in\V(K_m)$, we denote $G\times u$ the copy of $G$ associated to the node $u$.
Pick an edge $ij$ of $G$, and, for $u\in \V(K_m)$, let $ij\times u$ be the copy the edge $ij$ in $G\times u$.
We are going to apply Stanley's deletion-contraction formula to all the copies of $ij$.
Since $ij\times u$ and $ij\times v$ do not share any node as long as $u\ne v$, contracting or deleting $ij\times u$ leaves $ij\times v$ unaltered.
Consequently, we can delete or contract all the copies of $ij$ independently, leading to an expression of $\psi(G\times K_m)$ as a sum of $|2^{\V(K_m)}| = 2^{m}$ terms.
For $X \subseteq \V(K_m)$, we denote $H(X)$ the graph obtained from $G\times K_m$ by contracting all $ij\times u$ for $u\in X$ and deleting all $ij\times v$ for $v\notin X$.
Using Stanley's formula, we get:
\begin{equation}\label{eqn:IntermediateAOproduct}
\psi(G\times K_m) = \sum_{X\subseteq\V(K_m)} \psi(H(X))
\end{equation}

So far, we have not yet used the fact that $K_m$ is a complete graphs.
The group of automorphisms of the complete graph $K_m$ is the symmetric group on $\V(K_m) = [m]$.
In particular, for any $X, Y\subseteq\V(K_m)$ such that $|X| = |Y|$, there exists an automorphism $\sigma$ of $K_m$ that $\sigma(X) = Y$.
This extends to an automorphism on $G\times K_m$ by defining $\overline{\sigma}: i\times u \mapsto i\times \sigma(u)$.
Moreover, we have $\overline{\sigma}(H(X)) = H(Y)$, so the graphs $H(X)$ and $H(Y)$ are isomorphic.
In particular, $\psi(H(X)) = \psi(H(Y))$ as long as $|X| = |Y|$.
Consequently, we denote $H(k)$ the graph $H(X)$ for any $X\subseteq\V(K_m)$ with $|X| = k$.
Note that $H(0) = (G/_{ij})\times K_m$ since all the copies of $ij$ are contracted in $H(0)$, while $H(m) = (G\ssm ij)\times K_m$ since all the copies of $ij$ are deleted in $H(m)$.
Thus, grouping subsets by sizes, \Cref{eqn:IntermediateAOproduct} simplifies to:
$$\psi(G\times K_m) = \sum_{k = 0}^m \binom{m}{k}\cdot \psi(H(k))$$

It remains to use the fact that $m = 2^k$ and that $\psi(G)$ is an even number for any graph $G$, by \Cref{lem:ParityNbAO}.
By Lucas's theorem \cite{Lucas1878-TheorieDesFonctionsNumeriquementSimplementPeriodiques}, for any $k\notin\{0, 2^k\}$, the binomial coefficient $\binom{2^k}{k}$ is even.
So, for any $k\notin\{0, 2^k\}$, the term $\binom{m}{k}\cdot \psi(H(k))$ is divisible by $4$: all the terms of the above sum except for $k = 0$ and $k = 2^k$ are equal to $0$ modulo $4$.
We obtain:
$$\psi(G\times K_{2^k}) \equiv \psi\bigl((G\ssm ij)\times K_{2^k}\bigr) + \psi\bigl((G/_{ij})\times K_{2^k}\bigr) \mod 4$$

This proves that the function $f : G \mapsto \bigl(\psi(G\times K_{2^k})\mod4\bigr)$ defined from the set of graphs towards $\Z/4\Z$ satisfies the same deletion-contraction formula as the function $g : G \mapsto \bigl(\psi(G)\mod4\bigr)$, \ie both $f(G) = f(G/_e) + f(G\ssm e)$ and $g(G) = g(G/_e) + g(G\ssm e)$ for any edge $e$ of $G$.
Now fix $Q = K_{m_1} \times \dots \times K_{m_s}$ to be a Cartesian product of cliques where $m_i = 2^{k_i}$ for some $k_i \geq 1$ for all $i\in [s]$.
Since we have only used the deletion-contraction formula for $g$ (not the initialization for this recursive formula), one can apply the exact same reasoning to $f$ inductively in order to prove that $\psi(G \times Q) \equiv \psi\bigl((G/_{e}) \times Q\bigr) + \psi\bigl((G\ssm e) \times Q\bigr) \mod 4$.
\end{proof}

\begin{theorem}\label{thm:CubeGraphIsNotGood}
For any $n\geq 1$, the $n\textsuperscript{th}$ cube graph $Q_n \eqdef K_2\times \dots\times K_2$ satisfies $\psi(Q_n) \equiv 2\mod 4$.
Consequently, $Q_n$ is not {\good}.
\end{theorem}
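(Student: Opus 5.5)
We argue by induction on $n$, using \Cref{prop:NbAOmod4} and \Cref{thm:PrismOverTreeNotGood}. The key observation is that $Q_n = Q_{n-1}\times K_2$, and more generally $G\times Q$ with $Q = Q_{n-1}$ is a Cartesian product of copies of $K_2 = K_{2^1}$. Hence \Cref{prop:NbAOmod4} applies with $Q = Q_{n-1}$ (for $n\geq 2$): the map $f: G\mapsto \psi(G\times Q_{n-1}) \bmod 4$ satisfies the deletion-contraction recursion $f(G) = f(G/_e) + f(G\ssm e)$ for every edge $e$ of $G$.

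The plan is to prove the more general claim that $\psi(T\times Q_{m}) \equiv 2 \bmod 4$ for every tree $T$ and every $m\geq 1$, by induction on $m$. The base case $m=1$ is exactly the first part of \Cref{thm:PrismOverTreeNotGood}. For the inductive step, we take $G = T\times K_2$ and $Q = Q_{m-1}$; but a cleaner route is to view $T\times Q_m = (T\times K_2)\times Q_{m-1}$. Here we apply \Cref{prop:NbAOmod4} with base graph $T\times K_2$ and repeatedly delete/contract, which is awkward since $T\times K_2$ has cycles. Instead we induct on the tree: take a leaf $u$ of $T$ with edge $e = uv$. Then $T\ssm e$ is the disjoint union of $T\ssm u$ and an isolated node, and $T/_e \cong T\ssm u$. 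We use multiplicativity over connected components: $(T\ssm e)\times Q_m$ is the disjoint union of $(T\ssm u)\times Q_m$ and $Q_m$, so $\psi$ is the product $\psi((T\ssm u)\times Q_m)\cdot\psi(Q_m)$. This product is a product of two even numbers by \Cref{lem:ParityNbAO}, hence $\equiv 0 \bmod 4$. Therefore, by \Cref{prop:NbAOmod4} with $Q = Q_m$ (a product of $K_2$'s),
$$\psi(T\times Q_m) \equiv \psi\bigl((T\ssm u)\times Q_m\bigr) \mod 4.$$
Stripping leaves down to a single node $x$ gives $\psi(T\times Q_m)\equiv \psi(\{x\}\times Q_m) = \psi(Q_m) \bmod 4$. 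So everything reduces to the claim for $T$ a single node, namely $\psi(Q_m)\equiv 2\bmod 4$.

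For that, we induct on $m$ in the same way, but with $T=K_2$: $Q_m = K_2\times Q_{m-1}$, and the leaf-stripping argument above (valid for $m-1\geq 1$) gives $\psi(K_2\times Q_{m-1})\equiv\psi(Q_{m-1}) \bmod 4$. The base case is $\psi(Q_1)=\psi(K_2)=2$. Thus $\psi(Q_n)\equiv 2\bmod 4$ for all $n\geq1$.

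Finally, $Q_n$ has $n2^{n-1}$ edges, which is even for $n\geq 2$, so \Cref{lem:DivisibilityBy4Criterion} shows that $Q_n$ is not {\good} for $n\geq 2$. The only delicate point is $n=1$: $Q_1=K_2$ is a tree, and $\AO[K_2]$ is a single edge between two nodes, so the statement there must be read via the convention on degenerate cycles. The main care needed is checking that \Cref{prop:NbAOmod4} is applied with the correct $Q$ and that the disjoint-union multiplicativity of $\psi$ holds for the deleted graph, which it does because orientations of different components are independent.
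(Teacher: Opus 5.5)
Your proposal is correct. Once the tree detour is set aside (it is harmless and non-circular, but not needed), it is the paper's proof: write $Q_m = K_2\times Q_{m-1}$, apply \Cref{prop:NbAOmod4} to the edge of $K_2$, discard the deletion term $\psi(Q_{m-1}\sqcup Q_{m-1})$ modulo $4$ using multiplicativity and \Cref{lem:ParityNbAO}, and finish with \Cref{lem:DivisibilityBy4Criterion}. Your caveat about $n=1$ is justified: $Q_1=K_2$ has an odd number of edges, so the non-Hamiltonicity conclusion only holds for $n\geq 2$, as in \Cref{cor:CubePrismsEtcAreNotAOH}, whereas the paper's proof asserts that the edge count is even for every $n$.
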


\begin{proof}
The argument boils down to applying \Cref{lem:DivisibilityBy4Criterion} using the first part of the theorem, since $Q_n$ has an even number of edges.
It remains to prove the first claim.

Recall that $K_2$ is just an edge.
For $n = 1$, we have $\psi(Q_1) = \psi(K_2) = 2$.

For $n \geq 2$, note that $Q_n = K_2\times K_2\times \dots\times K_2 = K_2\times Q_{n-1}$.
We denote $e$ the edge of this distinguished ``first'' copy of $K_2$.
By \Cref{prop:NbAOmod4}, we have:
$$\psi(Q_n)
\equiv \psi\bigl((K_2/_e)\times Q_{n-1}\bigr) + \psi\bigl((K_2\ssm e)\times Q_{n-1}\bigr)
\equiv \psi(Q_{n-1}) + \psi(Q_{n-1}\sqcup Q_{n-1})
\mod4\,.$$
Last, remark that $\psi(H\sqcup H') = \psi(H)\cdot\psi(H')$ when $H$ and $H'$ are node-disjoint graphs.
As $\psi(H)$ is divisible by $2$ for all graph $H$, by \Cref{lem:ParityNbAO}, we get that $4$ divides $\psi(H\sqcup H')$ when $H$ and $H'$ are node-disjoint graphs.
Thus, we get:
$\psi(Q_n)
\equiv \psi(Q_{n-1}) \mod4$.
By induction, we get $\psi(Q_n) \equiv 2\mod 4$ for all $n\geq 1$.
\end{proof}

For the sake of completeness, we also prove the following:

\begin{proposition}\label{prop:ProductOfCliquesHave0Mod4NbAO}
Let $Q = K_p \times K_{m_1} \times \dots \times K_{m_s}$ be a Cartesian product of cliques where $m_i = 2^{k_i}$ for some $k_i \geq 1$ for all $i\in [s]$ with $s \geq 1$.
If $p \geq 3$, then $\psi(Q)\equiv 0\mod4$.
\end{proposition}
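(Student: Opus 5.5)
The plan is to treat $K_p$ as the graph $G$ in \Cref{prop:NbAOmod4} and run a deletion--contraction on it modulo $4$. Write $Q' \eqdef K_{m_1}\times\dots\times K_{m_s}$, so that $Q = K_p\times Q'$. Note that \Cref{prop:NbAOmod4} holds for any graph $G$ and any edge $e$. Hence $f(G)\eqdef \psi(G\times Q') \bmod 4$ satisfies $f(G)\equiv f(G/_e)+f(G\ssm e)$ for every edge. Iterating this, $f(G)$ is congruent to a sum of terms $f(H)$ where each $H$ is edgeless. Concretely, we delete or contract every edge of $K_p$ in some order, keeping the graphs simple after each contraction, as in the definition of contraction.

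The key observation is what an edgeless graph contributes. If $H$ has $r$ isolated nodes, then $H\times Q'$ is the disjoint union of $r$ copies of $Q'$, so $f(H)\equiv \psi(Q')^r \bmod 4$. By \Cref{lem:ParityNbAO}, $\psi(Q')$ is even, so $f(H)\equiv 0 \bmod 4$ whenever $r\ge 2$. Here $s\geq 1$ guarantees $Q'$ has an edge, but evenness holds regardless. Therefore, modulo $4$, only the leaves of the deletion--contraction tree ending in a single node survive, and each contributes $\psi(Q')$:
\[
\psi(Q)\equiv N_1(K_p)\cdot \psi(Q') \mod 4 ,
\]
where $N_1(G)$ is the number of single-node leaves. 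Since $\psi(Q')$ is even, it suffices to show $N_1(K_p)$ is even.

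To control $N_1$, I note it satisfies $N_1(G)=N_1(G/_e)+N_1(G\ssm e)$, with $N_1$ of an edgeless graph equal to $1$ if it has one node and $0$ otherwise. This is exactly the recursion satisfied by the linear coefficient of the chromatic polynomial, up to sign: $N_1(G)=|[x]\chi(G;x)|$, i.e. $N_1(G)=T_G(1,0)$, the number of acyclic orientations with a unique fixed source. The quantity is therefore independent of the order of operations. For $K_p$ we have $\chi(K_p;x)=x(x-1)\cdots(x-p+1)$, so the linear coefficient is $\pm(p-1)!$. This is even exactly when $p\ge 3$, which gives $\psi(Q)\equiv 0\bmod 4$.

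I expect the main obstacle to be justifying the identification of $N_1$ with the chromatic linear coefficient, or at least its well-definedness. The cleanest route is to fix one specific order and count directly, avoiding the independence question. Alternatively, one can prove by induction via deletion--contraction that any such recursion with these base values equals $|[x]\chi|$, since $\chi$ obeys $\chi(G)=\chi(G\ssm e)-\chi(G/_e)$ with signs alternating consistently in the number of edges.
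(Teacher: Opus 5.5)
Your argument is correct, but it is organised differently from the paper's. The paper runs \Cref{prop:NbAOmod4} only on the $p-1$ edges at a fixed node $u$ of $K_p$. This gives $\psi(K_p\times Q')\equiv (p-1)\,\psi(K_{p-1}\times Q')+\psi\bigl((K_{p-1}\sqcup\{u\})\times Q'\bigr) \pmod 4$. The paper discards the disconnected term by parity and inducts on $p$; for $p=3$ the factor $2$ times an even number already vanishes modulo $4$.

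You instead push the recursion down to edgeless graphs. This gives the general congruence $\psi(G\times Q')\equiv c_1(G)\,\psi(Q') \pmod 4$ for every graph $G$, with $c_1(G)=\bigl|[x]\chi(G;x)\bigr|$, and you then read off $c_1(K_p)=(p-1)!$. Your route yields a closed formula valid for all $G$ and makes the threshold $p\geq 3$ transparent. The price is the (standard) identification of your leaf count $N_1$ with the linear chromatic coefficient. The paper's choice of edge order computes this count directly via $N_1(K_p)=(p-1)\,N_1(K_{p-1})$. That is exactly the ``fix one specific order'' route you mention, and it needs no chromatic polynomial at all.

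A few side remarks are inaccurate, though harmless here:
\begin{itemize}
\item Evenness of $\psi(Q')$ does not hold ``regardless''. An edgeless graph has exactly one acyclic orientation, so $s\geq 1$ is genuinely needed: for $s=0$ and $p=3$ one gets $\psi(K_3)=6\not\equiv 0 \pmod 4$.
\item The equality $N_1(G)=T_G(1,0)$ holds only for connected $G$. $N_1$ vanishes on disconnected graphs, while $T_G(1,0)$ need not.
\item We have $[x]\chi(G;x)=(-1)^{n-1}c_1(G)$, where $n$ is the number of nodes. So the sign alternates with the number of nodes (contraction removes one), not with the number of edges. With that correction, your inductive proof that $c_1(G)=c_1(G\ssm e)+c_1(G/_e)$ goes through.
\end{itemize}
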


\begin{proof}
Let $u$ be a node of $K_p$, and let $e_1, \dots, e_{p-1}$ be the edges adjacent to $u$ in $K_p$.
Note that $K_p/_{e_i}$ is isomorphic to $K_{p-1}$, while $e_j$ is an edge in $K_p\ssm e_i$ as long as $i\ne j$.
Furthermore, $(K_p \ssm X)/_{e_i}$ is isomorphic to $K_{p-1}$ for any $X \subseteq \{e_1,\dots, e_{p-1}\}\ssm\{e_i\}$.
Hence, we can perform $p-1$ deletion-contractions on the edges $e_1, \dots, e_{p-1}$ using \Cref{prop:NbAOmod4}.
We get, modulo $4$ (where we denote $Q' \eqdef K_{m_1} \times \dots \times K_{m_s}$):
\begin{align*}
\psi(K_p \times Q')
&\equiv \psi\bigl((K_p/_{e_1} \times Q'\bigr) + \psi\bigl((K_p\ssm {e_1} \times Q'\bigr) \\
&\equiv \psi\bigl(K_{p-1} \times Q'\bigr) + \psi\bigl(((K_p\ssm {e_1})/_{e_2}) \times Q'\bigr) + \psi\bigl((K_p\ssm \{e_1, e_2\}) \times Q'\bigr) \\
&\equiv 2\cdot \psi\bigl(K_{p-1} \times Q'\bigr) + \psi\bigl(((K_p\ssm \{e_1, e_2\})/_{e_3}) \times Q'\bigr) + \psi\bigl((K_p\ssm \{e_1, e_2, e_3\}) \times Q'\bigr) \\
&\equiv \ldots \\
&\equiv (p-1) \cdot \psi\bigl(K_{p-1} \times Q'\bigr) + \psi((K_{p-1}\sqcup\{u\})\times Q')
\end{align*}

Recall that $\psi(H)$ is divisible by $2$ for any graph $H$ by \Cref{lem:ParityNbAO}.
Since $K_{p-1}\sqcup\{u\}$ is a disconnected graph, $\psi((K_{p-1}\sqcup\{u\})\times Q')$ is divisible by $4$ (because $\psi(H\sqcup H') = \psi(H)\cdot \psi(H')$ when $H$ and $H'$ are node-disjoint graphs).
Besides, if $p = 3$, then $4$ divides $(p-1) \cdot \psi\bigl(K_{p-1} \times Q'\bigr)$ as a product of even numbers; whereas if $p \geq 4$, then $4$ divides $(p-1) \cdot \psi\bigl(K_{p-1} \times Q'\bigr)$ by induction on $p$.
This gives the claim.
\end{proof}

\begin{corollary}\label{cor:NotGoodForProductWithK2ImpliesNotGoodForProductWithQn}
Let $G$ be a graph and $n\geq 1$, then
$\psi(G\times K_2)\equiv\psi(G\times Q_n) \mod 4$.

Moreover, if $\psi(G\times K_2)$ is not divisible by $4$ and if either $n\geq 2$ or $G$ has an even number of nodes, then $G\times Q_n$ is not {\good}.
\end{corollary}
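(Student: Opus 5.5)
We prove the congruence by induction on $n$; the base case $n = 1$ is trivial since $Q_1 = K_2$. For $n \geq 2$, write $Q_n = Q_{n-1}\times K_2$. The key observation is that the map $f : H \mapsto \psi(H\times K_2) \mod 4$ and the map $h : H \mapsto \psi(H \times Q_n)\mod 4$ both satisfy the deletion-contraction recursion $f(H) = f(H/_e) + f(H\ssm e)$ for every edge $e$. This is exactly \Cref{prop:NbAOmod4}, applied once with $Q = K_2$ and once with $Q = Q_n = K_2\times\dots\times K_2$; all factors are cliques $K_{2^1}$, so the hypothesis holds.

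Using this recursion, we reduce $G$ by deletion-contraction until no edges remain. This expresses $f(G)$ and $h(G)$ as the same sum over edgeless graphs $\overline{K_r}$ (in the multigraph-trimmed sense, which is harmless since loops never occur and parallel edges are identified by the contraction convention). It therefore suffices to compare $f$ and $h$ on edgeless graphs. If $r \geq 2$, then $\overline{K_r}\times K_2$ is a disjoint union of $r$ copies of $K_2$, and $\overline{K_r}\times Q_n$ is a disjoint union of $r$ copies of $Q_n$. By multiplicativity over disjoint unions and \Cref{lem:ParityNbAO}, both $\psi$ values are divisible by $4$. If $r = 1$, we get $\psi(K_2) = 2$ and $\psi(Q_n)\equiv 2 \mod 4$ by \Cref{thm:CubeGraphIsNotGood}. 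Hence $f = h$ on all edgeless graphs, and by the common recursion $f(G) = h(G)$. This base-case comparison, together with checking that the recursion terminates consistently for both functions (the same deletion-contraction tree is used for each), is the main point to verify carefully. (The induction on $n$ is in fact not needed once this argument is in place.)

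For the second claim, assume $4 \nmid \psi(G\times K_2)$. By the first part, $4\nmid \psi(G\times Q_n)$. By \Cref{lem:DivisibilityBy4Criterion}, it remains to show that $G\times Q_n$ has an even number of edges. Let $G$ have $v$ nodes and $m$ edges. Then
$$|E(G\times Q_n)| = m\cdot 2^n + v\cdot n 2^{n-1}.$$
If $n\geq 2$, both terms are even. If $n = 1$, the count is $2m + v$, which is even exactly when $v$ is even, as assumed. Thus $G\times Q_n$ is not {\good}.
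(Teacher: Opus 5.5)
Your proof is correct and is essentially the paper's argument. You use \Cref{prop:NbAOmod4} to show that $G\mapsto\psi(G\times K_2)$ and $G\mapsto\psi(G\times Q_n)$ satisfy the same deletion-contraction recursion modulo $4$, compare the two on edgeless graphs via multiplicativity over disjoint unions, \Cref{lem:ParityNbAO} and \Cref{thm:CubeGraphIsNotGood}, and finish with \Cref{lem:DivisibilityBy4Criterion}; the paper does exactly the same. Your explicit edge count $m\cdot 2^n + v\cdot n\,2^{n-1}$ only spells out the parity claim the paper states without computation, and, as you note yourself, the opening induction on $n$ is superfluous.
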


\begin{proof}
Consider the two functions on graphs with values in $\Z/4\Z$ defined by $f : G \mapsto \bigl(\psi(G\times K_2)\mod 4\bigr)$ and $g : G\mapsto \bigl(\psi(G\times Q_n)\mod 4\bigr)$.
By \Cref{prop:NbAOmod4}, the functions $f$ and $g$ satisfy the same inductive formula:
$f(G) = f(G/_e) + f(G\ssm e)$ and $g(G) = g(G/_e) + g(G\ssm e)$ for any edge $e$ of $G$.
Hence, to prove that $f$ and $g$ are equal, it is enough to show that they are equal for the initialization graphs, \ie for any graph $G$ which is composed of isolated nodes with no edge.

Let $m$ be the number of nodes of such $G$.
If $m\geq 2$, then both $G\times K_2$ and $G\times Q_n$ have several connected components, so $\psi(G\times K_2)$ and $\psi(G\times Q_n)$ are divisible by $4$ because $\psi(H\sqcup H') = \psi(H)\cdot \psi(H')$ when $H$ and $H'$ are node-disjoint graphs, and $\psi(H)$ is divisible by $2$ for any graph $H$ by \Cref{lem:ParityNbAO}.
If $m = 1$, then $\psi(G\times K_2) = \psi(K_2) = 2$, and, by \Cref{thm:CubeGraphIsNotGood}, $\psi(G \times Q_n) = \psi(Q_n) \equiv 2\mod 4$.
Thus $f = g$ by induction, as claimed.

The ``moreover'' part boils down to using \Cref{lem:DivisibilityBy4Criterion} together with the first part of the corollary, where the condition that $n\geq 2$ or $G$ has an even number of nodes ensures that condition ensures that $G\times Q_n$ has an even number of edges.
\end{proof}

\begin{remark}
Let $Q = K_{m_1} \times \dots \times K_{m_s}$ with $m_i = 2^{k_i}$ for some $k_i \geq 1$ for all $i\in [s]$.
The proof of \Cref{cor:NotGoodForProductWithK2ImpliesNotGoodForProductWithQn} suggest that we can explicitly compute $\bigl(\psi(G\times Q)\mod 4\bigr)$ running the deletion-contraction algorithm on $G$ itself, instead of running it on $G\times Q$.
Indeed, the functions with values in $\Z/4\Z$ defined as $f : G \mapsto \bigl(\psi(G\times Q)\mod 4\bigr)$ and $g : G \mapsto \bigl(\psi(G)\mod 4\bigr)$ satisfy the same recursion, \ie $f(G) = f(G/_e) + f(G\ssm e)$ and $g(G) = g(G/_e) + g(G\ssm e)$ for any edge $e$ of $G$.
However, it is false to say that $\psi(G\times Q) \equiv \psi(G) \mod 4$ because $f$ and $g$ do not have the same value for initialization graphs:
If $G$ consists in $m$ isolated nodes and no edge, then $f(G) \equiv f(Q)$ if $m = 1$ and $f(G) = 0$ if $m\geq 2$, but $g(G) = 1$ for all $m\geq 1$.

\smallskip

For the case $Q = K_2$, and more generally for the cube-graph $Q = Q_n$, one can compute recursively $\psi(G\times K_2)\mod4$ as follows.
Fix a graph $G$.
\defn{Running} the deletion-contraction algorithm on $G$ \defn{until we get forests}, means if $G$ is a forest we do nothing, and otherwise we choose an edge $e$ of $G$, then run the deletion-contraction algorithm on $G/_{e}$ and on $G\ssm e$ until we get forests.
Let $t$ be the number of trees produced by running the deletion-contraction algorithm on $G$ until we obtain forests, then $\psi(G\times K_2) \equiv \psi(G\times Q_n) \equiv 2\cdot t\mod4$.
Hence, $\psi(G\times K_2)$ and $\psi(G\times Q_n)$ are divisible by $4$ if and only if $t$ is even.
To prove this claim, note that, by \Cref{prop:NbAOmod4,cor:NotGoodForProductWithK2ImpliesNotGoodForProductWithQn}, we have $\psi(G\times Q_n) \equiv \psi(G\times K_2) \equiv \psi\bigl((G/_e) \times K_2\bigr) + \psi\bigl((G\ssm e) \times K_2\bigr)$, and by \Cref{thm:PrismOverTreeNotGood}, we have $\psi(G\times K_2) \equiv 2\mod 4$ when $G$ is a tree, while $\psi(G\times K_2) \equiv 0\mod 4$ when $G$ is disconnected.

\smallskip

On the other hand, let $k_1 \geq 2$ and $Q = K_{m_1} \times \dots \times K_{m_s}$ with $m_i = 2^{k_i}$ for some $k_i \geq 1$ for all $i\geq 2$.
By the same reasoning as in the proof of \Cref{cor:NotGoodForProductWithK2ImpliesNotGoodForProductWithQn}, $\psi(G\times Q)$ is equivalent modulo $4$ to a certain multiple of $\psi(Q)$.
Since $\psi(Q) \equiv 0\mod 4$ by \Cref{prop:ProductOfCliquesHave0Mod4NbAO}, this implies $\psi(G\times Q) \equiv 0\mod 4$ for all $G$, and yields no conclusion on whether $G\times Q$ is {\good} or not.
\end{remark}

% \begin{proof}
% The number of edges of the Cartesian product $A \times B$ is $|\V(A)|\cdot|E(B)| + |E(A)|\cdot|\V(B)|$.
% Thus, if $n \geq 2$, then the number of edges of $G \times K_{2^n}$ is $2^n\cdot |E(G)| + \binom{2^n}{2}\cdot|\V(G)|$, where $\binom{2^n}{2}$ is even, according to Lucas' theorem (or according to Kummer's theorem, or by direct computation): in particular, this is even.
% On the other hand, the number of edges of $G\times K_2\times\dots \times K_2$ with $m$ copies of $K_2$ is $2^m\cdot |E(G)| + m\cdot2^{m-1}\cdot|\V(G)|$ which is even either if $m \geq 2$ or if $|\V(G)|$ is even.

% Fix $G$ and $n_1, \dots, n_r \geq 1$ with either $r\geq 2$ or $n_i \geq 2$ for some $i\in [r]$ or $|\V(G)|$ even.
% Then $H \eqdef G\times K_{2^{n_1}} \times K_{2^{n_2}} \times \dots \times K_{2^{n_r}}$ has an even number of edges by the previous argument.
% By \Cref{thm:ProductWithProductOfCompleteGraphs}, if $\psi(G)$ is not divisible by $4$, then $\psi(H)$ is also not divisible by $4$.
% Consequently, by \Cref{lem:DivisibilityBy4Criterion}, the graph $H$ is not {\good}.
% \end{proof}

We can now prove \Cref{thmZ} by listing classes of graphs which are not {\good}.
Beyond the usual path on $n$ nodes $P_n$, cycle on $n$ nodes $C_n$, star with $n$ outer nodes $K_{1, n}$, and complete graph on $n$ nodes $K_n$, we need to define the following two classes of graphs.

The \defn{wheel graph} $W_n$ is the cycle $C_n$ together with a \defn{central node} adjacent to all nodes of $C_n$.

For a tree $T = ([s], E)$ and numbers $n_1, \dots, n_s \geq 3$, a \defn{$(T,\, n_1, \dots, n_s)$-tree of cycles} is any graph isomorphic to a $2$-sum $H \oplus_2 C_{n_s}$ where $H$ is a $(T\ssm s,\, n_1, \dots, n_{s-1})$-tree of cycles (of course for $s = 1$, a $(T,\, n_1)$-tree of cycle is just a cycle graph $C_{n_1}$).

\begin{corollary}\label{cor:CubePrismsEtcAreNotAOH}
The following graphs are not {\good}:
\begin{compactenum}
\item Ladder graphs $L_n \eqdef P_n \times K_2$ for any $n$ even,\label{item:NotGood:Ladder}
\item Book graphs $B_n \eqdef K_{1, n} \times K_2$ for any $n$ odd,\label{item:NotGood:Book}
\item Prism graphs $\mathrm{Pr}_n \eqdef C_n\times K_2$ for any $n$ even,\label{item:NotGood:Prisms}
\item Toroidal grid graphs $T_{4, m} \eqdef C_4\times C_m$ for any $m$ even,\label{item:NotGood:ToroidalGrids}
\item Cube graphs $Q_n = K_2 \times \dots\times K_2$, with $n$ copies of $K_2$, for any $n\geq 2$,\label{item:NotGood:Cube}
\item The graph $T\times Q_n$ for any tree $T$ where either $n \geq 2$ or $T$ has an even number of nodes,\label{item:NotGood:TreeTimesCube}
\item Thick wheel graphs $\mathrm{TW}_n \eqdef W_n \times K_2$ for any $n$ odd,\label{item:NotGood:PrismOverWheel}
\item Any $(T,\, n_1, \dots, n_s)$-tree of cycles for any tree $T$ and any $n_1, \dots, n_s\geq 1$, if there is some $i\in [d]$ such that $n_i$ is even and if $\sum_{i=1}^s (n_i-1)$ is odd.\label{item:NotGood:TreeOfCycle}
\end{compactenum}
\end{corollary}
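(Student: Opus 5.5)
The uniform strategy is to show, for each family, that $\psi(G)\equiv 2 \bmod 4$ and that $G$ has an even number of edges, and then to conclude with \Cref{lem:DivisibilityBy4Criterion}.

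\textbf{Edge parities.} For a Cartesian product, $|E(A\times B)| = |\V(A)|\,|E(B)| + |E(A)|\,|\V(B)|$, so each parity is a short computation:
\begin{itemize}
\item $L_n$ has $3n-2$ edges.
\item $B_n$ has $3n+1$ edges.
\item $\mathrm{Pr}_n$ has $3n$ edges.
\item $T_{4,m}$ has $8m$ edges.
\item $\mathrm{TW}_n$ has $2\cdot 2n + (n+1) = 5n+1$ edges.
\end{itemize}
Each of these is even under the stated parity hypothesis on $n$ or $m$.

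\textbf{Tree products.} Items \ref{item:NotGood:Ladder}, \ref{item:NotGood:Book}, \ref{item:NotGood:Cube} and \ref{item:NotGood:TreeTimesCube} all concern trees. Paths and stars are trees, $Q_n = K_1\times Q_n$, and $K_1$ is a tree with an odd number of nodes. So \Cref{thm:PrismOverTreeNotGood} gives $\psi(T\times K_2)\equiv 2 \bmod 4$. \Cref{cor:NotGoodForProductWithK2ImpliesNotGoodForProductWithQn} then transfers this to $T\times Q_n$, and its "moreover" part yields non-$\c{AO}$-Hamiltonicity exactly under the hypotheses "$n\geq 2$ or $|\V(T)|$ even". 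For $Q_n$ itself one can alternatively invoke \Cref{thm:CubeGraphIsNotGood} directly.

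\textbf{Graphs with cycles.} For prisms, toroidal grids and thick wheels I would use the recipe in the remark after \Cref{cor:NotGoodForProductWithK2ImpliesNotGoodForProductWithQn}: $\psi(G\times K_2)\equiv 2t \bmod 4$, where $t$ is the number of trees produced by deletion–contraction on $G$ run until forests.
\begin{itemize}
\item \emph{Prisms.} For $C_n$, a single deletion–contraction gives $P_n$ and $C_{n-1}$. Inducting down to $C_2$, read as a single edge after trimming multi-edges, gives $t = n-1$. This is odd for $n$ even, so $\psi(\mathrm{Pr}_n)\equiv 2 \bmod 4$.
\item \emph{Toroidal grids.} $C_4\times C_m$ contains $C_4\times K_2$ as the base, and $C_m = K_2$ after the appropriate contractions. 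More cleanly, apply \Cref{prop:NbAOmod4} with $G = C_m$ and $Q = C_4$ viewed as $K_2\times K_2 = Q_2$. The same recursion on $C_m$ gives $\psi(C_m\times Q_2)\equiv 2t \bmod 4$ with $t=m-1$ odd, since \Cref{cor:NotGoodForProductWithK2ImpliesNotGoodForProductWithQn} shows products with $Q_2$ and with $K_2$ agree mod $4$.
\item \emph{Thick wheels.} For $W_n$, I would count $t \bmod 2$ by deleting and contracting rim edges. This step, tracking how contractions create multi-edges that collapse, is the main computational obstacle. I would set up a recursion for $t(W_n)$ and for fans $F_k$ (a path joined to a hub). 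Fans yield $t(F_k)\equiv$ something computable, and I expect $t(W_n)$ odd exactly when $n$ is odd.
\end{itemize}

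\textbf{Trees of cycles.} For item \ref{item:NotGood:TreeOfCycle} the edge count is $\sum n_i - (s-1) = s+\sum(n_i-1)-(s-1)$. Gluing along an edge identifies one edge per $2$-sum, so this equals $1+\sum_i (n_i-1)$, which is even by hypothesis. Since $C_{n_i} = \oplus^{n_i-1}_{u,v}K_2$, repeated application of \Cref{lem:NbAOgluePathOnEdge} gives
\[
\psi = 2\prod_i \bigl(2^{n_i-1}-1\bigr),
\]
with the factor for $n_i = 2$ read as a single edge. This is $\equiv 2 \bmod 4$, so \Cref{lem:DivisibilityBy4Criterion} applies. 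I expect the hypothesis that some $n_i$ is even is only needed to make the edge parity work out, so I would double-check degenerate $n_i\le 2$ conventions here.
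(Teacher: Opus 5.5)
Items (1)--(6) and (8) are correct and follow the paper's route. For (8), your closed formula $\psi=2\prod_i(2^{n_i-1}-1)$ is a cleaner form of the paper's induction. You are also right about the ``some $n_i$ even'' hypothesis: it is automatic once $\sum_i(n_i-1)$ is odd, and the paper's choice of an even cycle as starting point is unnecessary, since $\psi(C_n)=2^n-2\equiv 2 \pmod 4$ for every $n\geq 3$.

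The genuine gap is item (7), and it is more than an unfinished computation: your expectation that $t(W_n)$ is odd for $n$ odd is false. Carry out your own recipe:
\begin{itemize}
\item A pendant edge does not change $t$, since deleting it disconnects the graph and contracting it removes it.
\item For the fan $F_k$, deleting or contracting an end rim edge therefore gives $t(F_k)=2\,t(F_{k-1})$. With $t(F_1)=t(K_2)=1$, this yields $t(F_k)=2^{k-1}$.
\item For the wheel, a rim edge gives $t(W_n)=t(W_{n-1})+t(F_n)$ for $n\geq 4$, with $t(W_3)=t(K_4)=t(K_3)+t(F_3)=2+4=6$.
\end{itemize}
Hence $t(W_n)=2^n-2$, which is even for every $n\geq 3$. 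This is consistent with $t(G)=|[x]\chi(G;x)|$ and $[x]\chi(W_n;x)=\pm(2^n-2)$. So $\psi(\mathrm{TW}_n)\equiv 0\pmod 4$ for all $n\geq 3$, and \Cref{lem:DivisibilityBy4Criterion} gives nothing. You can see this directly for $n=3$: $\mathrm{TW}_3=K_4\times K_2$, and \Cref{prop:ProductOfCliquesHave0Mod4NbAO} already gives $\psi(K_4\times K_2)\equiv 0 \pmod 4$ (in fact $\psi=5016$).

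The paper's own argument for (7) breaks at the same spot. It derives the same recursion $\psi(\mathrm{TW}_n)\equiv\psi(\mathrm{TW}_{n-1})\pmod 4$, but anchors it at $\mathrm{TW}_1\simeq C_4$. That anchoring is invalid: the last step down, from $W_2\simeq K_3$, deletes to $P_3$ rather than to a fan, and it contributes $\psi(P_3\times K_2)\equiv 2$. Its conclusion $\psi(\mathrm{TW}_n)\equiv 2 \pmod 4$ is therefore wrong.

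No mod-$4$ count can prove item (7); it needs a nonzero acyclic signature and \Cref{lem:ParityArgument}. For $n=3$, write an acyclic orientation of $K_4\times K_2$ as two linear orders on the cliques plus the directions of the matching edges. A direct signed count then gives $\sigma(K_4\times K_2)=\pm 24\neq 0$, so the statement survives in that case. For general odd $n$, a genuinely new argument is required, for instance a signature computation for $W_n\times K_2$, and neither your proposal nor the paper supplies one.
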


\begin{proof}
\eqref{item:NotGood:Ladder} and~\eqref{item:NotGood:Book}:
Ladder and book graphs are of the form $T\times K_2$ where $T$ is a tree (respectively a path and a star), so by \Cref{thm:PrismOverTreeNotGood}, they are not {\good} when they have an even number of nodes.
Ladder graphs already appear in \cite[Theorem~3.2]{SavageSquireWest93-GrayCodesAcyclicOrientations}.

\eqref{item:NotGood:Prisms}:
Let $\mathrm{Pr}_n = C_n \times K_2$.
We are going to prove by induction that $\psi(\mathrm{Pr}_n)$ is divisible by $4$ if $n$ is odd, and not divisible by $4$ if $n$ is even.
We denote $C_n'$ and $C_n''$ the two copies of $C_n$ in $\mathrm{Pr}_n$.
Fix any edge $e$ of $C_n$, and let $e'$ and $e''$ the copies of $e$ in $C_n'$ and $C_n''$.
We denote $H'$ (resp. $H''$) the graph obtained by contracting $e'$ and deleting $e''$ (resp. by deleting $e'$ and contracting $e''$) in $\mathrm{Pr}_n$.
Note that $H'$ and $H''$ are isomorphic.
By deletion-contraction on the edges $e'$ and $e''$, we have (using $C_n/_e \simeq C_{n-1}$):
\begin{align*}
\psi(\mathrm{Pr}_n)
&= \psi\bigl((C_n\ssm e) \times K_2\bigr) + \psi(H') + \psi(H'') + \psi\bigl((C_n/_e)\times K_2\bigr) \\
&= \psi\bigl((C_n\ssm e) \times K_2\bigr) + 2\cdot\psi(H') + \psi(\mathrm{Pr}_{n-1}) \\
&\equiv \psi\bigl((C_n\ssm e) \times K_2\bigr) + \psi(\mathrm{Pr}_{n-1}) \mod 4
\end{align*}

Since $(C_n\ssm e) \times K_2$ is a ladder graph on an even number of nodes, $4$ does not divide $\psi\bigl((C_n\ssm e) \times K_2\bigr)$, by the proof of~\eqref{item:NotGood:Ladder}, so $\psi\bigl((C_n\ssm e) \times K_2\bigr)\equiv 2\mod 4$.
Hence, by induction, $\psi(\mathrm{Pr}_n) \equiv 2\cdot (n-1) \mod 4$ because $\mathrm{Pr}_2 = C_4$ with $\psi(C_4) = 14 \equiv 2 \mod 4$.
Thus, if $n$ is even, $\psi(\mathrm{Pr}_n)$ is not divisible by $4$.
As the number of edges of $\mathrm{Pr}_n$ is $3n$, we get the claim.

\eqref{item:NotGood:ToroidalGrids}:
We just proved that $\psi(C_m\times K_2) \equiv 2\mod 4$.
Since $C_4 = K_2\times K_2 = Q_2$, by \Cref{cor:NotGoodForProductWithK2ImpliesNotGoodForProductWithQn}, we have $\psi(C_m\times C_4) \equiv \psi(C_m\times K_2) \not\equiv 0\mod 4$, so by \Cref{lem:DivisibilityBy4Criterion}, we get that $T_{4, m} = C_m\times C_4$ is not {\good} for all $m$ even because $T_{4, m}$ has an even number of edges.

\eqref{item:NotGood:Cube}:
This is \Cref{thm:CubeGraphIsNotGood}.

\eqref{item:NotGood:TreeTimesCube}:
Combine \Cref{thm:PrismOverTreeNotGood,cor:NotGoodForProductWithK2ImpliesNotGoodForProductWithQn} where $T\times Q_n$ has an even number of edges if $n\geq 2$ or if $T$ has an even number of nodes.

\eqref{item:NotGood:PrismOverWheel}:
We use the same decomposition that Savage, Squire and West applied to the wheel graph $W_n$ in the proof of \cite[Theorem~3.1]{SavageSquireWest93-GrayCodesAcyclicOrientations}.
Let $e$ be and edge of $W_n$ which does not contain the central node, and let $G_n = W_n \ssm e$ and $\mathrm{TG}_n = G_n \times K_2$.
Note that $W_n/_e = W_{n-1}$, hence by deletion-contraction and \Cref{prop:NbAOmod4}, we have:
$\psi(\mathrm{TW}_n) \equiv \psi(\mathrm{TW}_{n-1}) + \psi(\mathrm{TG}_n)\mod4$.
Moreover, let $f$ be and edge of $G_n$ not containing the central node $x$, then $G_n/_f = G_{n-1}$ and $G_n\ssm f$ is a $1$-sum of $G_{n-1}$ with a single edge.
In particular, by \Cref{lem:NbAOgluePathOnEdge}, $\psi\bigl((G_n \ssm f) \times K_2\bigr) \equiv \psi(\mathrm{TG}_{n-1})\mod 4$ since $(G_n \ssm f)\times K_2$ is obtained from $\mathrm{TG}_{n-1}$ by gluing a path of length $3$ on the edge $\{x\}\times K_2$.
Thus, by \Cref{lem:ParityNbAO,prop:NbAOmod4}, we get:
$\psi(\mathrm{TG}_{n}) \equiv 2\cdot \psi(\mathrm{TG}_{n-1}) \equiv 0\mod4$.
Consequently, $\psi(\mathrm{TW}_n) \equiv \psi(\mathrm{TW}_{n-1})\mod 4$, and by induction, since $\psi(\mathrm{TW}_1) \equiv \psi(C_4) \equiv 2\mod 4$, we obtain $\psi(\mathrm{TW}_n) \equiv 2\mod4$.
As the wheel graph $W_n$ has an even number of edges and $n+1$ nodes, the thick wheel graph has an even number of edges when $n$ is odd, so we conclude by \Cref{lem:DivisibilityBy4Criterion}.

\eqref{item:NotGood:TreeOfCycle}:
Fix a $(T, n_1, \dots, n_s)$-tree of cycle $G$ with $n_i$ even for some $i\in [s]$.
Note that the number of edges of $G$, namely $1 + \sum_{i=1}^s (n_i - 1)$, is even by hypothesis.
Let $i_1, \dots, i_s$ be an enumeration of $[s]$, with $n_{i_1}$ even, such that $T_j = T[\{i_1, \dots, i_j\}]$ is a tree.
In particular, $i_j$ is a leaf in $T_j$.
Let also $G_j$ be the $(T_j, n_{i_1}, \dots, n_{i_j})$-subtree of cycle of $G$.
As $G_j$ is obtained from $G_{j-1}$ by gluing a path of length $n_j - 1$ on an edge, $\psi(G_j)$ is divisible by $4$ if and only if $\psi(G_{j-1}) \mod 4$ is divisible by $4$, by \Cref{lem:NbAOgluePathOnEdge}.
Since $G_1$ is an even cycle with $n_{i_1}$ nodes, $\psi(G_1) \equiv 2\mod 4$ by \Cref{exm:AcyclicPolynomialCycles}, so $\psi(G)$ is not divisible by $4$.
By \Cref{lem:DivisibilityBy4Criterion}, $G$ is not {\good}.
\end{proof}

\begin{remark}
We have only presented a few possible applications of \Cref{lem:NbAOgluePathOnEdge,prop:NbAOmod4} together with the usual deletion-contraction formula.
The reader is invited to try to compute $\psi(G\times K_2)\mod 4$ for their favorite (family of) graphs $G$, and conclude on the $\c{AO}$-Hamiltonicity of $G\times K_2$ if the number of nodes of $G$ is odd, using \Cref{lem:DivisibilityBy4Criterion}.

Note that any result proven for $G\times K_2$ immediately extends to $G\times Q_n$ for $n\geq 1$ thanks to \Cref{cor:NotGoodForProductWithK2ImpliesNotGoodForProductWithQn}.
\end{remark}

We give one last structural result relying on the same ideas.
This can be used to start tackling non-$2$-connected graphs.

\begin{proposition}
If $G$ is disconnected, then $\psi(G\times Q_n) \equiv \psi(G) \equiv 0\mod4$ for all $n\geq 1$.

Moreover, for any graph $G$ and any tree $T$, we have for all $n\geq 1$:
$$\psi\bigl((G\oplus_1 T)\times Q_n\bigr) \equiv \psi\bigl((G\oplus_1 T)\times K_2\bigr) \equiv \psi(G\times K_2) \mod4 \,.$$
\end{proposition}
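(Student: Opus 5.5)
For the first claim, I would write $G = H \sqcup H'$ with $H$, $H'$ nonempty node-disjoint subgraphs. Then $\psi(G) = \psi(H)\cdot\psi(H')$. Both factors are even by \Cref{lem:ParityNbAO}, so $4 \mid \psi(G)$. Next, $G\times Q_n = (H\times Q_n)\sqcup(H'\times Q_n)$ is again a disjoint union of two nonempty graphs, so the same argument gives $\psi(G\times Q_n)\equiv 0 \mod 4$.

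For the second claim, the first congruence is exactly \Cref{cor:NotGoodForProductWithK2ImpliesNotGoodForProductWithQn} applied to the graph $G\oplus_1 T$. For the second congruence, I would induct on the number of edges of $T$. If $T$ is a single node, then $G\oplus_1 T = G$ and there is nothing to prove. Otherwise $T$ has at least two leaves, so we may pick a leaf $u$ of $T$ that is not the node glued to $G$. Let $v$ be its neighbour in $T$. Then $(G\oplus_1 T)\times K_2$ is obtained from $(G\oplus_1 (T\ssm u))\times K_2$ by gluing a path of length $3$ between $v'$ and $v''$, namely $v', u', u'', v''$. Here $v'v''$ is an edge, since it is the copy of $K_2$ over $v$. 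This is exactly the situation in the proof of \Cref{thm:PrismOverTreeNotGood}.

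By \Cref{lem:NbAOgluePathOnEdge} with $\ell=3$, we get
$$\psi\bigl((G\oplus_1 T)\times K_2\bigr) = 7\cdot \psi\bigl((G\oplus_1 (T\ssm u))\times K_2\bigr).$$
Since $7\equiv -1 \mod 4$, this only gives the congruence up to sign. Each $\psi$ is even, hence congruent to $0$ or $2 \mod 4$, and $-x\equiv x \mod 4$ for such $x$. Multiplying by $7$ therefore preserves the residue mod $4$. Iterating down to $T$ being a single node yields $\psi\bigl((G\oplus_1 T)\times K_2\bigr)\equiv\psi(G\times K_2)\mod 4$.

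The main obstacle is the bookkeeping in the inductive step. One must check that the leaf $u$ can always be chosen away from the gluing node, which holds because a tree with at least one edge has at least two leaves. One must also check that the product structure really is a path of length $3$ glued on an existing edge, so that \Cref{lem:NbAOgluePathOnEdge} applies verbatim. The observation that $\psi$ is even, so that the factor $7$ is harmless mod $4$, handles the sign issue.
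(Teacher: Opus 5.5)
Your proposal follows the paper's proof essentially step for step. The first claim uses $\psi(H\sqcup H')=\psi(H)\,\psi(H')$ together with \Cref{lem:ParityNbAO}. The first congruence is \Cref{cor:NotGoodForProductWithK2ImpliesNotGoodForProductWithQn}. The second congruence strips a leaf $u$ distinct from the gluing node and applies \Cref{lem:NbAOgluePathOnEdge} with $\ell=3$. You also note that $7\psi\equiv\psi \bmod 4$ needs $\psi$ to be even, which holds here since $(G\oplus_1 T')\times K_2$ has edges. The paper leaves this step implicit.

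One caveat, which you share with the paper: \Cref{lem:ParityNbAO} only holds for graphs with at least one edge. The reversal involution of \Cref{lem:CompletementIsInvolution} is fixed-point-free only when $E\neq\emptyset$, and an edgeless graph has $\psi=1$. So ``both factors are even'' fails when $H$ or $H'$ is edgeless. Indeed $\psi(K_2\sqcup K_1)=2\not\equiv 0 \bmod 4$, so the statement itself is false in this degenerate case.

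The claim $\psi(G\times Q_n)\equiv 0 \bmod 4$ survives, but for a different reason than the one you give (``nonempty''). The point is that $H\times Q_n$ contains $\{x\}\times Q_n$, which has edges for $n\geq 1$, so $\psi(H\times Q_n)$ is even. By contrast, $\psi(G)\equiv 0 \bmod 4$, and hence $\psi(G\times Q_n)\equiv \psi(G)$, requires at least two connected components of $G$ to contain an edge.
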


\begin{proof}
The first statement comes from $\psi(H\sqcup H') = \psi(H)\cdot \psi(H')$ for node-disjoint graphs $H$ and $H'$, and $2$ divides both $\psi(H)$ and $\psi(H')$ by \Cref{lem:ParityNbAO} (note that $(H\sqcup H')\times Q_n$ is also disconnected).

The first equivalence of the second statement is given by \Cref{cor:NotGoodForProductWithK2ImpliesNotGoodForProductWithQn}.
For the second one, let $u$ be a leaf of $T$ which is not the one shared by $G$ in $G\oplus_1 T$, let $v$ be the unique neighbor of $u$ in $T$, and let $T' = T\ssm u$.
Then $(G\oplus_1 T)\times K_2$ is obtained from $(G\oplus_1 T')\times K_2$ by gluing a path of length $3$ on the edge $\{v\}\times K_2$.
By \Cref{lem:NbAOgluePathOnEdge}, we have that $\psi\bigl((G\oplus_1 T)\times K_2\bigr) \equiv 7\cdot \psi\bigl((G\oplus_1 T')\times K_2\bigr) \equiv \psi\bigl((G\oplus_1 T')\times K_2\bigr) \mod4$.
Trimming $T$ of its leaves recursively, we get the claim.
\end{proof}

We conclude by discussing further possible research angles that use the same core idea of counting the number of acyclic orientations of graphs and checking its divisibility by $4$.

\begin{example}\label{exm:ClassesOfNonAOHgraphs}
Our computer experiments show that the following graphs $G$ satisfy $\psi(G) \equiv 2 \mod4$, and hence are not {\good} since they have an even number of edges:
\begin{compactenum}
\item The octahedral graph;
\item The Franklin graph;
\item The Herschel graph;
\item The M\"obius--Kantor graph;
\item The Desargues graph;
\item The triangular grid graphs $\mathrm{Tr}_{n, m}$ for $(n, m) \in \{(3, 3), (3, 5)\}$;
\item The graph of the antiprisms $A_n$ for $n \in \{3, 6\}$ but not for $n\in\{4, 5, 7, 8\}$;
\item The grid graphs $\mathrm{Gr}_{n, m} = P_n \times P_m$ for $(n, m) \in \{(3, 3), (3, 5), (3, 7)\}$ but not for $n = m = 4$;
\item The complete tripartite graphs $K_{m, n, p}$ for $n = m = 2$ and $2\leq p \leq 7$ and for $(n, m, p) = (2, 3, 4)$\item The generalized Petersen graphs $\mathrm{Pet}(n, k)$ for $(n, k) \in \{(6, 1), (8, 1), (8, 3), (10, 1), (10, 3)\}$ but not for $(n, k) \in \{(6, 2), (8, 2), (10, 2), (10, 4)\}$.
\end{compactenum}

The reader must be careful: each ``but not'' means that $\psi(G) \equiv 0\mod4$, it does not imply that $G$ is {\good}.
Note also that the number of acyclic orientations of Chv\'atal graphs is divisible by $4$, yet it is not {\good}, see \Cref{tab:SmallGraphsDetailed}: we will need the notion of acyclic signature studied in \Cref{ssec:AcyclicSignature} to understand this case.
\end{example}

In view of the previous example list, we conjecture the following.

\begin{conjecture}
For the following graphs $G$, we have $\psi(G) \equiv 2\mod4$, and consequently $G$ is not {\good} since $G$ has an even number of edges:
\begin{compactenum}
\item The grid graphs $\mathrm{Gr}_{n, m} = P_n \times P_m$ for $m, n\geq 3$ both odd;
\item The triangular grid graphs $\mathrm{Tr}_{n, m}$ for $m, n\geq 3$ both odd;
\item The generalized Petersen graphs $\mathrm{Pet}(n, k)$ for $n$ even and $k$ odd;
\item The complete tripartite graphs $K_{m, n, p}$ for $n$, $m$, $p$ not all odd.
\end{compactenum}
\end{conjecture}

We believe that methods similar to the one studied above for path gluing and Cartesian products may be developed in these cases.
Note that the ladder graphs (\ie grid graphs $\mathrm{Gr}_{2, m}$) and the complete bipartite graphs were already dealt with by Savage, Squire and West \cite{SavageSquireWest93-GrayCodesAcyclicOrientations}.

\subsection{The acyclic polynomial in general}\label{ssec:AcyclicPolynomial}

For graphs, using a deletion-contraction recursion proven by Stanley \cite{Stanley-acyclicOrientations}, one can compute the number~\mathdefn{$\psi(G)$} of acyclic orientations of the graph $G = (\V, E)$:
for all $uv\in E$, we have $\psi(G) = \psi(G/_{uv}) + \psi(G\ssm uv)$.
This yields a counting algorithm with time complexity $O\bigl(\varphi^{|\V| + |E|}\bigr)$ where $\varphi \eqdef \frac{1 + \sqrt{5}}{2}$, see \cite[end of Section~2.3]{Wilf-AlgorithmAndComplexityBook}.
Our ultimate goal is to present a similar, although incomplete, algorithm for computing the acyclic signature $\sigma(D)$ of an orientation of $G$.
This will be done in \Cref{ssec:AcyclicSignature}. It is natural to count acyclic orientations according to their rank (with respect to a fixed base orientation).

\subsubsection{Definition and first examples}

For a digraph $D$, the \defn{acyclic polynomial} is defined as
$$\mathdefn{\Psi(D; t)} \eqdef \sum_{A\in \AO[D]} t^{\rank_D A} \,.$$
By convention, $\Psi(D; t) = 1$ if $D$ has no edge.
Clearly, for any orientation $D$ of $G$, one has $\psi(G) = \Psi(D; 1)$, and $\sigma(D) = \Psi(D; -1)$.
 
We give $22$ examples of acyclic polynomials for small, well-known graphs in Appendix~\ref{app:GraphExamples}.

\begin{remark}
The coefficients of $\Psi(D; t)$ are the Whitney numbers of the second kind of the poset of regions of the graphic arrangement with base region given by $D$.
See \cite[Section~3.10]{stanleyenumerative}, for the general definition and background.
We will not make use of this perspective, since our goal is to present graph-theoretic proofs of every claim.
\end{remark}

%The acyclic polynomials of some standard graphs are easy to compute.

\begin{example}[Trees]\label{exm:AcyclicPolynomialTrees}
For a tree $T$ with $m$ edges, recall that all orientations are acyclic.
Hence, for any orientation $D$ of $T$, we get $|\AODk[D]| = |\c O_{k}(D)| = \binom{m}{k}$.
Thus we get
$$\Psi(D ; t) = \sum_{k} \binom{m}{k} t^k = (1+t)^m \,.$$
We recover that the number of acyclic orientations of $T$ is $\psi(T) = \Psi(D; 1) = 2^m$ while its signature is $\sigma(D) = \Psi(D; -1) = 0$.
\end{example}

\begin{example}[Cycles]\label{exm:AcyclicPolynomialCycles}
For a cycle $C_n = ([n], E)$ all orientations except 2 are acyclic.
Let $D$ be one of the cyclic orientations of $C_n$, hence $D^E$ is the other cyclic orientation of $C_n$.

Thus $\Psi(D; t) = (1+t)^n - 1 - t^n\,.$
We recover that the number of acyclic orientations of $C_n$ is $\psi(C_n) = \Psi(D; 1) = 2^n - 2$ while its signature is $\sigma(D) = \Psi(D; -1) = \left\{\begin{array}{ll}
0 & \text{ if } n \text{ is odd} \\
-2 & \text{ if } n \text{ is even}
\end{array}\right.$.

Changing the base orientation from a cyclic orientation $D$ to some acyclic orientation $A = D^X$ with $|X| = q$, we easily get
$\Psi(A; t) = (1+t)^n - t^q - t^{n-q}\,,$ yielding $\sigma(A) = \left\{\begin{array}{ll}
0 & \text{ if } n \text{ is odd} \\
(-1)^{q+1}\cdot 2 & \text{ if } n \text{ is even}
\end{array}\right.$.
\end{example}

\begin{example}\label{exm:AcyclicPolynomialK23}
As our running example, the complete bipartite graph $K_{2, 3}$ has $46$ acyclic orientations, and its acyclic signature is $\pm 6$ depending on the chosen orientation.
We depict all the possible acyclic polynomials  in \Cref{fig:AcyclicPolyK23}.
Out of the $2^6 = 64$ orientations of $K_{2, 3}$, we only get $8$ different polynomials:
as we will see in \Cref{cor:PsiOfReverseOrientation}, reversing the directions of all arcs yield the same acyclic polynomial;
and moreover, it is clear that if two base orientations $D$ and $D'$ differ by an automorphism of the underlying graph $G$, then their associated acyclic polynomials are identical.
\end{example}

% \germain{Think about other examples.}
% \begin{example}
% \germainadd{Vertebrate or skeletal graphs (\ie graphs for which $\AO$ is a lattice, \cite{Pilaud2024-AcyclicReorientations})???}
% \end{example}

\begin{figure}
    \centering
    \includegraphics[width=\linewidth]{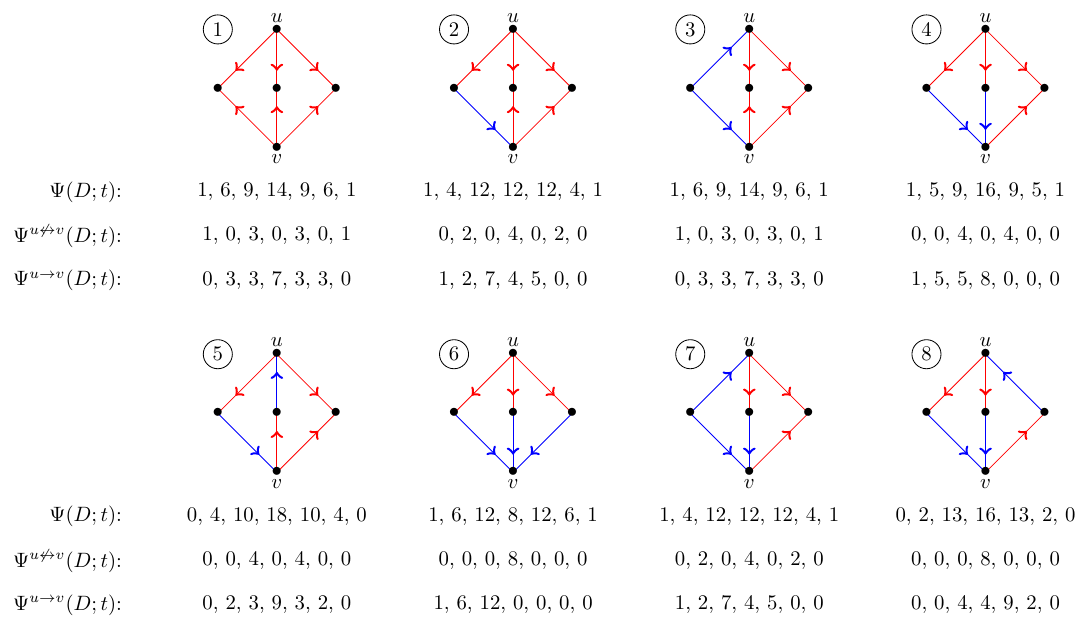}
    \caption[Acyclic polynomials of $K_{2, 3}$ for several base orientations]{The $8$ possible orientations of $K_{2, 3}$ up to symmetry and reversion of the directions of all arcs, with the corresponding acyclic polynomials and their decomposition.
    Each polynomial is given as a list of coefficients, \eg ``$1, 2, 7, 4, 5, 0, 0$'' means $1 + 2t + 7t^2 + 4t^3 + 5t^4 + 0t^5 + 0t^6$.
    Here, we write $K_{2, 3}$ as a bipartition between on the one side the graph with nodes $u, v$ and no edge, and on the other side the remaining edges (\ie $K_{2, 3}$ itself).
    Following the decomposition of \Cref{prop:AcyclicPolyDecomposition,cor:PsiIsPalindromic}, note that $\Psi(D; t)$ and $\Psi^{u\not\leftrightarrow v}(D; t)$ are palindromic, contrary to $\Psi^{u\to v}(D; t)$.
    Furthermore, $\Psi(D; t)$ is equal to the sum of $\Psi^{u\not\leftrightarrow v}(D; t)$, and $\Psi^{u\to v}(D; t)$, and the reverse of $\Psi^{u\to v}(D; t)$.
    Besides, the constant coefficient (and the coefficient on $t^6$) of $\Psi(D; t)$ is non-zero if and only if $D$ is acyclic.
    The number of acyclic orientations $\psi(K_{2, 3}) = 46$ is the sum of the coefficients of $\Psi(D; t)$, while the acyclic signature $\sigma(K_{2, 3}) = 6$ is their alternating sum (up to sign).}
    \label{fig:AcyclicPolyK23}
\end{figure}

\begin{remark}\label{rmk:NotATutteWhitneyInstance}
Since we will show that the acyclic polynomial admits a deletion-contraction phenomenon (see \Cref{ssec:DeletionContractionPhenomenon}), one might wonder if $\Psi(D; t)$ is a specialization of the famous Tutte--Whitney polynomial of $G$ for some orientation $D$ of the graph $G$.
We conjecture that this is not possible, and we now argue against it.

Let $G = K_{2, 3}$, and $D$ the orientation $\circled{1}$ of $G$ of \Cref{fig:AcyclicPolyK23}.
The Tutte--Whitney polynomial of $G$ is
$T_{K_{2, 3}}(x, y) = x^4 + 2x^3 + 3x^2 + 3xy + y^2 + x + y \,.$

On the other hand, $\Psi(D; t) = 1 + 6t + 9t^2 + 14t^3 + 9t^4 + 6t^5 + 1$ is of degree $6$ (since $D$ has been chosen acyclic), as indicated in \Cref{fig:AcyclicPolyK23}.
There are many ways to interpret ``being a specialization of $T_{K_{2, 3}}$''; we are going to prove that it is impossible that there exist polynomials $A(t), B(t)$ with real coefficients, such that $T_{K_{2, 3}}\bigl(A(t), B(t)\bigr) = \Psi(D; t)$.

Indeed, suppose it is the case, and let $a$ (resp. $b$) be the degree of $A(t)$ (resp. $B(t)$).
The degree of $T_{K_{2, 3}}\bigl(A(t), B(t)\bigr)$ depends on which of $4a$ and $2b$ is the greatest (since $a+b < \max(4a, 2b)$).
In particular, we need to have $\max(4a, 2b) \geq 6$.
\begin{compactitem}
\item If $4a > 2b$, then the degree of $T_{K_{2, 3}}\bigl(A(t), B(t)\bigr)$ is $4a$ which needs to be equal to $6$, but cannot.
\item If $4a < 2b$, then the degree of $T_{K_{2, 3}}\bigl(A(t), B(t)\bigr)$ is $2b$ which needs to be equal to $6$, so $b = 3$ and $a \leq 1$.
Hence, writing $A(t) = \alpha_0 + \alpha_1t$ and $B(t) = \beta_0 + \beta_1t + \beta_2t^2 + \beta_3t^3$, one can develop $T_{K_{2, 3}}\bigl(A(t), B(t)\bigr) = \Psi(D; t)$ to get $7$ equations involving the $6$ variables $\alpha_0, \alpha_1, \beta_0, \beta_1, \beta_2, \beta_3$.
A tedious computation (or a solver of polynomial systems) shows that this explicit system has no solutions.
\item If $4a > 6$ and $2b > 6$, then $4a = 2b$ and the leading coefficients of $A(t)^4$ and $B(t)^2$ compensate each other (otherwise the degree of $T_{K_{2, 3}}\bigl(A(t), B(t)\bigr)$ would not be~$6$).
Yet, since $4$ and $2$ are even powers, this is not possible.
\end{compactitem}

Consequently, there exists a graph $G$ (\eg $G = K_{2, 3}$) and an orientation $D$ of $G$ (actually several) such that for all polynomials $A(t), B(t)\in \R[t]$, we have $T_G(A(t), B(t)) \ne \Psi(D; t)$.
However, this does not rule out the possibility that $T_G(A(t), B(t)) = \Psi(D; t)$ for some orientation $D$ of some graph $G$ and for some polynomials $A(t), B(t)\in \C[t]$.
We still believe this cannot happen except in trivial cases, but cannot (un)prove it.
\end{remark}

\begin{remark}\label{rmk:ChromaticPolynomial}
It is well-known (\eg see \cite{Stanley-acyclicOrientations}) that $\psi(G) = \chi(G; -1)$ where $\chi(G; x)$ is the chromatic polynomial of $G$, \ie $\chi(G; c)$ is the number of ways one can color $G$ using $c$ colors, such that no edge has both endpoints of the same color.
The chromatic and acyclic polynomials shall not be confused: in general $\Psi(D; t) \ne \chi(\underline{D}; -t)$ for a digraph $D$.
For example, one can compare the results of \Cref{fig:AcyclicPolyK23} with: $\chi(K_{2, 3}; x) = 7x -17x^2 + 15x^3 - 6x^4 + x^5$.

We do not know if there exists a polynomial $\Phi(D; t, x)$, satisfying a deletion-contraction recursion, from which both $\Psi(D; t)$ and $\chi(\underline{D}; x)$ can be naturally recovered (\eg such that $\Phi(D; t, 1) = \Psi(D; t)$ and $\Phi(D; 1, x) = \chi(\underline{D}; x)$ for all $x, t$).
We saw in \Cref{rmk:NotATutteWhitneyInstance} that if such a polynomial were to exist, it could not be a specification of the Tutte--Whitney polynomial.
\end{remark}

%As a first property, we show a non-trivial relation between the polynomials $\bigl(\Psi(D; t) ~;~ D\in \c O(G)\bigr)$.

\begin{proposition}\label{prop:SumAcyclicPolynomials}
Let $G$ be a graph and $\c O(G)$ the set of all its orientations, then
$$\sum_{D\in \c O(G)} \Psi(D; t) = (1 + t)^{|E|}\cdot \psi(G).$$
\end{proposition}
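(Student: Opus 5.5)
We plan a double counting argument: exchange the order of summation and count, for each acyclic orientation $A$, its contribution over all base orientations. By definition,
$$\sum_{D\in \c O(G)} \Psi(D; t) = \sum_{D\in \c O(G)} \sum_{A\in \AO} t^{\rank_D A} = \sum_{A\in \AO} \sum_{D\in \c O(G)} t^{\rank_D A}\,,$$
since the set of acyclic orientations of $G$ does not depend on the choice of base orientation $D$. Only the rank depends on $D$.

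The key step is to evaluate the inner sum for a fixed acyclic orientation $A$. Recall from \Cref{ssec:Prelim:GeneralOrientations} that $\rank_D A = |D\symmdiff A|$, which is the number of edges on which $D$ and $A$ disagree, and that this quantity is symmetric in $D$ and $A$. Fixing $A$ as a base orientation, the map $\omega\mapsto A^\omega$ is a bijection $2^E\to\c O(G)$, and $|A^\omega\symmdiff A| = |\omega|$. Hence
$$\sum_{D\in\c O(G)} t^{\rank_D A} = \sum_{\omega\subseteq E} t^{|\omega|} = \sum_k \binom{|E|}{k} t^k = (1+t)^{|E|}\,,$$
by Newton's binomial formula, as in \Cref{exm:AcyclicPolynomialTrees}.

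Summing this constant over the $\psi(G)$ acyclic orientations $A$ gives $(1+t)^{|E|}\cdot\psi(G)$, which is the claim. There is no real obstacle. The only point requiring care is checking that $\rank_D A$ depends only on the unordered difference $D\symmdiff A$, so that we may swap the roles of base and target orientation. This follows directly from the definition of the difference given in \Cref{ssec:Prelim:GeneralOrientations}, which is independent of the base orientation. We should also note that the convention $\Psi(D;t)=1$ for edgeless $G$ is consistent with the formula, since then $\psi(G)=1$ and $|E|=0$.
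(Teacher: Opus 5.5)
Your proposal is correct and follows the paper's proof: you exchange the two sums, use the symmetry $\rank_D A = \rank_A D$ to re-base at $A$, and evaluate $\sum_{\omega\subseteq E} t^{|\omega|} = (1+t)^{|E|}$ for each of the $\psi(G)$ acyclic orientations. Your extra remarks on the symmetric difference and the edgeless convention are harmless sanity checks.
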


\begin{proof}
Note that we have $\rank_D A = \rank_A D$, by definition (the number of arcs on which $A$ differs from $D$ is equal to the number of arcs on which $D$ differs from $A$).
We have:
$$\sum_{D\in \c O(G)} \Psi(D; t)
= \sum_{\substack{D\in \c O(G)\\A\in \AO}} t^{\rank_D A}
= \sum_{A\in \AO} \sum_{D\in \c O(G)} t^{\rank_A D}
= \sum_{A\in \AO} (1 + t)^{|E|}
= (1 + t)^{|E|}\cdot \psi(G)$$
where for any $A\in \AO$ we have: $\sum_{D\in \c O(G)} t^{\rank_A D} =\sum_{X\subseteq E} t^{\rank_A A^X} = \sum_{X\subseteq E} t^{|X|} = (1 + t)^{|E|}$.
\end{proof}

\begin{example}
For the complete bipartite graph $K_{2, 3}$, \Cref{prop:SumAcyclicPolynomials} gives $\sum_{D\in \c O(K_{2, 3})} \Psi(D; t) = 46\cdot (1 + t)^6$.
This can be checked by summing the polynomials of \Cref{fig:AcyclicPolyK23}, taking into account the reversion of all the arcs of each orientation and their multiplicities.
\end{example}

\subsubsection{Disconnected graphs and 1-sums of graphs}

As an easy property of acyclic polynomials, we show that the acyclic polynomial of a disconnected graph is the product of the acyclic polynomials of its connected components, while the acyclic polynomial of a $1$-sum of graphs is the product of the acyclic polynomials of the summands.

\begin{proposition}\label{prop:AcyclicPol:Disconnected}
Let $G = (\V, E)$ be a disconnected graph with connected components $\V_1, \dots, \V_r$, and $D$ an orientation of $G$, then:
$$\Psi(D; t) = \prod_{k=1}^r \Psi(D[\V_k]; t)\,.$$
\end{proposition}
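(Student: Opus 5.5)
The plan is to prove the statement directly from the definition of $\Psi(D; t)$, exploiting that acyclicity and rank both split over connected components.

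First, the arc set $\Arcs$ of $D$ is the disjoint union of the arc sets $\Arcs_k$ of the induced subdigraphs $D[\V_k]$, since no edge joins two distinct components. Hence the map $A \mapsto \bigl(A[\V_1], \dots, A[\V_r]\bigr)$ is a bijection from $\c O(G)$ to $\c O(G[\V_1])\times\dots\times\c O(G[\V_r])$. Its inverse sends a tuple of orientations $(A_1, \dots, A_r)$ to the digraph on $\V$ with arc set $\bigsqcup_k \Arcs(A_k)$.

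Second, I will check that this bijection restricts to a bijection $\AO[D] \to \prod_k \AO[D[\V_k]]$. A directed cycle in $A$ is in particular a connected subgraph of $\underline{A} = G$, so all its nodes lie in a single component $\V_k$. It is therefore a directed cycle of $A[\V_k]$. Conversely, any directed cycle of some $A[\V_k]$ is a directed cycle of $A$. So $A$ is acyclic if and only if every $A[\V_k]$ is acyclic.

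Third, the rank is additive: the set $D\symmdiff A$ of edges on which $A$ and $D$ disagree is the disjoint union of the sets $D[\V_k]\symmdiff A[\V_k]$. Hence
$$\rank_D A = \sum_{k=1}^r \rank_{D[\V_k]} A[\V_k].$$
Consequently
$$\Psi(D; t) = \sum_{A\in\AO[D]} t^{\rank_D A} = \sum_{(A_1,\dots,A_r)} \prod_{k=1}^r t^{\rank_{D[\V_k]} A_k},$$
where the sum runs over $\prod_k \AO[D[\V_k]]$. Distributing the product over the sum yields $\prod_k \Psi(D[\V_k]; t)$.

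For the edge case of a component with no edges, the convention $\Psi = 1$ is consistent with this: such a component admits exactly one (empty) orientation, which has rank $0$. There is no real obstacle here. The only point requiring care is the second step, namely that a directed cycle cannot use nodes from two different components, and this is immediate from connectivity of cycles.
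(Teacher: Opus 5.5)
Your proof is correct and follows the same route as the paper: the bijection $A \mapsto (A[\V_1], \dots, A[\V_r])$ onto the product of the components' sets of acyclic orientations, additivity of the rank over components, and then exchanging the sum with the product. You also justify why acyclicity splits over components (a directed cycle is connected, so it lies in a single component), which the paper simply calls trivial.
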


\begin{proof}
It is trivial that the map $A \mapsto (A[\V_1], \dots, A[\V_r])$ is a bijection between $\AO$ and $\AO[{G[\V_1]}] \times \dots \times \AO[{G[\V_r]}]$.
Moreover, given a fixed acyclic orientation $D$ of $G$, we have $\rank_D A = \rank_{D[\V_1]} A[\V_1] + \dots + \rank_{D[\V_r]} A[\V_r]$.
Thus:
$$\Psi(D; t) = \sum_{A\in \AO} t^{\rank_D A} = \sum_{A_1, \dots, A_r\in \AO[{G[\V_1]}] \times \dots \times \AO[{G[\V_r]}]} \prod_{k=1}^r t^{\rank_{D[\V_k]} A[\V_k]} = \prod_{k=1}^r \Psi(D[\V_k]; t) \,,$$
where the last equality comes from exchanging the sum with the product and summing over each $\AO[{G[\V_k]}]$ independently.
\end{proof}

\begin{proposition}\label{prop:AcyclicPol:1sums}
Let $G = H\oplus_1 H'$ be a $1$-sum of graphs.
For any orientation $D$ of $G$, one naturally has $D = D[H]\oplus_1 D[H']$, and obtains
$\Psi(D; t) = \Psi(D[H]; t) \cdot \Psi(D[H']; t)\,.$
\end{proposition}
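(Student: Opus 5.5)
The plan mirrors the proof of \Cref{prop:AcyclicPol:Disconnected}: we build a rank-preserving bijection between $\AO$ and $\AO[H]\times\AO[H']$. Let $x$ be the cut node shared by $H$ and $H'$ in $G = H\oplus_1 H'$. Since $E(G) = E(H)\sqcup E(H')$, every orientation $A$ of $G$ splits uniquely as a pair $(A[H], A[H'])$ of orientations of $H$ and $H'$. Conversely, any such pair glues back to an orientation of $G$. So $A\mapsto (A[H], A[H'])$ is a bijection $\c O(G)\to \c O(H)\times\c O(H')$.

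The key step is to show that this bijection restricts to acyclic orientations, i.e.\ that $A$ is acyclic if and only if both $A[H]$ and $A[H']$ are. One direction is immediate: a directed cycle in $A[H]$ or $A[H']$ is already a directed cycle in $A$. For the converse, take a directed cycle $i_1,\dots,i_r$ in $A$, where all nodes are distinct by definition. If it used edges of both $E(H)$ and $E(H')$, then along the cycle there would be at least two transitions between $H$-edges and $H'$-edges. Each transition happens at a node lying in both $\V(H)$ and $\V(H')$, and the only such node is $x$. Hence the cycle would pass through $x$ twice, contradicting distinctness. Therefore every directed cycle of $A$ lies entirely within $H$ or entirely within $H'$. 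I expect this to be the only step needing any care.

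Next comes the rank computation. Because the edge sets are disjoint, for a fixed base orientation $D$ we have $\rank_D A = \rank_{D[H]} A[H] + \rank_{D[H']} A[H']$. The symmetric difference $D\symmdiff A$ splits as $(D[H]\symmdiff A[H])\sqcup(D[H']\symmdiff A[H'])$, which gives this additivity. Substituting into the definition gives
$$\Psi(D;t)=\sum_{(A_1,A_2)\in\AO[H]\times\AO[H']} t^{\rank_{D[H]}A_1}\, t^{\rank_{D[H']}A_2}=\Psi(D[H];t)\cdot\Psi(D[H'];t).$$
The last equality factors the double sum exactly as in \Cref{prop:AcyclicPol:Disconnected}.

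Finally, note that the base orientation $D$ need not be acyclic, since $\rank_D$ is defined for any orientation. The edge-free convention $\Psi=1$ also handles the degenerate case where $H$ or $H'$ has no edge.
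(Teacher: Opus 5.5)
Your proof is correct and follows essentially the same route as the paper: a bijection $A \mapsto (A[H], A[H'])$ onto pairs of acyclic orientations, justified by the fact that no directed cycle can use arcs of both $H$ and $H'$, followed by additivity of the rank. The only difference is that you spell out why such a mixed cycle is impossible (two label transitions would force the cycle through the unique shared node twice), a step the paper states without proof.
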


\begin{proof}
The map $A \mapsto (A[H], A[H'])$ is a bijection between $\AO$ and $\AO[H]\times \AO[H']$ since it is impossible to create a (directed) cycle in $G$ using arcs of both $H$ and $H'$.
Moreover, $\rank_D A = \rank_{D[H]} A[H] + \rank_{D[H']} A[H']$.
The claim follows.
\end{proof}

Consequently, it is enough to compute acyclic polynomials of $2$-connected graphs in order to recover the acyclic polynomials of all graphs.

\subsubsection{Simplicial nodes, chordal graphs, complete graphs}

A graph $G$ is \defn{chordal} if all its induced cycles have length $3$.
On the other hand, a \defn{perfect elimination ordering} of a graph $G = (\V, E)$ with $|\V| = n$ is a bijection $\theta : [n] \to \V$ such that for all $i\in [n]$ the neighbors of $\theta(i)$ in $\{\theta(1), \dots, \theta(i-1)\}$ form a clique in $G$ (\ie are pairwise connected by an edge in $G$). We say that $\theta(i)$ is \defn{simplicial} in $G[\{\theta(1), \dots, \theta(i-1)\}]$. 
It is well-known, see \cite{Rose1970-EliminationProcess} that $G$ is chordal if and only if $G$ admits a perfect elimination ordering.

\begin{remark}\label{rem:hyperplane_setting}
For a graph $G = (\V, E)$ with an orientation $D$, the coefficients of the acyclic polynomial $\Psi(D; t)$ are the Whitney numbers of the second kind of the graphic hyperplane arrangement $\c H_G \eqdef \bigl(\{\b x\in \R^{\V} ~;~ x_i = x_j\} ~;~ ij\in E\bigr)$ with respect to the base chamber corresponding to $D$, \ie $\pol[C]_D \eqdef \{x_i \leq x_j ~;~ ij\in D\}$.
In particular, the graphic hyperplane arrangement of chordal graphs are known to be supersolvable.
In this context, the formula of \Cref{prop:AcyclicPol:ChordalGraph} is already known, see \cite[Thm~4.4 \& Cor.~4.5]{bjorneredelmanzieglerlattices}.

Moreover, Edelman and Reiner \cite{edelman-reiner} proved that the sequence of exponents of the graphic arrangement associated to a chordal graph is precisely the elimination-degree sequence defined thereafter.
As the sequence of exponents is an invariant of the hyperplane arrangement, the independence from a specific base chamber (\ie from a specific perfect elimination ordering) follows automatically, see \cite[Lemma~3.4]{edelman-reiner}.

Accordingly, we do not pretend that \Cref{prop:AcyclicPol:ChordalGraph,prop:AllPOEhaveSameDegreeSeq,cor:AcyclicPol:AllPEOyieldSamePsi} are new, but we give graph-theoretical proofs below for the sake of self-containment, and to showcase how to compute acyclic polynomials.
Besides, \Cref{lem:AcyclicPol:SimplicialNodeAdd,thm:AcyclicPol:LogConcaveCoeff:AddedSimplicialNode,cor:AcyclicPol:LogConcaveCoeff:ChordalGraph} seem not to appear in the existing literature, although we believe that other authors may have already thought about it.
Finally, \Cref{thm:ChordalGraphsAreGammaAlternating} is new, as far as we are aware.
\end{remark}

In order to prove \Cref{prop:AcyclicPol:ChordalGraph}, we first state the following more general lemma.
A node is called \defn{simplicial} if its neighbors form a clique.

\begin{lemma}\label{lem:AcyclicPol:SimplicialNodeAdd}
Let $G = (\V, E)$ be a graph with a simplicial node $v\in \V$ with $b$ neighbors. 
Let $D$ be an orientation of $G$ where all edges $uv$ are directed from $u$ to $v$, and let $D' = D\ssm\{v\}$.
Then:
$$\Psi(D; t) = \Psi(D'; t) \cdot (1 + t + \dots + t^b) = \Psi(D'; t) \cdot \frac{1 - t^{b+1}}{1 - t}\,.$$

In particular $\psi(G) = \Psi(D; 1) = \psi(G\ssm\{v\}) \cdot (b+1)$, and if $b$ is odd then $\sigma(D) = \Psi(D; -1) = 0$, if $b$ is even, then $\sigma(D) = \sigma(D')$.
\end{lemma}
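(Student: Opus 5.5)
The plan is to build an explicit bijection between $\AO$ and $\AO[G\ssm\{v\}]\times\{0,1,\dots,b\}$ that is compatible with the ranks. Let $u_1,\dots,u_b$ be the neighbors of $v$. They form a clique $K$, so every acyclic orientation $A'$ of $G\ssm\{v\}$ restricts to an acyclic orientation of $K$. That restriction is a total order on $K$, say $u_{\pi(1)}\to\dots$, since an acyclic tournament is transitive.

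The first step is to show that an extension of $A'$ to an orientation $A$ of $G$ is acyclic if and only if the set $S$ of neighbors $u$ with arc $v\to u$ is an up-set of this total order. In other words, $v$ is inserted at some position in the linear order of $K$.

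\begin{itemize}
\item \emph{Up-sets give acyclic extensions.} Any directed cycle in $A$ must pass through $v$, entering from some $x\notin S$ and leaving to some $y\in S$. Because $S$ is an up-set and $K$ is a clique, $x\to y$ is an arc of $A'$. So the cycle can be shortcut to a cycle of $A'$ that avoids $v$, a contradiction.
\item \emph{Acyclic extensions give up-sets.} If $S$ is not an up-set, there exist $x\to y$ in $K$ with $x\in S$ and $y\notin S$. Then $v\to x\to y\to v$ is a directed triangle.
\end{itemize}

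Hence each $A'$ has exactly $b+1$ acyclic extensions, indexed by $|S|\in\{0,\dots,b\}$.

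The second step is to track ranks. All arcs of $D$ at $v$ point into $v$, so reversing an edge $uv$ costs $1$ exactly when $u\in S$. Therefore
\[
\rank_D A=\rank_{D'}A'+|S|.
\]
Summing over $A'$ and over the $b+1$ up-sets, one of each size $0,\dots,b$, gives
\[
\Psi(D;t)=\Psi(D';t)\,(1+t+\dots+t^b).
\]
The closed form $\frac{1-t^{b+1}}{1-t}$ is the geometric sum.

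The last step is the specializations.
\begin{itemize}
\item At $t=1$ the factor is $b+1$, and $\Psi(D';1)=\psi(G\ssm\{v\})$.
\item At $t=-1$ the factor $\sum_{k=0}^b(-1)^k$ equals $0$ if $b$ is odd and $1$ if $b$ is even.
\end{itemize}

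The main subtlety is the first step: using the clique hypothesis to reduce acyclicity of the extension to the up-set condition. The shortcut argument handles this. Note also that $D'$ may itself be cyclic, but this does not matter, since ranks are defined for any base orientation.
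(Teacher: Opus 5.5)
Your proof is correct and takes the same approach as the paper: both biject $\AO$ with pairs consisting of an acyclic orientation $A'$ of $G\ssm\{v\}$ and a position of $v$ in the total order $A'$ induces on the clique. Both then factor the sum using $\rank_D A=\rank_{D'}A'+|S|$; the paper indexes by $j=b-|S|$, the number of arcs into $v$. Your shortcut argument also supplies a detail the paper only asserts, namely that every such insertion of $v$ yields an acyclic orientation of $G$.
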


\begin{proof}
The rightmost equality is immediate.
The values of $\psi(G)$ and $\sigma(D)$ come from direct evaluations.

% The proof of the leftmost equality is by induction.
% If $G$ has $2$ nodes sharing an edge, then its acyclic polynomial is $1 + t$ for all its orientations, and we have $b_1 = 0$ and $b_2 = 1$, which gives the equality.

To ease notation, let $G' \eqdef G\ssm \{v\}$ and $U \subseteq \V$ be the set of neighbors of $v$ in $G$, and recall that $b = |U|$.
For an acyclic orientation $A$ of $G$, let $A' = A \ssm\{v\}$ and let $j$ be the number of neighbors $u$ of $v$ with $u\to v$ in $A$.
The acyclic orientation $A$ is fully determined by $(A', j)$ because since $A'$ defines a total order on $U$ (as $U$ induces a clique in $G$) the acyclicity of $A$ implies that if $u\to v \in A$ for some $u \in U$, then $u'\to v\in A$ for all $u'\in U$ with $u'\to u\in A'$.
Hence the sole number of arcs pointing towards $v$ fully determines these arcs.
Reciprocally, for all acyclic orientations $A'$ of $G'$ and all $j\in\{0, \dots, b\}$, one can construct an acyclic orientation $A$ of $G$ by $A\ssm\{v\} = A'$ and orienting $u\to v$ for $u\in U$ among the $j$ first nodes of $U$ according to the total order of $U$ imposed by $A'$, and $v \to u$ for all other $u\in U$.
Consequently, $A \mapsto (A', j)$ is a bijection between acyclic orientations of $G$ and pairs of acyclic orientations of $G'$ and $j\in\{0, \dots, b_n\}$.
Finally, we have $\rank_D A = \rank_{D'} A' + (b - j)$ since $(b - j)$ of the edges $uv$ for $u\in U$ are not oriented from $u$ to $v$ in $A$.
Thus:
$$\Psi(D; t) = \sum_{A \in \AO} t^{\rank_D A} = \sum_{j=0}^{b}\sum_{A'\in \AO[G']} t^{\rank_{D'} A'} \cdot t^{b - j} = \Psi(D'; t)\cdot (1 + t + \dots + t^b) \,. ~\qedhere$$
\end{proof}

For a chordal graph $G = (\V, E)$ and a perfect elimination ordering $\theta$, we denote $\mathdefn{D_\theta}$ the orientation of $G$ induced by $\theta$, that is the orientation with arcs from $\theta(i)$ to $\theta(j)$ for all $i < j$ such that $\theta(i)\theta(j)\in E$.
Besides, we define the \defn{$i^{\text{th}}$ elimination degree} as the size of the clique of previous neighbors of $\theta(i)$, that is $\mathdefn{b_i} \eqdef \bigl|\{j\in [i-1] ~;~ \theta(i)\theta(j)\in E\}\bigr|$, where by convention $b_1 = 0$.
We call the sequence $\bigr(b_1, \dots, b_n\bigr)$ the \defn{elimination-degree sequence}.
Note that if $G$ is connected, then $b_2 = 1$.

\begin{proposition}\label{prop:AcyclicPol:ChordalGraph}
Let $G$ be a connected chordal graph with a perfect elimination ordering $\theta$, and let $D_\theta$ be its induced orientation, and $\bigl(b_1, \dots, b_n\bigr)$ its elimination-degree sequence.
Then:
$$\Psi(D_\theta; t) 
= \prod_{i=1}^n \bigl(1 + t + \dots +t^{b_i}\bigr)
= \prod_{i=1}^n \frac{1 - t^{b_i + 1}}{1 - t} \,.$$

In particular, $\psi(G) = \Psi(D_\theta; 1) = \prod_{i=1}^n (b_i+1)$ and $\sigma(D_\theta) = \Psi(D_\theta; -1) = 0$.
\end{proposition}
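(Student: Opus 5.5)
The plan is to argue by induction on $n = |\V|$ along the perfect elimination ordering $\theta$, peeling off the last node $\theta(n)$ and applying \Cref{lem:AcyclicPol:SimplicialNodeAdd} at each step.

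For the base case $n=1$, the graph has no edge, so $\Psi(D_\theta;t)=1$ by convention. This matches the empty-degree factor $1+\dots+t^{b_1}=1$, since $b_1=0$.

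For the inductive step, set $v\eqdef\theta(n)$. By definition of a perfect elimination ordering, the neighbors of $v$ in $\{\theta(1),\dots,\theta(n-1)\}$ form a clique, and since $v$ is last these are all its neighbors. So $v$ is simplicial in $G$ with exactly $b_n$ neighbors. In $D_\theta$ every arc between $v$ and a neighbor $\theta(j)$ (with $j<n$) is directed $\theta(j)\to v$, which is exactly the hypothesis of \Cref{lem:AcyclicPol:SimplicialNodeAdd}. That lemma then gives
$$\Psi(D_\theta;t)=\Psi(D_\theta\ssm\{v\};t)\cdot(1+t+\dots+t^{b_n}).$$

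Next I check that the restriction behaves correctly. The map $\theta|_{[n-1]}$ is a perfect elimination ordering of $G'\eqdef G\ssm\{v\}$, and $D_\theta\ssm\{v\}$ is exactly $D_{\theta|_{[n-1]}}$. The elimination degrees $b_1,\dots,b_{n-1}$ are unchanged, because $b_i$ only involves earlier nodes. The connectivity hypothesis is not really needed, since the lemma never uses it; alternatively, $G'$ is connected because all neighbors of $v$ are pairwise adjacent. Applying the induction hypothesis to $G'$ yields the product formula, and the rational form follows from $1+\dots+t^{b}=\frac{1-t^{b+1}}{1-t}$.

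For the evaluations, setting $t=1$ gives $\prod_i(b_i+1)$. For $t=-1$, I use the remark just before the proposition: if $G$ is connected with $n\geq 2$, then $b_2=1$, so the factor $1+t$ appears and vanishes at $t=-1$, giving $\sigma(D_\theta)=0$. Connectivity is used only here, together with $n\geq2$; the single-node case has no edges and should be excluded or treated as degenerate.

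I do not expect a real obstacle. The only points to verify carefully are that removing the last node preserves both the perfect elimination property and the induced orientation, and that the orientation hypothesis of the lemma (all arcs pointing into $v$) holds.
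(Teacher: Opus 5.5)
Your proof is correct and is essentially the paper's argument: apply \Cref{lem:AcyclicPol:SimplicialNodeAdd} inductively along the prefixes $\{\theta(1),\dots,\theta(i)\}$ of the perfect elimination ordering, and get $\sigma(D_\theta)=0$ from the factor $1+t$ contributed by $b_2=1$. You are also right that the single-node case must be excluded, since there $\sigma=1$; the paper leaves this implicit.
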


\begin{proof}
Inductive application of \Cref{lem:AcyclicPol:SimplicialNodeAdd} on the sequence of induced subgraphs on $\{\theta(1), \dots, \theta(i)\}$ in which $\theta(i)$ is a simplicial node with $b_i$ neighbors.
For $\sigma(D_\theta)$, note that $b_2 = 1$ is odd.
\end{proof}

\begin{example}\label{exm:AcyclicPol:CompleteGraphs}
In particular, for the complete graph $K_n$, which is a chordal graph, the acyclic orientations are in bijection with the permutations of $[n]$.
Fixing $D$ to be any acyclic orientation of $K_n$, the number of acyclic orientations of $K_n$ of a given rank $k$ is the number of permutations on $n$ elements with $k$ inversions.
These are counted by the Mahonian numbers $T_{n, k}$ \OEIS{A008302}.
In particular, \Cref{prop:AcyclicPol:ChordalGraph} helps us to retrieve the famous formula:
$$\Psi(D; t) = \sum_{k=0}^{n(n-1)/2} T_{n, k}\,t^k = \prod_{j = 1}^n \frac{1-t^j}{1-t}\,.$$
We recover that the number of acyclic orientations of $K_n$ is $\psi(K_n) = \Psi(D; 1) = n!$, while its signature is $\sigma_D(K_n) = \Psi(D; -1) = 0$ for $n\geq 2$.
\end{example}

One may wonder what happens when we change the chosen perfect elimination ordering of a given graph $G$.
Luckily, the acyclic polynomial remains unchanged.

\begin{proposition}\label{prop:AllPOEhaveSameDegreeSeq}
Let $G$ be a connected chordal graph.
Up to a permutation, the elimination-degree sequence does not depend on the perfect elimination ordering.
\end{proposition}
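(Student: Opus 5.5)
The plan is to read off the multiset $\{b_1, \dots, b_n\}$ from an invariant of $G$ that does not mention $\theta$: the product $\prod_{i=1}^n (1 + t + \dots + t^{b_i})$ is not obviously such an invariant, since it depends on the chosen base orientation $D_\theta$. We therefore use a different quantity, namely the chromatic polynomial. For a perfect elimination ordering $\theta$, colour the nodes greedily in the order $\theta(1), \theta(2), \dots, \theta(n)$ with $c$ colours. When we reach $\theta(i)$, its previously coloured neighbours form a clique of size $b_i$, so they carry $b_i$ pairwise distinct colours. Hence there are exactly $c - b_i$ choices for $\theta(i)$, and we obtain
$$\chi(G; c) = \prod_{i=1}^n (c - b_i)\,.$$

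The left-hand side depends only on $G$. The polynomial $\prod_{i}(x - b_i)$ therefore equals $\chi(G; x)$ for every perfect elimination ordering. By unique factorisation in $\R[x]$ (a monic polynomial determines its multiset of roots), the multiset $\{b_1, \dots, b_n\}$ is the multiset of roots of $\chi(G;x)$ counted with multiplicity. It is thus independent of $\theta$, which is exactly the claim up to permutation.

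The only step needing care is the counting argument. We argue by induction on $i$: the number of proper colourings of $G[\{\theta(1), \dots, \theta(i)\}]$ equals $\prod_{j\le i}(c - b_j)$ for all integers $c \geq \max_j b_j$. Each colouring of the smaller induced subgraph extends in exactly $c - b_i$ ways, because the forbidden colours are precisely those of the $b_i$ earlier neighbours, and these are distinct since the neighbours form a clique. Two polynomials agreeing on infinitely many integers are equal, which promotes the identity to polynomials.

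I expect no real obstacle; connectedness is not even needed. Alternatively, one could stay within the paper's language and note that $\prod_i(1 + t + \dots + t^{b_i})$ evaluated at $t = 1$ only gives $\prod_i (b_i + 1)$, which is insufficient. This is why I would use the chromatic polynomial, mentioned in \Cref{rmk:ChromaticPolynomial}, rather than $\Psi$.
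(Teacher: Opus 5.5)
Your proof is correct, but it follows a different route from the paper.

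The paper argues by local moves. It considers the graph on perfect elimination orderings in which two orderings are adjacent when they differ by swapping two consecutive positions, and imports from Chandran, Ibarra, Ruskey and Sawada the fact that this graph is connected. It then checks that a single swap only permutes the elimination-degree sequence. When $\theta(j)\theta(j+1)\in E$, this check uses that both orderings are perfect, which forces $b_{j+1}=b_j+1$.

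You instead produce a global invariant. Greedy colouring along $\theta$ gives $\chi(G;x)=\prod_{i}(x-b_i)$, so the multiset $\{b_1,\dots,b_n\}$ is the multiset of roots of the chromatic polynomial.

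Your approach is short and self-contained: it needs no external connectivity theorem for the space of orderings. It does not use connectedness of $G$, and it characterizes the multiset intrinsically in terms of $G$. It also proves directly the factorization $\chi(G;x)=\prod_i(x-b_i)$, which the paper only quotes afterwards from the theory of supersolvable arrangements.

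The paper's approach, in return, stays within the combinatorics of orderings. It describes exactly how the sequence transforms under elementary moves between perfect elimination orderings, which matches the flip-graph viewpoint used throughout the paper.
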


\begin{proof}
For a given $G = (\V, E)$, consider the graph $\c P(G)$ whose nodes are the perfect elimination orderings of $G$ and where two perfect elimination orderings $\theta$ and $\theta'$ are linked by an edge if $\theta' = \theta \circ (j~\,~ j\!+\!1)$ where $(j~\,~ j\!+\!1)$ denotes a transposition of $[n]$ (\ie $\c P(G)$ is the subgraph of the permutahedral graph induced on the perfect elimination orderings of $G$).

According to \cite{ChandranIbarraRskeySawada03-PerfectEliminationOrderings}, $\c P(G)$ is connected:
the authors even prove that $\c P(G)$ is a Hamiltonian graph by expliciting a Gray code.
Hence, it is enough to prove that if $\theta$ and $\theta'$ are linked by an edge in $\c P(G)$, then their associated elimination degree sequences are the same up to permutation.
To this end, fix a perfect elimination ordering $\theta$ of $G$ and $j\in [n-1]$ such that $\theta' = \theta \circ (j~\,~ j\!+\!1)$ is also a perfect elimination ordering.
We denote $b_i$ (resp. $b'_i$) the $i^{\text{th}}$ elimination degree of $\theta$ (resp. of $\theta'$).
If $\theta(j)\theta(j+1)\notin E$, then $b_i = b'_i$ for all $i\notin\{j, j+1\}$, and $b'_j = b_{j+1}$ and $b'_{j+1} = b_j$ since swapping the order of $\theta(j)$ and $\theta(j+1)$ does not change the clique of previous nodes of any node.
If $\theta(j)\theta(j+1)\in E$, then $b_i = b'_i$ for all $i\in[n]$ since swapping the order of $\theta(j)$ and $\theta(j+1)$ only changes the cliques of previous nodes of $\theta(j)$ (whose size increases by $1$) and of $\theta(j+1)$ (whose size decreases by $1$) and their sizes have a difference of $1$.
In both cases, $b'$ is a permutation of $b$, which proves the claim.
\end{proof}

\begin{corollary}\label{cor:AcyclicPol:AllPEOyieldSamePsi}
Let $G$ be a connected chordal graph with two perfect elimination orderings $\theta$ and $\theta'$.
Then $\Psi(D_\theta; t) = \Psi(D_{\theta'}; t)$.
\end{corollary}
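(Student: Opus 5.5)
The plan is to combine \Cref{prop:AcyclicPol:ChordalGraph} with \Cref{prop:AllPOEhaveSameDegreeSeq}. The acyclic polynomial attached to a perfect elimination ordering depends only on the multiset of its elimination degrees, and that multiset is independent of the ordering.

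First, apply \Cref{prop:AcyclicPol:ChordalGraph} to $\theta$ and to $\theta'$ separately. Write $(b_1, \dots, b_n)$ and $(b'_1, \dots, b'_n)$ for the elimination-degree sequences of $\theta$ and $\theta'$. This gives
$$\Psi(D_\theta; t) = \prod_{i=1}^n \bigl(1 + t + \dots + t^{b_i}\bigr) \quad\text{and}\quad \Psi(D_{\theta'}; t) = \prod_{i=1}^n \bigl(1 + t + \dots + t^{b'_i}\bigr).$$
Second, invoke \Cref{prop:AllPOEhaveSameDegreeSeq}, which provides a permutation $\pi$ of $[n]$ with $b'_i = b_{\pi(i)}$ for all $i\in[n]$. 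Since multiplication in $\Z[t]$ is commutative, reindexing the second product by $\pi$ turns it into the first, and the two polynomials coincide.

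There is no real obstacle here, since all the combinatorial content sits in \Cref{prop:AllPOEhaveSameDegreeSeq}. That proposition relies on the connectivity of the graph of perfect elimination orderings from \cite{ChandranIbarraRskeySawada03-PerfectEliminationOrderings}, together with the local check that adjacent orderings have elimination-degree sequences equal up to a permutation. The only point to verify is that the hypotheses match: $G$ must be connected and chordal, and both $D_\theta$ and $D_{\theta'}$ must be the orientations induced by perfect elimination orderings, so that \Cref{prop:AcyclicPol:ChordalGraph} applies to each.
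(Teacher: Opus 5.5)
Your proposal is correct and follows exactly the paper's route: the paper's proof is the same direct combination of \Cref{prop:AcyclicPol:ChordalGraph} (product formula over the elimination degrees) with \Cref{prop:AllPOEhaveSameDegreeSeq} (the elimination-degree sequence is invariant up to permutation). Reindexing the commutative product, as you do, is precisely what that combination amounts to.
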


\begin{proof}
Direct application of \Cref{prop:AcyclicPol:ChordalGraph,prop:AllPOEhaveSameDegreeSeq}.
\end{proof}

\begin{remark}
For graphic hyperplane arrangements, it is known that the characteristic polynomial of the arrangement coincides with the chromatic polynomial of the graph, see \cite[Section~2.4]{OrlikTerao}.
Thus, we know that $\chi(G, x) = \prod_{i = 1}^n (x - b_i)$ according to the Factorization Theorem \cite[Theorem 4.6.21]{OrlikTerao}. In particular, this again shows that the chromatic polynomial and the acyclic polynomials (of perfect elimination orientations) are two different polynomials.
\end{remark}

We can further explore the distribution of the coefficients of the polynomial $\Psi(D_{\theta}; t)$ for $\theta$ a perfect elimination ordering of a chordal graph $G$.

\medskip

A sequence of positive integers $\b a = (a_1, \dots, a_s)$ is \defn{log-concave} if for all $i\in [s-1]$ we have $a_{i+1}^2 \geq a_i\cdot a_{i+2}$.
Such a sequence is \defn{unimodal} if there exists $m\in[s]$, called the \defn{mode} of $\b a$, such that $(a_1, \dots, a_m)$ is increasing, while $(a_m, \dots, a_s)$ is decreasing.
A notorious property of log-concave sequences is that they are unimodal.
Furthermore, if the polynomial $\sum_{i=1}^s a_i t^i$ is \defn{real-rooted} (\ie does not have complex non-real roots), then $\b a$ is log-concave, and hence unimodal.
We refer to Br\"and\'en's handbook \cite{Branden2015-UnimodalityLogConcavityRealRootednessGammaPositivity} for all the ``well-known'' facts on unimodality, log-concavity, real-rootedness, and, later, $\b \gamma$-vectors.

For chordal graphs endowed with the orientation induced by a perfect elimination ordering, the coefficients of the acyclic polynomial form a log-concave sequence.
%A more general statement is:

\begin{theorem}\label{thm:AcyclicPol:LogConcaveCoeff:AddedSimplicialNode}
Let $G = (\V, E)$ be a graph with a simplicial node $v\in \V$.
Let $D$ be an orientation of $G$ where all edges $uv$ are directed from $u$ to $v$, and let $D' = D\ssm\{v\}$.
If the coefficients of the polynomial $\Psi(D'; t)$ form a log-concave (hence unimodal) sequence, then so do the coefficients of the polynomial $\Psi(D; t)$.
\end{theorem}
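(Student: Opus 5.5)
By \Cref{lem:AcyclicPol:SimplicialNodeAdd}, the situation is $\Psi(D; t) = \Psi(D'; t)\cdot(1 + t + \dots + t^b)$, where $b$ is the number of neighbors of $v$. So the statement reduces to a purely algebraic fact: the product of a polynomial with log-concave coefficients and the polynomial $1 + t + \dots + t^b$ again has log-concave coefficients. My plan is to prove this via the classical result that the convolution of two log-concave sequences with no internal zeros is log-concave (Hoggar, or \cite{Branden2015-UnimodalityLogConcavityRealRootednessGammaPositivity}).

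\textbf{Step 1: check the hypotheses on both factors.} The sequence $(1, 1, \dots, 1)$ of length $b+1$ is trivially log-concave and has no internal zeros.

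For $\Psi(D'; t)$, log-concavity is given by assumption, so what remains is to argue there are no internal zeros between its lowest and highest nonzero coefficients. I expect to get this from the structure of $\c{AO}(D')$ as a graded poset: every acyclic orientation other than those of minimal rank can be flipped along one arc to an acyclic orientation of rank one less. Concretely, take an arc of $A$ disagreeing with $D'$ and choose it so that reversing it creates no directed cycle; this is always possible, as in the connectedness of the tope graph.

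More simply, I will read ``sequence of positive integers'' in the paper's definition of log-concavity as already guaranteeing that the coefficients are positive on their support interval. The proof will then just note that the product's coefficients are positive on their support interval.

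\textbf{Step 2: the convolution inequality.} Write $c_k = \sum_{i=k-b}^{k} a_i$, a sliding window sum of the sequence $a$ of coefficients of $\Psi(D'; t)$. We need $c_k^2 \geq c_{k-1}c_{k+1}$. I would prove this directly, rather than citing, as follows.

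Expand $c_k^2 - c_{k-1}c_{k+1}$ as a sum over pairs of indices $(i,j)$. After cancellation, it becomes a sum of terms of the form $a_i a_j - a_{i-1}a_{j+1}$ with $i \le j$. Each such term is nonnegative, because log-concavity with no internal zeros implies $a_i a_j \geq a_{i-1}a_{j+1}$ whenever $i\le j$; this follows by chaining the ratios $a_{m+1}/a_m$, which are nonincreasing.

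\textbf{Main obstacle.} The bookkeeping of this cancellation in Step 2 is the main obstacle. Equivalently, one can view $c$ as $a$ convolved with a log-concave sequence and invoke the standard lemma that log-concavity is preserved under convolution of positive sequences without internal zeros. I would cite that lemma from Brändén's handbook to avoid the computation, adding only the remark about boundary indices (zeros outside the support). Unimodality then follows automatically from log-concavity.
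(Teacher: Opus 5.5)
Your proposal is correct and is essentially the paper's proof. The paper factors $\Psi(D;t)=\Psi(D';t)\,(1+t+\dots+t^b)$ by \Cref{lem:AcyclicPol:SimplicialNodeAdd} and simply cites that products of log-concave (positive) coefficient sequences are log-concave. Your window-sum check is a valid proof of that cited fact: the terms indexed by $(i,j)$ and $(j+1,i-1)$ with $i\geq j+2$ cancel, leaving only terms $a_ia_j-a_{i-1}a_{j+1}$ with $i\leq j$. Relying on the positivity convention for the absence of internal zeros matches the paper's level of rigor.

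Do not use your alternative justification, though. It is false, when $D'$ is cyclic, that every acyclic orientation of non-minimal rank can be flipped to one of rank one less. On $K_4$, take $A$ the transitive orientation $1<2<3<4$ and $D'$ obtained from $A$ by reversing $13$ and $24$. Neither disagreeing arc of $A$ can be reversed without creating a directed cycle, yet reversing $23$ in $D'$ gives an acyclic orientation of rank $1<2=\rank_{D'}A$. The correct reason the support of $\Psi(D';t)$ is an interval is that $\c{AO}(G\ssm\{v\})$ is connected and each of its edges changes $\rank_{D'}$ by exactly $1$.
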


\begin{proof}
Recall that if the coefficients of both polynomials $p(t)$ and $q(t)$ form log-concave sequences, then the coefficients of the polynomial $p(t)\cdot q(t)$ also form a log-concave sequence.
Note that, for all $b$, the coefficients of $1 + t + \dots + t^b$ are all $1$, hence form a log-concave sequence.
Consequently, by \Cref{lem:AcyclicPol:SimplicialNodeAdd}, if the coefficients of $\Psi(D'; t)$ form a log-concave sequence, then the coefficients of the product $\Psi(D'; t)\cdot (1 + t + \dots +t^b)$ form a log-concave sequence, whatever the value of $b$.
\end{proof}

\begin{corollary}\label{cor:AcyclicPol:LogConcaveCoeff:ChordalGraph}
Let $G$ be a connected chordal graph with a perfect elimination ordering $\theta$.
Then the coefficients of the polynomial $\Psi(D_\theta; t)$ form a log-concave (hence unimodal) sequence.
\end{corollary}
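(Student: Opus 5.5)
We argue by induction along the perfect elimination ordering $\theta$, repeatedly applying \Cref{thm:AcyclicPol:LogConcaveCoeff:AddedSimplicialNode}. For $i\in[n]$, let $G_i \eqdef G[\{\theta(1), \dots, \theta(i)\}]$ and let $D_i \eqdef D_\theta[\{\theta(1), \dots, \theta(i)\}]$ be the induced orientation. By definition of a perfect elimination ordering, $\theta(i)$ is simplicial in $G_i$. By definition of $D_\theta$, every edge $\theta(j)\theta(i)$ with $j<i$ is directed from $\theta(j)$ to $\theta(i)$, so all edges at $\theta(i)$ in $G_i$ point towards $\theta(i)$. Moreover $D_i\ssm\{\theta(i)\} = D_{i-1}$. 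Hence the hypotheses of \Cref{thm:AcyclicPol:LogConcaveCoeff:AddedSimplicialNode} hold at each step $i\geq 2$.

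For the base case, $G_1$ is a single node, so $\Psi(D_1;t) = 1$ by convention; its coefficient sequence $(1)$ is trivially log-concave. The theorem then propagates log-concavity from $\Psi(D_{i-1};t)$ to $\Psi(D_i;t)$, and after $n-1$ steps we obtain the claim for $\Psi(D_n;t) = \Psi(D_\theta;t)$.

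Alternatively, one can argue directly from \Cref{prop:AcyclicPol:ChordalGraph}. There $\Psi(D_\theta;t)$ is a product of the polynomials $1+t+\dots+t^{b_i}$, each of which has constant coefficients and is therefore log-concave. Products of log-concave sequences of positive numbers with no internal zeros are again log-concave.

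The only delicate point is precisely this "no internal zeros" condition, which the product closure fact (as invoked in the proof of \Cref{thm:AcyclicPol:LogConcaveCoeff:AddedSimplicialNode}) implicitly requires. I would check it by observing that all coefficients of every factor are $1$ on a contiguous range, so every partial product has strictly positive coefficients on a contiguous range $0,\dots,\sum_{j\le i} b_j$. Unimodality then follows from log-concavity of a positive sequence.
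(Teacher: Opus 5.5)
Your proof is correct and follows the paper's argument: the paper also inducts along the prefixes $G[\{\theta(1),\dots,\theta(i)\}]$, applying \Cref{thm:AcyclicPol:LogConcaveCoeff:AddedSimplicialNode} at each step; it starts the induction at $i=2$ with $1+t$ rather than at $i=1$ with $1$, which makes no difference. Your remark on the no-internal-zeros condition is a useful precision that the paper leaves implicit, since it defines log-concavity only for sequences of positive integers.
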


\begin{proof}
Inductive application of \Cref{thm:AcyclicPol:LogConcaveCoeff:AddedSimplicialNode} on the induced subgraphs on $\{\theta(1), \dots, \theta(i)\}$.
For $i = 2$, this induced subgraph is an edge whose acyclic polynomial $1 + t$ has a log-concave sequence of coefficients.
\end{proof}

\begin{remark}

The Mahonian numbers $T_{n, k}$ seen in \Cref{exm:AcyclicPol:CompleteGraphs} are log-concave (in $k$) according to \Cref{cor:AcyclicPol:LogConcaveCoeff:ChordalGraph} (see \cite{GhemitAhmia-MahonianNumbersLogConcave} for more details), even though the polynomial $\sum_{k=0}^{n(n-1)/2} T_{n, k}\,t^k = \prod_{j = 1}^n \frac{1-t^j}{1-t}$ is obviously not real-rooted.

However, note that the coefficients of $\Psi(D; t)$ given by $\circled{6}$ of \Cref{fig:AcyclicPolyK23}, where $G$ is the complete bipartite graph $K_{2, 3}$, do not form a log-concave (not even a unimodal) sequence.
This motivates the following question.
\end{remark}

\begin{question}
Classify the graphs $G$ and orientations $D$ such that $\Psi(D; t)$ is real-rooted, and the ones whose coefficients form a log-concave sequence, or a unimodal sequence.
\end{question}

\begin{definition}
If $\b a = (a_0, \dots, a_d)$ is a palindromic sequence of length $d$, then one can rewrite the associated polynomial in the so-called \defn{$\gamma$-basis}, that is:
$$\sum_{k = 0}^d a_k t^k = \sum_{j = 0}^{\left\lfloor \frac{d}{2} \right\rfloor} \gamma_j t^j (1+t)^{d-2j} \,,$$
where $\mathdefn{\b\gamma} \eqdef (\gamma_0, \dots, \gamma_{\left\lfloor \frac{d}{2} \right\rfloor})$ is called the \defn{$\gamma$-vector} associated to $\b a$.
\end{definition}

It is well-known that if $\b a$ is \defn{$\gamma$-positive}, that is if $\gamma_j \geq 0$ for all $j \in \{0, \dots, \left\lfloor \frac{d}{2} \right\rfloor\}$, then $\b a$ is unimodal.
Besides, if $\sum_{k = 0}^d a_k t^k$ is real-rooted, then $\b a$ is $\gamma$-positive.

The coefficients of $\Psi(D_{\theta}; t)$ are palindromic for a chordal graph $G$ with a perfect elimination ordering $\theta$, as can be seen from \Cref{prop:AcyclicPol:ChordalGraph}, and we will prove that this actually holds for any directed graph in \Cref{cor:PsiIsPalindromic}. 
Hence, one can wonder whether the coefficients of $\Psi(D; t)$ form a $\gamma$-positive sequence for some digraph $D$.
We give a negative but surprising answer in the case of chordal graphs:
We say that a sequence $\b a$ is \defn{$\gamma$-alternating} if the sign of $(-1)^j\cdot\gamma_j$ does not depend on $j\in \{0, \dots, \left\lfloor \frac{d}{2} \right\rfloor\}$.

\begin{theorem}\label{thm:ChordalGraphsAreGammaAlternating}
Let $G$ be a connected chordal graph with a perfect elimination ordering $\theta$.
Then the sequence of coefficients of the polynomial $\Psi(D_\theta; t)$ is $\gamma$-alternating.
\end{theorem}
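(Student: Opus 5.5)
The plan is to reduce everything to a single factor, using the product formula of \Cref{prop:AcyclicPol:ChordalGraph}:
$$\Psi(D_\theta; t) = \prod_{i=1}^n \bigl(1 + t + \dots + t^{b_i}\bigr).$$
The two ingredients are that $\gamma$-expansions are multiplicative and that each factor $[b+1]_t \eqdef 1 + t + \dots + t^b$ has an explicit, sign-alternating $\gamma$-vector.

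\textbf{Step 1 (multiplicativity).} For a palindromic polynomial $p$ of degree $d$ centred at $d/2$, define its $\gamma$-polynomial $\Gamma_p(x) \eqdef \sum_j \gamma_j x^j$. Then
$$p(t) = (1+t)^d\, \Gamma_p\!\left(\tfrac{t}{(1+t)^2}\right).$$
Now take $p$ of degree $d$ and $q$ of degree $d'$, both written this way. Multiplying the two expressions gives
$$p(t)\,q(t) = (1+t)^{d+d'}\, \Gamma_p\!\left(\tfrac{t}{(1+t)^2}\right) \Gamma_q\!\left(\tfrac{t}{(1+t)^2}\right).$$
Since the product $pq$ is palindromic of degree $d+d'$, it follows that $\Gamma_{pq} = \Gamma_p \Gamma_q$. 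Note that the degree $d = \sum_i b_i$ of $\Psi(D_\theta;t)$ is exactly the sum of the factor degrees, so the centres of symmetry match.

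Being $\gamma$-alternating is equivalent to the polynomial $\Gamma(-x)$ having all coefficients of one sign, and we may normalise that sign to be nonnegative. This property is preserved under products. So it suffices to prove that each factor $[b+1]_t$ is $\gamma$-alternating, with $\Gamma_{[b+1]}(-x)$ having nonnegative coefficients.

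\textbf{Step 2 (one factor).} I claim the following identity:
$$1 + t + \dots + t^b = \sum_{j=0}^{\lfloor b/2\rfloor} (-1)^j \binom{b-j}{j}\, t^j (1+t)^{b-2j}.$$
To prove it, I will induct on $b$, using the base cases $b = 0$ and $b = 1$ and the three-term recurrence
$$[b+1]_t = (1+t)\,[b]_t - t\,[b-1]_t.$$
The recurrence can be checked directly, since $(1+t)[b]_t = [b]_t + (t + \dots + t^b)$ and subtracting $t[b-1]_t = t + \dots + t^{b-1}$ leaves $[b]_t + t^b$. Substituting the inductive hypothesis into the recurrence, the coefficient of $t^j(1+t)^{b-2j}$ becomes
$$(-1)^j\left[\binom{b-1-j}{j} + \binom{b-1-j}{j-1}\right] = (-1)^j \binom{b-j}{j}$$
by Pascal's rule. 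Hence $\gamma_j\bigl([b+1]_t\bigr) = (-1)^j \binom{b-j}{j}$. These binomials are strictly positive for $0 \le j \le \lfloor b/2\rfloor$, so $\Gamma_{[b+1]}(-x)$ has positive coefficients.

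\textbf{Step 3 (conclusion).} Combining the previous steps, $\Gamma_{\Psi}(-x) = \prod_i \Gamma_{[b_i+1]}(-x)$ is a product of polynomials with positive coefficients and nonzero constant term. Its coefficients are therefore positive in every degree up to $\sum_i \lfloor b_i/2\rfloor$. This number is at most $\lfloor d/2\rfloor$, and any remaining $\gamma_j$ vanish. Hence $(-1)^j \gamma_j \geq 0$ for all $j$, which is the claim.

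The only delicate point is the convention for ``the sign does not depend on $j$'' when some $\gamma_j = 0$. This happens when several $b_i$ are odd, because then $\sum_i \lfloor b_i/2\rfloor < \lfloor d/2\rfloor$. I will state the result as $(-1)^j\gamma_j \ge 0$ for all $j$, with strict positivity up to $\sum_i \lfloor b_i/2\rfloor$. The main computational step is the binomial identity in Step 2; the recurrence makes it routine.
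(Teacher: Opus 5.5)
Your proof is correct and follows the paper's route. Both arguments use the product formula of \Cref{prop:AcyclicPol:ChordalGraph}, the multiplicativity of $\gamma$-vectors, and the explicit $\gamma$-vector $\gamma_j=(-1)^j\binom{b-j}{j}$ of $1+t+\dots+t^b$, and your reading of $\gamma$-alternating as $(-1)^j\gamma_j\ge 0$ is also what the paper's proof actually establishes. The only difference is that you verify the single-factor identity by direct induction on $[b+1]_t=(1+t)[b]_t-t[b-1]_t$, which is the bivariate Fibonacci recurrence with $s=1+t$, $x=-t$, together with Pascal's rule; the paper instead goes through the diagonal-sum formula and the Binet formula, and your version is more elementary.
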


\begin{proof}
We first prove that the $\gamma$-coefficients of the sequence $(1, 1, \dots, 1)$ of length $b$ is $\gamma_j = (-1)^j \cdot \binom{b-j}{j}$ and that products of $\gamma$-alternating sequences are $\gamma$-alternating. Then we conclude the proof using \Cref{prop:AcyclicPol:ChordalGraph}. 

Let $F_n(s, x)$ be the $n^{\text{th}}$ bivariate Fibonacci polynomial, that is $F(s, x) = s\cdot F_{n-1}(s, x) + x\cdot F_{n-2}(s, x)$, and $F_0(s, x) = 0$, $F_1(s, x) = 1$.
Using the Binet formula, we get $F_n(s, x) = \frac{\alpha(s, x)^n - \beta(s, x)^n}{\alpha(s, x) - \beta(s, x)}$ where $\alpha(s, x)$ and $\beta(s, x)$ are the roots of $Z^2 - sZ - x$.
The usual ``sum of diagonals'' argument on the Fibonacci numbers yields
$F_{b+1}(s, x) = \sum_{j=0}^{\lfloor b/2\rfloor}\binom{b-j}{j} s^{b-2j}x^j$.
In particular
$$\sum_{j=0}^{\lfloor b/2\rfloor} (-1)^j\binom{b-j}{j} t^j (1+t)^{b-2j}
= (1+t)^b \sum_{j=0}^{\lfloor b/2\rfloor} \binom{b-j}{j} 1^{b-2j} \left(\frac{-t}{(1+t)^2}\right)^j
= (1+t)^b F_{b+1}\left(1, \frac{-t}{(1+t)^2}\right).$$

Using $\alpha(1, -x) = \frac{1 + \sqrt{1-4x}}{2}$ and $\beta(1, -x) = \frac{1 - \sqrt{1-4x}}{2}$, and evaluating for $x = \frac{t}{(1+t)^2}$, we get (after several simplifications):
$(1+t)^b F_{b+1}\left(1, \frac{-t}{(1+t)^2}\right) = \frac{1 - t^{b+1}}{1 - t} = 1+t+t^2+\dots+t^b$.
Hence, $\gamma_j = (-1)^j \cdot \binom{b-j}{j}$ is indeed the $\gamma$-vector of the sequence $(1, 1, \dots, 1)$ of length $b$. This sequence is thus $\gamma$-alternating.

Besides, if $(-1)^j \gamma_j \geq 0$ and $(-1)^k \delta_k \geq 0$ for all $j, k\geq 0$, then
$$\left(\sum_j \gamma_j t^j (1+t)^{b-2j}\right)\cdot\left(\sum_k \gamma_k t^k (1+t)^{c-2k}\right) = \sum_{p} \left(\sum_{j = 0}^p\gamma_j\cdot\delta_{p-j}\right) t^p (1+t)^{b+c-2p} \,,$$
where for all $j\in\{0, \dots, p\}$, $(-1)^p \gamma_j\cdot\delta_{p-j} \geq 0$, so $(-1)^p\cdot \left(\sum_{j=0}^p \gamma_j\cdot\delta_{p-j}\right) \geq 0$.
Hence if two sequences are $\gamma$-alternating, then their product is also $\gamma$-alternating.

By \Cref{prop:AcyclicPol:ChordalGraph}, the sequence of coefficients of $\Psi(D_{\theta}; t)$ is a product of $\gamma$-alternating sequences:
it is $\gamma$-alternating.
\end{proof}

\begin{remark}
Note that we are not the first to describe a $\gamma$-alternating behavior:
in \cite[Corollary~7.10]{DAliVenturello-KozulGorensteinAlgebrasAndAlternatingGammaVectors}, the authors show that for a $(d-1)$-dimensional Cohen--Macaulay sphere $\Delta$ with positive $h$-vectors, the ring $R_\Delta$ they define is $\gamma$-alternating.
\end{remark}

% Since we will prove in \Cref{cor:PsiIsPalindromic} that $\Psi(D; t)$ is always palindromic of length $|E|$, we can ask the following question:

\begin{question}
Classify the graphs $G$ and orientations $D$ such that the coefficients of $\Psi(D; t)$ form a $\gamma$-positive or a $\gamma$-alternating sequence.
\end{question}

\subsubsection{Decomposition of the acyclic polynomial and reversion of the direction of all arcs}

In order to propose some deletion-contraction formulas, we will need to split the acyclic polynomial according to the following notion:

\begin{definition}\label{def:Direction}
For two distinct \nodes~ $u, v\in \V$ in a graph $G = (\V, E)$, and an acyclic orientation $A$ of $G$, we define the \defn{direction} between $u$ and $v$ in $A$ as:
$$\mathdefn{A(u, v)} \eqdef \left\{\begin{array}{cl}
    +1 & \text{ if there exists a directed path from } u \text{ to } v \text{ in } A, \\
    -1 & \text{ if there exists a directed path from } v \text{ to } u \text{ in } A, \\
    0 & \text{ otherwise }
\end{array}\right.$$

We allow such a directed path to be composed of a single edge.
Besides: $A(v, u) = -A(u, v)$.
For a sequence of acyclic orientations $\b A = (A_1, \dots, A_r)$, let $\mathdefn{\b A(u, v)} \eqdef \bigl(A_1(u, v), \dots, A_r(u, v)\bigr)$.
\end{definition}

Let $G = H \sqcup H'$ be a bipartition of the edges of the graph $G$.
Fix an orientation of $G$, say $D\sqcup D'$ where $D$ is an orientation of $H$, and $D'$ is an orientation of $H'$.
If $D\sqcup D'$ is acyclic, then both $D$ and $D'$ are acyclic.
Moreover, if $D \sqcup D'$ is acyclic, then for any nodes $u, v$ of $G$, we have $D(u, v)\cdot D'(u, v) \ne -1$.
The reciprocal is straightforward:

\begin{lemma}\label{lem:BipartitionEdgesYieldsCartesianProductAO}
Let $G = H\sqcup H'$ be a bipartition of the edges of the graph $G$.
The map $(A, A') \mapsto A\sqcup A'$ is a bijection from the pairs of acyclic orientations of $H$ and $H'$ such that $A(u, v)\cdot A'(u, v)\ne -1$ for all nodes $u, v$ of $G$, and the acyclic orientations of $G$.

Moreover, $\AO$ is the subgraph of the Cartesian product $\AO[H]\times \AO[H']$ induced on these pairs of acyclic orientations, that is $A\sqcup A'$ and $B\sqcup B'$ are adjacent in $\AO$ if and only if either $A = B$, and $A'$ and $B'$ are adjacent in $\AO[H']$, or $A' = B'$, and $A$ and $B$ are adjacent in $\AO[H]$.
\begin{comment}
\begin{compactitem} 
\item $A = B$, and $A'$ and $B'$ are adjacent in $\AO[H']$, or
\item $A' = B'$, and $A$ and $B$ are adjacent in $\AO[H]$.
\end{compactitem}
\end{comment}
\end{lemma}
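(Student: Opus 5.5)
We split the statement into the bijection claim and the adjacency claim. First, the map is well defined. Since the edge sets of $H$ and $H'$ partition $E(G)$, a pair of orientations $(A, A')$ determines an orientation $A\sqcup A'$ of $G$. Conversely, restricting an orientation of $G$ to the edges of $H$ and of $H'$ gives a pair, so $(A, A')\mapsto A\sqcup A'$ is a bijection between $\c O(H)\times\c O(H')$ and $\c O(G)$. It therefore suffices to show that $A\sqcup A'$ is acyclic if and only if $A$ and $A'$ are both acyclic with $A(u, v)\cdot A'(u, v)\ne -1$ for all nodes $u, v$. The forward implication is the observation stated just before the lemma. If $A\sqcup A'$ is acyclic, its sub-digraphs $A$ and $A'$ are acyclic. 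A directed path $u\to v$ in $A$ together with a directed path $v\to u$ in $A'$ would concatenate into a closed directed walk in $A\sqcup A'$, and such a walk contains a directed cycle.

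The converse is the main step. Let $A, A'$ be acyclic with the product condition, and suppose $A\sqcup A'$ contains a directed cycle $C$. Since $A$ and $A'$ are acyclic, $C$ uses arcs of both. Decompose $C$ into maximal consecutive blocks of arcs from $A$ and from $A'$; these alternate around the cycle. If there are exactly two blocks, one in $A$ from $u$ to $v$ and one in $A'$ from $v$ to $u$, then $A(u, v) = +1$ and $A'(u, v) = -1$, a contradiction. When there are more blocks, I would pick $C$ to minimize the number of blocks and try to shortcut. I expect this to be the obstacle: with four or more alternating blocks $u_1\to u_2$ in $A$, $u_2\to u_3$ in $A'$, $u_3\to u_4$ in $A$, and so on, the pairwise condition does not immediately yield a contradiction. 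Its only consequence is that each pair $(u_i, u_{i+2})$ cannot be oppositely directed in $A$ and $A'$. One would try a transitivity argument on these direction values. A four-block cycle may involve only pairs that are incomparable in one of the two orientations, so the lemma as literally stated may need an extra hypothesis. Examples are a bipartition where $H'$ shares only two nodes with $H$, or the restriction to pairs $u, v$ in a common small node set, which is how it is used later for paths glued at $u, v$.

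Concretely, I would first check whether the node sets of $H$ and $H'$ meet in at most two nodes in the intended applications. In that case any cycle switching between $H$ and $H'$ passes through those two shared nodes only, so it has exactly two blocks, and the two-block argument above finishes the proof. If the general statement is really intended, I would look for a minimal counterexample with four blocks before attempting a general proof.

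For the adjacency statement, two acyclic orientations of $G$ are adjacent if and only if they differ on exactly one edge. That edge lies either in $H$ or in $H'$, so the restrictions to the other part coincide and the restrictions to its own part are adjacent. Conversely, if $A = B$ and $A'$, $B'$ are adjacent, then $A\sqcup A'$ and $B\sqcup B'$ differ on exactly one edge. This identifies $\AO$ with the induced subgraph of $\AO[H]\times\AO[H']$ on admissible pairs.
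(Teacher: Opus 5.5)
Your suspicion about the converse is justified: the statement is false as literally written, so no argument can close the step you got stuck on. Take $G=C_4$ on nodes $a,b,c,d$. Let $H$ have the edges $ab, cd$ and $H'$ the edges $bc, da$, and set $A=\{a\to b,\ c\to d\}$ and $A'=\{b\to c,\ d\to a\}$.
\begin{itemize}
\item Both $A$ and $A'$ are acyclic.
\item The only pairs with nonzero direction are $\{a,b\},\{c,d\}$ in $A$ and $\{b,c\},\{a,d\}$ in $A'$. Hence $A(u,v)\cdot A'(u,v)=0$ for all $u,v$.
\item Yet $A\sqcup A'$ is the directed $4$-cycle.
\end{itemize}
Counting confirms this: there are $16$ admissible pairs but only $\psi(C_4)=14$ acyclic orientations. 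This is exactly your four-block scenario, with every pair $(u_i,u_{i+2})$ incomparable in both orientations.

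The paper's proof has the hole you located. It claims that having no pair of opposite paths in $A\sqcup A'$ is equivalent to having none inside $A$, none inside $A'$, and none split as ``one in $A$, the other in $A'$''. This tacitly assumes that every directed path of $A\sqcup A'$ lies entirely in $A$ or in $A'$.

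Your repair is the right one, and it extends slightly:
\begin{itemize}
\item Along a directed cycle, the switches between $H$-arcs and $H'$-arcs occur at distinct nodes of $V(H)\cap V(H')$.
\item The number of switches is even, and at least $2$ if both parts occur.
\item So if the two parts share at most two nodes (in fact at most three), every mixed cycle has exactly two blocks. Your two-block argument then gives a pair with $A(x,y)\cdot A'(x,y)=-1$.
\end{itemize}
This covers every concrete use in the paper. Those are the two-node decompositions of \Cref{prop:AcyclicPolyDecomposition} and the recursions built on it, and the (multi)path gluings of \Cref{sec:MultiPathGluing}, where $V(H)\cap V(H')=\{u,v\}$. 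The general lacing framework of \Cref{ssec:LacingPatterns} should be read with this hypothesis added.

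Your forward direction and your adjacency argument are correct and agree with the paper.
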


\begin{proof}
Clearly, $(A, A')\mapsto A\sqcup A'$ is an injection, since this is just a standard operation between sets.
Moreover, $A\sqcup A'$ is acyclic if and only if for all $u, v\in \V$ there is no path from $u$ to $v$ and from $v$ to $u$ in $A\sqcup A'$, which is equivalent to: for all $u,v \in \V$ there is no path from $u$ to $v$ and from $v$ to $u$ both in $A$ (\ie $A$ is acyclic), and there is no path from $u$ to $v$ and from $v$ to $u$ both in $A'$ (\ie $A'$ is acyclic), and there is no path from $u$ to $v$ and from $v$ to $u$, one in $A$ and the other in $A'$ (\ie $A(u, v)\cdot A'(u, v) \ne -1$).
This proves the bijection.

Besides, $A\sqcup A'$ and $B\sqcup B'$ are adjacent if and only if they differ on exactly 1 edge: due to the bipartition of the edges of $G$, this edge is either in $H$ or in $H'$ not in both.
This proves the adjacency relation.
\end{proof}

% \begin{definition}
% For a graph $G$ and two nodes $u, v\in \V$, we denote by $\mathdefn{\AO^{+1}_{u, v}}$ (resp. $\mathdefn{\AO^{-1}_{u, v}}$, resp. $\mathdefn{\AO^0_{u, v}}$) the set of acyclic orientations of $G$ such that the direction between $u$ and $v$ is $+1$ (reps. is $-1$, resp. is $0$).
% \end{definition}

Some fundamental properties of the acyclic polynomial come from understanding what happens when we reverse the direction of all the arcs of an orientation at once.
We will often use the following straightforward lemma:

\begin{lemma}\label{lem:CompletementIsInvolution}
Fix a (di)graph $G = (\V, E)$, any $u, v\in \V$, and any orientation $D$ of $G$.
The map $X \mapsto E\ssm X$ satisfies $\rank_D D^{E\ssm X} = |E| - \rank_D D^X$, and it is:
\begin{compactenum}
\item an involution without fixed point on $\c O(G)$, and\label{item:CompletementIsInvolution:OnOrientations}
\item an involution without fixed point on $\AO$, and\label{item:CompletementIsInvolution:OnAcyclic}
\item an involution without fixed point on $\{A\in \AO ~;~ A(u, v) = 0\}$, and\label{item:CompletementIsInvolution:On0Acyclic}
\item a bijection between $\{A\in \AO ~;~ A(u, v) = +1\}$ and $\{A\in \AO ~;~ A(u, v) = -1\}$, and\label{item:CompletementIsInvolution:BetweenNon0Acyclic}
\item if $|E|$ is even, an involution without fixed point on $\{A \in \AO ~;~ \rank_D A \text{ is even}\}$, and an involution without fixed point on $\{A \in \AO ~;~ \rank_D A \text{ is odd}\}$, and\label{item:CompletementIsInvolution:InsideEvenRank}
\item if $|E|$ is odd, a bijection between the set $\{A \in \AO ~;~ \rank_D A \text{ is even}\}$ and the set\linebreak$\{A \in \AO ~;~ \rank_D A \text{ is odd}\}$.\label{item:CompletementIsInvolution:EvenOddRank}
\end{compactenum}
\end{lemma}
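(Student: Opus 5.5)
The plan is to first record the rank identity, which is the engine of every item. Fix $D$ and write every orientation as $D^X$ with $X\subseteq E$. By definition of re-orientation, $D^{E\ssm X}$ is exactly the orientation obtained from $D^X$ by reversing every arc. Hence $\rank_D D^{E\ssm X} = |E\ssm X| = |E| - |X| = |E| - \rank_D D^X$. Since $X\mapsto E\ssm X$ is an involution on $2^E$ and $\omega\mapsto D^\omega$ is a bijection $2^E\to\c O(G)$, the map is an involution on $\c O(G)$. It has no fixed point because $X = E\ssm X$ would force $E=\emptyset$. This uses that $G$ has at least one edge, which I will assume; the edgeless case is trivial or excluded. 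This settles item~\eqref{item:CompletementIsInvolution:OnOrientations}.

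For items~\eqref{item:CompletementIsInvolution:OnAcyclic}--\eqref{item:CompletementIsInvolution:BetweenNon0Acyclic}, I will use the following observation. Reversing all arcs sends a directed path $u\to\dots\to v$ to a directed path $v\to\dots\to u$, and conversely. So directed cycles correspond to directed cycles, and the reversal preserves acyclicity. Moreover, if $A' $ denotes the reversal of $A$, then $A'(u,v) = -A(u,v)$ by \Cref{def:Direction}. Restricting the fixed-point-free involution of item~\eqref{item:CompletementIsInvolution:OnOrientations} to the invariant subsets then gives the following:
\begin{compactitem}
\item the set $\AO$ is invariant, which gives item~\eqref{item:CompletementIsInvolution:OnAcyclic};
\item the set $\{A(u,v)=0\}$ is invariant since $-0=0$, which gives item~\eqref{item:CompletementIsInvolution:On0Acyclic};
\item the map swaps the sets $\{A(u,v)=+1\}$ and $\{A(u,v)=-1\}$, and its inverse is itself, which gives item~\eqref{item:CompletementIsInvolution:BetweenNon0Acyclic}.
\end{compactitem}

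For items~\eqref{item:CompletementIsInvolution:InsideEvenRank} and~\eqref{item:CompletementIsInvolution:EvenOddRank}, I will reduce modulo $2$. The rank identity gives $\rank_D A' \equiv |E| - \rank_D A \equiv |E| + \rank_D A \pmod 2$. If $|E|$ is even, the parity of the rank is preserved, so each parity class is an invariant subset of $\AO$. The restriction is then a fixed-point-free involution on each class. If $|E|$ is odd, the parity is flipped, so the map sends even ranks to odd ranks and back, and being an involution it is a bijection between these two sets.

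There is no real obstacle here. The only points needing care are the fixed-point-freeness, which requires $E\neq\emptyset$, and checking that reversal preserves acyclicity and negates the direction $A(u,v)$; both follow immediately from reversing paths.
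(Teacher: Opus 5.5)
Your proof is correct and follows the paper's argument essentially verbatim: the involution $X\mapsto E\ssm X$ on $2^E$ transported to $\c O(G)$, reversal of all arcs preserving acyclicity and negating $A(u,v)$, and the rank identity $\rank_D D^{E\ssm X} = |E|-\rank_D D^X$ read modulo $2$ for the last two items. Your remark that fixed-point-freeness needs $E\neq\emptyset$ is a legitimate caveat that the paper's statement and proof leave implicit (for an edgeless graph the unique orientation is a fixed point).
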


\begin{proof}
It is immediate that $X\mapsto E\ssm X$ is an involution of $2^E$.
For~\eqref{item:CompletementIsInvolution:OnOrientations}, note that $\c O(G)$ is isomorphic to $2^E$.
For~\eqref{item:CompletementIsInvolution:OnAcyclic}, note that reversing the directions of all the arcs at once does not change whether the orientation is acyclic or not.
For~\eqref{item:CompletementIsInvolution:On0Acyclic} and~\eqref{item:CompletementIsInvolution:BetweenNon0Acyclic}, note that reversing the directions of all the arcs at once changes a directed path from $u$ to $v$ into a directed path from $v$ to $u$, \ie $D^{E\ssm X}(u, v) = - D^X(u, v)$ for all orientations $D$ of $G$.

The claimed rank equality is by definition:
$\rank_D D^{E\ssm X} = |E\ssm X| = |E| - |X| = |E| - \rank_D D^X$.
This immediately implies~\eqref{item:CompletementIsInvolution:InsideEvenRank} and~\eqref{item:CompletementIsInvolution:EvenOddRank}.
\end{proof}

Using the fundamental involution, we can re-prove one of the powerful results of \cite{SavageSquireWest93-GrayCodesAcyclicOrientations} that they repeatedly use in order to show that some graphs are not {\good}.
Our proof is essentially the same as theirs, but written with our language.

\begin{theorem}[{\cite[Lemma~3.1]{SavageSquireWest93-GrayCodesAcyclicOrientations}}]\label{thm:DivisibilityBy4}
If a graph with an even number of edges is {\good}, then its number of acyclic orientations is divisible by $4$.
\end{theorem}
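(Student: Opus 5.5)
Fix an orientation $D$ of $G$ and use the bipartition of $\AO$ by parity of $\rank_D$, as in the proof of \Cref{lem:ParityArgument}. If $G$ is {\good}, then \Cref{prop:Bipartite(Path)Hamilitonian}~\ref{item:BipartiteHamiltonian} gives that the even-rank part $\mathcal{E} \eqdef \{A\in\AO ~;~ \rank_D A \text{ even}\}$ and the odd-rank part $\mathcal{O}$ have the same size. Hence $\psi(G) = |\mathcal{E}| + |\mathcal{O}| = 2|\mathcal{E}|$, and it suffices to show that $|\mathcal{E}|$ is even.

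For this, apply the fundamental involution $X\mapsto E\ssm X$ of \Cref{lem:CompletementIsInvolution}. Since $|E|$ is even, item~\eqref{item:CompletementIsInvolution:InsideEvenRank} says that this map restricts to a fixed-point-free involution of $\mathcal{E}$. Indeed, $\rank_D D^{E\ssm X} = |E| - \rank_D D^X$ preserves the parity of the rank, acyclicity is preserved by reversing all arcs, and $X \ne E\ssm X$ whenever $E\ne\emptyset$. A finite set carrying a fixed-point-free involution splits into orbits of size $2$, so $|\mathcal{E}|$ is even and $4$ divides $\psi(G) = 2|\mathcal{E}|$.

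The only care needed is the degenerate case $E=\emptyset$, where $X=\emptyset=E\ssm X$ is a fixed point. There $\AO$ is a single node, and we take the convention that a one-node graph is not Hamiltonian, or else restrict to graphs with at least one edge (an even number of edges then forces $|E|\ge 2$). Beyond that, the argument is a direct assembly of \Cref{prop:Bipartite(Path)Hamilitonian,lem:CompletementIsInvolution}, and I do not expect a genuine obstacle. As a side remark, the same argument gives \Cref{lem:ParityNbAO} without any Hamiltonicity hypothesis, by applying item~\eqref{item:CompletementIsInvolution:OnAcyclic} to all of $\AO$.
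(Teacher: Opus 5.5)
Your proof is correct and is essentially the paper's argument. The paper likewise uses the fixed-point-free involution $X\mapsto E\ssm X$ of \Cref{lem:CompletementIsInvolution} to show that the set of even-rank acyclic orientations has even size, and combines this with $\sigma(D)=0$ from \Cref{lem:ParityArgument}, which is the same step as your direct appeal to the bipartite Hamiltonicity criterion; your explicit handling of the degenerate case $E=\emptyset$ is a detail the paper leaves implicit.
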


\begin{proof}
Let $G$ be such a graph, and $D$ some orientation of $G$. Since $|E|$ is even, \Cref{lem:CompletementIsInvolution}~\eqref{item:CompletementIsInvolution:InsideEvenRank} shows that $X\mapsto E\ssm X$ induces an involution without fixed point on $\{A \in \AO ~;~ \rank_D A \text{ is even}\}$.
This implies that the latter set is of even size.

By \Cref{lem:ParityArgument}, we have $\sigma(D) = \Psi(D; -1) = 0$.
So writing $\Psi(D; t) = \sum_{k = 0}^d p_k t^k$ we get that $\sum_{k \text{ even}} p_k = \sum_{k\text{ odd}} p_k$.
Since $\sum_{k\text{ even}} p_k = \bigl|\{A \in \AO ~;~ \rank_D A \text{ is even}\}\bigr|$ is even, we finally get that $\psi(G) = \Psi(D; 1) = \sum_k p_k$ is divisible by $4$.
\end{proof}

Thanks to the notion of direction and the fundamental involution, we can isolate two sub-polynomials hidden inside the acyclic polynomial.
The core idea is to split the edges of $G$ between two subgraphs $H$ and $H'$ which meet on exactly two nodes $u$ and $v$:
in an acyclic orientation of $G$, either there is no directed path between $u$ and $v$ inside the induced orientation of $H$, or there is a directed path from $u$ to $v$, or from $v$ to $u$.
The set of possible acyclic orientations of $H'$ depends only on which of these three cases we are in, not on the exact acyclic orientation of $H$.
In \Cref{fig:AcyclicPolyK23}, we illustrate the particular case where $H'$ is the subgraph of $K_{2, 3}$ with nodes $u, v$ and no edge.

\begin{proposition}\label{prop:AcyclicPolyDecomposition}
Let $G = H\sqcup H'$ be a bipartition of the edges of $G$ such that $\V(H) \cap \V(H') = \{u, v\}$ for two nodes $u, v$.
Then:
$\Psi(D\sqcup D'; t) = \Psi^{u\not\leftrightarrow v}(D\sqcup D'; t) + \Psi^{u\to v}(D\sqcup D'; t) + t^{|E|}\cdot \Psi^{u\to v}(D\sqcup D'; \tfrac{1}{t})$, where:
$$\mathdefn{\Psi^{u\not\leftrightarrow v}(D\sqcup D'; t)} \eqdef \sum_{\substack{A\sqcup A'\in \AO[G]\\A(u, v) = 0}} t^{\rank_{D\sqcup D'} A\sqcup A'} \hspace{0.5cm}\text{ and }\hspace{0.5cm}
\mathdefn{\Psi^{u\to v}(D\sqcup D'; t)} \eqdef \sum_{\substack{A\sqcup A'\in \AO[G]\\A(u, v) = 1}} t^{\rank_{D\sqcup D'} A\sqcup A'}$$
\end{proposition}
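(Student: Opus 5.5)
We partition the set of acyclic orientations $A\sqcup A'$ of $G$ according to the value of the direction $A(u,v)\in\{0,+1,-1\}$, where $A$ is the restriction to $H$. Since every acyclic orientation of $G$ falls in exactly one of these three classes, we get
$$\Psi(D\sqcup D'; t) = \Psi^{u\not\leftrightarrow v}(D\sqcup D'; t) + \Psi^{u\to v}(D\sqcup D'; t) + \sum_{\substack{A\sqcup A'\in \AO[G]\\A(u, v) = -1}} t^{\rank_{D\sqcup D'} A\sqcup A'} \,.$$
By \Cref{lem:BipartitionEdgesYieldsCartesianProductAO}, every acyclic orientation of $G$ indeed decomposes uniquely as such a pair, so this split is well defined. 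It therefore remains to identify the third sum with $t^{|E|}\cdot \Psi^{u\to v}(D\sqcup D'; \tfrac1t)$.

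For this we use the fundamental involution $X\mapsto E\ssm X$ of \Cref{lem:CompletementIsInvolution}, which reverses all arcs at once. Reversal acts on $H$ and $H'$ separately: it sends $A\sqcup A'$ to $\overline{A}\sqcup\overline{A'}$, where the bars denote total reversal. By \Cref{lem:CompletementIsInvolution}~\eqref{item:CompletementIsInvolution:OnAcyclic} it preserves acyclicity, and applied within $H$ it gives $\overline{A}(u,v)=-A(u,v)$, as noted in the proof of \eqref{item:CompletementIsInvolution:BetweenNon0Acyclic}. Hence it restricts to a bijection from $\{A\sqcup A'\in\AO ~;~ A(u,v)=+1\}$ onto $\{A\sqcup A'\in\AO ~;~ A(u,v)=-1\}$. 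By the same lemma, the rank transforms as $\rank_{D\sqcup D'}(\overline{A}\sqcup\overline{A'}) = |E|-\rank_{D\sqcup D'}(A\sqcup A')$.

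Reindexing the third sum through this bijection then gives
$$\sum_{\substack{A\sqcup A'\in \AO[G]\\A(u, v) = +1}} t^{|E|-\rank_{D\sqcup D'} A\sqcup A'} = t^{|E|}\,\Psi^{u\to v}(D\sqcup D'; \tfrac1t) \,,$$
which concludes the proof. The only point requiring care is that the direction is computed inside $H$ rather than inside $G$. This is harmless, because reversing all arcs of $G$ reverses all arcs of $H$, so $\overline{A}(u,v)=-A(u,v)$ still holds for the restriction. The hypothesis $\V(H)\cap\V(H')=\{u,v\}$ is not actually needed for this identity; it only matters for the later interpretation of the sub-polynomials.
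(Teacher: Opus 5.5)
Your proof is correct and is essentially the paper's argument: you split the sum according to $A(u,v)\in\{0,+1,-1\}$, then reindex the $-1$ class by the complementation $X\mapsto E\setminus X$ of \Cref{lem:CompletementIsInvolution}, using that it maps $A(u,v)=+1$ to $A(u,v)=-1$ and sends the rank $k$ to $|E|-k$. Your extra remarks are also accurate: the direction is computed in $H$ and is still negated by reversing all arcs, and the identity does not need the hypothesis $\V(H)\cap\V(H')=\{u,v\}$.
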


% \begin{proposition}\label{prop:AcyclicPolyDecomposition}
% For any $u, v\in \V$: $\Psi(D; t) = \Psi^{u\not\leftrightarrow v}(D; t) + \Psi^{u\to v}(D; t) + t^{|E|}\cdot \Psi^{u\to v}(D; \tfrac{1}{t})$, where:
% $$\mathdefn{\Psi^{u\not\leftrightarrow v}(D; t)} \eqdef \sum_{\substack{A\in \AO[D] \\ A(u, v) = 0}} t^{\rank_D A} \hspace{1cm}\text{ and }\hspace{1cm}
% \mathdefn{\Psi^{u\to v}(D; t)} \eqdef \sum_{\substack{A\in \AO[D] \\ A(u, v) = 1}} t^{\rank_D A}$$
% \end{proposition}

\begin{proof}
Clearly, for any $D\sqcup D'$, and $t$, we have:
$$\Psi(D\sqcup D'; t) = \sum_{\substack{A\sqcup A'\in \AO[G]\\ A(u, v) = 0}} t^{\rank_{D\sqcup D'} A\sqcup A'} ~+~\sum_{\substack{A\sqcup A'\in \AO[G]\\ A(u, v) = 1}} t^{\rank_{D\sqcup D'} A\sqcup A'} ~+~ \sum_{\substack{A\sqcup A'\in \AO[G]\\ A(u, v) = -1}} t^{\rank_{D\sqcup D'} A\sqcup A'}$$

Thus, we only need to prove that $\sum_{\substack{A\sqcup A'\in \AO[G]\\ A(u, v) = -1}} t^{\rank_{D\sqcup D'} A\sqcup A'} = t^{|E|}\cdot\Psi^{u\to v}(D\sqcup D'; \frac{1}{t})$.
Using the bijection $X \mapsto E\ssm X$ on the graph $G$ from \Cref{lem:CompletementIsInvolution}~\eqref{item:CompletementIsInvolution:BetweenNon0Acyclic}, we get:
$$\begin{array}{rcl}
\sum_{\substack{A\sqcup A'\in \AO[G]\\ A(u, v) = -1}} t^{\rank_{D\sqcup D'} A\sqcup A'}
&=& \sum_{\substack{A\sqcup A'\in \AO[G]\\ A(u, v) = +1}} t^{|E| - \rank_{D\sqcup D'} A\sqcup A'} \vspace{.25cm} \\
&=& t^{|E|}\cdot \sum_{\substack{A\sqcup A'\in \AO[G]\\ A(u, v) = +1}} \left(\frac{1}{t}\right)^{\rank_{D\sqcup D'} A\sqcup A'} \vspace{.25cm} \\
&=& t^{|E|}\cdot\Psi^{u\to v}(D\sqcup D'; \tfrac{1}{t}) ~ \qedhere
\end{array}$$
\end{proof}

% \begin{remark}
% By convention, $\Psi^{u\not\leftrightarrow v}(D; t) = 1$ and $\Psi^{u\to v}(D; t) = 0$ if $D$ has no edge but contains the nodes $u$ and $v$.
% \end{remark}

As a special case, we define the \defn{partial acyclic polynomials} $\mathdefn{\Psi^{u\not\leftrightarrow v}(D; t)}$ and $\mathdefn{\Psi^{u\to v}(D; t)}$ to the polynomials associated to the bipartition of edges $G = H\sqcup H'$ where $H'$ is the graph with nodes $u, v$ and no edge between them.
The polynomial $\Psi^{u\not\leftrightarrow v}(D; t)$ (resp. $\Psi^{u\to v}(D; t)$) counts the number of acyclic orientations of $G$ containing no directed path from $u$ to $v$ nor from $v$ to $u$ (resp. containing a directed path from $u$ to $v$), according to their rank with respect to the orientation $D$.

\smallskip

Recall that a polynomial $P(t) = \sum_{k = 0}^d p_k t^k$ is \defn{palindromic of length $d$} if its coefficients satisfy $p_{k} = p_{d-k}$ for all $k\in [d]$, or equivalently if $P(t) = t^{d}\cdot P(\tfrac1t)$.

\begin{corollary}\label{cor:PsiIsPalindromic}
The acyclic polynomial $\Psi(D; t)$ is palindromic of length $|E|$.

Furthermore, if $G = H\sqcup H'$ is a bipartition of the edges of $G$ with $\V(H)\cap\V(H') = \{u, v\}$, then the polynomial $\Psi^{u\not\leftrightarrow v}(D\sqcup D'; t)$ is palindromic of length $|E|$.
\end{corollary}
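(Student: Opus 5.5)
Both claims will follow from the fundamental involution $X\mapsto E\ssm X$ of \Cref{lem:CompletementIsInvolution}, which reverses all arcs at once and sends rank $k$ to rank $|E|-k$. For the first claim, I will use \Cref{lem:CompletementIsInvolution}~\eqref{item:CompletementIsInvolution:OnAcyclic}: the map $A\mapsto$ (reverse of $A$) is a bijection of $\AO$ onto itself, and $\rank_D$ of the reversed orientation equals $|E|-\rank_D A$. Reindexing the sum $\Psi(D;t)=\sum_{A\in\AO} t^{\rank_D A}$ along this bijection gives
$$\Psi(D;t)=\sum_{A\in\AO} t^{|E|-\rank_D A}=t^{|E|}\,\Psi(D;\tfrac1t).$$
This is exactly palindromicity of length $|E|$. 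Alternatively, I could read it off \Cref{prop:AcyclicPolyDecomposition} with $H'$ the edgeless graph on $\{u,v\}$, once the second claim is known, since the two remaining summands $\Psi^{u\to v}(t)$ and $t^{|E|}\Psi^{u\to v}(1/t)$ are swapped by $t\mapsto 1/t$ followed by multiplication by $t^{|E|}$.

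For the second claim, I take $A\sqcup A'\in\AO[G]$ with $A(u,v)=0$, where $A$ is the restriction to $H$. Reversing all arcs of $G$ reverses all arcs of $H$ in particular. The reversed $H$-part is the restriction of the reversed orientation, and its direction between $u$ and $v$ is $-A(u,v)=0$, as in the proof of \Cref{lem:CompletementIsInvolution}~\eqref{item:CompletementIsInvolution:On0Acyclic}. Acyclicity of the whole orientation is preserved by \eqref{item:CompletementIsInvolution:OnAcyclic}. Hence the involution restricts to a bijection of the index set $\{A\sqcup A'\in\AO[G] ~;~ A(u,v)=0\}$. The rank again transforms as $k\mapsto |E|-k$, with rank computed with respect to $D\sqcup D'$ on all of $E$. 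The same reindexing then gives $\Psi^{u\not\leftrightarrow v}(D\sqcup D';t)=t^{|E|}\Psi^{u\not\leftrightarrow v}(D\sqcup D';\tfrac1t)$.

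The only point needing care is that the direction condition is imposed on the $H$-part alone, not on the whole graph. So I must check that reversing the whole orientation acts on the $H$-part by reversing it. This is immediate because the edge bipartition is respected. I do not expect any real obstacle here.
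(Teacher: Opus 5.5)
Your proof is correct and uses the same tool as the paper, the involution $X\mapsto E\ssm X$ of \Cref{lem:CompletementIsInvolution}. The only difference is the order: the paper proves palindromicity of $\Psi^{u\not\leftrightarrow v}(D\sqcup D';t)$ first and deduces it for $\Psi(D;t)$ from the decomposition of \Cref{prop:AcyclicPolyDecomposition}, whereas you apply the involution directly on all of $\AO$, which is slightly more direct since it needs no auxiliary pair $u,v$. You also check explicitly that reversing all arcs reverses the $H$-part and so preserves the condition $A(u,v)=0$ on the restriction, which the paper leaves implicit.
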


\begin{proof}
Using the involution $X \mapsto E\ssm X$ from \Cref{lem:CompletementIsInvolution}~\eqref{item:CompletementIsInvolution:On0Acyclic}, we get:
$$\Psi^{u\not\leftrightarrow v}(D\sqcup D'; t) = \sum_{\substack{A\sqcup A'\in \AO[G] \\ A(u, v) = 0}} t^{\rank_{D\sqcup D'} A\sqcup A'} = \sum_{\substack{A\sqcup A'\in \AO[G] \\ A(u, v) = 0}} t^{|E| - \rank_{D\sqcup D'} A\sqcup A'} = t^{|E|}\cdot \Psi^{u\not\leftrightarrow v}(D\sqcup D'; \tfrac{1}{t})$$

Hence, the polynomial $\Psi^{u\not\leftrightarrow v}(D; t)$ is palindromic.
Consequently, by \Cref{prop:AcyclicPolyDecomposition}, $\Psi(D; t)$ is also palindromic, as it is the sum of a palindromic polynomial $\Psi^{u\not\leftrightarrow v}(D; t)$ with another palindromic polynomial $\Psi^{u\to v}(D; t) + t^{|E|}\cdot \Psi^{u\to v}(D; \tfrac{1}{t})$.
\end{proof}

\begin{remark}
Note that $\Psi(D; t)$ is not necessarily of degree $|E|$ (nor does it need to have a non-zero constant term), especially when $D$ is not acyclic, see the example of cycles $C_n$ in \Cref{exm:AcyclicPolynomialCycles}, or of $K_{2, 3}$ in \Cref{exm:AcyclicPolynomialK23,fig:AcyclicPolyK23}.
\end{remark}

For two orientations $D$ and $D'$ of the same graph $G = (\V, E)$, it is not evident how to determine $\Psi(D'; t)$ given $\Psi(D; t)$.
However, for an orientation $D$, we can easily express the acyclic polynomial of the reverse orientation $D^E$:

\begin{corollary}\label{cor:PsiOfReverseOrientation}
For any digraph $D = (\V, E)$, we have:
$\Psi(D^E; t) = \Psi(D; t)$.

Furthermore, if $G = H\sqcup H'$ is a bipartition of the edges of $G$ with $\V(H)\cap\V(H') = \{u, v\}$, then we have $\Psi^{u\not\leftrightarrow v}((D\sqcup D')^E; t) = \Psi^{u\not\leftrightarrow v}(D\sqcup D'; t)$, and also $\Psi^{u\to v}((D\sqcup D')^E; t) = t^{|E|}\cdot \Psi^{u\to v}(D\sqcup D'; \tfrac{1}{t})$.
\end{corollary}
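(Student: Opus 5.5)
The plan is to use the base-change bijection $X \mapsto E\ssm X$ of \Cref{lem:CompletementIsInvolution}, now read with respect to two base orientations. The key observation is that if $A = D^X$ for some $X\subseteq E$, then relative to the reversed base $D^E$ the same orientation $A$ is $(D^E)^{E\ssm X}$. This holds because an edge agrees with $D^E$ exactly when it disagrees with $D$. Consequently
\[
\rank_{D^E} A = |E| - \rank_D A
\]
for every orientation $A$ of $G$. Acyclicity of $A$ does not depend on the base, so the set $\AO$ is the same on both sides. Summing over $A\in\AO$ then gives
\[
\Psi(D^E; t) = \sum_{A\in\AO} t^{|E| - \rank_D A} = t^{|E|}\cdot\Psi(D; \tfrac1t).
\]
By the palindromicity of $\Psi(D;t)$ of length $|E|$ (\Cref{cor:PsiIsPalindromic}), the right-hand side equals $\Psi(D;t)$.

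For the refined statements, the same rank identity applies termwise, with $D\sqcup D'$ in place of $D$. The condition on the summands, namely $A(u,v) = 0$ or $A(u,v) = 1$ for the $H$-part $A$ of $A\sqcup A'$, depends only on the orientation $A\sqcup A'$ and not on the base. Hence summing over the same index sets gives
\[
\Psi^{u\not\leftrightarrow v}\bigl((D\sqcup D')^E; t\bigr) = t^{|E|}\cdot\Psi^{u\not\leftrightarrow v}(D\sqcup D'; \tfrac1t)
\quad\text{and}\quad
\Psi^{u\to v}\bigl((D\sqcup D')^E; t\bigr) = t^{|E|}\cdot\Psi^{u\to v}(D\sqcup D'; \tfrac1t).
\]
The second identity is exactly the claim. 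For the first, I invoke the palindromicity of $\Psi^{u\not\leftrightarrow v}$ from \Cref{cor:PsiIsPalindromic}, which turns the right-hand side back into $\Psi^{u\not\leftrightarrow v}(D\sqcup D'; t)$.

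The only real care needed is the first step: verifying that reversing the base complements the rank. The rest is bookkeeping, and no step seems to be a genuine obstacle. One must also note that $(D\sqcup D')^E = D^{E(H)}\sqcup D'^{E(H')}$, so that the decomposition $G = H\sqcup H'$ is preserved and the partial polynomials for the reversed base are defined via the same bipartition.
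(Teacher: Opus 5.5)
Your proof is correct. For $\Psi$ and $\Psi^{u\not\leftrightarrow v}$ it is exactly the paper's argument: the identity $\rank_{D^E} A = |E| - \rank_D A$ gives $t^{|E|}\Psi(\,\cdot\,;\tfrac1t)$, and palindromicity from \Cref{cor:PsiIsPalindromic} finishes.

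For $\Psi^{u\to v}$ your route differs from the paper's, and it is the better one. You apply the same rank identity termwise over the base-independent index set $\{A\sqcup A'\in\mathcal{AO}(G) ~;~ A(u,v)=1\}$. This gives the claimed identity directly and needs no palindromicity.

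The paper instead says it deduces this third identity from the equalities for $\Psi$ and $\Psi^{u\not\leftrightarrow v}$ via the decomposition of \Cref{prop:AcyclicPolyDecomposition}. Write $P = \Psi^{u\to v}(D\sqcup D';\,\cdot\,)$ and $Q = \Psi^{u\to v}\bigl((D\sqcup D')^E;\,\cdot\,\bigr)$. Applied to both bases, the decomposition only yields $Q(t) + t^{|E|}Q(\tfrac1t) = P(t) + t^{|E|}P(\tfrac1t)$. That equation is equally satisfied by $Q = P$, so it does not single out $Q(t) = t^{|E|}P(\tfrac1t)$ when $P$ is not palindromic, which is the typical case.

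Your termwise argument is what actually closes this step. It amounts to running the ``same proof'' the paper uses for $\Psi^{u\not\leftrightarrow v}$, just stopping before the palindromicity step.
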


\begin{proof}
Note that $\rank_{D^E} D^X = |E\ssm X| = |E| - \rank_D D^X$.
Using the involution $X \mapsto E\ssm X$ from \Cref{lem:CompletementIsInvolution}~\eqref{item:CompletementIsInvolution:OnAcyclic}, we get
$$\Psi(D^E; t) = \sum_{A\in \AO[D^E]} t^{\rank_{D^E} A} = \sum_{A\in \AO[D]} t^{|E| -\rank_D A} = t^{|E|}\cdot \Psi(D; \tfrac{1}{t}).$$

Since $\Psi(D; t)$ is palindromic of length $|E|$ by \Cref{cor:PsiIsPalindromic}, the latter is equal to $\Psi(D; t)$.

The proof is the same for $\Psi^{u\not\leftrightarrow v}$.
From the equality on $\Psi$ and on $\Psi^{u\not\leftrightarrow v}$, we deduce the one of $\Psi^{u\to v}$ via the decomposition of \Cref{prop:AcyclicPolyDecomposition}.
\end{proof}

\begin{example}
By \Cref{cor:PsiOfReverseOrientation}, we can deduce from \Cref{fig:AcyclicPolyK23} the acyclic polynomials (and their decomposition) of the orientations obtained by reversing the direction of all the arcs in the drawn orientations.
\end{example}

\subsubsection{Automorphisms of (di)graphs and palindromicity of \texorpdfstring{$\Psi^{u\to v}(D\sqcup D'; t)$}{Psi(u to v)(D u D'; t)}}

It is straightforward to show that if two orientations $D$ and $D'$ of $G$ are isomorphic (as digraphs, \ie if there exists $\varphi:\V\to \V$ such that $xy\in D$ if and only if $\varphi(x)\varphi(y)\in D'$), then their acyclic polynomials are the same.
We can go one step further by considering automorphisms of digraphs.

\begin{definition}
For a (directed) graph $G = (\V, E)$, an \defn{automorphism} is a bijection $\varphi : \V \to \V$ such that $G$ has the (directed) edge $uv\in E$ if and only if $G$ has the (directed) edge $\varphi(u)\varphi(v)$.
\end{definition}
The set of automorphisms of a (directed) graph together with the composition of functions forms a group, which is denoted $\mathdefn{\Aut(G)}$.

\begin{theorem}
Let $G = H\sqcup H'$ be a graph with a bipartition of the edges of $G$ such that $\V(H)\cap\V(H') = \{u, v\}$, and $D\sqcup D'$ an orientation of $G$.
If the directed graph $D\sqcup D'$ admits an automorphism $\varphi$ which induces an automorphism of $D$ and such that $\varphi(u) = v$ and $\varphi(v) = u$, then $\Psi^{u\to v}(D\sqcup D'; t)$ is palindromic of length $|E|$.
\end{theorem}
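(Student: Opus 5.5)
The plan is to exhibit an involution-like bijection of $\{A\sqcup A'\in\AO ~;~ A(u,v)=+1\}$ onto itself which sends rank $k$ to rank $|E|-k$ (with respect to $D\sqcup D'$). Then $\Psi^{u\to v}(D\sqcup D'; t) = t^{|E|}\cdot\Psi^{u\to v}(D\sqcup D'; \tfrac1t)$, which is exactly palindromicity of length $|E|$.

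The bijection is a composition of two maps. The first is the complementation $X\mapsto E\ssm X$ from \Cref{lem:CompletementIsInvolution}. It reverses all arcs and sends rank $k$ to $|E|-k$. By item~\eqref{item:CompletementIsInvolution:BetweenNon0Acyclic}, applied with the direction computed inside $H$ (reversing all arcs of $A$ turns a directed $u\to v$ path in $A$ into a $v\to u$ path in $A^{E(H)}$), it maps the orientations with $A(u,v)=+1$ onto those with $A(u,v)=-1$. The second map is the relabelling by $\varphi$: to an orientation $B$ of $G$ we associate $\varphi(B)$, with arcs $\varphi(x)\varphi(y)$ for $xy\in B$. Since $\varphi$ is an automorphism of the digraph $D\sqcup D'$, it is in particular an automorphism of $G$. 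It therefore preserves acyclicity and maps orientations of $G$ to orientations of $G$.

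The key step is checking that $\varphi$ preserves rank, and here the hypothesis that $\varphi$ fixes $D\sqcup D'$ as a digraph is used. An edge $xy$ is oriented as in $D\sqcup D'$ in $B$ if and only if the edge $\varphi(x)\varphi(y)$ is oriented as in $D\sqcup D'$ in $\varphi(B)$. Hence $\rank_{D\sqcup D'}\varphi(B)=\rank_{D\sqcup D'}B$.

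Next we must control the direction between $u$ and $v$. Since $\varphi$ induces an automorphism of $D$, it maps $E(H)$ onto $E(H)$, so $\varphi(B)$ restricted to $H$ is $\varphi(B[H])$. A directed path from $v$ to $u$ in $B[H]$ becomes a directed path from $\varphi(v)=u$ to $\varphi(u)=v$ in $\varphi(B)[H]$. Thus $\varphi$ maps $\{B ~;~ B[H](u,v)=-1\}$ bijectively onto $\{B ~;~ B[H](u,v)=+1\}$; its inverse is $\varphi^{-1}$, which has the same properties.

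Composing the two maps gives a bijection of $\{A\sqcup A'\in\AO ~;~ A(u,v)=+1\}$ onto itself sending rank $k$ to rank $|E|-k$. Summing $t^{\rank}$ over this set yields the identity above. The main obstacle is making sure that the restriction to $H$ is respected, since the direction $A(u,v)$ is computed in $H$ only and not in all of $G$. This is exactly why the statement requires $\varphi$ to induce an automorphism of $D$ and not merely of $D\sqcup D'$. I will spell that point out carefully and keep the remaining verifications brief.
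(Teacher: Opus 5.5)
Your proposal is correct and follows essentially the same route as the paper: the paper also composes the relabelling $A\sqcup A'\mapsto\varphi(A\sqcup A')$, which preserves acyclicity and rank and swaps $u\to v$ paths in $H$ with $v\to u$ paths because $\varphi$ restricts to an automorphism of $D$, with the global reversal $X\mapsto E\setminus X$, which sends rank $k$ to $|E|-k$. This composite is a rank-reversing bijection of $\{A\sqcup A' ~;~ A(u,v)=+1\}$ onto itself, and summing $t^{\rank}$ over it gives $\Psi^{u\to v}(D\sqcup D';t)=t^{|E|}\Psi^{u\to v}(D\sqcup D';\tfrac1t)$.
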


\begin{proof}
Fix $G$, $D\sqcup D'$, $u, v\in \V$ and $\varphi$ as above.
For an acyclic orientation $A\sqcup A'$ of $G$, we denote $\varphi(A\sqcup A')$ the orientation whose arcs are from $\varphi(x)$ to $\varphi(y)$ for all arcs from $x$ to $y$ in $A\sqcup A'$.

First, if there is a directed path $x_0 \to x_1 \to \dots \to x_\ell$ in $A\sqcup A'$, then there is a directed path $\varphi(x_0) \to \varphi(x_1) \to \dots \to \varphi(x_\ell)$ in $\varphi(A\sqcup A')$.
Hence, since $\varphi$ is a bijection, $A\sqcup A'$ is acyclic if and only if $\varphi(A\sqcup A')$ is acyclic.
Moreover, since $\varphi(u) = v$ and $\varphi(v) = u$, there is a path from $u$ to $v$ in $A$ if and only if there is a path from $v$ to $u$ in $\varphi(A)$ because $\varphi$ is also an automorphism of $D$.
Second, let $X\subseteq E$ be such that $A\sqcup A' = (D\sqcup D')^X$, \ie $X$ is the set of edges $xy\in E$ such that we have an arc from $x$ to $y$ in $D\sqcup D'$ but an arc from $y$ to $x$ in $A\sqcup A'$.
For such $xy\in E$, we have $\varphi(x)\to \varphi(y)$ in $D\sqcup D'$ because $\varphi$ is an automorphism of $D\sqcup D'$, and $\varphi(x) \to \varphi(y)$ in $A\sqcup A'$ by definition.
Hence, if $A\sqcup A' = (D\sqcup  D')^X$, then $\varphi(A\sqcup A') = (D\sqcup D')^{\{\varphi(x)\varphi(y) ~;~ xy\in E\}}$.
In particular, we get: $\rank_{D\sqcup D'} \varphi(A\sqcup A') = \rank_{D\sqcup D'} A\sqcup A'$.

Now, we mix both $\varphi$ and the reversion of all arcs.
Note that if $A$ admits a directed path from $u$ to $v$, then $\varphi(A)$ admits a directed path from $v$ to $u$, so $\varphi(A)^E$ admits a directed path from $u$ to $v$ again.
Consequently, the map $A\sqcup A' \mapsto \varphi(A\sqcup A')^E$ defines a bijection (whose reciprocal is $A\sqcup A' \mapsto \varphi^{-1}((A\sqcup A')^E)$) of the set of acyclic orientations of $G$ with a directed path from $u$ to $v$ in $D$, such that $\rank_{D\sqcup D'} \varphi(A\sqcup A')^E = |E| - \rank_{D\sqcup D'} \varphi(A\sqcup A') = |E| - \rank_{D\sqcup D'} A\sqcup A'$.
This bijection yields the following transformation of the sum:
$$\begin{array}{rcl}
\Psi^{u\to v}(D\sqcup D'; t)
&=& \sum_{\substack{A\sqcup A'\in \AO[G] \\ A(u, v) = +1}} t^{\rank_{D\sqcup D'} A\sqcup A'} \vspace{0.25cm} 
= \sum_{\substack{B\sqcup B'\in \AO[G] \\ B(u, v) = +1}} t^{\rank_{D\sqcup D'} \varphi^{-1}(B\sqcup B')^E} \vspace{0.25cm} \\
&=& t^{|E|}\cdot \sum_{\substack{B\sqcup B'\in \AO[D] \\ B(u, v) = +1}} t^{-\rank_{D\sqcup D'} B\sqcup B'} \vspace{0.25cm} 
= t^{|E|}\cdot \Psi^{u\to v}(D\sqcup D'; \tfrac{1}{t})
\end{array}$$
This ensures that $\Psi^{u\to v}(D\sqcup D'; t)$ is palindromic of length $|E|$.
\end{proof}

\begin{example}
For the complete bipartite graph $K_{2, 3}$ depicted in \Cref{fig:AcyclicPolyK23}, the orientations $\circled{1}, \circled{3}$ and $\circled{5}$ admit the transposition between $u$ and $v$ as an automorphism of directed graph (remember that the chosen bipartition takes $H'$ to be the graph with nodes $u, v$ and no edge).
These are the only orientations of $K_{2, 3}$ to admit an automorphism that exchanges $u$ and $v$, and are indeed the only orientations for which $\Psi^{u\to v}(D; t)$ is palindromic of length $|E| = 6$.
\end{example}

\begin{remark}
One has to be careful not to confuse automorphisms of graphs and of digraphs.
For instance, let $G = (\V, E)$ be a path on $\ell$ nodes, with $\ell$ odd, denoted $x_1, \dots, x_\ell$ with $x_ix_{i+1}\in E$.
Let $u = x_1$ and $v = x_\ell$ be the endpoints of this path.
The graph $G$ admits an automorphism that exchanges $u$ and $v$, namely $\varphi : x_i \mapsto x_{\ell-i}$ for $i\in [\ell]$.
Moreover, this is the only automorphism of $G$.

Consequently, there is no orientation $D$ of $G$ which admits an automorphism that exchanges $u$ and $v$.
Indeed, such an automorphism of $D$ would induce an automorphism of $G$, so it would have to be the above $\varphi$, yet since $\ell$ is odd, there is an edge $x_{(\ell-1)/2}x_{(\ell+1)/2}$ in $G$ whose image by $\varphi$ is $x_{(\ell+1)/2}x_{(\ell-1)/2}$:
whatever how we orient this edge in $D$, its image by $\varphi$ would be the same edge but with the reverse orientation, so it would not be an arc of $D$.
\end{remark}

% \germainadd{Other stuffs to say? Surely...}

\subsubsection{Deletion-contraction phenomenon for the acyclic polynomial}\label{ssec:DeletionContractionPhenomenon}

By means of the decomposition of \Cref{prop:AcyclicPolyDecomposition}, we can present a deletion-contraction phenomenon.
It will be incomplete since it will fail to handle pairs of nodes which share a neighbor which is of degree 3 or more (see \Cref{rmk:ImpossibilityOfBetterPsi0Recursion}), but it will still enable us to compute the acyclic signature of new graphs, such as the multipaths (see \Cref{ssec:SignatureMultiPath}). We start with contraction. 

\begin{definition}\label{def:SuspendedNodes}
Let $G = H\sqcup H'$ be a graph with a bipartition of the edges of $G$ such that $\V(H)\cap\V(H') = \{u, v\}$.
We say that the nodes $u$ and $v$ are \defn{suspended over $H$} if all the common neighbors of $u$ and $v$ in $H$ are of degree $2$.
Moreover, for an orientation $D\sqcup D'$ of $G$, if $u$ and $v$ are suspended over $H$, then we denote by $x$ the number of directed paths of length $2$ from $u$ to $v$ or from $v$ to $u$ in $D$, and let $y$ be the number of paths of length $2$ of $H$ which are not directed paths in $D$, and we denote by $\Tilde{D}/_{uv}$ the contraction on $\{u, v\}$ of the orientation $D$ with the common neighbors of $u$ and $v$ deleted.
\end{definition}

\begin{example}
In \Cref{fig:AcyclicPolyK23}, the nodes $u$ and $v$ are suspended over $K_{2, 3}$, while any pair among the three other nodes is not suspended over $K_{2, 3}$.
For orientation $\circled{5}$, we have $x = 2$ and $y = 1$.
The orientation $\Tilde{D}/_{uv}$ is just a digraph with a single node, whatever the chosen orientation of $K_{2, 3}$.
\end{example}

\begin{proposition}\label{prop:Psi0Recursion}
Let $G = H\sqcup H'$ be a graph with a bipartition of the edges of $G$ such that $\V(H)\cap\V(H') = \{u, v\}$, and $D\sqcup D'$ an orientation of $G$.
Let also $\varepsilon = 1$ if $uv\in E(H)$ and $\varepsilon = 0$ if $uv\notin E(H)$.
If $u$ and $v$ do not have a common neighbor in $H$, then
$$\Psi^{u\not\leftrightarrow v}(D\sqcup D'; t)
= (1-\varepsilon) \cdot \Psi(D'; t) \cdot \Psi(D/_{uv}; t) \,.$$

More generally, if $u$ and $v$ are suspended over $H$, then with the notations of \Cref{def:SuspendedNodes}:
% there are paths $(u, w, v)$ of length $2$ between $u$ and $v$ in $H$, then let $x$ be the number of directed paths of length $2$ from $u$ to $v$ or from $v$ to $u$ in $D$, and let $y$ be the number of paths $(u, w, v)$ of $H$ which are not directed paths in $D$.
% If $u$ and $v$ do not have a common neighbor in $H$ except for nodes on a path of length $2$\leonie{weird formulation, not sure what you want, do better!}\germain{Give a name to this property like ``$u$ and $v$ are suspended over $H$'', and change accordingly (many times).}, then, denoting $\Tilde{D}$ the orientation $D$ with these common neighbors of $u$ and $v$ deleted, and , we get:
$$\Psi^{u\not\leftrightarrow v}(D\sqcup D'; t)
= (1 - \varepsilon) \cdot (2t)^x \cdot (1+t^2)^y \cdot
\Psi(D'; t) \cdot \Psi(\Tilde{D}/_{uv}; t) \,.$$
\end{proposition}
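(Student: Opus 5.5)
We construct an explicit rank-preserving bijection. First, the factor $(1-\varepsilon)$: if $uv\in E(H)$, then every acyclic $A\sqcup A'$ has $A(u,v)=\pm1$, since the edge $uv$ itself is a directed path in $A$. So $\Psi^{u\not\leftrightarrow v}=0$, and we may assume $uv\notin E(H)$.

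Let $W$ be the set of common neighbours of $u$ and $v$ in $H$; by suspension each $w\in W$ has degree $2$. Put $\Tilde H = H\ssm W$ and let $\Tilde D$ be the restriction of $D$. An acyclic orientation $A\sqcup A'$ with $A(u,v)=0$ splits into three pieces: the restriction $\Tilde A$ to $\Tilde H$, the orientations of the $2$-paths $u - w - v$, and $A'$.

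The key equivalence is the following. We have $A(u,v)=0$ if and only if (a) $\Tilde A(u,v)=0$, and (b) no $2$-path through $w\in W$ is directed, i.e.\ each $w$ is a source or a sink between $u$ and $v$. For the forward direction, a directed $u$--$v$ path in $A$ either uses some $w$, in which case it is the $2$-path itself since $w$ has degree $2$, or it lies in $\Tilde A$. For the converse, if no $2$-path is directed, every $w$ is a source or a sink, so no directed path can pass through $w$. Then any directed $u$--$v$ path in $A$ avoids $W$, and (a) excludes it.

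The same reasoning handles cycles. Any cycle through $w$ would have to pass $u\to w\to v$ or the reverse, so when (b) holds, $A$ is acyclic if and only if $\Tilde A$ is. Once $A(u,v)=0$, the orientation $A\sqcup A'$ is acyclic if and only if $A$ and $A'$ are acyclic, by \Cref{lem:BipartitionEdgesYieldsCartesianProductAO}. So $A'$ ranges freely over $\AO[H']$ and contributes the factor $\Psi(D';t)$.

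Next we identify $\{\Tilde A \in \AO[\Tilde H] ~;~ \Tilde A(u,v)=0\}$ with $\AO[\Tilde H/_{uv}]$, with ranks preserved.

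\begin{compactitem}
\item \emph{Map.} Since $u,v$ have no common neighbour in $\Tilde H$ and $uv\notin E(\Tilde H)$, the contraction creates no multi-edges. Edges of $\Tilde H$ therefore correspond bijectively to edges of $\Tilde H/_{uv}$, so orientations and ranks with respect to $\Tilde D$ and $\Tilde D/_{uv}$ correspond.
\item \emph{Acyclicity.} A directed cycle in the contraction either lifts to a cycle of $\Tilde A$, or to a directed path from $u$ to $v$ or from $v$ to $u$. Conversely, $\Tilde A$ acyclic with $\Tilde A(u,v)=0$ gives an acyclic contraction, and the contraction being acyclic forces $\Tilde A(u,v)=0$.
\end{compactitem}

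This yields the factor $\Psi(\Tilde D/_{uv};t)$; the special case $W=\emptyset$ is the first formula of the statement.

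Finally, we count the factor coming from $W$. Each $w$ is independently a source or a sink, so it has $2$ valid choices out of its $4$ orientations.
\begin{compactitem}
\item If $u\to w\to v$ (or the reverse) in $D$, both the source and the sink state reverse exactly one of the two edges, giving $2t$. There are $x$ such $w$.
\item If $w$ is a source or sink in $D$, the two states have rank $0$ and $2$, giving $1+t^2$. There are $y$ such $w$.
\end{compactitem}
Multiplying the contributions of $A'$, $\Tilde A$ and the nodes of $W$ gives the claimed formula.

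The main obstacle is the careful equivalence relating $A(u,v)=0$ together with acyclicity of $A$ to conditions (a) and (b) plus acyclicity of the contraction. In particular, we must check that the contraction is a simple graph and that its edges are in bijection with those of $\Tilde H$, so that the rank is preserved.
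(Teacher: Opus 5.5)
Your proof is correct and takes essentially the same route as the paper. You split off $\Psi(D';t)$ because $A(u,v)=0$ decouples $H$ from $H'$, delete the degree-$2$ common neighbours and identify what remains with $\AO[\Tilde H/_{uv}]$ with ranks preserved, and count the two source/sink choices at each $w\in W$ to get $(2t)^x(1+t^2)^y$. Your node-by-node count of the $W$-factor is slightly cleaner than the paper's argument via the involution on partitions $I\sqcup J=W$. The one step you leave implicit is that an acyclic contraction lifts to an acyclic $\Tilde A$. This is immediate: a directed cycle of $\Tilde A$ through both $u$ and $v$ would contain a directed path from $u$ to $v$, and any other directed cycle of $\Tilde A$ maps to a directed cycle of the contraction.
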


\begin{proof}
The second formula implies the first when $x = y = 0$, hence, we only prove the second.

If there is an edge between $u$ and $v$ in $H$, then all acyclic orientations $A\sqcup A'$ of $G = D\sqcup D'$ satisfy $A(u, v) = \pm 1$, so $\Psi^{u\not\leftrightarrow v}(D\sqcup D'; t) = 0$.
The right-hand side for the formula is also $0$ since $(1 - \varepsilon) = 0$ in this case.
Hence, we assume that there is no edge $uv\in E(H)$ in the following.

The set of acyclic orientations $A\sqcup A'$ of $G = D\sqcup D'$ such that $A(u, v) = 0$ is in bijection with the set of pairs $(A, A')\in \AO[H]\times\AO[H']$ such that $A(u, v) = 0$.
Consequently, since $t^{\rank_{D\sqcup D'} A\sqcup A'} = t^{\rank_{D} A}\cdot t^{\rank_{D'} A'}$:
$$\Psi^{u\not\leftrightarrow v}(D\sqcup D'; t)
= \sum_{\substack{A\sqcup A'\in \AO\\A(u, v) = 0}} t^{\rank_{D\sqcup D'} A\sqcup A'}
= \left(\sum_{A'\in \AO[D']} t^{\rank_{D'} A'}\right)\cdot\left(\sum_{\substack{A\in \AO[D]\\A(u, v) = 0}} t^{\rank_{D} A}\right)$$

The left parenthesis is $\Psi(D'; t)$.

For the right parenthesis, consider the map $A \mapsto \Tilde{A}/_{uv}$.
This maps an acyclic orientation of $D$ to an orientation of $\Tilde{D}/_{uv}$.
Moreover, if $A$ is acyclic and $A(u, v) = 0$, then $\Tilde{A}/_{uv}$ is acyclic too (because a directed path between $u$ and $v$ in $\Tilde{A}/_{uv}$ yields a directed path between $u$ and $v$ in $A$).
Besides, two orientations $A$ and $B$ of $D$ satisfy $\Tilde{A}/_{uv} = \Tilde{B}/_{uv}$ if and only if they only differ on the orientation of the paths of length $2$ between $u$ and $v$ (if there exist any).
Let $W\subseteq \V$ be the set of nodes $w$ such that there is a path $(u, w, v)$ in $H$, and let $W_x$ (resp. $W_y$) be the set of all nodes $w$ such that $(u, w, v)$ is a path in $H$ and is (resp. is not) a directed path in $D$.

Fix $w\in W$:
the orientation $A$ satisfies $A(u, v) = 0$ if and only if $w$ is either a source or a sink in $(u, w, v)$.
Hence for a given acyclic orientation $a$ of $\Tilde{H}/_{uv}$, the set of acyclic orientations $A$ of $H$ such that $\Tilde{A}/_{uv} = a$ is in bijection with partitions $I\sqcup J = W$, namely $(I, J)$ is associated to the orientation of $H$ given by the arcs of $a$ together with the orientation of $(u, w, v)$ where $w$ is a source if $w\in I$ (resp. a sink if $w\in J$).
Note that this set of orientations admits an involution $\eta$ without fixed point that exchanges the roles of $I$ and $J$.
We will sum over these possible partitions for a fixed $a\in \Tilde{D}/_{uv}$, it remains to compute $\rank_D A$ from $\rank_{\Tilde
{D}/_{uv}} a$ and $I, J$.

Let $A$ be an acyclic orientation of $H$ where $w\in W$ is either a source or a sink.

\begin{enumerate}
\item If $w\in W_x$, then $\rank_D A = \rank_D (A\ssm w) + 1$; 
\item If $w\in I$ (resp. $w\in J$) and $w$ is a sink in $D$, then $\rank_D A = \rank_D (A\ssm w) + 2$ (resp. $\rank_D A = \rank_D (A\ssm w)$); 
\item If $w\in I$ (resp. $w\in J$) and $w$ is a source in $D$, then $\rank_D A = \rank_D (A\ssm w)$ (resp. $\rank_D A = \rank_D (A\ssm w) + 2$).
\end{enumerate}

Consequently, up to using the involution $\eta$, we assume without loss of generality that all $w\in W_y$ are sinks in $D$.
Therefore, for a fixed $a \in \AO[\Tilde{D}/_{uv}]$, we get: $$\rank_D A = (\rank_{\Tilde{D}/_{uv}} a) + 1\cdot |W_x| + 0\cdot |I\cap W_y| + 2\cdot |J\cap W_y|
= (\rank_{\Tilde{D}/_{uv}} a) + x + 2\cdot|J\cap W_y|.$$
Finally, we obtain (remember that $|W_x| = x$, $|W_y| = y$ and $|W| = x + y$):
$$\begin{array}{rclr}
\sum_{\substack{A\in \AO[D]\\A(u, v) = 0}} t^{\rank_{D} A}
&=& \sum_{a\in \AO[\Tilde{D}/_{uv}]} t^{\rank_{\Tilde{D}/_{uv}} a} \cdot t^{x} \cdot \left(\sum_{J\subseteq W} t^{2|J\cap W_y|}\right) \vspace{0.2cm} & \\
&=& t^{x} \cdot \sum_{a\in \AO[\Tilde{D}/_{uv}]} t^{\rank_{\Tilde{D}/_{uv}} a} \cdot \left(\sum_{J\subseteq W_y} t^{2|J|} \cdot 2^{|W|-|W_y|}\right) \vspace{0.2cm} & \\
&=& t^{x} \cdot (1+t^2)^{|W_y|} \cdot 2^x \cdot \sum_{a\in \AO[\Tilde{D}/_{uv}]} t^{\rank_{\Tilde{D}/_{uv}} a} \vspace{0.2cm} & \\
&=& (2t)^{x} \cdot (1+t^2)^{y} \cdot \Psi(\Tilde{D}/_{uv}; t)  &\qedhere
\end{array}$$
\end{proof}

\begin{remark}
As typical use cases of \Cref{prop:Psi0Recursion}, note that if $H'= (\{u,v\}, \emptyset)$, then $\Psi(D'; t) = 1$; while if $H'$ is only the edge between $u$ and $v$, then $\Psi(D'; t) = 1 + t$.
\end{remark}

\begin{example}
In \Cref{fig:AcyclicPolyK23}, with the bipartition $K_{2, 3} = H \sqcup H'$ where $H'$ is the graph with nodes $u, v$ and no edge, the polynomial $\Psi^{u\not\leftrightarrow v}(D; t)$ can be deduced directly using the second formula of \Cref{prop:Psi0Recursion}:
for each orientation, denoting $x$ the number of directed paths between $u$ and $v$ in $D$, we get $\Psi^{u\not\leftrightarrow v}(D; t) = (2t)^x (1+t^2)^{3-x}$ whose value is either $8t^3$, or $4t^2 + 4t^4$, or $2t + 4t^3 + 2t^5$, or $1 + 3t^2 + 3t^4 + t^6$.
\end{example}

\begin{remark}\label{rmk:ImpossibilityOfBetterPsi0Recursion}
In \Cref{prop:Psi0Recursion}, the condition that $u$ and $v$ are suspended over $H$ is not artificial.
Consider the map $\delta : A \mapsto A/_{uv}$ from the acyclic orientations of $H$ to the acyclic orientations of $H/_{uv}$.
If all common neighbors of $u, v$ are of degree $2$, then the proof of \Cref{prop:Psi0Recursion} essentially relies on the fact that the fibers $\delta^{-1}(a)$ for $a\in \AO[H/_{uv}]$ are well-behaved:
they have the same size, and the rank of the pre-images can be controlled easily.
However, if there exists a common neighbor to $u, v$ which is connected to other nodes of $H$ (possibly to other common neighbors of $u$ and $v$), then nothing guarantees that the fibers of $\delta$ are well-behaved. 
%, and we do not know how to sum over them.

As illustrated in \Cref{fig:WeirdContraction}, is not even true in general that the polynomial $\Psi(\Tilde{D}/_{uv}; t)$ divides the polynomial $\Psi^{u\not\leftrightarrow v}(D\sqcup D'; t)$.
\end{remark}

\begin{figure}
    \centering
    \includegraphics[width=0.8\linewidth]{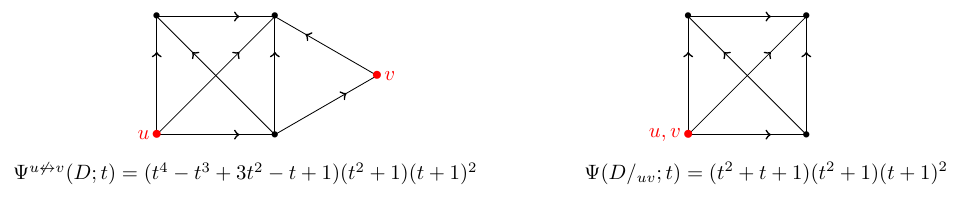}
    \caption{A digraph $D$ (left) and its contraction on $u, v$ (right), together with their respective (partial) acyclic polynomials.
    Both polynomials are factorized (over $\Z$), so the right one does not divide the left one.}
    \label{fig:WeirdContraction}
\end{figure}

We now turn our attention to deletion: in what follows, the digraphs $D$ and $D'$ should be thought as deletions from the digraph $D\sqcup D'$.
The result is easier to understand than the contraction of \Cref{prop:Psi0Recursion} but only allows us to express \emph{partial} acyclic polynomials $\Psi^{u\to v}$ using \emph{partial} acyclic polynomials of smaller digraphs.

\begin{proposition}\label{prop:Psi+Recursion}
Let $G = H\sqcup H'$ be a graph with a bipartition of the edges of $G$ such that $\V(H)\cap\V(H') = \{u, v\}$, and $D\sqcup D'$ an orientation of $G$.
Then
$$\begin{array}{rcl}
\Psi^{u\to v}(D\sqcup D'; t)
&=& \bigl(\Psi^{u\not\leftrightarrow v}(D'; t) + \Psi^{u\to v}(D'; t)\bigr)\cdot \Psi^{u\to v}(D; t) \vspace{0.2cm} \\
&=& \bigl(\Psi(D'; t) - t^{|E|}\cdot \Psi^{u\to v}(D'; \tfrac{1}{t})\bigr)\cdot \Psi^{u\to v}(D; t).
\end{array}$$
\end{proposition}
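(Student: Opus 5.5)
We plan to prove the first equality by a bijective decomposition via \Cref{lem:BipartitionEdgesYieldsCartesianProductAO}, and then derive the second from \Cref{prop:AcyclicPolyDecomposition} applied to $D'$ alone.

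\textbf{First equality.} By \Cref{lem:BipartitionEdgesYieldsCartesianProductAO}, acyclic orientations of $G$ are exactly the unions $A\sqcup A'$ of acyclic orientations $A\in\AO[H]$ and $A'\in\AO[H']$ such that $A(x,y)\cdot A'(x,y)\ne -1$ for all nodes $x,y$. The sum defining $\Psi^{u\to v}(D\sqcup D';t)$ runs over those pairs with $A(u,v)=+1$.

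The key claim is that, when $A(u,v)=+1$ and $\V(H)\cap\V(H')=\{u,v\}$, the union $A\sqcup A'$ is acyclic if and only if $A'(u,v)\ne -1$, i.e.\ $A'(u,v)\in\{0,+1\}$.
\begin{itemize}
\item[] \emph{Necessity.} This is immediate: if $A'(u,v)=-1$, a path $u\to v$ in $A$ followed by a path $v\to u$ in $A'$ is a directed cycle.
\item[] \emph{Sufficiency.} Suppose $A\sqcup A'$ contains a directed cycle. Since $A$ and $A'$ are acyclic separately, the cycle must use arcs of both $H$ and $H'$. Each time it passes from $H$ to $H'$ or back, it does so at a shared node, and the only shared nodes are $u$ and $v$.
\item[] We decompose the cycle into maximal segments lying in a single side. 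Each segment lies in $A$ or in $A'$ and goes between $u$ and $v$. A segment from $u$ to $u$ (or $v$ to $v$) would itself be a closed directed walk inside $A$ or $A'$, contradicting acyclicity, so consecutive segments alternate between $u$ and $v$.
\item[] Hence there is a segment in $A$ and one in $A'$ running in opposite directions between $u$ and $v$. As $A(u,v)=+1$ and $A$ is acyclic, $A$ has no path $v\to u$. So the $A$-segments go $u\to v$, and some $A'$-segment goes $v\to u$, giving $A'(u,v)=-1$.
\end{itemize}
This case analysis is the only delicate step of the argument.

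Granted the claim, the set we sum over is the product $\{A\in\AO[H]: A(u,v)=1\}\times\{A'\in\AO[H']: A'(u,v)\in\{0,1\}\}$. Since $\rank_{D\sqcup D'}(A\sqcup A')=\rank_D A+\rank_{D'}A'$, the sum factorizes as
$$\Psi^{u\to v}(D;t)\cdot\bigl(\Psi^{u\not\leftrightarrow v}(D';t)+\Psi^{u\to v}(D';t)\bigr).$$
Here the partial polynomials of $D$ and $D'$ are understood with respect to the trivial bipartition with $(\{u,v\},\emptyset)$, i.e.\ directions are computed inside $D$ alone or $D'$ alone.

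\textbf{Second equality.} We apply \Cref{prop:AcyclicPolyDecomposition} to $D'$ alone (with the edge-free graph on $\{u,v\}$ as complement). This gives
$$\Psi(D';t)=\Psi^{u\not\leftrightarrow v}(D';t)+\Psi^{u\to v}(D';t)+t^{|E'|}\Psi^{u\to v}(D';\tfrac1t).$$
Subtracting the last term yields the bracket of the first equality, which proves the second equality with the exponent $|E(H')|$.

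There is a notational subtlety: the statement writes $t^{|E|}$, which should be read as the number of edges of $D'$. I would remark on this interpretation explicitly rather than try to make the exponent $|E(G)|$ work.
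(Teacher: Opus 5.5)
Your proposal is correct and follows the same route as the paper: identify the acyclic orientations with $A(u,v)=1$ with the product of $\{A : A(u,v)=1\}$ and $\{A' : A'(u,v)\neq -1\}$, factor the sum using additivity of ranks, and then apply \Cref{prop:AcyclicPolyDecomposition} to $D'$ alone to get the second line. Your segment-decomposition argument proves the bijection that the paper only asserts, and you are right that the exponent in the second formula must be read as $|E(H')|$ (the number of arcs of $D'$), not $|E(G)|$.
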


\begin{proof}
The set of acyclic orientations $A\sqcup A'$ of $G = D\sqcup D'$ such that $A(u, v) = 1$ is in bijection with the set of pairs $(A, A')\in \AO[H]\times\AO[H']$ such that $A(u, v) = 1$ and $A'(u, v)\ne -1$.
Consequently, since $t^{\rank_{D\sqcup D'} A\sqcup A'} = t^{\rank_{D} A}\cdot t^{\rank_{D'} A'}$:
$$\Psi^{u\to v}(D\sqcup D'; t)
= \sum_{\substack{A\sqcup A'\in \AO\\A(u, v) = 1}} t^{\rank_{D\sqcup D'} A\sqcup A'}
= \left(\sum_{\substack{A'\in \AO[D']\\A'(u, v) \in \{0, 1\}}} t^{\rank_{D'} A'}\right)\cdot\left(\sum_{\substack{A\in \AO[D]\\A(u, v) = 1}} t^{\rank_{D} A}\right)$$

The rightmost parenthesis is $\Psi^{u\to v}(D; t)$, while the left parenthesis is $$\sum_{\substack{A'\in \AO[D']\\A'(u, v) = 0}} t^{\rank_{D'} A'} + \sum_{\substack{A'\in \AO[D']\\A'(u, v) = 1}} t^{\rank_{D'} A'} = \Psi^{u\not\leftrightarrow v}(D'; t) + \Psi^{u\to v}(D'; t). $$ 
Using \Cref{prop:AcyclicPolyDecomposition}, we get the second formula.
\end{proof}

\begin{remark}\label{rmk:UncompleteDeletionContraction}
Sadly, \Cref{prop:Psi0Recursion,prop:Psi+Recursion} do not allow us to compute $\Psi(D; t)$ for any digraph $D$ via a deletion-contraction recursion.
Not only is the choice of the bipartition $G=H\sqcup H'$ slightly restrictive, but also, although the formula for $\Psi^{u\not\leftrightarrow v}(D\sqcup D; t)$ only involves $\Psi$ of smaller digraphs, the formula for $\Psi^{u\to v}(D\sqcup D'; t)$ involves some $\Psi^{u\to v}$ for smaller digraphs, and we provide no method (except exhaustive enumeration of acyclic orientations) to compute such polynomials.
Note that the polynomial $\Psi^{u\to v}(D; t)$ cannot be recovered knowing $\Psi(D; t)$ and $\Psi^{u\not\leftrightarrow v}(D; t)$ since it is not palindromic.
\end{remark}

\begin{remark}\label{rmk:StanleyDeletionContraction}
In \cite{Stanley-acyclicOrientations}, Stanley proved that $\psi(G) = \psi(G/_{uv}) + \psi(G\ssm uv)$ for all graphs $G$ and all edges $uv$ of $G$, where $\psi(G) = \Psi(D; 1)$ for any orientation $D$ of $G$.
Considering the bipartition of edges $G = H\sqcup H'$ where $H'$ is a single edge between $u$ and $v$, one can almost recover Stanley's deletion-contraction recursion.
Indeed, if $u,v$ have no common neighbors in $G$, then evaluating \Cref{prop:AcyclicPolyDecomposition,prop:Psi0Recursion,prop:Psi+Recursion} at $t = 1$ yields:
$$\begin{array}{rcl}
\Psi(D\sqcup D'; 1) &=& \Psi^{u\not\leftrightarrow v}(D\sqcup D'; 1) + 2\cdot\Psi^{u\to v}(D\sqcup D'; 1) \\
&=& \Psi(D'; 1)\cdot \Psi(D/_{uv}; 1) + 2\cdot\bigl(\Psi^{u\not\leftrightarrow v}(D'; 1) + \Psi^{u\to v}(D'; 1)\bigr)\cdot \Psi^{u\to v}(D; 1)
\end{array}$$
where $\Psi(D'; 1) = 2$, $\Psi^{u\not\leftrightarrow v}(D'; 1) = 0$, and $\Psi^{u\to v}(D'; 1) = 1$, since $D'$ is only an (oriented) edge.
Now, applying \Cref{prop:AcyclicPolyDecomposition,prop:Psi0Recursion} to $D$ itself gives:
$\Psi^{u\to v}(D; 1) = \frac{1}{2}\Psi(D; 1) - \frac{1}{2}\Psi^{u\not\leftrightarrow v}(D; 1) = \frac{1}{2}\Psi(D; 1) - \frac{1}{2}\Psi(D/_{uv}; 1)$.
Substituting in the above, we obtain:
$\Psi(D\sqcup D'; 1) = \Psi(D/_{uv}; 1) + \Psi(D; 1)$, which is precisely Stanley's recursion, as desired.

However, as our \Cref{prop:Psi0Recursion} does not apply if $u$ and $v$ are not suspended over $H$, one still needs to prove Stanley's recursion in these cases, and cannot recover it from the deletion-contraction phenomenon that we presented.
\end{remark}

\subsection{Acyclic signature}\label{ssec:AcyclicSignature}

We use the knowledge on the acyclic polynomial to compute the acyclic signature $\sigma(D) \eqdef \Psi(D; -1)$ of (di)graphs.
The actual value of the signature is not very important for the problem of Hamiltonicity of $\AO$:
we only care about whether $\sigma(D)$ is $0$ or not.
Indeed, by the parity argument (see \cite{SavageSquireWest93-GrayCodesAcyclicOrientations} or \Cref{lem:ParityArgument}), if $\sigma(D) \ne 0$ for some orientation $D$ or $G$, then $G$ is not {\good}.
The reciprocal is false, yet it seems non-trivial to produce an interesting $2$-connected graph with an even number of edges that would help to understand how far from characterizing $\c{AO}$-Hamiltonicity the acyclic signature is.

\subsubsection{Definition for (di)graphs, first examples and first properties}

%One may wonder whether the signature is a property of the graph $G$ itself or truly depends on the chosen orientation $D$.
We have seen in \Cref{fig:AcyclicPolyK23} that, up to a sign, the signature $\sigma(D)$ for $D$ an orientation of $K_{2, 3}$ does not depend on $D$.
This is actually a general fact:

\begin{proposition}\label{prop:SignatureIndependentFromOrientationUpToSign}
Let $D$ and $D'$ be two orientations of a graph $G$, then: $\sigma(D') = (-1)^{\rank_D D'}\cdot \sigma(D)$.
\end{proposition}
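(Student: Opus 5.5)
The plan is to compare the two signatures term by term by re-indexing acyclic orientations with respect to the two base orientations. Write $D' = D^Y$ for $Y \subseteq E$, so that $\rank_D D' = |Y|$.

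Every orientation $A$ of $G$ can be written $A = D^X$ with respect to $D$, and then $A = D'^{X\symmdiff Y}$ with respect to $D'$. Indeed, an edge is flipped relative to $D'$ exactly when it is flipped relative to $D$ but not in $Y$, or in $Y$ but not flipped relative to $D$; this is just the statement that the difference $A\symmdiff D'$ equals $(A\symmdiff D)\symmdiff(D\symmdiff D')$, as recorded in \Cref{ssec:Prelim:GeneralOrientations}. Hence
\[
\rank_{D'} A = |X\symmdiff Y| = |X| + |Y| - 2|X\cap Y| \equiv \rank_D A + \rank_D D' \pmod 2 .
\]

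Since the set of acyclic orientations $\AO$ does not depend on the choice of base orientation, we can sum over the same set on both sides:
\[
\sigma(D') = \sum_{A\in\AO} (-1)^{\rank_{D'} A} = (-1)^{|Y|}\sum_{A\in\AO}(-1)^{\rank_D A} = (-1)^{\rank_D D'}\,\sigma(D).
\]
Here we use that $\sigma(D) = \Psi(D;-1) = \sum_{A} (-1)^{\rank_D A}$, which is valid for any orientation $D$ and not only acyclic ones; \Cref{lem:ParityArgument} already uses this.

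The only step needing care is the parity identity for $|X\symmdiff Y|$, which is immediate from the displayed formula. So no real obstacle is expected. As a consequence, $|\sigma(D)|$ is independent of $D$, which justifies the notation $\sigma(G)$.
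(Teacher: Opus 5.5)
Your proof is correct and follows essentially the same argument as the paper: write $D' = D^Y$ and $A = D^X$, note $A = (D')^{X\symmdiff Y}$, use $|X\symmdiff Y| \equiv |X| + |Y| \pmod 2$, and sum over the base-independent set $\AO$. Your remark that $\sigma(D) = \Psi(D;-1)$ makes sense for arbitrary, not necessarily acyclic, $D$ matches how the paper actually uses the signature.
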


\begin{proof}
Suppose that $D' = D^Y$ for some $Y\subseteq 2^E$, and let $A = D^X$ be another orientation of $G$.
Thus, we have $A = (D')^{X\triangle Y}$ where $\triangle$ denotes the symmetric difference operator.
This implies that $\rank_{D'} A = |X\triangle Y| = |X| + |Y| - 2|X\cap Y| = \rank_D D' + \rank_D A - 2|X\cap Y|$ which has the same parity as $\rank_D D' + \rank_D A$, \ie $(-1)^{\rank_{D'} A} = (-1)^{\rank_D D'} \cdot (-1)^{\rank_D A}$.
Consequently:
$$\sigma(D') = \sum_{A\in \AO} (-1)^{\rank_{D'} A} = (-1)^{\rank_D D'} \cdot \sum_{A\in \AO} (-1)^{\rank_D A} = (-1)^{\rank_D D'} \cdot \sigma(D). ~ \qedhere$$
\end{proof}

In view of \Cref{prop:SignatureIndependentFromOrientationUpToSign}, we define $\mathdefn{\sigma(G)}$ to be the absolute value of $\sigma(D)$ for any orientation~$D$ of the graph $G$.
We can directly use the explicit formulas given in \Cref{ssec:AcyclicPolynomial} for $\Psi(D; t)$ to obtain formulas for $\sigma(G)$, see \Cref{tab:SignatureEasyFacts}.

% \begin{table}
% \centering
% \renewcommand{\arraystretch}{1.3}
% \begin{tabular}{c|c|c}
% Graph $G$ & $\sigma(G)$ & Proof by \\ \hline \hline
% Tree & $0$ & \Cref{exm:AcyclicPolynomialTrees} \\
% Cycle $C_n$ & $\left\{\begin{array}{ll} 0 & \text{if } n \text{ even} \\ 2 & \text{if } n \text{ odd}\end{array}\right.$ & \Cref{exm:AcyclicPolynomialCycles} \\
% Complete bipartite graph $K_{2, 3}$ & $6$ & \Cref{exm:AcyclicPolynomialK23} \\
% Complete graph $K_n$ & $0$ & \Cref{exm:AcyclicPol:CompleteGraphs} \\ \hline

% \makecell{Disconnected graph $G$\\with connected components $\V_1, \dots, \V_r$} & $\prod_k\sigma(G[\V_k])$ & \Cref{prop:AcyclicPol:Disconnected} \\
% $1$-sum $G = H\oplus_1 H'$ & $\sigma(H) \cdot \sigma(H')$ & \Cref{prop:AcyclicPol:1sums} \\
% \makecell{$G$ with simplicial node $v$\\with $b$ neighbors} & $\left\{\begin{array}{ll} \sigma(G\ssm\{v\}) & \text{if } b \text{ even} \\ 0 & \text{if } b \text{ odd}\end{array}\right.$ & \Cref{lem:AcyclicPol:SimplicialNodeAdd} \\
% Chordal graph & $0$ & \Cref{prop:AcyclicPol:ChordalGraph} \\
% Odd number of edges & $0$ & \Cref{cor:OddEdgeNbImpliesSignature0}
% \end{tabular}
% \renewcommand{\arraystretch}{1}
% \caption{First examples and first properties of the acyclic signature.}
% \label{tab:SignatureEasyFacts}
% \end{table}

\subsubsection{Towards a deletion-contraction algorithm}
Looking at the formulas we developed for $\Psi(D; t)$ in \Cref{ssec:AcyclicPolynomial}, one can see that some of them simplify drastically when evaluated at $t = 1$:
this allows for recovering many key properties of the number of acyclic orientations $\psi(G) = \Psi(D; 1)$, as we did in \Cref{rmk:StanleyDeletionContraction}.
Likewise, some of these formulas also simplify when evaluated at $t = -1$, yielding properties of the acyclic signature $\sigma(G) = \bigl|\Psi(D; -1)\bigr|$. % that we will now present.

%First, the parity of the number of edges of $G$ has a direct impact on its signature.

\begin{corollary}\label{cor:OddEdgeNbImpliesSignature0}
If $G$ has an odd number of edges, then $\sigma(G) = 0$.
\end{corollary}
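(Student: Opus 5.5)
The plan is to use the palindromicity of the acyclic polynomial, \Cref{cor:PsiIsPalindromic}, evaluated at $t=-1$. Fix any orientation $D$ of $G=(\V,E)$, and set $m \eqdef |E|$, which is odd. By \Cref{cor:PsiIsPalindromic}, we have $\Psi(D; t) = t^{m}\cdot \Psi(D; \tfrac{1}{t})$ as an identity of polynomials in $t$ (more precisely, of Laurent polynomials). Both sides can be evaluated at $t=-1$, since $\tfrac{1}{-1} = -1$. This yields
$$\sigma(D) = \Psi(D; -1) = (-1)^{m}\cdot\Psi(D; -1) = -\sigma(D),$$
so $\sigma(D) = 0$. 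Hence $\sigma(G) = |\sigma(D)| = 0$.

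Alternatively, and more combinatorially, one can invoke \Cref{lem:CompletementIsInvolution}~\eqref{item:CompletementIsInvolution:EvenOddRank}. When $|E|$ is odd, the map $X\mapsto E\ssm X$ is a bijection between the acyclic orientations of even rank and those of odd rank. Therefore $\sum_{k \text{ even}}|\AODk| = \sum_{k\text{ odd}}|\AODk|$, which gives $\sigma(D)=0$ directly from \Cref{def:signature}.

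Neither route has a real obstacle. The only point to check is that palindromicity of length $|E|$ holds even when $D$ is cyclic, where $\Psi(D;t)$ may have degree smaller than $|E|$. This is fine, because \Cref{cor:PsiIsPalindromic} is stated as the functional identity $P(t)=t^{|E|}P(1/t)$, and that identity is all the argument uses.
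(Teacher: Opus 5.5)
Your proof is correct, and your first route is exactly the paper's argument: evaluate the palindromic identity $\Psi(D;t)=t^{|E|}\Psi(D;\tfrac{1}{t})$ from \Cref{cor:PsiIsPalindromic} at $t=-1$ with $|E|$ odd, which gives $\sigma(D)=-\sigma(D)$. Your alternative via \Cref{lem:CompletementIsInvolution}~\eqref{item:CompletementIsInvolution:EvenOddRank} is also valid; it is simply the combinatorial content that underlies that palindromicity.
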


\begin{proof}
By \Cref{cor:PsiIsPalindromic}, $\Psi(D; t)$ is palindromic of length $|E|$ for any orientation $D$ of $G = (\V, E)$, \ie $\Psi(D; t) = t^{|E|} \Psi(D; \tfrac{1}{t})$.
Evaluating at $t = -1$ for $|E|$ odd yields $\sigma(D) = - \sigma(D)$, so $\sigma(D) = 0$.
\end{proof}

For the case of graphs with an even number of edges, we can use the decomposition of \Cref{prop:AcyclicPolyDecomposition} and the deletion-contraction phenomenon described in \Cref{prop:Psi0Recursion,prop:Psi+Recursion}, but we need to work with $\sigma(D)$ instead of $\sigma(G)$ to keep track of signs.

\begin{definition}
Let $G = H\sqcup H'$ be a graph with a bipartition of the edges of $G$ such that $\V(H)\cap\V(H') = \{u, v\}$, and $D\sqcup D'$ an orientation of $G$.
We define the \defn{partial signatures} as $\mathdefn{\sigma^{u\not\leftrightarrow v}(D\sqcup D')} \eqdef \Psi^{u\not\leftrightarrow v}(D\sqcup D'; -1)$ and $\mathdefn{\sigma^{u\to v}(D\sqcup D')} \eqdef \Psi^{u\to v}(D\sqcup D'; -1)$.
\end{definition}

\begin{theorem}\label{thm:AcyclicSignatureContractionDeletionPhenomenon}
For $G$ a graph with an even number of edges,
let $G = H\sqcup H'$ be a graph with a bipartition of the edges of $G$ such that $\V(H)\cap\V(H') = \{u, v\}$, and $D\sqcup D'$ an orientation of $G$.
Then:
\begin{equation}\label{eqn:SigmaFromPartialSigmas}
\sigma(D\sqcup D') = \sigma^{u\not\leftrightarrow v}(D\sqcup D') + 2\cdot \sigma^{u\to v}(D\sqcup D')
\end{equation}
\begin{equation}\label{eqn:Sigma+}
\sigma^{u\to v}(D\sqcup D') = \bigl(\sigma(D') - \sigma^{u\to v}(D')\bigr)\cdot \sigma^{u\to v}(D) 
\end{equation}

% $$\begin{array}{rcl}
% \sigma(D\sqcup D') &=& \sigma^{u\not\leftrightarrow v}(D\sqcup D') + 2\cdot \sigma^{u\to v}(D\sqcup D') \vspace{0.2cm} \\
% \sigma^{u\to v}(D\sqcup D') &=& \bigl(\sigma(D') - \sigma^{u\to v}(D')\bigr)\cdot \sigma^{u\to v}(D) 
% \end{array}$$

Moreover, if $u$ and $v$ are suspended over $H$, then with the notations of \Cref{def:SuspendedNodes}, and $\varepsilon = 1$ if $uv\in E(H)$ and $\varepsilon = 0$ if $uv\notin E(H)$, we get:
\begin{equation}\label{eqn:Sigma0}
\sigma^{u\not\leftrightarrow v}(D\sqcup D') = (1 - \varepsilon) \cdot (-1)^x \cdot 2^{x+y} \cdot \sigma(D')\cdot \sigma(\Tilde{D}/_{uv})
\end{equation}
\end{theorem}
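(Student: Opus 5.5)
The plan is to evaluate the three polynomial identities already proven at $t=-1$. The parity hypothesis on $|E|$ will be used exactly where the reciprocal evaluation $t^{|E|}\Psi(\,\cdot\,;\tfrac1t)$ appears, since at $t=-1$ it turns the factor $(-1)^{|E|}$ into $1$.

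\emph{Equation \eqref{eqn:SigmaFromPartialSigmas}.} I start from the decomposition of \Cref{prop:AcyclicPolyDecomposition}:
\[\Psi(D\sqcup D';t)=\Psi^{u\not\leftrightarrow v}(D\sqcup D';t)+\Psi^{u\to v}(D\sqcup D';t)+t^{|E|}\Psi^{u\to v}(D\sqcup D';\tfrac1t).\]
Setting $t=-1$ and using $|E|$ even, the last term is $(-1)^{|E|}\Psi^{u\to v}(D\sqcup D';-1)=\sigma^{u\to v}(D\sqcup D')$. This gives \eqref{eqn:SigmaFromPartialSigmas} directly.

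\emph{Equation \eqref{eqn:Sigma+}.} I use the second form of \Cref{prop:Psi+Recursion}:
\[\Psi^{u\to v}(D\sqcup D';t)=\bigl(\Psi(D';t)-t^{|E'|}\Psi^{u\to v}(D';\tfrac1t)\bigr)\cdot\Psi^{u\to v}(D;t).\]
Here I must be careful about the exponent. The statement of that proposition writes $t^{|E|}$, but its proof obtains this term from \Cref{prop:AcyclicPolyDecomposition} applied to $D'$ alone, so the correct exponent is the edge count $|E'|$ of $H'$ (or $|E(D')|$).

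This is the main obstacle: $|E'|$ need not be even even though $|E|$ is. To avoid needing the parity of $|E'|$, I would use the first form instead,
\[\Psi^{u\to v}(D\sqcup D';t)=\bigl(\Psi^{u\not\leftrightarrow v}(D';t)+\Psi^{u\to v}(D';t)\bigr)\cdot\Psi^{u\to v}(D;t).\]
I then evaluate at $t=-1$ and rewrite $\sigma^{u\not\leftrightarrow v}(D')+\sigma^{u\to v}(D')$ via the decomposition of $D'$:
\[\sigma(D')=\sigma^{u\not\leftrightarrow v}(D')+\sigma^{u\to v}(D')+(-1)^{|E'|}\sigma^{u\to v}(D').\]
When $|E'|$ is even, this yields exactly $\sigma(D')-\sigma^{u\to v}(D')$. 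When $|E'|$ is odd, the correction term has the opposite sign, and I would have to check whether the other factor compensates. Since $|E|$ is even, $|E(H)|$ is then also odd, so I would try to show that $\sigma^{u\to v}(D)$ or the relevant combination vanishes in that case, for instance via the involution of \Cref{lem:CompletementIsInvolution} combined with palindromicity. If this fails, I would note that the identity is meant with $|E'|$ even, which holds in the intended applications where $H'$ is a single edge or an empty edge set (the latter being the one case $\Psi(D';t)=1$ of the paper's remark). In the same way I would read the paper's convention as implicitly taking $D'$'s edge count to be even.

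\emph{Equation \eqref{eqn:Sigma0}.} This is a direct substitution $t=-1$ in \Cref{prop:Psi0Recursion}: $(2t)^x\mapsto(-2)^x=(-1)^x2^x$ and $(1+t^2)^y\mapsto 2^y$, while $\Psi(D';-1)=\sigma(D')$ and $\Psi(\Tilde D/_{uv};-1)=\sigma(\Tilde D/_{uv})$. Together these give $(1-\varepsilon)(-1)^x2^{x+y}\sigma(D')\sigma(\Tilde D/_{uv})$, which is \eqref{eqn:Sigma0}.
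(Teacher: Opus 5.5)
For \eqref{eqn:SigmaFromPartialSigmas} and \eqref{eqn:Sigma0} you do exactly what the paper does: evaluate \Cref{prop:AcyclicPolyDecomposition} and \Cref{prop:Psi0Recursion} at $t=-1$. The latter needs no parity assumption at all.

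For \eqref{eqn:Sigma+} your diagnosis is correct, and you should push it to its conclusion rather than hedge. The term $t^{|E|}\Psi^{u\to v}(D';\tfrac1t)$ in \Cref{prop:Psi+Recursion} comes from decomposing $D'$ alone, so its exponent is $|E(H')|$. Evaluating the first (correct) form at $t=-1$ gives
\[
\sigma^{u\to v}(D\sqcup D')=\bigl(\sigma^{u\not\leftrightarrow v}(D')+\sigma^{u\to v}(D')\bigr)\cdot\sigma^{u\to v}(D)=\bigl(\sigma(D')-(-1)^{|E(H')|}\sigma^{u\to v}(D')\bigr)\cdot\sigma^{u\to v}(D).
\]
The paper's one-line proof silently uses ``$|E|$ even'' where ``$|E(H')|$ even'' is what is needed.

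Your planned attempt to show that $\sigma^{u\to v}(D)$ (or the product) vanishes when $|E(H')|$ is odd cannot succeed, because \eqref{eqn:Sigma+} is false there. Here is a counterexample.
\begin{itemize}
\item Take $G=C_4$ on $u,a,b,v$. Let $H$ be the path oriented $u\to a\to b\to v$ and $H'$ the single arc $u\to v$, so $|E|=4$.
\item Each of $D$ and $D'$ is the unique acyclic orientation of its own graph with a directed path from $u$ to $v$. Hence $\sigma^{u\to v}(D)=\sigma^{u\to v}(D')=1$, and also $\sigma(D')=0$.
\item The only orientation counted by $\Psi^{u\to v}(D\sqcup D';t)$ is $D\sqcup D'$ itself, so $\sigma^{u\to v}(D\sqcup D')=1$.
\item But \eqref{eqn:Sigma+} predicts $(0-1)\cdot 1=-1$.
\end{itemize}
As a cross-check, $\Psi(D\sqcup D';t)=(1+t)^4-t-t^3$ gives $\sigma(D\sqcup D')=2$. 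Since $\sigma^{u\not\leftrightarrow v}(D\sqcup D')=0$, \eqref{eqn:SigmaFromPartialSigmas} reproduces this only with the value $+1$.

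So \eqref{eqn:Sigma+} needs one of two repairs. Either add the hypothesis that $|E(H')|$ is even (equivalently, given $|E|$ even, that $|E(H)|$ is even), or replace $\sigma(D')-\sigma^{u\to v}(D')$ by $\sigma^{u\not\leftrightarrow v}(D')+\sigma^{u\to v}(D')$. With that amendment your proof is complete.

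One correction to your fallback remark: a single edge $H'$ is not an even case; it is precisely the failing case above. The places where the paper actually applies \eqref{eqn:Sigma+} do satisfy the amended hypothesis.
\begin{itemize}
\item \Cref{rmk:ContractionDeletionAlgorithmForSignature} assumes both $|E(H)|$ and $|E(H')|$ even.
\item The proof of \Cref{thm:AcyclicSignatureMultiPaths} uses either $H'=P^{\ell_s}$ with $\ell_s$ even, or $H'=P^{(\ell_{s-1},\ell_s)}$ with $\ell_{s-1}+\ell_s$ even.
\end{itemize}
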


\begin{proof}
Direct applications of \Cref{prop:AcyclicPolyDecomposition,prop:Psi0Recursion,prop:Psi+Recursion} evaluated at $t = -1$ and $|E|$ even.
\end{proof}

%Before presenting an algorithm to compute certain acyclic signatures recursively, we need a last lemma.

\begin{lemma}\label{lem:AcyclicSignature:PartialToTrueSignature}
Let $G$ be a graph with nodes $u$ and $v$ which are suspended over $G$.
Then, we have
\begin{equation}\label{eqn:AcyclicSignature:PartialToTrueSignature}
\sigma^{u\to v}(D) = \frac{1}{2}\bigl(\sigma(D) - (-1)^x \cdot 2^{x+y} \cdot \sigma(\Tilde{D}/_{uv})\bigr) \,.
\end{equation}
\end{lemma}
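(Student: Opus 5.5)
Consider the special bipartition $G = H\sqcup H'$ with $H = G$ and $H'$ the edgeless graph on the nodes $u,v$. This is the setting in which the partial acyclic polynomials $\Psi^{u\not\leftrightarrow v}(D;t)$ and $\Psi^{u\to v}(D;t)$ were defined. In this case $D' $ has no edge, so $\Psi(D';t)=1$ by convention, and $|E(G)| = |E(H)|$.

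First apply \Cref{prop:AcyclicPolyDecomposition} and evaluate it at $t=-1$. This gives
$$\sigma(D) = \sigma^{u\not\leftrightarrow v}(D) + \sigma^{u\to v}(D) + (-1)^{|E|}\sigma^{u\to v}(D),$$
because $(1/t)$ evaluated at $-1$ is again $-1$.

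Next use the hypothesis that $u$ and $v$ are suspended over $G$. \Cref{prop:Psi0Recursion}, evaluated at $t=-1$ with $\Psi(D';-1)=1$, gives
$$\sigma^{u\not\leftrightarrow v}(D) = (1-\varepsilon)(-1)^x 2^{x+y}\sigma(\Tilde D/_{uv}).$$
Substituting this and solving for $\sigma^{u\to v}(D)$ yields the claimed formula, under two conditions that must be justified: that $|E|$ is even, and that $\varepsilon = 0$.

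Handling these two conditions is the main obstacle. The statement does not assume them explicitly, so I read it as being used under the running hypotheses of \Cref{thm:AcyclicSignatureContractionDeletionPhenomenon}, namely an even number of edges and no edge $uv$. I would add a short remark on each.

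If $uv\in E$, then $\sigma^{u\not\leftrightarrow v}(D)=0$. The formula then needs the correction factor $(1-\varepsilon)$, or else one restricts to the case $uv\notin E$, which is the one relevant for multipaths.

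If $|E|$ is odd, the decomposition gives $\sigma(D)=\sigma^{u\not\leftrightarrow v}(D)$. In that case $\sigma^{u\to v}(D)$ is not determined by $\sigma(D)$ and $\sigma(\Tilde D/_{uv})$, so the even-edges hypothesis is genuinely needed.

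With both conditions in place, the computation is just the single line $\sigma(D)=\sigma^{u\not\leftrightarrow v}(D)+2\sigma^{u\to v}(D)$, which is \eqref{eqn:SigmaFromPartialSigmas}, combined with \eqref{eqn:Sigma0} and rearranged.
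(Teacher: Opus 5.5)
Your computation is the paper's proof. You use the same bipartition with $H'$ the edgeless graph on $\{u,v\}$, then combine \eqref{eqn:SigmaFromPartialSigmas} with \eqref{eqn:Sigma0} and solve for $\sigma^{u\to v}(D)$. Your caveat about $|E|$ is correct; the paper leaves it implicit, since the lemma inherits it from \Cref{thm:AcyclicSignatureContractionDeletionPhenomenon}. The formula really does fail for odd $|E|$: for the directed path $u\to a\to b\to v$ one has $\sigma^{u\to v}(D)=1$, while the right-hand side is $0$.

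Your caveat about $\varepsilon$, however, is mistaken. As written, your argument leaves the case $uv\in E$ unproved and wrongly suggests the formula needs correcting there; no correction factor and no restriction are needed. Since the common neighbors of $u$ and $v$ have degree $2$, deleting them removes exactly $2(x+y)$ edges. After that, $u$ and $v$ have no common neighbor, so contracting them creates no multi-edge, and the edge $uv$ becomes a discarded loop. Hence $|E(\Tilde{D}/_{uv})| = |E| - 2(x+y) - \varepsilon$, which is odd when $|E|$ is even and $\varepsilon=1$. So $\sigma(\Tilde{D}/_{uv})=0$ by \Cref{cor:OddEdgeNbImpliesSignature0}. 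The right-hand side then reduces to $\tfrac12\sigma(D)$, which is exactly what \eqref{eqn:SigmaFromPartialSigmas} gives when $\sigma^{u\not\leftrightarrow v}(D)=0$. This one-line parity check is what justifies the paper silently taking $(1-\varepsilon)=1$.

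Contrary to your remark, this case does matter for multipaths. In the proof of \Cref{thm:AcyclicSignatureMultiPaths}, when $\ell_s=1$ the lemma is applied to $D^{(\ell_{s-1},\ell_s)}_{u\to v}$, which contains the edge $uv$.
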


\begin{proof}
Write the bipartition of edges $G = H\sqcup H'$ where $H'$ is the graph with nodes $u, v$ and no edge between them, by \Cref{thm:AcyclicSignatureContractionDeletionPhenomenon}~(\ref{eqn:SigmaFromPartialSigmas}), we get:
$\sigma(D) = \sigma^{u\not\leftrightarrow v}(D) + 2\cdot\sigma^{u\to v}(D)$, so we have $\sigma^{u\to v}(D) = \frac{1}{2}\bigl(\sigma(D) - \sigma^{u\not\leftrightarrow v}(D)\bigr)$.
Besides, by~(\ref{eqn:Sigma0})
$\sigma^{u\not\leftrightarrow v}(D) = (-1)^x \cdot 2^{x+y} \cdot 1 \cdot \sigma(\Tilde{D}/_{uv})$.
Thus,
$$\sigma^{u\to v}(D) = \frac{1}{2}\bigl(\sigma(D) - (-1)^x \cdot 2^{x+y} \cdot \sigma(\Tilde{D}/_{uv})\bigr) ~~\qedhere
$$
\end{proof}

\begin{remark}[Algorithm to compute the acyclic signature]\label{rmk:ContractionDeletionAlgorithmForSignature}
Constructing an algorithm to compute the acyclic signature of a graph $G$ is not always possible with our current formulas, but we describe how to do it for certain graphs.

First of all, if $G$ is not connected or not $2$-connected  \Cref{tab:SignatureEasyFacts} indicates that $\sigma(G)$ can be written as a product of the signatures of the $2$-connected components of $G$.
Secondly, if $G$ has an odd number of edges, then $\sigma(G) = 0$ by \Cref{cor:OddEdgeNbImpliesSignature0}.

Now, assume that $G$ is $2$-connected with an even number of edges.
Suppose there is a bipartition $G = H\sqcup H'$ of its edges such that $\V(H)\cap\V(H') = \{u, v\}$, $\vert E(H)\vert$ and $\vert E(H')\vert$ are both even, and $u$ and $v$ are suspended over $H$, then, combining \Cref{eqn:SigmaFromPartialSigmas,eqn:Sigma+,eqn:Sigma0,eqn:AcyclicSignature:PartialToTrueSignature} yields:
\begin{equation}\label{eqn:AcyclicSignatureContractionDeletionWhenAllGoesWell}
\sigma(D\sqcup D') = c\cdot b + \frac{1}{2}\cdot (c - d)\cdot (a + b)
\end{equation}
where, with the notations of \Cref{def:SuspendedNodes} for $x, y, \Tilde{D}$ and $x', y', \Tilde{D'}$:
$$\begin{array}{rclrclrclrcl}
a &=& \sigma(D), ~~  b &=& (-1)^x\cdot 2^{x+y}\cdot \sigma(\Tilde{D}/_{uv}) , ~~ c &=& \sigma(D') \text{, and } d &=& (-1)^{x'}\cdot 2^{x'+y'}\cdot \sigma(\Tilde{D'}/_{uv}) 
\end{array}\,.$$

Note that \Cref{eqn:AcyclicSignatureContractionDeletionWhenAllGoesWell} expresses the acyclic signature of $G$ using acyclic signatures of smaller (di)graphs, not using any partial signature.
Hence, this is a genuine induction step.
In particular, one can try dealing with the signatures $a$, $b$, $c$ and $d$ using the same method, or computing them directly via \Cref{tab:SignatureEasyFacts} or by exhaustive enumeration of acyclic orientations if the involved digraphs are tame enough.
Note that the common neighbors of $u$ and $v$ are taken care of in this induction step.

\end{remark}

\subsubsection{Acyclic signature of multipaths}\label{ssec:SignatureMultiPath}

\begin{definition}
A \defn{path} $\mathdefn{P_\ell}$ of \defn{length} $\ell\geq 1$ is a graph composed of $\ell+1$ \nodes~ $i_1\dots, i_{\ell+1}$, and edges $i_pi_{p+1}$ for $p\in [\ell]$.
A \defn{multipath} is a graph composed of several (internally) disjoint paths between the same endpoints $u$ and $v$, see \Cref{fig:MultiPath} for an illustration.
Precisely, for a sequence of integers $\b \ell = (\ell_1, \dots, \ell_s)$ with $\ell_1 \geq 1$ and $\ell_i \geq 2$ for all $i\geq 2$, the \defn{$\b \ell$-multipath} $\mathdefn{P_{\b \ell}}$ is the graph obtained as the contraction of paths $P_{\ell_1}, \dots, P_{\ell_s}$ at their endpoints, \ie the graph with nodes $u$, $v$ and $i_{1, 2}, \dots, i_{1, \ell_1}, \dots, i_{s, 2}, \dots, i_{s, \ell_s}$, and edges $ui_{p, 2}$, $i_{p, \ell_p}v$ for all $p\in [s]$ and $i_{p, m}i_{p, m+1}$ for all $p\in [s]$ and $m\in [2, \ell_p]$.

We denote $\mathdefn{D^{\b \ell}_{u\to v}}$ the acyclic orientation of $P_{\b \ell}$ ``from $u$ to $v$'', \ie with arcs $ui_{p, 2}$, $i_{p, \ell_p}v$ for all $p\in [s]$ and $i_{p, m}i_{p, m+1}$ for all $p\in [s]$ and $m\in [2, \ell_p]$.
\end{definition}

\begin{figure}[h]
    \centering
    \includegraphics[width=0.6\linewidth]{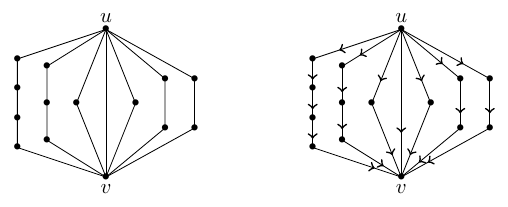}
    \caption[multipath]{
    For $\b\ell = (1, 2, 2, 3, 3, 4, 5)$, the multipath $P^{\b\ell}$ (left) and its orientation $D^{\b\ell}_{u\to v}$ (right).
    By \Cref{thm:AcyclicSignatureMultiPaths}, since $\sum_i \ell_i$ is even and some $\ell_i$ are odd, we have $\sigma(P^{\b\ell}) = -2 \ne 0$.
    }
    \label{fig:MultiPath}
\end{figure}

\begin{example}
The graph $K_{2, 3}$ is the multipath $P^{\b\ell}$ with $\b\ell = (2, 2, 2)$. Orientation $\circled{6}$ of \Cref{fig:AcyclicPolyK23} is the orientation $D^{\b\ell}_{u\to v}$, and we have $\sigma(D^{\b\ell}_{u\to v}) = +6 = (-1)^{3+1}\cdot(2^3 - 2)$.
\end{example}

The goal of this subsection is to compute the acyclic signature of multipaths.
The proof should be read as a showcase of the algorithm of \Cref{rmk:ContractionDeletionAlgorithmForSignature}.

\begin{theorem}\label{thm:AcyclicSignatureMultiPaths}
Fix any $\b\ell = (\ell_1, \dots, \ell_s)$ with $\ell_1 \geq 1$ and $\ell_i \geq 2$ for all $i\geq 2$.
If $\sum_i \ell_i$ is odd, then $\sigma(P^{\b\ell}) = 0$.
If $\sum_i \ell_i$ is even, then denoting $r$ the number of $i\in [s]$ such that $\ell_i$ is even, we have:
$$\sigma(D^{\b\ell}_{u\to v}) = \left\{\begin{array}{ll}
(-1)^{s+1}\cdot(2^s - 2) & \text{ if } \ell_i \text{ is even for all } i\in [s] \\
(-1)^r\cdot 2 & \text{ otherwise}
\end{array}\right. \,.$$
\end{theorem}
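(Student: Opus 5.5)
The odd case is immediate. $P^{\b\ell}$ has $\sum_i \ell_i$ edges, so \Cref{cor:OddEdgeNbImpliesSignature0} gives $\sigma(P^{\b\ell})=0$ when this sum is odd. From now on I assume $\sum_i\ell_i$ is even and induct on $s$, following the algorithm of \Cref{rmk:ContractionDeletionAlgorithmForSignature}.

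\textbf{Splitting off one path.} The natural bipartition is $H = P_{\ell_s}$, the last path, and $H' = P_{(\ell_1,\dots,\ell_{s-1})}$, the remaining multipath. They meet exactly in $\{u,v\}$. The orientation splits the same way, $D^{\b\ell}_{u\to v} = D\sqcup D'$.

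Rather than requiring both parts to have an even number of edges, I would work directly with the polynomial identities and evaluate them at $t=-1$ only at the end. \Cref{thm:AcyclicSignatureContractionDeletionPhenomenon} assumes $|E|$ even, and this holds here for the whole graph.

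\textbf{Ingredients for the single path $D$ of length $\ell=\ell_s$.} Every orientation of a path is acyclic, and $D(u,v)=+1$ only for the fully directed orientation, which has rank $0$. Hence $\Psi^{u\to v}(D;t)=1$, so $\sigma^{u\to v}(D)=1$.

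\textbf{Ingredients for $D'$.} By \Cref{prop:AcyclicPolyDecomposition}, $\Psi^{u\to v}(D';t)$ is determined by $\Psi(D';t)$ and $\Psi^{u\not\leftrightarrow v}(D';t)$. These three satisfy
\[\Psi(D';-1)=\sigma^{u\not\leftrightarrow v}(D') + \sigma^{u\to v}(D') + (-1)^{|E'|}\sigma^{u\to v}(D').\]
\begin{itemize}
\item If $|E'|$ is odd, then $\sigma(D')=0$, and $\sigma^{u\to v}(D')$ is not recovered from this identity.
\item If $|E'|$ is even, then $\sigma^{u\to v}(D')=\tfrac12(\sigma(D')-\sigma^{u\not\leftrightarrow v}(D'))$.
\end{itemize}
It is cleaner to compute $\Psi^{u\to v}(P^{\b\ell}$ oriented $u\to v; t)$ directly by induction, using \Cref{prop:Psi+Recursion} iteratively. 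This gives $\Psi^{u\to v}(D^{\b\ell}_{u\to v};t)=\prod_{p=2}^{s}\bigl(\Psi^{u\not\leftrightarrow v}+\Psi^{u\to v}\bigr)(\text{path }\ell_p)$ times the term for $\ell_1$.

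For a single path of length $\ell$ oriented $u\to v$:
\begin{itemize}
\item $\Psi^{u\to v}=1$;
\item $\Psi^{u\not\leftrightarrow v}=(1+t)^\ell-1-t^\ell$;
\item so $\Psi^{u\not\leftrightarrow v}+\Psi^{u\to v}=(1+t)^\ell-t^\ell$, which evaluates to $-(-1)^\ell$ at $t=-1$.
\end{itemize}
Each new path therefore contributes the factor $(-1)^{\ell_p+1}$. Hence $\sigma^{u\to v}(D^{\b\ell}_{u\to v})=\prod_p(-1)^{\ell_p+1}=(-1)^{s+\sum\ell_p}=(-1)^s$, up to the correct treatment of $\ell_1$, including the case $\ell_1=1$ where $\Psi^{u\not\leftrightarrow v}=0$ and the factor is still consistent.

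\textbf{The contraction term.} For $\sigma^{u\not\leftrightarrow v}$ I use \Cref{prop:Psi0Recursion} with $H'$ the edgeless graph on $\{u,v\}$, since $u,v$ are suspended over $P^{\b\ell}$.
\begin{itemize}
\item If $\ell_1=1$, then $\varepsilon=1$ and the term is $0$.
\item Otherwise the length-2 paths are all directed, so $x=$ the number of $\ell_i=2$ and $y=0$.
\item $\Tilde D/_{uv}$ is the $1$-sum of the cycles $C_{\ell_i}$ with $\ell_i\ge3$. By \Cref{prop:AcyclicPol:1sums} and \Cref{exm:AcyclicPolynomialCycles}, its signature is $0$ unless all those $\ell_i$ are even. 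In that case each cycle, directed cyclically, contributes $\Psi=(1+t)^{\ell}-1-t^{\ell}$, evaluating to $-2$.
\end{itemize}
So when all $\ell_i$ are even,
\[\sigma^{u\not\leftrightarrow v}=(-1)^x2^x(-2)^{s-x}=(-1)^s2^s,\]
and otherwise it is $0$.

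\textbf{Combining.} By (\ref{eqn:SigmaFromPartialSigmas}), $\sigma=\sigma^{u\not\leftrightarrow v}+2\sigma^{u\to v}$.
\begin{itemize}
\item All $\ell_i$ even: $\sigma=(-1)^s2^s+2(-1)^s\cdot(\pm1)$. To match $(-1)^{s+1}(2^s-2)$, the $u\to v$ term must be $+2(-1)^s$ and the overall sign then flips. So the sign bookkeeping of the product is the step to verify carefully.
\item Otherwise: $\sigma=2\sigma^{u\to v}$, which must equal $(-1)^r2$. This pins the product to $(-1)^r=\prod_p(-1)^{\ell_p+1}\cdot(\text{correction})$.
\end{itemize}

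The main obstacle is exactly this sign bookkeeping: getting the factor for each path right at $t=-1$, and checking that the $\ell_1=1$ edge case behaves correctly. I would validate the final formula against $K_{2,3}$ ($\ell=(2,2,2)$, $\sigma=+6$) and against even cycles ($s=2$) before finalizing.
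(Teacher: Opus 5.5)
\textbf{There is a genuine gap in your $u\to v$ term.} Your odd case is fine, and so is the contraction term. With $H'$ the edgeless graph on $\{u,v\}$ you correctly get $\sigma^{u\not\leftrightarrow v}(D^{\boldsymbol\ell}_{u\to v})=(-2)^s$ if all $\ell_i$ are even, and $0$ otherwise.

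The product $\prod_{p=1}^s\bigl((1+t)^{\ell_p}-t^{\ell_p}\bigr)$ counts the orientations in which no path $P_{\ell_p}$ is fully directed from $v$ to $u$. These are exactly the acyclic orientations with no directed path from $v$ to $u$. So the product equals $\Psi^{u\not\leftrightarrow v}+\Psi^{u\to v}$ for the edgeless complement, not $\Psi^{u\to v}$: it also counts orientations in which no path is fully directed in either direction.

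Nor does \Cref{prop:Psi+Recursion} iterate the way you suggest. Its left-hand side counts orientations having a directed $u\to v$ path \emph{inside the part $H$}. That is a different polynomial from the partial polynomial of the whole graph relative to the edgeless complement. The latter is what the next step of your iteration needs, and it is also what must be paired with your $\sigma^{u\not\leftrightarrow v}$ in \eqref{eqn:SigmaFromPartialSigmas}, since that identity only holds when both partial signatures refer to the same bipartition.

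As a result, in the all-even case you obtain $(-1)^s2^s+2(-1)^s=(-1)^s(2^s+2)$. This differs from $(-1)^{s+1}(2^s-2)$ in absolute value, so no amount of sign bookkeeping can repair it. Your proposed $K_{2,3}$ check would expose this: you would get $-10$ instead of $+6$. The true partial polynomial there is $(1+2t)^3-(2t)^3$, whose value at $t=-1$ is $7$, not $-1$.

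\textbf{How to repair it.} Subtract the no-path part:
\[
\Psi^{u\to v}(D^{\boldsymbol\ell}_{u\to v};t)=\prod_p\bigl((1+t)^{\ell_p}-t^{\ell_p}\bigr)-\prod_p\bigl((1+t)^{\ell_p}-1-t^{\ell_p}\bigr).
\]
Both products are justified by the fact that any directed $u$--$v$ path, and any directed cycle, must run along entire paths $P_{\ell_p}$. Let $\delta=1$ if all $\ell_i$ are even and $\delta=0$ otherwise. Then $\sigma^{u\to v}=(-1)^s-\delta(-2)^s$, and
\[
\sigma=\delta(-2)^s+2\bigl((-1)^s-\delta(-2)^s\bigr)=2(-1)^s-\delta(-2)^s.
\]
This is the claimed formula once you note that $s-r$, the number of odd $\ell_i$, is even, so $(-1)^s=(-1)^r$. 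No ``correction'' is needed in the second case.

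Done this way, your argument becomes a direct, non-inductive alternative to the paper's proof. The paper instead inducts on $s$: it splits off one even path (or two odd paths) and recovers the partial signature of the smaller multipath via \Cref{lem:AcyclicSignature:PartialToTrueSignature}.
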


\begin{proof}
If $\sum_i \ell_i$ is odd, then $P^{\b\ell}$ has an odd number of edges, so by \Cref{cor:OddEdgeNbImpliesSignature0}, we get $\sigma(P^{\b\ell}) = 0$.

Suppose all $\ell_i$ are even for $i\in [s]$, let $u$ and $v$ be the two common endpoints of the multipath $P^{\b\ell}$, and let $\b\ell' = (\ell_1, \dots, \ell_s)$.
Assume by induction on $s$ that $\sigma(D^{\b\ell'}_{u\to v}) = (-1)^s\cdot(2^{s-1}-2)$, which holds true for $s = 1$ (where $P^{\b\ell}$ is a path, hence a tree) and $s = 2$ (where $P^{\b\ell}$ is an even cycle) by \Cref{tab:SignatureEasyFacts}.
Let $x$ be the number of $i\in [s-1]$ such that $\ell_i = 2$.
Fix the bipartition $P^{\b\ell} = P^{\b\ell'}\sqcup P^{\ell_s}$.
The deletion-contraction argument of \Cref{thm:AcyclicSignatureContractionDeletionPhenomenon} (see also \Cref{rmk:ContractionDeletionAlgorithmForSignature}) gives:
$$\begin{array}{rcl}
\sigma(D^{\b\ell}_{u\to v}) &=& \sigma^{u\not\leftrightarrow v}(D^{\b\ell}_{u\to v}) + 2 \cdot\sigma^{u\to v}(D^{\b\ell}_{u\to v}) \\
\sigma^{u\not\leftrightarrow v}(D^{\b\ell}_{u\to v}) &=& (-2)^x\cdot\sigma(D^{\ell_s}_{u\to v})\cdot\sigma(\Tilde{D}^{\b\ell}_{u\to v}/_{uv}) \\
\sigma^{u\to v}(D^{\b\ell}_{u\to v}) &=& \bigl(\sigma(D^{\ell_s}_{u\to v}) - \sigma^{u\to v}(D^{\ell_s}_{u\to v})\bigr) \cdot \sigma^{u\to v}(D^{\b\ell'}_{u\to v})
\end{array}$$

Since $P^{\ell_s}$ is a path, it is a tree, so $\sigma(D^{\ell_s}_{u\to v}) = 0$ by \Cref{tab:SignatureEasyFacts}.
Moreover, there is a unique acyclic orientation of $P^{\ell_s}$ which contains a directed path from $u$ to $v$, namely $D^{\ell_s}_{u\to v}$, so $\Psi^{u\to v}(D^{\ell_s}_{u\to v}; t) = 1$, and in particular $\sigma^{u\to v}(D^{\ell_s}_{u\to v}) = 1$.
It remains to compute $\sigma^{u\to v}(D^{\b\ell'}_{u\to v})$ which can be done using \Cref{lem:AcyclicSignature:PartialToTrueSignature} (where $y = 0$ by construction) because $u$ and $v$ have no common neighbors in $P^{\b\ell}$ except on paths of length $2$:
$\sigma^{u\to v}(D^{\b\ell'}_{u\to v}) = \frac{1}{2}\bigl(\sigma(D^{\b\ell'}_{u\to v}) - (-2)^{x} \cdot \sigma(\Tilde{D}^{\b\ell'}_{u\to v}/_{uv})\bigr)$. 

The graph $\Tilde{D}^{\b\ell}_{u\to v}/_{uv}$ is a $1$-sum of several cycles directed in a cyclic way of lengths $\ell_j$ for $j\in[s-1]$ such that $\ell_j\ne 2$:
these are $s-1-x$ even cycles.
Thus $\sigma^{u\to v}(D^{\b\ell'}_{u\to v}) = (-2)^{s-1-x}$ by \Cref{tab:SignatureEasyFacts}.
Putting everything back together, we get
$$\sigma(D^{\b\ell}_{u\to v}) = 0 + (0 - 1)\cdot \frac{1}{2}\bigl(\sigma(D^{\b\ell'}_{u\to v}) - (-2)^x\cdot (-2)^{s-1-x}\bigr).$$
As we have assumed by induction that $\sigma(D^{\b\ell'}_{u\to v}) = (-1)^s\cdot(2^{s-1} - 2)$, we indeed get $\sigma(D^{\b\ell}_{u\to v}) = (-1)^{s+1}\cdot(2^s - 2)$ if $\ell_i$ is even for all $i\in[s]$.

\smallskip

Last but not least, if there is some $j\in [s]$ such that $\ell_j$ is odd, then, since $\sum_i \ell_i$ is even, it implies that there exists another index $k\in[s]$, $k\ne j$ such that $\ell_k$ is odd.
Without loss of generality, we can assume that $\ell_{s-1}$ and $\ell_s$ are odd, and if there exists some $j\in [s]$ such that $\ell_j = 1$ then $\ell_s = 1$.
We denote $\b\ell' = (\ell_1, \dots, \ell_{s-2})$.
We consider the bipartition $P^{\b\ell} = P^{\b\ell'} \sqcup P^{(\ell_{s-1}, \ell_s)}$, and compute the corresponding (partial) signatures in a similar manner.
Note that $D^{(\ell_{s-1}, \ell_s)}_{u\to v}$ is a cycle of even length whose orientation differs on the direction of an odd number of arcs (namely $\ell_s$) from the cyclic orientation, so $\sigma(D^{(\ell_{s-1}, \ell_s)}_{u\to v}) = +2$.
Besides, $\Tilde{D}^{\b\ell'}_{u\to v}/_{uv}$ is a $1$-sum of cycles directed in a cyclic way:
if there is $j\in [s-2]$ such that $\ell_j$ is odd, then one of these cycles is of odd length, so $\sigma(\Tilde{D}^{\b\ell'}_{u\to v}/_{uv}) = 0$, otherwise, $\sigma(\Tilde{D}^{\b\ell'}_{u\to v}/_{uv}) = (-2)^{s-x-2}$.
We could compute $\sigma^{u\to v}(D^{(\ell_{s-1}, \ell_s)}_{u\to v})$ directly, but note that $u$ and $v$ have no common neighbors in $D^{(\ell_{s-1}, \ell_s)}_{u\to v}$, so we can instead apply \Cref{lem:AcyclicSignature:PartialToTrueSignature}, where $\Tilde{D}^{(\ell_{s-1}, \ell_s)}_{u\to v}/_{uv}$ is a $1$-sum of two odd cycles, so $\sigma(\Tilde{D}^{(\ell_{s-1}, \ell_s)}_{u\to v}/_{uv}) = 0$.
We now have all the pieces:
$$\begin{array}{rcl}
\sigma^{u\to v}(D^{(\ell_{s-1}, \ell_s)}_{u\to v}) &=& \frac{1}{2}\bigl(\sigma(D^{(\ell_{s-1}, \ell_s)}_{u\to v}) - \sigma(\Tilde{D}^{(\ell_{s-1}, \ell_s)}_{u\to v}/_{uv})\bigr) = \frac{1}{2}\cdot(2 - 0) = 1 \vspace{0.3cm} \\

\sigma(D^{\b\ell}_{u\to v}) &=& (-2)^x \cdot 2 \cdot \sigma(\Tilde{D}^{\b\ell'}_{u\to v}/_{uv}) + 2\cdot\bigl(2 - 1\bigr)\cdot\frac{1}{2}\bigl(\sigma(D^{\b\ell'}_{u\to v}) - (-2)^x\cdot \sigma(\Tilde{D}^{\b\ell'}_{u\to v}/_{uv})\bigr) \vspace{0.2cm} \\
&=& (-2)^x\cdot\sigma(\Tilde{D}^{\b\ell'}_{u\to v}/_{uv}) + \sigma(D^{\b\ell'}_{u\to v})
\end{array}
$$
To finish, if $\b\ell'$ contains only even elements, then $r = s - 2$, $\sigma(\Tilde{D}^{\b\ell'}_{u\to v}/_{uv}) = (-2)^{s-x-2} = (-2)^{r-x}$, and $\sigma(D^{\b\ell'}_{u\to v}) = (-1)^{r+1}(2^r - 2)$ by induction:
hence $\sigma(D^{\b\ell}_{u\to v}) = (-2)^r + (-1)^{r+1}(2^r - 2) = (-1)^r\cdot 2$.
If $\b\ell'$ contains some odd elements, then $\sigma(\Tilde{D}^{\b\ell'}_{u\to v}/_{uv}) = 0$, and $\sigma(D^{\b\ell'}_{u\to v}) = (-1)^r\cdot 2$ by induction:
hence $\sigma(D^{\b\ell}_{u\to v}) = (-1)^r\cdot 2$.
This concludes the proof.
\end{proof}

\begin{corollary}\label{cor:AOhamiltonianMultiPathImpliesSumLengthsEven}
For any $\b\ell = (\ell_1, \dots, \ell_s)$ with $\ell_1 \geq 1$ and $\ell_i \geq 2$ for all $i\geq 2$, if $P^{\b\ell}$ is {\good}, then $\sum_{i=1}^s \ell_i$ is odd.
\end{corollary}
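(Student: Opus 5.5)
We argue by contraposition: assume $\sum_{i=1}^s \ell_i$ is even and show that $P^{\b\ell}$ is not {\good}. The tools are the parity argument \Cref{lem:ParityArgument} and the explicit signature computation of \Cref{thm:AcyclicSignatureMultiPaths}. It therefore suffices to exhibit one orientation of $P^{\b\ell}$ whose acyclic signature is nonzero. We take the orientation $D^{\b\ell}_{u\to v}$ from $u$ to $v$. Note also that the number of edges of $P^{\b\ell}$ is $\sum_i \ell_i$, so our assumption is exactly that $P^{\b\ell}$ has an even number of edges.

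With $\sum_i\ell_i$ even, \Cref{thm:AcyclicSignatureMultiPaths} splits into two cases.

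\emph{All $\ell_i$ even.} Here $\sigma(D^{\b\ell}_{u\to v}) = (-1)^{s+1}(2^s-2)$, which is nonzero as soon as $s\geq 2$.

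\emph{Some $\ell_i$ odd.} Here $\sigma(D^{\b\ell}_{u\to v}) = (-1)^r\cdot 2 \neq 0$ directly.

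The only degenerate situation is $s=1$ in the all-even case, where the formula gives $0$. Then $P^{\b\ell}$ is a single path, which is a tree, and trees are {\good} by \Cref{exm:KnownAOhamiltonicity}. The statement as written would therefore fail for a single path of even length, so some convention must exclude it. I would treat this by reading the statement with $s\geq 2$, i.e. a genuine multipath with at least two internally disjoint paths. If $\ell_1=1$ and $s\ge2$, the graph is a cycle with chords-as-paths and is still covered by the theorem. This boundary case is the main point requiring care. It should be flagged explicitly, either as an assumption $s\geq2$ or by noting that a multipath is understood to consist of at least two paths.

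In all remaining cases we get $\sigma(D^{\b\ell}_{u\to v})\neq 0$. \Cref{lem:ParityArgument} then shows that $\AO[P^{\b\ell}]$ is not Hamiltonian, and the contrapositive yields the claim.
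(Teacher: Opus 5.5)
Your proof is correct and is exactly the paper's argument, which simply combines \Cref{thm:AcyclicSignatureMultiPaths} with \Cref{lem:ParityArgument} for the orientation $D^{\b\ell}_{u\to v}$. Your caveat about the boundary case is also justified: for $s=1$ with $\ell_1$ even, the signature formula gives $0$, and the single path is a tree, hence {\good}, so the statement (and the paper's one-line proof) tacitly needs $s\geq 2$.
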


\begin{proof}
Direct consequence of \Cref{thm:AcyclicSignatureMultiPaths,lem:ParityArgument}.
\end{proof}

\begin{remark}
One could decide whether the acyclic signature of multipaths is $0$ or not via a more direct count of the number of acyclic orientations of multipaths.
We prefer the proof of \Cref{thm:AcyclicSignatureMultiPaths} since it emphasizes the properties of the acyclic signature and hints at ways to compute it for larger families of graphs, but the interested reader may have a look at Appendix~\ref{app:ParityArgumentMultiPathDirectCount}.
\end{remark}

\begin{remark}
Let $H$ be a graph with two nodes $u, v$, and $H' = P^{\b\ell}$ a multipath with endpoints $u, v$ such that $\sum_i \ell_i$ is even.
One can deduce the acyclic signature of $H\sqcup H'$ from the acyclic signature of $H$ if $H$ has an even number of edges, and $u$ and $v$ do not have a common neighbor in $H$ except for nodes on induced paths of length $2$ between $u$ and $v$.
Indeed, in this context, we can use \Cref{eqn:AcyclicSignatureContractionDeletionWhenAllGoesWell}, where $c = \sigma(D^{\b\ell}_{u\to v})$ is known from \Cref{thm:AcyclicSignatureMultiPaths} and $d = (-2)^{x'}\cdot \sigma(\Tilde{D}^{\b\ell}_{u\to v}/_{uv})$ is equal to $0$ if there is some $j\in [s]$ such that $\ell_j$ is odd, and is equal to $(-2)^s$ otherwise (since $\Tilde{D}^{\b\ell}_{u\to v}/_{uv}$ is a $1$-sum of cycles directed in a cyclic way).
\end{remark}

% \clearpage
\section{\texorpdfstring{$\c{AO}$}{AO}-Hamiltonicity: (multi)path gluing, and simplicial nodes}\label{sec:MultiPathGluing}

Our goal in this section is to construct a Hamiltonian cycle in $\AO$ where $G$ is obtained from an {\good} graph $H$ by adding a multipath between two nodes of $H$.

We give an overview of the strategy for proving Hamiltonicity of graphs in a constructive way, in \Cref{ssec:LacingPatterns}.
We begin with a key element in the proof of Hamiltonicity of the graph of regions of supersolvable arrangements in \cite{BrennerCardinalMcConvilleMerinoMutze25-CombinatorialGenerationVII}.

\begin{definition}
Denoting $P_\ell$ the path on $\ell$ nodes with endpoints $u$ and $v$ for $\ell \geq 2$, a graph $G$ is \defn{$\ell$-suspended} over a graph $H = (N, E)$ if $G$ is a spanning graph of the Cartesian product $H \times P_\ell$, such that $G[\{x\} \times P_{\ell}] \simeq P_\ell$ for all $x \in \V$ and $G[N \times \{u\}] \simeq G[N \times \{v\}] \simeq H$.   
\end{definition}

This graph $G$ is well-behaved in terms of Hamiltonicity if the graph $H$ was Hamiltonian (see \cite[Lemma~15]{BrennerCardinalMcConvilleMerinoMutze25-CombinatorialGenerationVII}).
This is illustrated in \Cref{fig:GraphSuspension}.

\begin{figure}[h]
    \centering
    \includegraphics[width=0.7\linewidth]{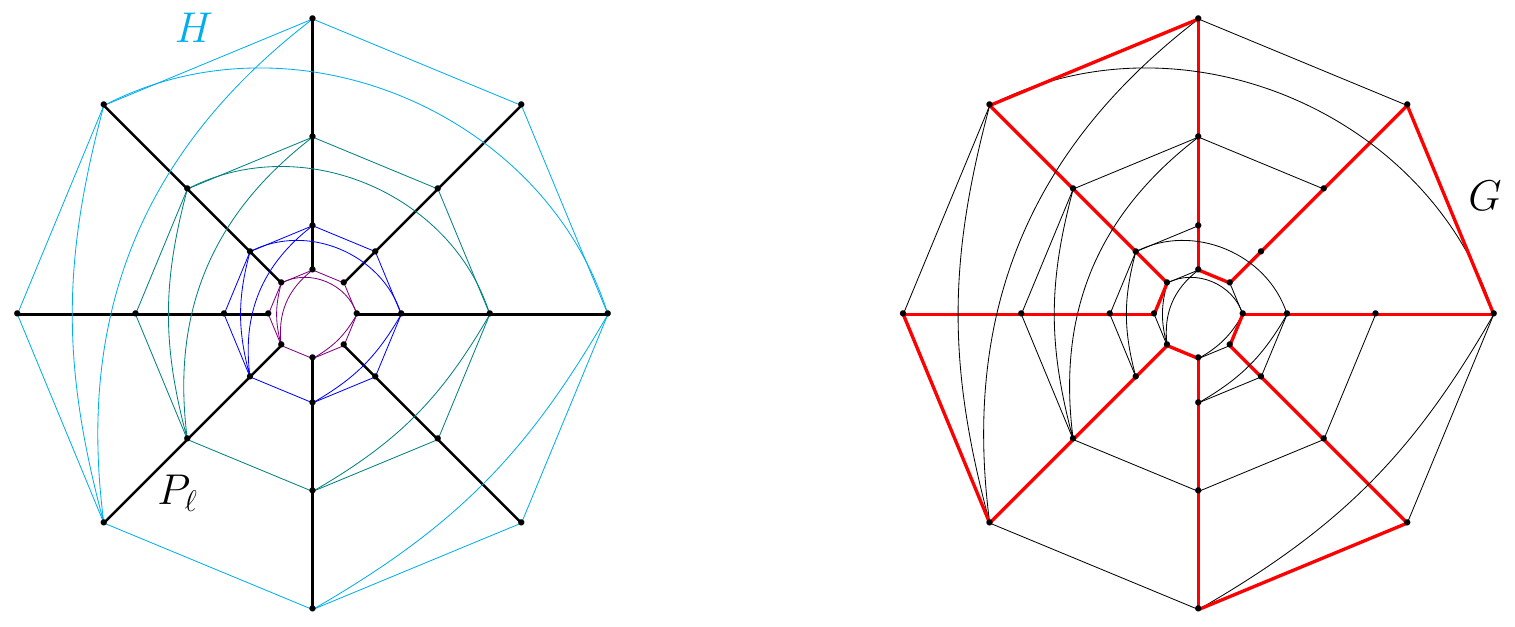}
    \caption{(Left) The Cartesian product $P_4\times H$ where the graph $H$ in \textcolor{cyan}{cyan}.
    (Right) An $4$-suspended graph $G$ over $H$.
    In \textcolor{red}{red}, a Hamiltonian cycle in $G$.
    In general, there might be some nodes missing in the copies of $H$ (see the nodes labeled as a red star $\textcolor{red}{\star}$ in a blue circle $\textcolor{blue}{\circ}$ in \Cref{fig:11ReplacingEven}) and thus, it is inevitable to use some of the black edges in the right picture to construct a Hamiltonian cycle.}
    \label{fig:GraphSuspension}
\end{figure}

The proof for supersolvable arrangements takes advantage of the fact that if we add new hyperplanes according to the modular chain witnessing supersovability, we subdivide each of the existing chambers in a very regular way, identical to all chambers.
Thus, we only need to specify how to walk the new chambers subdividing the existing ones, and then zigzag through the existing chambers through the graphic suspension. 

\medskip

We want to analyze which operations on graphs we can perform, such that if the base graph is {\good}, the resulting graph is also {\good}. This means that we generalize this approach of graphic suspension to a case where the two graphs $G[N \times \{0\}]$ and $G[N \times \{\ell\}]$ need not be copies of the same graph, but rather instances of graphs of the subdivisions of neighboring chambers of the original arrangement through the new hyperplanes. In this case, we find that it can be necessary to use the edges between two graphs $G[\{v\} \times i]$ and $G[\{w\} \times i]$, where $\{v,w\}$ is an edge in the original graph $H$. % This is what we want to call \defn{pattern lacing}.

For our problem, what the authors of \cite{BrennerCardinalMcConvilleMerinoMutze25-CombinatorialGenerationVII} proved implies that if $G$ is obtained from $H$ by adding a simplicial node on a clique of size $b$, then $\AO$ is $b$-suspended over $\AO[H]$, so the zigzag method guarantees that if $H$ is {\good}, then $G$ is also {\good}.
We will discuss this idea in \Cref{ssec:AddingSimpliceNode} since we need more conditions on the nodes and certain sequences associated to them. 

The question we are concerned with is what to do when $G$ is obtained from $H$ by an operation more complicated than adding a simplicial node, especially when $\AO$ is not suspended over $\AO[H]$. 
We propose two generalizations of the zigzag method, namely \Cref{cor:SelfLacingIntoAnAOcycle,thm:ConcatenatingReplacingPatternsWithSharedEndPoints}.
Both are based on the following core idea:
if $G = H\sqcup H'$ is a bipartition of the edges of $G$, where $H'$ shall be thought of as the edges we add to $H$, then $\AO$ is an induced subgraph of $\AO[H]\times\AO[H']$ (see \Cref{lem:BipartitionEdgesYieldsCartesianProductAO}).
For each acyclic orientation $A$ of $H$, we consider the graph of acyclic orientations of $G$ of the form $A\sqcup A'$ for all admissible $A'\in \AO[H']$:
if we find a Hamiltonian path $P_A$ through these acyclic orientations and if we manage to guarantee that we can ``string together'' the $(P_A ~;~ A\in \AO[H])$, then we would have a Hamiltonian path (or cycle) of $\AO$.
This is the idea of \Cref{thm:ConcatenatingReplacingPatternsWithSharedEndPoints}.

Yet, such a straightforward construction is usually not possible.
Therefore, instead of just one single orientation, we consider entire sequences $(A_1, \dots, A_s)$ of adjacent acyclic orientations of $H$ (\ie directed paths in $\AO[H]$), and find a Hamiltonian path in the subgraph of $\AO$ induced by all the acyclic orientations of the form $A_i\sqcup A'$ for $i\in [s]$ and $A'\in \AO[H']$.
We call such a Hamiltonian path a \emph{replacing pattern} for $(A_1, \dots, A_s)$.
To be able to ``string together'' these replacing patterns in order to construct a Hamiltonian path (or cycle) for $\AO$, we develop a theory of \emph{lacing} patterns, leading to \Cref{prop:LacingIsHamiltonianPath,cor:SelfLacingIntoAnAOcycle}.

We formalize these ideas, and apply them to gluing multipaths in \Cref{sssec:DoublePathGluingOddLength,sssec:PathGluingEvenLength,sssec:MultiPathGluingEvenTotalLength}.

\subsection{How to lace patterns}\label{ssec:LacingPatterns}

Recall from \Cref{def:Direction} that $A(u, v)$ is the \emph{direction} induced by the acyclic orientation $A$ on the nodes $u, v$ (\ie $\pm1$ if there is a directed path from $u$ to $v$ or from $v$ to $u$ in $A$, and $0$ otherwise), and that we denote $\b A(u, v) \eqdef \bigl(A_1(u, v), \dots, A_r(u, v)\bigr)$ for a sequence of acyclic orientations $\b A = (A_1, \dots, A_r)$.

Let $G = H\sqcup H'$ be a bipartition of the edges of the graph $G = (\V, E)$.
We have seen in \Cref{lem:BipartitionEdgesYieldsCartesianProductAO} that $\AO$ is the subgraph of the Cartesian product $\AO[H]\times \AO[H']$ induced on the pairs of orientations $(A, A')$ with $A\in \AO[H]$ and $A'\in \AO[H']$ such that $A(u, v) \cdot A'(u, v) \ne -1$ for all $u, v\in \V$.
Hence, for an acyclic orientation $A$ of $H$, we denote by $\mathdefn{\AO[H' | A]}$ the set of acyclic orientations $A'$ of $H'$ such that $A(u, v)\cdot A'(u, v) \ne -1$ for all $u, v\in \V$.

Let $\b A = (A_1, \dots, A_s)$ be a directed path in $\AO[H]$.
We denote by $\M$ the subgraph of $\AO$ induced on all the orientations $A_j \sqcup A'_k$ of $G$ for $A'_k \in \AO[H'|A_j]$ with $1 \leq j \leq s$.

\begin{definition}
A \defn{replacing pattern} $\b P$ for $\b A$ is a (directed) Hamiltonian path in $\M$ that starts at some $A_1 \sqcup A'$ for some $A'\in \AO[H' | A_1]$ and ends at some $A_s \sqcup B'$ for some $B'\in \AO[H' | A_s]$.
\end{definition}

We denote by $\mathdefn{\omega_i^{\b P}}$ the total order on $\AO[H'|A_i]$ (\ie the bijection from $\AO[H'|A_i]$ to $[m]$ with $m \eqdef |\AO[H'|A_i]|$) induced by the replacing pattern $\b P$, that is $\omega_i^{\b P}(A') \leq \omega_i^{\b P}(B')$ if $A_i \sqcup A'$ appears before $A_i \sqcup B'$ in the directed path $\b P$.

For a replacing pattern $\b P$ on $(A_1, \dots, A_s)$ and $i\in [s]$, we define the \defn{consecutive locus} $\mathdefn{\cons_i(\b P)}$ to be the set of indices $j$ such that for the orientation $A' \in \AO[H'|A_i]$ with $\omega_i^{\b P}(A') = j$, it holds that the node in $\b P$ after $A_i \sqcup A'$ is of the form $A_i \sqcup B'$ for some $B' \in \AO[H'|A_i]$.

%to be the set of indices $j$ such that there is an edge in $\b P$ from $(A_i, A')$ to $(A_i, B')$ with $\omega_i^{\b P}(A') = j$ and $\omega_i^{\b P}(B') = j+1$.
Denoting $m \eqdef |\AO[H'|A_s]|$, two replacing patterns $\b P$ on $(A_1, \dots, A_s)$ and $\b Q$ on $(A_s, \dots, A_t)$ \defn{can lace} if $\omega_s^{\b P} = \omega_s^{\b Q}$ and $\cons_s(\b P) \cup \cons_s(\b Q) = [m]$.
If they can lace,
we write $\b P$ as the concatenation of directed paths $\mathdefn{\b P_0}\cup \dots\cup \mathdefn{\b P_{m-1}}$ where each $\b P_j$ is the directed sub-path from the node $A_s \sqcup A'$ with $\omega_s^{\b P}(A') = j$ to the node $A_s \sqcup B'$ with $\omega_s^{\b P}(B') = j+1$ (by convention, $\b P_0$ starts at the starting node of $\b P$).
Similarly, we write $\b Q$ as the concatenation of directed paths $\mathdefn{\b Q_1} \cup \dots\cup \mathdefn{\b Q_m}$ where each $\b Q_j$ is the directed sub-path from the node $A_s \sqcup A'$ with $\omega_s^{\b Q}(A') = j$ to the node $A_s \sqcup B'$ with $\omega_s^{\b Q}(B') = j+1$ (by convention, $\b Q_m$ ends at the ending node of $\b Q$).
Then, the \defn{lacing $\b P\wedge \b Q$} is the directed path obtained as the concatenation of $\b P_0 \cup \b X_1 \cup \dots \cup \b X_{m-1}\cup \b Q_m$ where:
$$\b X_j = \left\{\begin{array}{ll}
\b P_j & \text{ if } j\in \cons_s(\b Q) \\
\b Q_j & \text{ otherwise}
\end{array}\right..$$
Note that the condition $\cons_s(\b P) \cup \cons_s(\b Q) = [m]$ ensures that the ``otherwise'' case implies $j\in \cons_s(\b P)$.

\begin{proposition}\label{prop:LacingIsHamiltonianPath}
Let $\b P$ (resp. $\b Q$) be a replacing pattern for the subsequence $(A_1, \dots, A_s)$ (resp. $(A_s, \dots, A_t)$).
If $\b P$ and $\b Q$ can lace, then their lacing $\b P\wedge \b Q$ is a replacing pattern for $(A_1, \dots, A_t)$.
\end{proposition}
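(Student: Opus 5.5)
We have to check three things: $\b P\wedge\b Q$ is a directed path in $\M[\b A][H']$ for $\b A=(A_1,\dots,A_t)$; it visits every node exactly once; and it starts at some $A_1\sqcup A'$ and ends at some $A_t\sqcup B'$. We use the decompositions $\b P=\b P_0\cup\dots\cup\b P_{m-1}$ and $\b Q=\b Q_1\cup\dots\cup\b Q_m$ from the definition of lacing. Write $C_j\in\AO[H'|A_s]$ for the orientation with $\omega^{\b P}_s(C_j)=\omega^{\b Q}_s(C_j)=j$; the two orders agree by the lacing hypothesis.

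\emph{Path structure.} Each $\b X_j$, whether it is $\b P_j$ or $\b Q_j$, is a directed sub-path starting at $A_s\sqcup C_j$ and ending at $A_s\sqcup C_{j+1}$. Also, $\b P_0$ ends at $A_s\sqcup C_1$ and $\b Q_m$ starts at $A_s\sqcup C_m$. So consecutive pieces share exactly their junction node, and the concatenation is a walk made of edges of $\b P$ or $\b Q$, hence of edges of $\AO$. Its first node is the first node of $\b P$, of the form $A_1\sqcup A'$. Its last node is the last node of $\b Q$, of the form $A_t\sqcup B'$.

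\emph{Partition of the nodes.} The node set of $\M[\b A][H']$ is the union of the node sets of $\M[(A_1,\dots,A_s)][H']$ and $\M[(A_s,\dots,A_t)][H']$. These overlap exactly in the fiber $\{A_s\sqcup C ~;~ C\in\AO[H'|A_s]\}$, assuming $A_s$ occurs only once in the directed path $\b A$. We look at the interiors of the pieces, meaning each piece with its endpoints removed.

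The key observation is that for $j\in\cons_s(\b P)$ the sub-path $\b P_j$ is the single edge $A_s\sqcup C_j\to A_s\sqcup C_{j+1}$, so it has empty interior. Likewise, for $j\in\cons_s(\b Q)$ the sub-path $\b Q_j$ has empty interior. Therefore the interiors of $\b P_1,\dots,\b P_{m-1}$ together with $\b P_0$ minus its last node contain exactly the nodes of $\b P$ outside the $A_s$-fiber. Symmetrically, $\b Q$ has the same property.

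In the lacing, for each $j\in[m-1]$ we keep exactly one of $\b P_j,\b Q_j$. If $j\in\cons_s(\b Q)$, we keep $\b P_j$ and discard $\b Q_j$, whose interior is empty. Otherwise $j\in\cons_s(\b P)$ by the covering hypothesis $\cons_s(\b P)\cup\cons_s(\b Q)=[m]$, so we keep $\b Q_j$ and discard $\b P_j$, whose interior is empty. In either case no non-fiber node is lost. Non-fiber nodes of $\b P$ and of $\b Q$ are disjoint, so none is repeated.

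The fiber nodes $A_s\sqcup C_1,\dots,A_s\sqcup C_m$ appear exactly once each, namely as the junctions. Hence every node of $\M[\b A][H']$ is visited exactly once, and the walk is a Hamiltonian directed path, i.e.\ a replacing pattern for $(A_1,\dots,A_t)$.

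\emph{Main obstacle.} The delicate point is the bookkeeping at the boundary indices. First, $\cons_s(\b P)$ may contain $m$, but $\b P$ has no piece $\b P_m$, and the index $0$ likewise needs care. Second, the node sets of the two pattern regions meet only in the $A_s$-fiber; this needs the $A_i$ to be distinct, which holds since $\b A$ is a path. I would handle these by checking explicitly that $\b P_0$ and $\b Q_m$ are always kept and contribute all their non-fiber nodes. Then only the indices $j\in[m-1]$ need the covering hypothesis.
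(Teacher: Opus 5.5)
Your proof is correct and is essentially the paper's argument. You cut $\b P$ and $\b Q$ at the nodes $A_s\sqcup C_1,\dots,A_s\sqcup C_m$ and note that a discarded piece $\b P_j$ (resp.\ $\b Q_j$) is a single edge inside the $A_s$-fiber, because $j\in\cons_s(\b P)$ (resp.\ $j\in\cons_s(\b Q)$), so no node is lost.

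Your version is more careful than the paper's in two ways. The paper only checks that every node is covered and takes the absence of repetitions for granted; your interior/junction bookkeeping also proves that no node is visited twice, and it makes explicit that the $A_i$ must be pairwise distinct. In addition, $m$ can never lie in $\cons_s(\b P)\cup\cons_s(\b Q)$, since $A_s\sqcup C_m$ is the last $A_s$-node of both patterns. So the covering hypothesis is really a condition on $[m-1]$, which is exactly how you use it; your remark that $\cons_s(\b P)$ ``may contain $m$'' is harmless but not actually possible.
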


\begin{proof}
Since $\b P$ and $\b Q$ can lace, we have $\omega_s^{\b P} = \omega_s^{\b Q}$, so $\b X_j$ and $\b X_{j+1}$ meet on the node $A_s\sqcup A'$ with $\omega_s{\b P}(A') = j+1 = \omega_s^{\b Q}(A')$.
Hence, by construction of $\b P\wedge \b Q$, it is a directed path inside $\M[A_1, \dots, A_s, \dots, A_t]$.
Moreover, $\b P\wedge\b Q$ starts at some $A_1\sqcup A'$ for some $A'\in \AO[H'| A_1]$ (because its starting node is the same as the one of $\b P$), and ends at some $A_t\sqcup B'$ for some $B'\in\AO[H'|A_t]$ (because its ending node is the same as the one of $\b Q$).
Hence, it is enough to prove that all nodes of $\M[A_1, \dots, A_s, \dots, A_t]$ appear in $\b P\wedge\b Q$.

We use the notations presented in the definition of the lacing.
Consider the restriction of $\b P\wedge \b Q$ to $\M[A_1, \dots, A_s]$: by construction, if $\b P_j\not\subset \b P\wedge\b Q$, then $j\in \cons_s(\b P)$, so $\b P_j$ is composed of a single edge from $A_s\sqcup A'$ to $A_s\sqcup B'$ for some $A', B'\in\AO[H'|A_s]$.
Note that these two nodes $A_s\sqcup A'$ to $A_s\sqcup B'$ are in the path $\b P\wedge \b Q$ since they belong to $\b Q_j$, only the edge is not there.
Hence, all the nodes appearing in $\b P$ (resp. $\b Q$) also appear in $\b P\wedge\b Q$.
As $\b P$ (resp. $\b Q$) is a Hamiltonian path in $\M[A_1, \dots, A_s]$ (resp. in $\M[A_s, \dots, A_t]$), we indeed get that $\b P\wedge \b Q$ is a Hamiltonian path in $\M[A_1, \dots, A_s, \dots, A_t]$.
\end{proof}

Finally, a replacing pattern $\b P$ for a directed cycle $(A_1, \dots, A_s)$ in $\AO[H]$ with $A_s = A_1$ \defn{can self-lace} if $\cons_s(\b P) = \{1, \dots, |\AO[H' | A_s]|\}$ and for the orientation $A'_\circ\in \AO[H'|A_s]$ such that $\omega_s^{\b P}(A'_\circ) = 1$ we also have $\omega_1^{\b P}(A'_\circ) = 1$.
The \defn{self-lacing} of $\b P$, denoted $\mathdefn{\wedge \b P}$, is defined as $\b P$ without its edges from  $A_s\sqcup A'$ to $A_s\sqcup B'$ for some $A', B'\in\AO[H'|A_s]$.

\begin{corollary}\label{cor:SelfLacingIntoAnAOcycle}
Let $G = H\sqcup H'$ be a bipartition of the edges of $G$.
If $\b A$ is a Hamiltonian cycle for $\AO[H]$ and $\b P$ a replacing pattern for $\b A$ that can self-lace, then $\wedge \b P$ is a Hamiltonian cycle for $\AO$.
\end{corollary}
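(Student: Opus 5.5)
The plan is to show three things in turn: $\wedge\b P$ visits every node of $\AO$ exactly once, consecutive nodes of $\wedge\b P$ are adjacent in $\AO$, and its last node is adjacent to its first.

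We first cover all nodes. Since $\b A = (A_1, \dots, A_s)$ with $A_s = A_1$ is a Hamiltonian cycle of $\AO[H]$, every $A\in\AO[H]$ equals some $A_i$. By \Cref{lem:BipartitionEdgesYieldsCartesianProductAO}, every acyclic orientation of $G$ has the form $A_i\sqcup A'$ with $A'\in\AO[H'|A_i]$. Hence $\M[\b A][H']$ is all of $\AO$. The replacing pattern $\b P$ is a Hamiltonian path of this graph, so it visits every node of $\AO$ exactly once. The only subtlety is that $A_1 = A_s$, so the fibre over $A_1$ appears both at the start and at the end of the sequence. We must treat the fibre $\{A_1\sqcup A'\}$ as a single set of nodes, visited once by $\b P$. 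Its visits are split between an initial portion indexed by $\omega_1^{\b P}$ and a final portion indexed by $\omega_s^{\b P}$.

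Next we analyse the self-lacing condition. The condition $\cons_s(\b P) = [m]$, with $m = |\AO[H'|A_s]|$, says that once $\b P$ enters the fibre over $A_s$ it moves only inside that fibre. So the nodes ordered by $\omega_s^{\b P}$ form a single block of consecutive vertices of $\b P$. Because $\omega_s^{\b P}$ is a bijection onto $[m]$, this block exhausts the fibre. Meanwhile $\omega_1^{\b P}(A'_\circ) = 1$, where $A'_\circ$ is the first element of that block, says that $\b P$ starts at $A_1\sqcup A'_\circ$. We then delete from $\b P$ the internal edges of the final block. This is exactly $\wedge\b P$, and reading it cyclically, the final fibre collapses onto the starting node $A_1\sqcup A'_\circ$.

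The main obstacle is bookkeeping: showing that the result is a genuine cycle in which no node is missed or repeated. The indexing of the fibre over $A_1 = A_s$ is ambiguous, since it serves both as the start fibre and as the end fibre. The intended reading is that the final block is the portion of $\b P$ that returns to the starting fibre. I would formalise this by viewing $\b P$ as a closed walk in $\AO$ whose only repeated nodes lie in the fibre over $A_1$. The conditions are designed exactly so that this repetition reduces to the single node $A_1\sqcup A'_\circ$. Removing the in-fibre edges leaves a closed walk that visits each node once: its last step, entering the fibre over $A_s$, lands on $A_s\sqcup A'_\circ = A_1\sqcup A'_\circ$, the start.

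Adjacency is immediate once this is done. Every remaining step of $\wedge\b P$ is an edge of $\b P$, hence an edge of $\AO$. The closing step is the edge of $\b P$ from the fibre over $A_{s-1}$ to $A_s\sqcup A'_\circ$. This gives a Hamiltonian cycle of $\AO$, in the same spirit as \Cref{prop:LacingIsHamiltonianPath} with $\b Q$ taken to be the initial portion of $\b P$ itself.
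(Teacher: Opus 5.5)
Your overall argument is the paper's: $\cons_s(\b P)=[m]$ makes the fibre over $A_s$ a terminal block of $\b P$, the condition $\omega_1^{\b P}(A'_\circ)=1$ identifies the first node $A_s\sqcup A'_\circ$ of that block with the starting node $A_1\sqcup A'_\circ$, and deleting the in-block edges closes $\b P$ into a cycle. The step that does not hold as written is the coverage argument in your first paragraph.

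There you take $\M$ to be $\AO$ itself. Then $\b P$ visits each node of $\AO$ exactly once, and the fibre over $A_1=A_s$ is split between an initial and a final portion. Under that reading the rest of your argument cannot work, for two reasons:
\begin{itemize}
\item $\b P$ starts in that fibre, and your terminal block exhausts it. So $\b P$ would lie entirely inside its terminal block.
\item If the fibre really were split, deleting the internal edges of the final portion would remove its nodes other than the first from $\wedge\b P$. These nodes would not be visited elsewhere, so $\wedge\b P$ would not be Hamiltonian.
\end{itemize}
In your third paragraph you notice the ambiguity and switch to the right reading, in which the repeated nodes lie in the fibre over $A_1$. But the claim that no node is missed is then left without a valid justification, since the only justification you give is the incompatible first paragraph.

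The fix is exactly the sentence the paper's proof rests on. In $\M$, the positions $1$ and $s$ carry two distinct copies of the fibre over $A_1$. Since $\b P$ is Hamiltonian there, it visits every node of $\M[A_1,\dots,A_{s-1}]$ exactly once. This graph is all of $\AO$, because $A_1,\dots,A_{s-1}$ exhaust $\AO[H]$ and by \Cref{lem:BipartitionEdgesYieldsCartesianProductAO}. The terminal block is then a full second copy of the $A_1$-fibre. Discarding everything in it after $A_s\sqcup A'_\circ$ removes only repeated visits, which gives the Hamiltonian cycle.

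A minor point: the edge of $\b P$ entering the terminal block need not come from the fibre over $A_{s-1}$. It comes from the fibre over some $A_j$ adjacent to $A_s$ in $\AO[H]$. This does not affect the argument.
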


\begin{proof}
Since $\omega_s^{\b P}$ and $\omega_1^{\b P}$ have the same first element $A'_\circ$, the first node of $\b P$ is equal to the last node of $\b P$ without its edges from $A_s\sqcup A'$ to $A_s\sqcup B'$ for some $A', B'\in\AO[H'|A_s]$.
Hence $\wedge \b P$ is a (directed) cycle.
Moreover, all the nodes of $\M[A_1, \dots A_{s-1}]$ appear in $\wedge \b P$, so because $A_1 = A_s$, the cycle $\wedge \b P$ is a Hamiltonian cycle for $\AO$.
\end{proof}

One can generalize \Cref{prop:LacingIsHamiltonianPath,cor:SelfLacingIntoAnAOcycle} in order to weaken their hypotheses.
As it makes the hypotheses less concise and handy, we keep both as stated above, for the more general statement see Appendix \ref{appendix:LooseLacing}.

%\medskip

%We also need the following theorem which is easier conceptually yet encumbering to state.

\begin{theorem}\label{thm:ConcatenatingReplacingPatternsWithSharedEndPoints}
Let $G = H\sqcup H'$ be a bipartition of the edges of $G$.
Suppose that there exists $X', Y'\in \AO[H']$ and a Hamiltonian path $\b A = (A_1, \dots, A_s)$ in $\AO[H]$ such that:
\begin{compactitem}
\item $X' \in \AO[H'|A_i]$ and $Y'\in \AO[H'|A_i]$ for all $i\in [s]$ (equivalently, $X'(u, v) = Y'(u, v) = 0$ for all nodes $u, v$ connected in $H$), and
% \item One can split the sequence $\b A$ into \defn{splitting subsequences} $\b A_j \eqdef (A_{i_j}, A_{i_j+1}, \dots, A_{i_{j+1}-1})$ for $j\in [r]$ with $i_1 = 1$ and $i_r - 1 = s$ such that for each $j\in [r]$, the sequence $\b A_j$ admits a replacing pattern which starts at $(A_{i_j}, A_j')$ and ends at $(A_{i_{j+1} -1}, B_j')$ either for $\{A_j', B_j'\} = \{X', Y'\}$, either both for $A_j' = B_j' = X'$ and for $A_j' = B_j' = Y'$,
\item One can split the sequence $\b A$ into \defn{splitting subsequences} $\b A_j \eqdef (A_{i_j}, A_{i_j+1}, \dots, A_{i_{j+1}-1})$ for $j\in [r]$ with $i_1 = 1$ and $i_r - 1 = s$ such that for each $j\in [r]$, the sequence $\b A_j$ admits either:
\begin{compactitem}
\item a replacing pattern which starts at $A_{i_j} \sqcup A_j'$ and ends at $A_{i_{j+1} -1} \sqcup B_j'$ either for $\{A_j', B_j'\} = \{X', Y'\}$, or
\item two replacing patterns which start at $A_{i_j} \sqcup A_j'$ and end at $A_{i_{j+1} -1} \sqcup B_j'$, one for $A_j' = B_j' = X'$ and one for $A_j' = B_j' = Y'$,
\end{compactitem}
\end{compactitem}
then there exists a Hamiltonian path in $\AO$ obtained as a concatenation of some of these replacing patterns.

Moreover, if $\b A$ is a Hamiltonian cycle of $\AO[H]$, and $\#\{j\in [r]~;~ A'_j\ne B'_j\}$ is even, % if the number of $j\in [r]$ such that $A_j'\ne B_j'$ is even
then $G$ is {\good}.
\end{theorem}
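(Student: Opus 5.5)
The plan is to build the Hamiltonian path by a greedy left-to-right concatenation of the given replacing patterns, keeping track only of which of $X'$, $Y'$ is the ``current'' orientation of $H'$ at each junction. The key observation is that, since $X'$ and $Y'$ lie in $\AO[H'|A_i]$ for every $i$, the orientations $A_i\sqcup X'$ and $A_i\sqcup Y'$ are acyclic for all $i\in[s]$, by \Cref{lem:BipartitionEdgesYieldsCartesianProductAO}. Moreover, $A_{i_{j+1}-1}\sqcup Z'$ and $A_{i_{j+1}}\sqcup Z'$ are adjacent in $\AO$ for $Z'\in\{X',Y'\}$, because $A_{i_{j+1}-1}$ and $A_{i_{j+1}}$ are adjacent in $\AO[H]$ (they are consecutive in the Hamiltonian path $\b A$).

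First, I will check that the vertex sets $V(\M[{\b A_j}])$ for $j\in[r]$ partition the vertex set of $\AO$. This holds because the $\b A_j$ partition the nodes of $\AO[H]$ (as $\b A$ is Hamiltonian), and every acyclic orientation of $G$ is uniquely of the form $A\sqcup A'$ with $A'\in\AO[H'|A]$. Hence any concatenation of one replacing pattern per block, whose consecutive pieces are joined by edges of $\AO$, is a Hamiltonian path of $\AO$.

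Second, I will choose the patterns inductively. Let $\b P_1$ be a pattern for $\b A_1$ with some admissible start. Suppose $\b P_j$ ends at $A_{i_{j+1}-1}\sqcup Z'_j$ with $Z'_j\in\{X',Y'\}$. For block $j+1$:
\begin{itemize}
\item If $\b A_{j+1}$ has a pattern with $\{A'_{j+1},B'_{j+1}\}=\{X',Y'\}$, I traverse it in the direction that starts at $Z'_j$. I expect this to require reversing the directed path, which should be harmless since Hamiltonian paths in an undirected graph can be reversed; I would note that the hypothesis implicitly allows either direction (or I would read ``starts at $A_j'$'' up to reversal).
\item Otherwise, I pick the one of its two patterns with $A'_{j+1}=B'_{j+1}=Z'_j$.
\end{itemize}
In either case, the junction edge from $A_{i_{j+1}-1}\sqcup Z'_j$ to $A_{i_{j+1}}\sqcup Z'_j$ exists, so the concatenation is a path.

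For the ``moreover'' part, I will start $\b P_1$ at $A_1\sqcup X'$. Each block of the first type swaps the current element of $\{X',Y'\}$, and each block of the second type keeps it. So after block $r$ the path ends at $A_s\sqcup X'$ exactly when the number of swapping blocks, $\#\{j~;~A'_j\ne B'_j\}$, is even. If $\b A$ is a Hamiltonian cycle, then $A_s$ and $A_1$ are adjacent, so $A_s\sqcup X'$ and $A_1\sqcup X'$ are adjacent in $\AO$, which closes the path into a Hamiltonian cycle. Hence $G$ is {\good}.

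The main obstacle is one subtle point. For the first block, I need to start at $X'$: if $\b A_1$ is of swapping type, this may again need reversal. In the cycle case, I can also rotate the indexing so that the parity bookkeeping is consistent.
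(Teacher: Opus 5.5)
Your argument is the paper's proof. The paper starts at $X'$ and chooses each $\b P_j$ according to the parity of $\varsigma_j = \#\{k<j ~;~ A'_k\ne B'_k\}$, which is exactly your greedy bookkeeping of the current element of $\{X',Y'\}$, and it closes the cycle with the edge between $A_s\sqcup X'$ and $A_1\sqcup X'$.

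One caution, which applies equally to the paper's own remark that patterns may be reversed: reversing a replacing pattern also swaps its $H$-endpoints, so it is only guaranteed to give the opposite $X'/Y'$ order when the block is a single orientation (as in the application to \Cref{thm:PathGluing2OddIsGood}). For longer swapping blocks, your justification via undirected reversal fails, and you genuinely need your alternative reading that the hypothesis supplies the pattern in both orders.
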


\begin{proof}
We will concatenate replacing patterns.
Note that one can reverse the direction of every arc in a replacing pattern to exchange its starting and ending nodes.

Let $\varsigma_j = \#\{k\in[j-1] ~;~ A'_k \ne B'_k\}$.
We choose the replacing pattern $\b P_j$ as follows:
\begin{compactitem}
\item if $A'_j \ne B'_j$ and $\varsigma_j$ is even, then $\b P_j$ starts at $A_{i_j}\sqcup X'$ and ends at $A_{i_{j+1}-1}\sqcup Y'$;
\item if $A'_j \ne B'_j$ and $\varsigma_j$ is odd, then $\b P_j$ starts at $A_{i_j}\sqcup Y'$ and ends at $A_{i_{j+1}-1}\sqcup X'$;
\item if $A'_j = B'_j$ and $\varsigma_j$ is odd, then $\b P_j$ starts at $A_{i_j}\sqcup Y'$ and ends at $A_{i_{j+1}-1}\sqcup Y'$;
\item if $A'_j = B'_j$ and $\varsigma_j$ is even, then $\b P_j$ starts at $A_{i_j}\sqcup X'$ and ends at $A_{i_{j+1}-1}\sqcup X'$.
\end{compactitem}
For all $j\in [r-1]$, by construction of $\varsigma_j$, the orientation of $H$ of the last node of $\b P_j$ is $A_{i_{j+1}-1}$ and the orientation of $H$ of the first node of $\b P_{j+1}$ is $A_{i_{j+1}}$ and the orientation of $H'$ is the same (either $X'$ or $Y'$), so these two nodes are adjacent in $\AO$.
In particular, the concatenation $(\b P_1, \b P_2, \dots, \b P_r)$ is a Hamiltonian path in $\AO$.

Moreover, if $\#\{j\in [r] ~;~ A'_j \ne B'_j\}$ is even, then either $\varsigma_r$ is odd and $A'_j\ne B'_j$, so its last node is $A_s\sqcup X'$, or $\varsigma_r$ is odd and $A'_j = B'_j$, so its last node is $A_s\sqcup X'$.
If, in addition, $A_s$ and $A_1$ are adjacent in $\AO[H]$ (\ie if $\b A$ is a Hamiltonian cycle of $\AO[H]$), then the last node of $\b P_r$ and the first node of $\b P_1$ are adjacent in $\AO$ so $(\b P_1, \b P_2, \dots, \b P_r)$ is a Hamiltonian cycle in $\AO$.
\end{proof}

As before, one can generalize \Cref{thm:ConcatenatingReplacingPatternsWithSharedEndPoints} in order to weaken its hypotheses, for the more general statement, see Appendix \ref{appendix:SplittingSubsequences}.

\subsection{Multipath gluing and 0-even sequences}\label{ssec:MultiPathGluingAnd0Even}

For a graph $G = (\V, E)$ and two \nodes~ $u, v\in \V$, the \defn{path gluing} of length $\ell$ at $u, v$, denoted $\mathdefn{\oplus_{u, v}^\ell G}$, is the graph obtained by adding $\ell-1$ \nodes~ to $G$ and constructing a path of length $\ell$ between $u$ and $v$, supported on these new \nodes. 
Formally, using the notation we have introduced for gluings in \Cref{ssec:Prelim:OperationsOnGraphs}, let $P_\ell$ be a path of length $\ell$ with end nodes $x$ and $y$, and let $\varphi_1 : \{u\} \to \{x\}$ and $\varphi_2 : \{v\}\to\{y\}$, then $\oplus_{u, v}^\ell G \eqdef G\oplus_{\varphi_1, \varphi_2} P_\ell$.

The \defn{multipath gluing} of length $\b \ell = (\ell_1, \dots, \ell_m)$ at $u, v$ is then defined recursively for $m\geq 2$ as the graph $\mathdefn{\oplus_{u, v}^{\b \ell} G}  \eqdef \oplus_{u, v}^{\ell_m}\bigl(\oplus_{u, v}^{(\ell_1, \dots, \ell_{m-1})} G\bigr)$.

\begin{definition}\label{def:zero_evenness}
A sequence of integers $\b a = (a_1, \dots, a_r)$ is \defn{$0$-even} if, between any two non-zero entries of $\b a$, there is an even number of zeros in $\b a$.
\end{definition}

\subsubsection{Gluing a path of even length}\label{sssec:PathGluingEvenLength}

Our main objective in this section is to prove \Cref{thm:PathGluingEvenIsGood}.
This motivates us to define special subsequences as well as a proposition on regularity of replacing patterns (see Proposition \ref{prop:ReplacingPatternsViaDrawings}).
The proof is delayed to page~\pageref{proof:MainThm}.

\begin{theorem}\label{thm:PathGluingEvenIsGood}
Let $G = (\V, E)$ be a {\good} graph with $\b A = (A_1, \dots, A_s)$ a Hamiltonian cycle in $\AO$.
For $u, v\in \V$ in the same connected component of $G$ and an even number $\ell\geq 2$, if $\b A(u, v)$ is $0$-even, then $\oplus_{u, v}^{\ell} G$ is {\good}.
Moreover, under these assumptions, there exists a Hamiltonian cycle $\b B = (B_1, \dots, B_t)$ in $\AO[\oplus_{u, v}^\ell G]$ such that $\b B(u, v)$ is $0$-even.
\end{theorem}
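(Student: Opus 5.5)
We take the bipartition $\oplus_{u,v}^{\ell} G = G\sqcup P$, where $P$ is the glued path of length $\ell$ with internal nodes $p_1,\dots,p_{\ell-1}$ between $u$ and $v$. We then build a replacing pattern for the Hamiltonian cycle $\b A$ of $\AO$ and self-lace it via \Cref{cor:SelfLacingIntoAnAOcycle}.

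The fibres are easy to describe. Every orientation of $P$ is acyclic, so the vertices of $\AO[\oplus_{u,v}^\ell G]$ over $A_i$ are $A_i\sqcup A'$ with $A'\in\AO[P|A_i]$. This is all of $Q_\ell$ if $A_i(u,v)=0$. If $A_i(u,v)=\pm1$, it is $Q_\ell$ minus the single directed path $v\to u$ (resp. $u\to v$), which is a vertex of $Q_\ell$ at one extremity of the bipartition. Since $\ell$ is even, $Q_\ell$ has the Hamiltonian cycle $\b C$ of \Cref{exm:CubeGraphIsGood}. The punctured cube $Q_\ell\ssm\{z\}$ has the Hamiltonian path $\b C\ssm z$, which runs between the two neighbours of $z$ on $\b C$. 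For $A_i(u,v)=\pm1$ the removed vertex is $D_{v\to u}$ (resp. $D_{u\to v}$); these two orientations are complementary, hence antipodal, and of the same colour because $\ell$ is even.

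Next we split $\b A$ into maximal blocks: runs of consecutive $A_i$ with the same nonzero direction, and runs with direction $0$. By $0$-evenness, every zero run between two nonzero entries has even length. Inside a nonzero run (a constant punctured cube), we zig-zag as in the suspended-graph argument of \cite[Lemma~15]{BrennerCardinalMcConvilleMerinoMutze25-CombinatorialGenerationVII}. We traverse $\b C\ssm z$ forward on $A_i$ and move to $A_{i+1}$ by one $G$-edge at each fibre position. The zig-zag constraint is that we traverse the fibre path alternately forwards and backwards at each $A_i$, consuming exactly one position per transition. Written as consecutive-locus sets, this is what \Cref{prop:LacingIsHamiltonianPath} needs: patterns on adjacent blocks must share $\omega$, and their $\cons$ sets must cover $[m]$.

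The transitions between a full cube (direction $0$) and a punctured cube (direction $\pm1$), and between $z=D_{v\to u}$ and $z'=D_{u\to v}$, are the main obstacle. We plan to order every fibre by the same Gray cycle $\b C$, rotated so that $z$ and $z'$ are placed symmetrically, say at positions $2^{\ell-1}$ and $2^\ell$, which is possible since they are antipodal. The punctured fibres then use $\b C$ with one entry skipped. A zero run of even length lets us enter at one end of the sequence and leave at the same end, so the parity of the traversal direction is preserved when passing from one nonzero block to the next. This is where $0$-evenness is used, and the case analysis (same sign versus opposite sign on the two sides, and the punctured vertex lying at a path endpoint) is the delicate check. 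If the direct orders clash, we would fall back on \Cref{thm:ConcatenatingReplacingPatternsWithSharedEndPoints}. Its shared endpoints are taken to be two orientations of $P$ with no directed $u$–$v$ path (for instance $u$ a source and $v$ a sink within $P$); these are compatible with every $A_i$ because the orientations of $P$ without a directed path between $u$ and $v$ lie in every fibre. Each block is then a replacing pattern between these two, and the even-count condition again follows from the even lengths of the zero runs.

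Finally, for the last claim we read off $\b B(u,v)$ along the constructed cycle. It is nonzero whenever the $P$-orientation is a directed $u$–$v$ path or $A_i(u,v)\neq0$. We arrange the cube Gray order so that the two directed-path orientations are visited at positions separated by an even number of $0$'s, which again uses that $\ell$ is even (all non-extremal cube positions come in pairs) and that the zero runs of $\b A$ are even, so concatenated zero stretches remain even.
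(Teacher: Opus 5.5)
Your proposal has a genuine gap. The argument for the ``moreover'' clause is wrong, and it exposes what the whole construction is missing. Since $\c{AO}(\oplus^{\ell}_{u,v}G)$ is bipartite by rank parity and consecutive vertices of $\b B$ alternate colours, $\b B(u,v)$ is $0$-even if and only if any two consecutive nonzero entries of $\b B(u,v)$ lie on vertices of opposite colours. Over an $A_i$ with $A_i(u,v)=0$, the only vertices with nonzero direction are $A_i\sqcup D_{u\to v}$ and $A_i\sqcup D_{v\to u}$ (the paper's $S_\mu$ and $S_{2^\ell}$). These have the \emph{same} colour precisely because $\ell$ is even, as you note yourself. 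So in any Gray order of $Q_\ell$ the two directed paths are separated by an odd number of positions on either side; with your placement at positions $2^{\ell-1}$ and $2^\ell$ the gap is $2^{\ell-1}-1$. ``All non-extremal positions come in pairs'' does not help: the other $2^\ell-2$ vertices split into two odd arcs. Hence any construction that runs through such a fibre in one contiguous piece produces an odd block of zeros. The cycle must leave the fibre of $A_i$ between these two visits and pick up a nonzero entry of the opposite colour in another fibre. This interleaving is exactly what the explicit patterns of \Cref{prop:ReplacingPatternsViaDrawings} achieve: every maximal run of black bullets has even length because $\mu$ is even. It cannot be skipped, since the $0$-evenness of $\b B(u,v)$ is what makes the induction in \Cref{cor:MultiPathGluingIsGood} run.

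More generally, the step that carries the proof is only announced. That step consists of the replacing patterns across a change between a full and a punctured fibre and between signs $+1$ and $-1$, together with the checks $\omega_s^{\b P}=\omega_s^{\b Q}$, $\cons_s(\b P)\cup\cons_s(\b Q)=[m]$, and self-lacing. Your fallback through \Cref{thm:ConcatenatingReplacingPatternsWithSharedEndPoints} presupposes block patterns with endpoints in $\{X',Y'\}$ that you never construct. The concrete device you suggest, ordering every fibre by one fixed cycle $\b C$, also fails. A punctured fibre traversed along $\b C$ is entered and left at the two $\b C$-neighbours of its missing vertex, and a full fibre traversed along $\b C$ shifts the endpoint by exactly one step. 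If $uv\in E$ with $A_i(u,v)=+1$ and $A_{i+1}(u,v)=-1$, you would need a common $\b C$-neighbour of $D_{u\to v}$ and $D_{v\to u}$. These are at Hamming distance $\ell$, so this is impossible for $\ell\geq 4$. With the two placed half a cycle apart, a block with directions $+1,0,0,-1$ would have to bridge a $\b C$-distance of $2^{\ell-1}-2$ in two unit steps, again impossible for $\ell\geq 4$. (Antipodality alone does not give that placement anyway: in \Cref{tab:CubeGraph4}, $0000$ and $1111$ are at cyclic distance $6$.) The paper gets around all this in four ways. It lets the fibre order change from block to block; a $(1\to-1)$-pattern leaves its last row ordered $\mu+1,\dots,\mu-1$. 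It uses vertical moves in the middle of rows, not only at row ends. It gives separate patterns for degenerate subsequences. When $uv\in E$, it uses special final patterns so that the result closes up into a cycle.
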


Our proof will explicitly construct a new Hamiltonian cycle $\b B$ for $\AO[\oplus_{u, v}^\ell G]$ with $\ell$ even, from any Hamiltonian cycle $\b A$ of $\AO$ (such that $\b A(u, v)$ is $0$-even) and any Hamiltonian cycle of the cube-graph $Q_{\ell}$.

For the cube-graph $Q_{\ell}$, fix a Hamiltonian cycle $\mathdefn{\b S} \eqdef (S_1, \dots, S_{2^\ell})$ of $Q_\ell \simeq \AO[P_\ell]$ (see \Cref{exm:CubeGraphIsGood} for its existence), where we interpret each $S_p$ as a subset of $E(P_\ell)$ (instead of a subset of $[\ell]$).
Without loss of generality, we can suppose that $\mathdefn{S_{2^{\ell}} = [\ell] \simeq E(P_\ell)}$, \ie $S_{2^{\ell}}(u, v) = -1$; and we denote by $\mathdefn{\mu}$ the index in  $[2^\ell]$ such that $\mathdefn{S_\mu = \emptyset}$, \ie $S_\mu(u, v) = 1$.
Note that $Q_\ell$ is bipartite:
$\emptyset$ and $E(P_\ell)$ belong to the same part if and only if $\ell$ is even.
Consequently, as we have supposed $\ell$ even, this implies that $\mu$ is even and not $0$.

Let $\b A = (A_1, \dots, A_s)$ be a Hamiltonian cycle in $\AO[G]$ such that $\b A(u, v)$ is $0$-even.
Without loss of generality, we can suppose that $A_1(u, v) = 1$.
For $a, b\in \{-1, +1\}$, a \defn{$(a\to b)$-subsequence} of $\b A$ is a sequence $(A_p, A_{p+1}, \dots, A_q)$ for some $p \leq q$ considered modulo $r$ such that $A_p(u, v) = a$ and $A_q(u, v) = b$ and $A_j(u, v) = 0$ for all $p < j < q$.
If $p = q-1$, then we call it a \defn{degenerate} $(a\to b)$-subsequence of $\b A$. This is the case if and only if the edge between $u$ and $v$ exists in $G$. 

\begin{proposition}\label{prop:ReplacingPatternsViaDrawings}
Let $G = (\V, E)$ be a {\good} graph with $\b A = (A_1, \dots, A_s)$ a Hamiltonian cycle in $\AO$.
For $u, v\in \V$ in the same connected component of $G$ and an even number $\ell\geq 2$, if $\b A(u, v)$ is $0$-even, then for any $(a\to b)$-subsequence $(A_p, \dots, A_q)$ of $\b A$ with $a, b\in \{-1, +1\}$, there exists a replacing pattern $\b P$ such that:
\begin{compactenum}[(a)]
\item $\b P(u, v)$ is $0$-even, and\label{item:0EvenReplacingEvenPath}
\item if $b = 1$ (resp. $b = -1$), then $\omega_q^{\b P} = 1\, 2\, \dots 2^{\ell}-1$ (resp. $\omega_q^{\b P} = \mu+1\, \mu+2\, \dots \mu-1$), and\label{item:omegaReplacingEvenPath}
\item $\cons_p(\b P) \supseteq \{S_{2k} ~;~ k \in [2^{\ell}]\}$ and $\cons_q(\b P) \supseteq \{S_{2k+1} ~;~ k \in [2^{\ell}]\}$, and\label{item:consReplacingEvenPath}
\item if $\b A$ is not degenerate and $A_q(u, v) = 1$ (resp. $A_q(u, v) = -1$), then $\cons_q(\b P) = [2^\ell]\ssm\{2^\ell\}$ (resp. $\cons_q(\b P) = [2^\ell]\ssm\{\mu\}$).\label{item:CyclicConditionReplacingEven}
\end{compactenum}
\end{proposition}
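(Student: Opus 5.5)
We build $\b P$ explicitly as a zigzag through a grid, working in the bipartition $\oplus^\ell_{u,v}G = G\sqcup P_\ell$ with $H = G$ and $H' = P_\ell$. By \Cref{lem:BipartitionEdgesYieldsCartesianProductAO}, $\AO[P_\ell|A_j]$ has three possible forms, each read off from the Hamiltonian cycle $\b S$ of $Q_\ell\simeq\AO[P_\ell]$:
\begin{compactitem}
\item if $A_j(u,v)=0$, then $\AO[P_\ell|A_j]$ is all $2^\ell$ subsets;
\item if $A_j(u,v)=1$, then $\AO[P_\ell|A_j]$ is all subsets except $S_{2^\ell}$;
\item if $A_j(u,v)=-1$, then $\AO[P_\ell|A_j]$ is all subsets except $S_\mu$.
\end{compactitem}
Thus $\M[(A_p,\dots,A_q)]$ is a subgraph of the grid $\{p,\dots,q\}\times\b S$, with one node removed in the first and last columns. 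Inside each column, consecutive entries of $\b S$ are adjacent, and in each row $A_j\sqcup S_k$ is adjacent to $A_{j+1}\sqcup S_k$ whenever both exist.

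The plan is to draw $\b P$ explicitly. The interior columns $A_{p+1},\dots,A_{q-1}$ (all with direction $0$) are full copies of $Q_\ell$; by $0$-evenness their number $q-p-1$ is even. So the middle block is a ladder-like grid with an even number of full columns, and the two end columns each lack one node ($S_{2^\ell}$ or $S_\mu$). We traverse row pairs: for $k$ running through consecutive pairs of $\b S$ (starting just after the missing node of column $p$), go horizontally from column $p$ to column $q$ along row $S_k$, step vertically in column $q$ to $S_{k+1}$, then come back horizontally to column $p$ and step vertically to $S_{k+2}$. Since the number of rows is $2^\ell$ (even) and the missing nodes sit at even positions ($2^\ell$ and $\mu$ are both even because $\ell$ is even), the horizontal passes can be arranged to detour around the missing node. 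Vertical steps in column $q$ then happen at odd positions and those in column $p$ at even positions; this yields \eqref{item:consReplacingEvenPath}. The cyclic order $\omega_q^{\b P}$ starts right after the missing node of column $q$, which gives \eqref{item:omegaReplacingEvenPath}. In the non-degenerate case, every vertical edge of column $q$ except the one skipping the removed node is consecutive, which is \eqref{item:CyclicConditionReplacingEven}. In the degenerate case ($q=p+1$, $uv\in E$) the grid has just two columns, and the same snake applies.

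Next comes \eqref{item:0EvenReplacingEvenPath}. In $\oplus^\ell_{u,v}G$, the direction $(A_j\sqcup S)(u,v)$ is $0$ only if $A_j(u,v)=0$ and $S\notin\{\emptyset,E(P_\ell)\}$. Along $\b P$ we must therefore check that between non-zero entries an even number of zero entries occur. Horizontal passes cross the even number $q-p-1$ of interior columns, and vertical runs of zeros in the end columns have lengths governed by the even positions of $S_\mu$ and $S_{2^\ell}$ in $\b S$. A careful parity count per segment should give $0$-evenness.

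The main obstacle is the detour around the missing nodes $S_\mu$ (column $q$ if $b=-1$, column $p$ if $a=-1$) and $S_{2^\ell}$, handled simultaneously with the $0$-evenness parity bookkeeping. There are four cases $(a,b)\in\{\pm1\}^2$. I would first treat $a=b=1$, where both end columns miss $S_{2^\ell}$ and the snake runs cleanly from $S_1$ to $S_{2^\ell-1}$. The other cases would then be handled by cyclically shifting the row labelling by $\mu$ in the column(s) with direction $-1$, using that $\mu$ is even so that parities of row positions are preserved. I expect to verify this shift with a figure for small $\ell$ such as $\ell=2$.
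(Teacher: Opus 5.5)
The grid is the right picture. The paper's proof is also a set of explicit drawings in this grid: one for each of the non-degenerate and degenerate $(1\to1)$ and $(1\to-1)$ cases, with the remaining cases obtained by swapping red and blue. However, the snake you describe cannot give (d), and your justifications of (c) and (d) contradict each other.

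In your snake, $\b P$ enters and leaves the layer $\{A_q\sqcup S\}$ once for every second element of $\b S$. So only every other step inside that layer is consecutive, and this is exactly how you obtain (c). You then claim that all steps in that layer, except the one at the missing node, are consecutive.

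Property (d) says (modulo the off-by-one in its indexing) that in the non-degenerate case $\b P$ ends with one uninterrupted sweep of the whole layer $\AO[P_\ell|A_q]$, in the order fixed by (b). This is what lets the laced pattern self-lace in the proof of \Cref{thm:PathGluingEvenIsGood}, so it cannot be dropped. A pattern satisfying (d) must first exhaust the layers $A_p,\dots,A_{q-1}$ and arrive at $A_{q-1}\sqcup S_1$ (resp.\ $A_{q-1}\sqcup S_{\mu+1}$). The real work is then the $0$-evenness around the interior layer $A_{q-1}$.

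Your planned check at $\ell=2$ will fail for any construction, not just yours. There $\mu=2$, so in a direction-$0$ layer $S_1,S_3$ are zeros, $S_2$ is red and $S_4$ is blue. Take a non-degenerate $(1\to1)$-subsequence with $q=p+3$ and $A_p\not\sim A_q$ in $\AO$; these occur, e.g., in a suitable $0$-even Hamiltonian cycle of $\AO[C_5]$ with $u,v$ at distance $2$.
\begin{itemize}
\item By (b) and (d), $\b P$ must end with $A_q\sqcup S_1$, $A_q\sqcup S_2$, $A_q\sqcup S_3$.
\item This final sweep is entered from the zero $A_{q-1}\sqcup S_1$, whose predecessor must be the zero $A_{q-2}\sqcup S_1$.
\item Then $A_{q-1}\sqcup S_2$ and $A_{q-1}\sqcup S_4$ each have only two usable neighbours, and both lists include $A_{q-1}\sqcup S_3$.
\item Hence $A_{q-1}\sqcup S_3$ forms a run of a single zero, contradicting (a).
\end{itemize}
So (d) is unattainable for $\ell=2$, and the statement itself needs amending in that case.

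Two further steps are missing from your proposal.

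First, the mixed case $a\neq b$. The two end layers lack different nodes ($S_{2^\ell}$ resp.\ $S_\mu$) and must be visited in different cyclic orders (starting at $S_1$ resp.\ $S_{\mu+1}$). A snake whose outer loop runs through $\b S$ visits both end layers in the same linear order, so (b) fails. Relabelling $\b S$ in one layer only is not legitimate, because edges between layers join $A_j\sqcup S_k$ and $A_{j+1}\sqcup S_k$ with the same $k$. A global rotation of $\b S$ combined with the red/blue swap does reduce $(-1\to-1)$ to $(1\to1)$, but the mixed case needs a differently shaped pattern. For example, in the degenerate $(1\to-1)$ case:
\begin{itemize}
\item run along $A_p$ from $S_1$ to $S_{\mu+1}$;
\item zigzag between the two layers over $S_{\mu+1},\dots,S_{2^\ell-1}$;
\item finish along $A_q$ from $S_{2^\ell-1}$ around to $S_{\mu-1}$.
\end{itemize}

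Second, (a) is only asserted, not argued. The end layers contain no zeros at all; the zeros are the nodes $A_j\sqcup S$ with $p<j<q$ and $S\notin\{S_\mu,S_{2^\ell}\}$. In the case $a=b=1$, the nodes $A_j\sqcup S_{2^\ell}$ with $p<j<q$ are not on your snake through $S_1,\dots,S_{2^\ell-1}$. The naive detour through them right after $A_p\sqcup S_{2^\ell-1}$ creates a run of exactly one zero. Inside an interior layer, the zero blocks between $S_\mu$ and $S_{2^\ell}$ have odd lengths $\mu-1$ and $2^\ell-1-\mu$, so every detour must be checked explicitly. This parity bookkeeping is precisely what the paper's figures encode.
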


\begin{proof}
The proof is by sketching the relevant replacing patterns with the claimed properties.

For a non-degenerate $(1\to1)$-subsequence, \Cref{fig:11ReplacingEven} illustrates a replacing pattern with the claimed properties.
The node in the $i^{\text{th}}$ row and the $j^{\text{th}}$ column represents the orientation $A_i\sqcup S_j$ for $i\in[p, q]$ and $j\in [2^{\ell}]$.
A node is drawn with a red star $\textcolor{red}{\star}$ if $A_i(u, v) = 1$ or $S_j(u, v) = 1$;
it is drawn with a blue circle $\textcolor{blue}{\circ}$ if $A_i(u, v) = -1$ or $S_j(u, v) = -1$;
otherwise, it is drawn with a black bullet $\bullet$.
Hence, $A_i\sqcup S_j$ is an acyclic orientation of $\oplus_{u, v}^\ell G$ if and only if its associated node is not both a red star $\textcolor{red}{\star}$ and a blue circle $\textcolor{blue}{\circ}$.
As $\b A = (A_p, \dots, A_q)$ is a path in $\AO$, and $(S_1, \dots, S_{2^{\ell}})$ is a cycle in $\AO[P_\ell]$, the graph $\M$ contains (at least) edges between vertically adjacent nodes, and edges between horizontally adjacent nodes, and edges between the rightmost and the leftmost nodes of each row (which we represent by a dotted line).

We have chosen and oriented some of these edges in order to construct a replacing pattern $\b P$.
A vertical arrow coming from above the first row indicates the starting node of $\b P$, while a vertical arrow going below the last row indicates its ending node.
To prove that this pattern is $0$-even, it is enough to show that each (maximal) path of consecutive black bullets $\bullet$ contains an even number of such black bullets $\bullet$.
This is ensured by the fact that $\mu$ is even.
To determine $\omega_p^{\b P}$ (resp. $\omega_q^{\b P}$), one follows along the directed path $\b P$ and each time one sees a new node of the first (resp. last) row, one records the number of its column.
To determine $\cons_p(\b P)$ (resp. $\cons_q(\b P)$), one follows along the nodes of the first (resp. last) row in the order given by $\omega_p^{\b P}$ (resp. $\omega_q^{\b P}$), \ie left-to-right starting from the first column, and one records which node has an edge going horizontally to its right.

\begin{figure}[h!]
    \centering
    \includegraphics[width=0.8\linewidth]{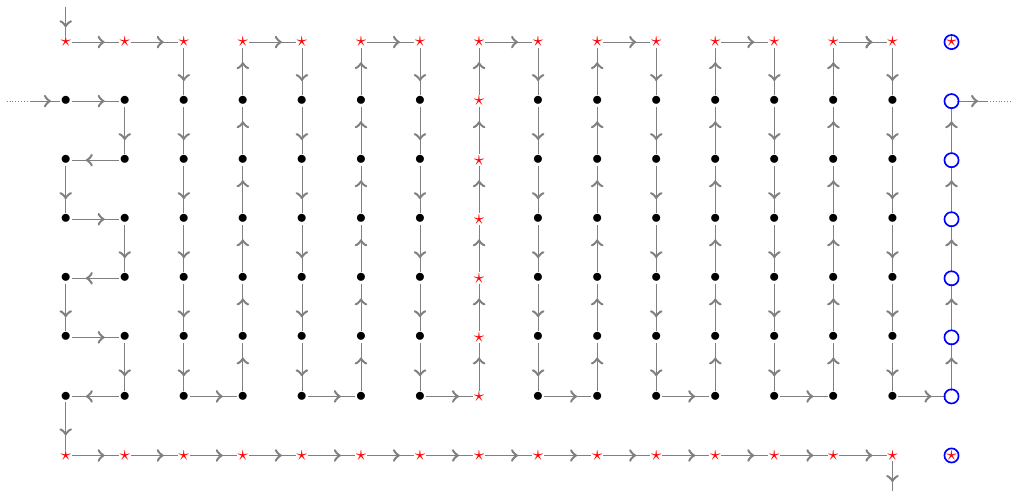}
    \caption{Replacing pattern for a $(1\to1)$-subsequence.
    }
    \label{fig:11ReplacingEven}
\end{figure}

For a $(1\to-1)$-subsequence, the drawing is given in \Cref{fig:1-1ReplacingEven}, with the same conventions.
Note that, here, $\cons_q(\b P)$ is determined following along the nodes of the last row from left to right starting from the $(\mu+1)^{\text{th}}$ column, and recording which node has an edge going horizontally to its right.

\begin{figure}[h!]
    \centering
    \includegraphics[width=0.8\linewidth]{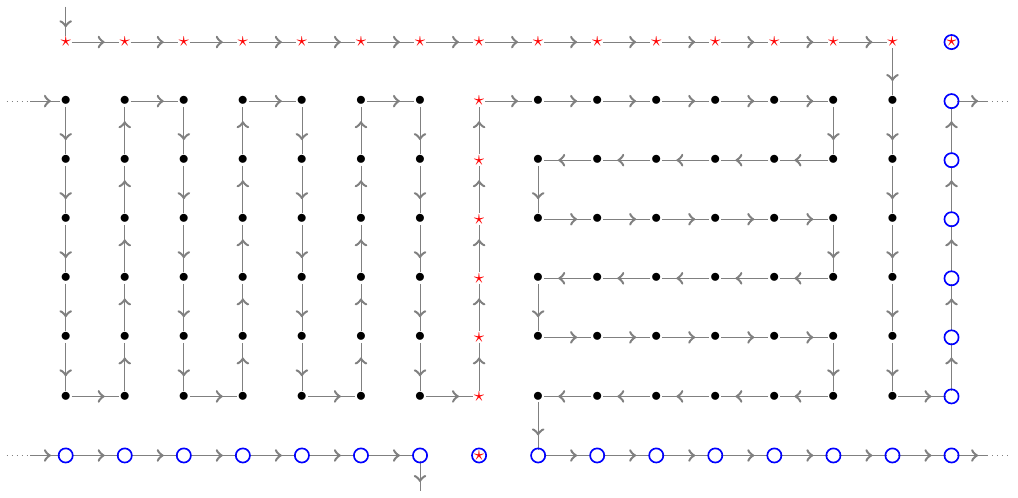}
    \caption{Replacing pattern for a $(1\to-1)$-subsequence.}
    \label{fig:1-1ReplacingEven}
\end{figure}

For degenerate $(1\to1)$-subsequences and degenerate $(1\to-1)$-subsequences, the drawings are given in \Cref{fig:Deg11ReplacingEven,fig:Deg1-1ReplacingEven}, with the same conventions.
\begin{figure}[h!]
    \centering
    \includegraphics[width=0.8\linewidth]{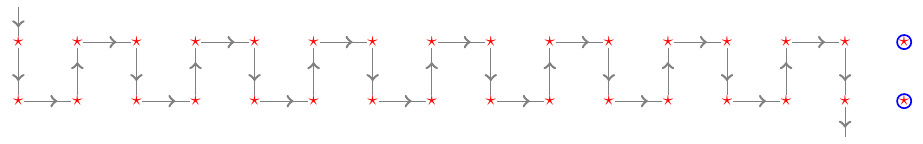}
    \caption{Replacing pattern for a degenerate $(1\to1)$-subsequence.}
    \label{fig:Deg11ReplacingEven}
\end{figure}

\begin{figure}[h!]
    \centering
    \includegraphics[width=0.8\linewidth]{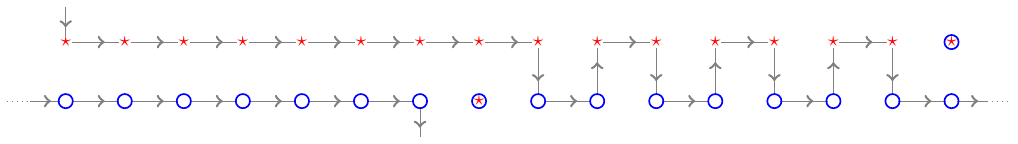}
    \caption{Replacing pattern for a degenerate $(1\to-1)$-subsequence.}
    \label{fig:Deg1-1ReplacingEven}
\end{figure}

The drawings for the (degenerate) $(-1 \to -1)$-subsequences and for the (degenerate) $(-1\to 1)$-subsequences can be obtained from the ones of \Cref{fig:11ReplacingEven,fig:1-1ReplacingEven,fig:Deg11ReplacingEven,fig:Deg1-1ReplacingEven} by swapping red stars $\textcolor{red}{\star}$ and blue circles $\textcolor{blue}{\circ}$.
\end{proof}

\begin{proof}[Proof of \Cref{thm:PathGluingEvenIsGood}]\label{proof:MainThm}
Fix a Hamiltonian cycle $\b A = (A_1, \dots, A_s)$ of $\AO$ with $A_1 = A_s$ such that $\b A(u, v)$ is $0$-even.

On the one hand, we suppose that there exists $j\in [s]$ such that $A_j(u, v) = 0$.
Without loss of generality, we can consider that $A_1(u, v) = 1$ and $A_{s-1}(u, v) = 0$.
Let $p_1, \dots, p_r$ be such that the $(a\to b)$-subsequences of $\b A$ for all possible $a, b\in\{-1, +1\}$ are $(A_{p_i}, A_{p_i+1}, \dots, A_{p_{i+1}})$ for $i\in [r]$ (in particular $p_1 = 1 = p_{r+1}$).

Fix $i\in [r]$.
Let $\b P_i$ be a replacing pattern for $(A_{p_i}, A_{p_i+1}, \dots, A_{p_{i+1}})$ given by \Cref{prop:ReplacingPatternsViaDrawings}.
If $A_{p_{i+1}}(u, v) = 1$ (resp. $A_{p_{i+1}}(u, v) = -1$), then
by~\eqref{item:omegaReplacingEvenPath} we get $\omega_{p_{i+1}}^{\b P_i} = \omega_{p_{i+1}}^{\b P_{i+1}}$, and
by~\eqref{item:consReplacingEvenPath} we get $\cons_{p_{i+1}}(\b P_i) \cup \cons_{p_{i+1}}(\b P_{i+1}) = [2^\ell]\ssm\{2^\ell\}$ (resp. $\cons_{p_{i+1}}(\b P_i) \cup \cons_{p_{i+1}}(\b P_{i+1}) = [2^\ell]\ssm\{\mu\}$).

Thus by \Cref{prop:LacingIsHamiltonianPath}, $\b P_i$ and $\b P_{i+1}$ can lace for all $i\in [r]$.
Thus $\b B \eqdef \b P_1 \wedge \dots \wedge \b P_r$ is a replacing pattern for $\b A$.
It remains to prove that $\b B$ can self-lace.
As we have assumed that $A_1(u, v) = 1$ and $A_{s-1}(u, v) = 0$, the last $(a\to1)$-subsequence $(A_{p_{r-1}}, \dots, A_{p_r})$ is not degenerate.
Thus by~\eqref{item:CyclicConditionReplacingEven}, we have $\cons_s(\b B) = [2^{\ell}]\ssm\{2^{\ell}\}$.
Combining this with $\omega_s^{\b B} = 1\, 2\, \dots \, 2^{\ell}-1$ proves that $\b B$ can self-lace.
By \Cref{cor:SelfLacingIntoAnAOcycle}, we get that $\b B$ is a Hamiltonian cycle for $\AO[\oplus_{u, v}^\ell G]$.

\medskip

On the other hand, assume that $A_j(u, v) \ne 0$ for all $j\in [s]$ (equivalently, that $G$ contains the edge between $u$ and $v$).
Suppose that there are two consecutive $A_i(u, v)$ and $A_{i+1}(u, v)$ of the same sign, so we can assume that $A_1(u, v) = 1$ and $A_{s-2}(u, v) = A_{s-1}(u, v) = -1$.

The reasoning is very similar, as consecutive (degenerate) $(a\to b)$-subsequences can lace according to \Cref{prop:ReplacingPatternsViaDrawings}.
However, we need to modify the last step of the chosen replacing patterns so that the resulting $\b B$ can self-lace and yield a Hamiltonian cycle for $\AO[\oplus_{u, v}^\ell G]$.
We swap the last replacing pattern for $(A_{s-2}, A_{s-1})$ by the one drawn in \Cref{fig:Alt-1-1DegReplacingEvenPath} where the last vertical edge connects $A_{s-1}\sqcup S_1$ to $A_1\sqcup S_1$, closing the Hamiltonian cycle of $\AO[\oplus_{u, v}^\ell G]$.
\begin{figure}[h]
    \centering
    \includegraphics[width=0.8\linewidth]{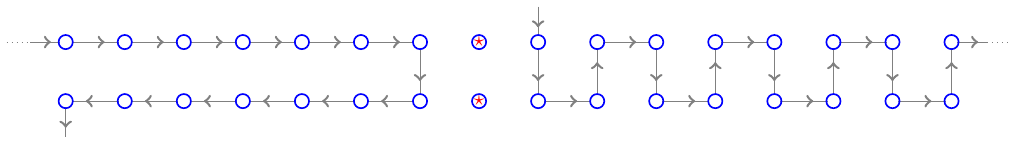}
    \caption{Alternative replacing pattern for a degenerate $(-1\!\to\!-1)$-subsequence.}
    \label{fig:Alt-1-1DegReplacingEvenPath}
\end{figure}

Otherwise, $\b A(u, v)$ alternates between $+1$ and $-1$ say $A_{2k}(u, v) = 1$ and $A_{2k+1}(u, v) = -1$, then we can replace each $(A_{2k}, A_{2k+1})$ by the replacing pattern of \Cref{fig:Alt1-1DegReplacingEvenPath} to directly get a cycle in $\AO[\oplus_{u, v}^\ell G]$ (no lacing is needed).
\begin{figure}[h]
    \centering
    \includegraphics[width=0.8\linewidth]{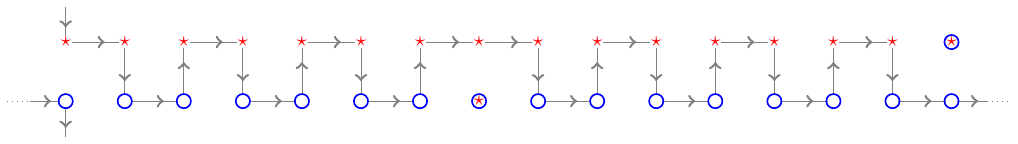}
    \caption{Alternative replacing pattern for a degenerate $(1\to-1)$-subsequence.}
    \label{fig:Alt1-1DegReplacingEvenPath}
\end{figure}
\end{proof}

\subsubsection{Gluing two paths of odd lengths}\label{sssec:DoublePathGluingOddLength}

The main objective of this section is to prove \Cref{thm:PathGluing2OddIsGood}.
As before we state it here, and delay the proof to page~\pageref{proof:MainThm2}.
The proof is very similar to \Cref{thm:PathGluingEvenIsGood}, except that it is conceptually easier since no lacing is needed; but the sketches are more intricate since the replacing patterns are larger.

\begin{theorem}\label{thm:PathGluing2OddIsGood}
Let $G = (\V, E)$ be a {\good} graph with $\b A = (A_1, \dots, A_s)$ a Hamiltonian cycle in $\AO$.
For $u, v\in \V$ in the same connected component of $G$ and two odd numbers $\ell_1, \ell_2\geq 2$, if $\b A(u, v)$ is $0$-even, then $\oplus_{u, v}^{\ell_1, \ell_2} G$ is {\good}.
Moreover, under these assumptions, there exists a Hamiltonian cycle $\b B = (B_1, \dots, B_t)$ in $\AO[\oplus_{u, v}^{\ell_1, \ell_2} G]$ such that $\b B(u, v)$ is $0$-even.
\end{theorem}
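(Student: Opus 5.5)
We mirror the proof of \Cref{thm:PathGluingEvenIsGood}, but with $H' = P^{(\ell_1,\ell_2)}$. This is an even cycle through $u$ and $v$, built from two odd paths. We fix the bipartition $\oplus_{u,v}^{\ell_1,\ell_2} G = G \sqcup H'$. By \Cref{lem:BipartitionEdgesYieldsCartesianProductAO}, the graph $\AO[\oplus_{u,v}^{\ell_1,\ell_2}G]$ is the subgraph of $\AO\times\AO[H']$ induced on the admissible pairs.

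The three fibres are as follows.
\begin{compactitem}
\item $\AO[H'|A]$ is all of $\AO[H']$ when $A(u,v)=0$, which has $2^{\ell_1+\ell_2}-2$ elements.
\item It is $\{A'~;~A'(u,v)\ne -1\}$ when $A(u,v)=1$.
\item It is $\{A'~;~A'(u,v)\ne 1\}$ when $A(u,v)=-1$.
\end{compactitem}
The orientations $A'$ of $H'$ with $A'(u,v)=+1$ are those in which at least one of the two paths is directed from $u$ to $v$ and the other is not directed from $v$ to $u$; symmetrically for $-1$. We first organize $\AO[H']$ as a ``grid''. We pick Hamiltonian cycles $\b S^1$ of $Q_{\ell_1}\simeq\AO[P_{\ell_1}]$ and $\b S^2$ of $Q_{\ell_2}$. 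We locate on them the indices of $\emptyset$ and of the full set, which lie in opposite parts since $\ell_i$ is odd. We then describe each of the three fibres as an explicit region of the torus $Q_{\ell_1}\times Q_{\ell_2}$. The full torus minus the two cyclic orientations is the fibre over $A(u,v)=0$, and the $\pm1$ fibres are the torus minus the ``cross'' through the $\mp$-directed row and column.

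The core step is the analogue of \Cref{prop:ReplacingPatternsViaDrawings}. For every $(a\to b)$-subsequence $(A_p,\dots,A_q)$ of $\b A$, degenerate or not, we want a replacing pattern that starts and ends at fixed prescribed orientations of $H'$, say $X'_{a}$ and $X'_{b}$, depending only on the signs $a$ and $b$. Here $X'_{+1}$ should be a chosen orientation with $X'(u,v)=+1$, and $X'_{-1}$ its reversal. We also want the pattern to satisfy $0$-evenness of $\b P(u,v)$. Since endpoints are prescribed, consecutive patterns simply concatenate via a vertical edge $A_{q}\sqcup X'_b \to A_{q+1}\sqcup X'_b$, so no lacing is needed. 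Closing up works as in \Cref{thm:ConcatenatingReplacingPatternsWithSharedEndPoints}.

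The patterns themselves are produced by a snake argument.
\begin{compactitem}
\item In the middle block, the rows $A_j$ with $A_j(u,v)=0$, every row is the full fibre. Their number is even by $0$-evenness, so a boustrophedon sweep through a Hamiltonian path of the fibre (which exists: even cycle of odd paths, treated via \Cref{cor:AOhamiltonianMultiPathImpliesSumLengthsEven}-type parity bookkeeping, as the fibre is a bipartite grid with balanced parts) returns to its starting column.
\item The first and last rows, with restricted fibres, must be covered by Hamiltonian paths of the ``torus minus a cross'' region that connect $X'_{a}$ to the entry point of the middle sweep.
\end{compactitem}

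The main obstacle is this combinatorial design: exhibiting, for each of the eight cases (four sign pairs, each degenerate or not), explicit paths in these grid regions with the right endpoints and bipartite parity. This is where oddness of $\ell_1$ and $\ell_2$ enters. With both lengths odd, the removed cross leaves colour classes balanced, so Hamiltonian paths between the prescribed endpoints are not obstructed. We would draw the patterns as in the figures for \Cref{thm:PathGluingEvenIsGood}, row by row, and check the balance using \Cref{prop:Bipartite(Path)Hamilitonian}.

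For the degenerate case, where $uv\in E(G)$ and $\b A(u,v)$ alternates or repeats signs, we would handle the closing step with special alternative patterns as in the end of the proof of \Cref{thm:PathGluingEvenIsGood}. Finally, we read off that the black-bullet runs in $\b B(u,v)$ have even length, which gives $0$-evenness of $\b B(u,v)$.
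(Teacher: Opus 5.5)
\textbf{There is a genuine gap: the sweep through the zero rows cannot exist as described.}

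When $A(u,v)=0$, the fibre is all of $\AO[H']$, with $H'=C_{\ell_1+\ell_2}$ an \emph{even} cycle. Its two cyclic orientations differ on an even number of edges, so both are removed from the same colour class of $Q_{\ell_1+\ell_2}$. The remaining classes have sizes $2^{\ell_1+\ell_2-1}$ and $2^{\ell_1+\ell_2-1}-2$, equivalently $\sigma(C_n)=\pm 2$ (\Cref{exm:AcyclicPolynomialCycles}). By \Cref{prop:Bipartite(Path)Hamilitonian}, $\AO[H']$ therefore has no Hamiltonian path at all. So a boustrophedon in which each zero row is traversed by a Hamiltonian path of the fibre is impossible, and the fibre is not ``a bipartite grid with balanced parts''. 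The parity bookkeeping behind \Cref{cor:AOhamiltonianMultiPathImpliesSumLengthsEven} gives exactly this obstruction, not an existence result.

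What $0$-evenness is really for is grouping the zero rows into consecutive pairs $(A_j,A_{j+1})$. The subgraph over such a pair is $\AO[H']\times K_2$, which is balanced. You then need a Hamiltonian path there that genuinely weaves between the two layers; this is \Cref{prop:GluingTwoOddPathsReplacingPatterns} in the paper.

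\textbf{The second problem is the choice of pivots.}

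An orientation $X'_{+1}$ with $X'_{+1}(u,v)=+1$ does not lie in $\AO[H'|A]$ when $A(u,v)=-1$. So you cannot pass at a pivot between adjacent rows of opposite signs, which happens as soon as $uv\in E(G)$. For the same reason \Cref{thm:ConcatenatingReplacingPatternsWithSharedEndPoints} does not apply: it requires pivots with $X'(u,v)=Y'(u,v)=0$. Moreover, $(a\to b)$-subsequences share their end rows, so without lacing those rows would be covered twice; you need a genuine partition of $\b A$ into blocks.

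The paper's route is as follows:
\begin{itemize}
\item It takes two pivots $X',Y'$ with no directed $u$--$v$ path in $H'$, so they lie in every fibre.
\item Each row with $A(u,v)\neq 0$ is its own block, with a pattern from $X'$ to $Y'$ through the torus minus a cross.
\item Each pair of zero rows is a block, with one pattern $X'\to X'$ and one pattern $Y'\to Y'$.
\item The number of pivot switches equals the number of rows with $A(u,v)\neq0$, which is even by \Cref{lem:CompletementIsInvolution}. So the concatenation closes into a cycle with no degenerate special cases.
\end{itemize}

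Note also that the torus minus a cross has $(2^{\ell_1}-1)(2^{\ell_2}-1)$ vertices, an odd number. Its colour classes therefore differ by one rather than being balanced, and both pivots must lie in the larger class. Even then, parity alone does not produce the required Hamiltonian paths; explicit constructions are still needed.
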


Our proof will explicitly construct a new Hamiltonian cycle $\b B$ for $\AO[\oplus_{u, v}^{\ell_1, \ell_2} G]$ with $\ell_1, \ell_2$ odd, from any Hamiltonian cycle $\b A$ of $\AO$ (such that $\b A(u, v)$ is $0$-even) and almost any Hamiltonian cycle of the cube-graphs $Q_{\ell_1}$ and $Q_{\ell_2}$.

For the cube-graph $Q_{\ell}$ for $\ell$ odd, fix a Hamiltonian cycle $\mathdefn{\b S} \eqdef (S_1, \dots, S_{2^\ell})$ of $Q_\ell \simeq \AO[P_\ell]$ (see \Cref{exm:CubeGraphIsGood} for its existence), where we interpret each $S_p$ as a subset of $E(P_\ell)$ (instead of a subset of $[\ell]$).
Without loss of generality, we can suppose that $\mathdefn{S_{2^{\ell}} = [\ell] \simeq E(P_\ell)}$, \ie $S_{2^{\ell}}(u, v) = -1$; and we denote $\mathdefn{\mu}\in [2^\ell]$ such that $\mathdefn{S_\mu = \emptyset}$, \ie $S_\mu(u, v) = 1$.
Note that $Q_\ell$ is bipartite:
$\emptyset$ and $E(P_\ell)$ do not belong to the same part if and only if $\ell$ is odd.
Consequently, as we have supposed $\ell$ odd, this implies that $\mu$ is odd.
Note also that $\mu\notin\{1, 2^{\ell}-1\}$ since $\emptyset$ and $[2^\ell]$ are not adjacent in the cube-graph $Q_{\ell}$.

For given odd $\ell_1, \ell_2$, we denote $\b S' = (S'_1, \dots, S'_{\mu_1}, \dots, S'_{2^{\ell_1}})$ the chosen Hamiltonian cycle of $Q_{\ell_1}$, and denote $\b S'' = (S''_2, \dots, S''_{\mu_2}, \dots, S''_{2^{\ell_2}})$ the chosen Hamiltonian cycle of $Q_{\ell_2}$, where $\mu_1$ and $\mu_2$ are odd with $\mu_1, \mu_2\notin\{1, 2^{\ell}-1\}$.

\begin{proposition}\label{prop:GluingTwoOddPathsReplacingPatterns}
Let $G = (\V, E)$ be a {\good} graph with two nodes $u, v$ in the same connected component of $G$, and let $\ell_1, \ell_2\geq 3$ be odd numbers.
We denote $A \sqcup A' \sqcup A''$ an orientation of $\oplus_{u, v}^{\ell_1, \ell_2} G$ where $A\in \AO$, $A'\in \AO[P_{\ell_1}]$ and $A''\in \AO[P_{\ell_2}]$.
\begin{compactenum}
\item If $A(u, v) = +1$ (resp. $A(u, v) = -1$), there exists a $0$-even replacing pattern\footnote{Technically, we should write $\M[(A)]$, but we abuse notation for readability and prefer $\M[A]$ here.} of $\M[A]$ that starts at $A \sqcup S'_{\mu_1+1} \sqcup S''_{\mu_2+1}$ and ends at $A \sqcup S'_{2} \sqcup S''_{2^{\ell_2}-1}$, and\label{item:2OddReplacing1-1}
\item If $A_1(u, v) = A_2(u, v) = 0$ and $A_1$ and $A_2$ are adjacent in $\AO$, then there exists a $0$-even replacing pattern of $\M[A_1, A_2]$ that starts at $A_1 \sqcup S'_{\mu_1+1} \sqcup S''_{\mu_2-1}$ and ends at $A_2 \sqcup S'_{\mu_1+1} \sqcup S''_{\mu_2-1}$, and there exists a $0$-even replacing pattern of $\M[A_1, A_2]$ that starts at $A_1 \sqcup S'_{2} \sqcup S''_{2^{\ell_2}-1}$ and ends at $A_2 \sqcup S'_{2} \sqcup S''_{2^{\ell_2}-1}$.\label{item:2OddReplacing00}
\end{compactenum}
\end{proposition}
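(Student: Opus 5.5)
We build the replacing patterns explicitly as Hamiltonian paths in a grid, as in the proof of \Cref{prop:ReplacingPatternsViaDrawings}. Since the two glued paths $P_{\ell_1}$ and $P_{\ell_2}$ meet $G$ only at $u$ and $v$, \Cref{lem:BipartitionEdgesYieldsCartesianProductAO} gives the criterion: $A\sqcup S'_i\sqcup S''_j$ is acyclic if and only if the three values $A(u,v)$, $S'_i(u,v)$, $S''_j(u,v)$ never contain both a $+1$ and a $-1$. By \Cref{exm:CubeGraphIsGood}, $\b S'$ and $\b S''$ are cycles in $Q_{\ell_1}$ and $Q_{\ell_2}$. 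Hence, for a fixed $A$, the graph $\M[A]$ contains the subgraph of the torus grid $C_{2^{\ell_1}}\times C_{2^{\ell_2}}$ induced on the admissible pairs $(i,j)$. In $\b S'$, the only nonzero columns are $\mu_1$ (value $+1$) and $2^{\ell_1}$ (value $-1$); similarly for $\b S''$ with rows $\mu_2$ and $2^{\ell_2}$. The $0$-evenness of a pattern is read off as in the earlier drawings: every maximal run of black bullets must have even length. Parity is controlled by the facts that $\mu_1,\mu_2$ are odd while $2^{\ell_i}$ is even, so each arc of $\b S'$ or $\b S''$ strictly between the two special indices has even length ($\mu-1$ and $2^\ell-\mu-1$ are both even).

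For~\eqref{item:2OddReplacing1-1}, take $A(u,v)=+1$; the case $-1$ follows by swapping stars and circles, which exchanges the roles of $\mu_k$ and $2^{\ell_k}$ and is handled by relabeling the cycles in reverse. The forbidden cells are then column $2^{\ell_1}$ and row $2^{\ell_2}$, so the admissible region is a $(2^{\ell_1}-1)\times(2^{\ell_2}-1)$ rectangle. It is an honest grid, with no wraparound edges available. Every node of it carries $A(u,v)=+1$, so every node is a red star and $0$-evenness holds trivially. It therefore suffices to exhibit a Hamiltonian path of this odd$\times$odd rectangle from cell $(\mu_1+1,\mu_2+1)$ to cell $(2,2^{\ell_2}-1)$. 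Color the rectangle like a chessboard: the corners are in the majority class, and a Hamiltonian path between two cells exists precisely when both cells lie in the majority color class (Itai--Papadimitriou--Szwarcfiter). Here $\mu_1+1$ and $\mu_2+1$ are even, so $(\mu_1+1)+(\mu_2+1)$ is even; likewise $2+(2^{\ell_2}-1)$ is odd, so the two endpoints have coordinate sums of different parity. This mismatch means I will need to recheck the indexing convention, in particular whether rows are labeled from $2$, as the notation $S''_2,\dots$ suggests. If that convention holds, both endpoints sit at majority-color cells and I will draw a boustrophedon path explicitly: sweep the columns left of $\mu_1+1$, then snake through the rest.

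For~\eqref{item:2OddReplacing00}, all of $A_1,A_2$ are black, so the admissible region is the full torus minus the two cells $(\mu_1,2^{\ell_2})$ and $(2^{\ell_1},\mu_2)$ where a star meets a circle, doubled over two layers joined by vertical edges. I will traverse layer $A_1$ almost completely, cross to $A_2$, and return along a mirrored route, so that the start and end sit at the same coordinates in the two layers. The main obstacle is ensuring $0$-evenness: the black runs are the cells with $i\notin\{\mu_1,2^{\ell_1}\}$ and $j\notin\{\mu_2,2^{\ell_2}\}$, and the path must enter and leave each black block after an even number of steps. I plan to route the path along the colored row and column as separators, as in \Cref{fig:11ReplacingEven}, relying on the even block widths. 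Writing this drawing explicitly and checking it with the parity argument is the step I expect to be hardest.
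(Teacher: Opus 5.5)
Your strategy (explicit Hamiltonian paths in the $\b S'\times\b S''$ grid, with one layer for item~(1) and two stacked layers for item~(2)) is the paper's own, since the paper proves this proposition only by drawings. However, the proposal establishes neither item, and the parity clash you hit in item~(1) is not an indexing artifact: it makes item~(1) false as written.

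Since $S_{2^\ell}=[\ell]$ and consecutive terms of a Hamiltonian cycle of $Q_\ell$ differ in one element, any consecutive labelling satisfies $|S_i|\equiv \ell+i \pmod 2$. Measured from $A\sqcup S'_{\mu_1}\sqcup S''_{\mu_2}$ (both paths directed $u\to v$), the node $A\sqcup S'\sqcup S''$ has rank $|S'|+|S''|$.

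If $A(u,v)=+1$, the nodes of $\M[A]$ are the pairs with $S'\neq[\ell_1]$ and $S''\neq[\ell_2]$. In each factor, even ranks outnumber odd ranks by exactly one, so the even class of $\M[A]$ exceeds the odd class by $1\cdot 1=1$. For $A(u,v)=-1$ one removes $\emptyset$ instead and gets $(-1)\cdot(-1)=1$. Because $\M[A]$ is bipartite by rank parity, cube chords included, every Hamiltonian path of $\M[A]$ starts and ends at even rank. This argument does not depend on your grid.

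The start has rank $2$, whether one takes $S''_{\mu_2+1}$ as in the statement or $S''_{\mu_2-1}$ as in the paper's proof. The prescribed end, however, has rank $|S'_2|+|S''_{2^{\ell_2}-1}|=(\ell_1-2)+(\ell_2-1)$, which is odd. So no Hamiltonian path of $\M[A]$ ends at $A\sqcup S'_2\sqcup S''_{2^{\ell_2}-1}$, whatever the drawing.

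Relabelling does not help. The list $S''_2,\dots,S''_{2^{\ell_2}}$ is too short to be a Hamiltonian cycle, and any shifted consecutive labelling with $S_{2^\ell}=[\ell]$ obeys the same congruence.

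What your Itai--Papadimitriou--Szwarcfiter argument does prove is the variant ending at an even-rank node of $H'$-direction $0$, such as $A\sqcup S'_2\sqcup S''_{2^{\ell_2}-2}$. The argument applies because the sides are $2^{\ell_k}-1\geq 7$, and $0$-evenness is vacuous since every node has nonzero direction. Moving this pivot, though, forces you to redo the second pattern of item~(2) and to recheck the use of \Cref{thm:ConcatenatingReplacingPatternsWithSharedEndPoints} in \Cref{thm:PathGluing2OddIsGood}.

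Item~(2) is only a plan, and it cannot succeed if the path handles each layer in one stretch. The two deleted star--circle cells $(\mu_1,2^{\ell_2})$ and $(2^{\ell_1},\mu_2)$ lie in the same class. Hence each layer of $\M[A_1,A_2]$ has two more nodes in one class than in the other, and the classes swap between layers since $A_1,A_2$ are adjacent. A contiguous stretch of a path is unbalanced by at most one, so a Hamiltonian path must switch between $A_1$ and $A_2$ at least three times.

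Moreover, $0$-evenness of $\b P(u,v)$ is equivalent to the nodes of nonzero direction alternating classes along the path. In each layer the black nodes are balanced, and the whole surplus of two sits on the coloured rows and columns. Consequently, the coloured nodes of one layer cannot all be visited in a stretch of the path that meets no coloured node of the other layer. The layer switches must interleave coloured cells, not just black blocks.

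Building such an interleaved route is exactly what the detailed switching pattern on the right of \Cref{fig:00DoubleOddPathGluing} is for. Until you exhibit one and verify both constraints, item~(2) remains unproved.
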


\begin{proof}
The proof is by sketching the relevant replacing patterns. 

\eqref{item:2OddReplacing1-1}
For $A(u, v) = +1$, \Cref{fig:1And-1DoubleODDPathGluing} (left) illustrates a replacing pattern of $\M[A]$ with the claimed properties.
The node in the $i^{\text{th}}$ row and the $j^{\text{th}}$ column represents the orientation $A \sqcup S'_i \sqcup S''_j$ for $i \in [2^{\ell_1}]$ and $j\in [2^{\ell_2}]$.
The drawing conventions (red star $\textcolor{red}{\star}$, blue circle $\textcolor{blue}{\circ}$, black dot $\bullet$) are the same as in \Cref{fig:11ReplacingEven,fig:1-1ReplacingEven,fig:Deg11ReplacingEven,fig:Deg1-1ReplacingEven,fig:Alt-1-1DegReplacingEvenPath,fig:Alt1-1DegReplacingEvenPath}:
As $A(u, v) = +1$ here, all nodes are red stars $\textcolor{red}{\star}$.
As $\ell_1, \ell_2 \geq 3$, none of the drawn ``even'' number is $0$.
In particular, $\M[A]$ contains (at least) edges between vertically (resp. horizontally) adjacent nodes, and edges between rightmost (resp. topmost) and leftmost (resp. bottom-most) node of each row (resp. column).

We have chosen and oriented some of these edges in order to construct a replacing pattern $\b P$ of $(A)$.
Note that this pattern is $0$-even since each maximal sub-path of black dots $\bullet$ is of even length (since there are no black dots $\bullet$).
Moreover, $\b P$ starts at $A \sqcup S'_{\mu_1+1} \sqcup S''_{\mu_2-1}$ and ends at $A\sqcup S'_{2} \sqcup S''_{2^{\ell_2}-1}$ as desired.

For $A(u, v) = -1$, the drawing is given in \Cref{fig:1And-1DoubleODDPathGluing} (right):
As $A(u, v) = -1$ here, all nodes are blue circles $\textcolor{blue}{\circ}$.

\begin{figure}[h]
    \centering
    \includegraphics[width=\linewidth]{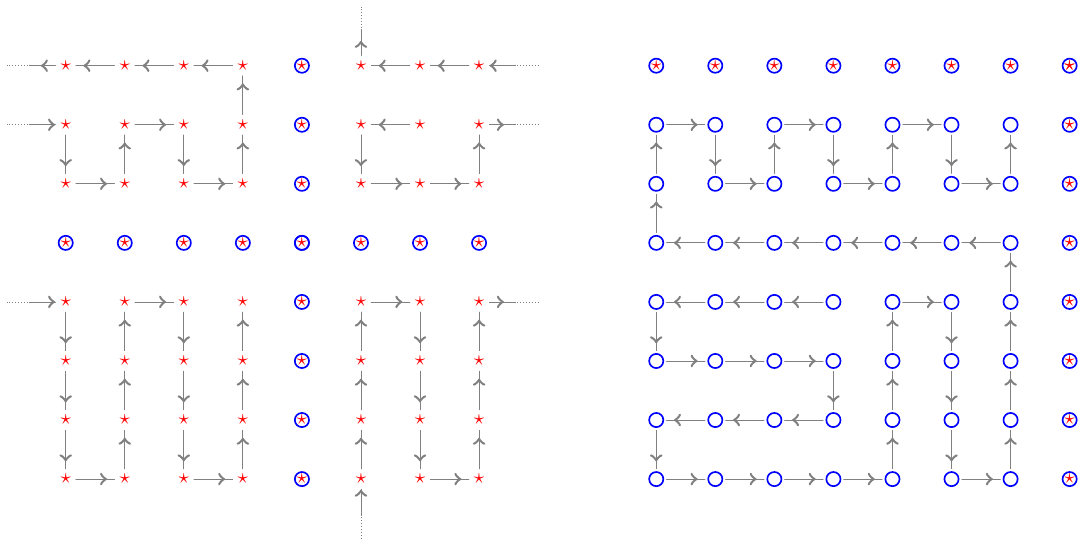}
    \caption{(Left) Hamiltonian path from $A \sqcup S'_{\mu_1+1} \sqcup S''_{\mu_2-1}$ to $A \sqcup S_{2} \sqcup S_{2^{\ell_2}-1}$ in the case $A(u, v) = +1$.
    (Right) Same in the case $A(u, v) = -1$.}
    \label{fig:1And-1DoubleODDPathGluing}
\end{figure}

\medskip

\eqref{item:2OddReplacing00}
For $A_1(u, v) = A_2(u, v) = 0$, \Cref{fig:00DoubleOddPathGluing} (left) illustrates a replacing pattern for $(A_1, A_2)$.
The drawing should be interpreted as showing $8$ rows and $8$ columns where row $i$ column $j$ contains two nodes $v_{i, j, 1}$ and $v_{i, j, 2}$ (the first being on the near lower-right of the second):
$v_{i, j, 1}$ represents $A_1 \sqcup S'_i\sqcup S''_j$ while $v_{i, j, 2}$ represents $A_2\sqcup S'_i \sqcup S''_j$.
Hence, in $\M[A_1, A_2]$, there are edges between vertically and horizontally adjacent nodes (and looping right-to-left or top-to-bottom), but also between $v_{i, j, 1}$ and $v_{i, j, 2}$ for all $i\in[2^{\ell_1}]$, $j\in[2^{\ell_2}]$.

We have chosen and oriented some of these edges in order to construct a replacing pattern $\b P$ of $\M[A_1, A_2]$.
As, in \Cref{fig:00DoubleOddPathGluing} (left), the part of the path on the top-right may not be easily generalized, we detail it in \Cref{fig:00DoubleOddPathGluing} (right):
that is we draw a $0$-even replacing pattern for the subgraph induced on $A_k \sqcup S'_i \sqcup S''_j$ with $k\in \{1, 2\}$, $i\in [1, \mu_1 - 1]$ and $j\in[\mu_2+1, 2^{\ell_2}]$ which starts at $A_1\sqcup S'_2 \sqcup S''_{\mu_2+1}$ and ends at $A_2 \sqcup S'_2\sqcup S''_{\mu_2+1}$.
If the reader wonders how to generalize the part $A_k\sqcup S'_i\sqcup S''_j$ for $k\in \{1, 2\}$, $i\in [\mu_1+1, 2^{\ell_1}]$ and $j\in \{1\} \cup [\mu_2, 2^{\ell_2}]$, especially if $\mu_1$ is congruent to $3$ modulo $4$, then he or she just need to subdivide the vertical arcs between the two bottom rows.
Note that the replacing pattern $\b P$ is $0$-even, and starts at $A_1 \sqcup S'_{\mu_1+1} \sqcup S''_{\mu_2-1}$ and ends at $A_2 \sqcup S'_{\mu_1+1} \sqcup S''_{\mu_2-1}$.
A symmetric drawing yields a $0$-even replacing pattern that starts at $A_1 \sqcup S'_{2} \sqcup S''_{2^{\ell_2}-1}$ and ends at $A_2 \sqcup S'_{2} \sqcup S''_{2^{\ell_2}-1}$.
\begin{figure}[h]
    \centering
    \includegraphics[width=\linewidth]{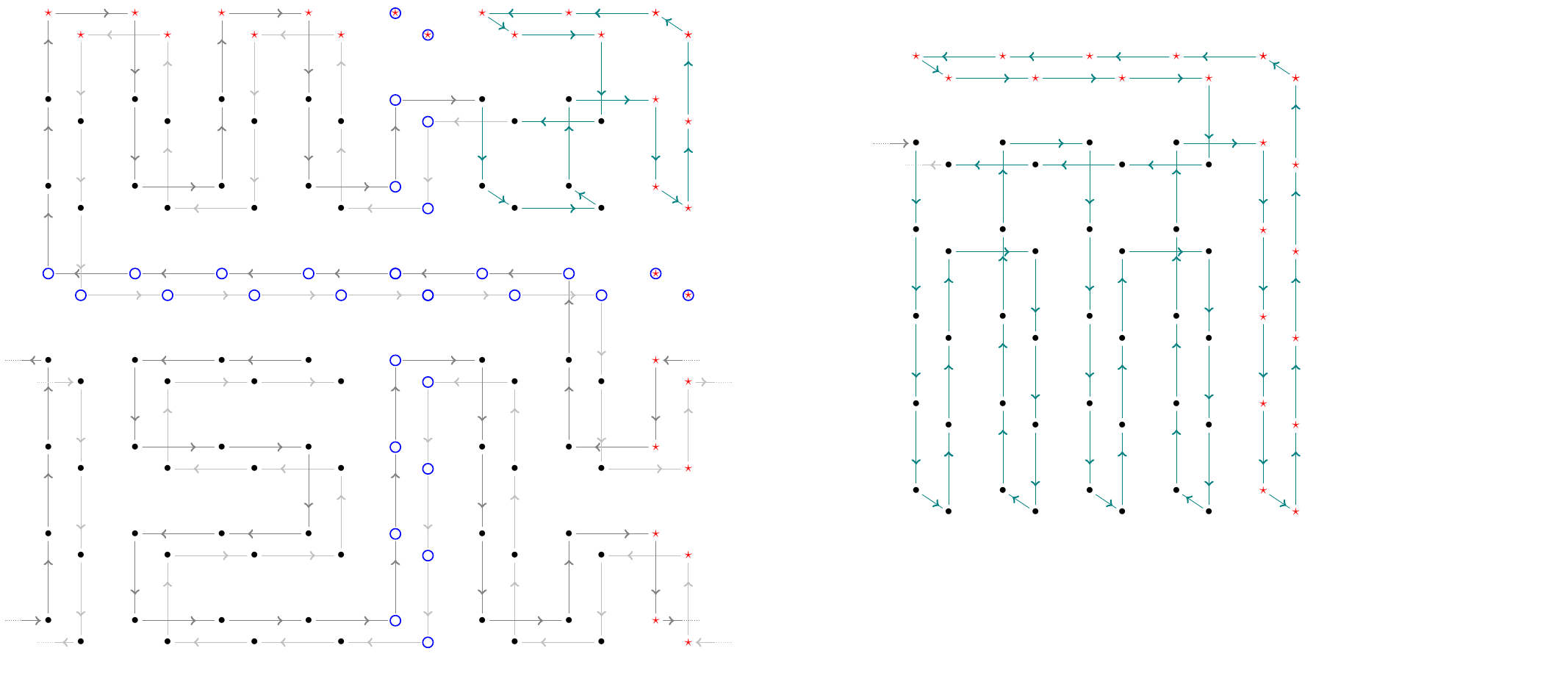}
    \caption{(Left) Hamiltonian path from $A \sqcup S'_{\mu_1+1} \sqcup S''_{\mu_2-1}$ to $A' \sqcup S'_{\mu_1+1} \sqcup S''_{\mu_2+1}$ in the case $A(u, v) = A'(u, v) = 0$.
    (Right) Details of the switching pattern in the general case.}
    \label{fig:00DoubleOddPathGluing}
\end{figure}
\end{proof}

\begin{proof}[Proof of \Cref{thm:PathGluing2OddIsGood}]\label{proof:MainThm2}
For odd $\ell_1, \ell_2 \geq 3$, this is a direct consequence of \Cref{thm:ConcatenatingReplacingPatternsWithSharedEndPoints,prop:GluingTwoOddPathsReplacingPatterns} using the splitting $i_{j+1} = i_j$ if $A_{i_j}(u, v) \ne 0$, and $i_{j+1} = i_j + 1$ if $A_{i_j} = 0$.
As the number of acyclic orientations $A\in \AO$ with $A(u, v) = +1$ is equal to the number of acyclic orientations $A\in \AO$ with $A(u, v) = -1$ by \Cref{lem:CompletementIsInvolution}~\eqref{item:CompletementIsInvolution:BetweenNon0Acyclic}, the quantity $\#\{j\in [r] ~;~ A'_j \ne B'_j\}$ of \Cref{thm:ConcatenatingReplacingPatternsWithSharedEndPoints} is even.
Thus, we get a Hamiltonian cycle in $\AO[\oplus_{u, v}^{\ell_1, \ell_2} G]$.
The $0$-even-ness of this cycle is ensured by the $0$-even-ness of each replacing pattern of \Cref{prop:GluingTwoOddPathsReplacingPatterns}.

\medskip

The remaining case is $\ell_1 = 1$ and $\ell_2 \geq 3$ is odd.
For readability, we call $S'_+$ and $S'_-$ the two orientations of $P_1$, where $S'_+(u, v) = +1$ and $S'_-(u, v) = -1$.
Also, we rename $\ell_2$ into simply $\ell$, and $\mu_2$ into $\mu$.
The proof is mainly via the drawings of \Cref{fig:Degenerate00DoubleOddPathGluing} where the conventions are the same as in \Cref{fig:1And-1DoubleODDPathGluing,fig:00DoubleOddPathGluing}.
We cannot directly apply \Cref{thm:ConcatenatingReplacingPatternsWithSharedEndPoints}, but we can concatenate paths joining four families of orientations, namely $A \sqcup S_+' \sqcup S''_{\mu-1}$, and $A \sqcup S_+' \sqcup S''_{\mu+1}$, and $A \sqcup S_-' \sqcup S''_{1}$, and $A \sqcup S_-' \sqcup S''_{2^{\ell}-1}$ for $A \in \AO$.

We again use the splitting subsequences of $\b A$ defined by $i_{j+1} = i_j$ if $A_{i_j}(u, v) \ne 0$, and $i_{j+1} = i_j + 1$ if $A_{i_j}(u, v) = 0$.
Since we are gluing the edge $uv$, we are sure that $uv\notin E(G)$, so we can consider that $A_1(u, v) = +1$ and $A_s(u, v) = 0$ without loss of generality.
By \Cref{fig:Degenerate00DoubleOddPathGluing} (bottom left \& right) and symmetries, if $A_{i_j}(u, v) = A_{i_{j+1}}(u, v) = 0$ there exist replacing patterns for $(A_{i_j}, A_{i_{j+1}})$ that start at $A_{i_j} \sqcup S'_\varepsilon \sqcup S''_{x}$ and ends at $A_{i_{j+1}} \sqcup S'_\eta \sqcup S''_{y}$ for all $\varepsilon,\eta\in \{-, +\}$ and all possible $x, y \in\{\mu-1, \mu+1, 1, 2^\ell-1\}$.
On the other hand, by \Cref{fig:Degenerate00DoubleOddPathGluing} (top left \& right), if $A_{i_j}(u, v) \ne 0$, then there exists a replacing pattern for $(A_{i_j})$ that starts at $A_{i_j} \sqcup S'_\varepsilon \sqcup S''_{x}$ and ends at $A_{i_{j+1}} \sqcup S'_\eta \sqcup S''_{y}$ for all possible $\varepsilon,\eta\in \{-, +\}$ and all possible $x, y \in\{\mu-1, \mu+1, 1, 2^\ell-1\}$.
The concatenation of these paths immediately yields a Hamiltonian path in $\AO$.
Moreover, we can choose the replacing pattern of $A_1$ to start at $A_1 \sqcup S'_+\sqcup S''_{\mu-1}$, and choose the replacing pattern of $A_s$ to end at $A_s \sqcup S'_+\sqcup S''_{\mu-1}$:
Thus, $\AO$ admits a Hamiltonian cycle.
\begin{figure}[h]
    \centering
    \includegraphics[width=\linewidth]{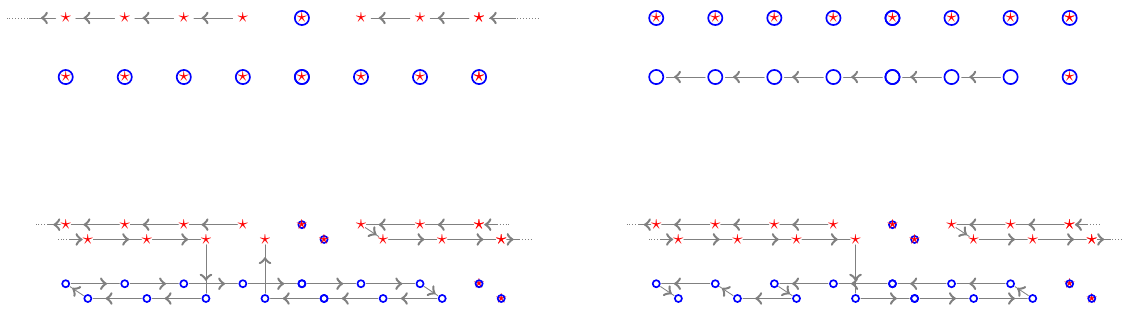}
    \caption{(Top left) Hamiltonian path from $A\sqcup S'_+ \sqcup S''_{\mu-1}$ to $A\sqcup S'_+\sqcup S_{\mu +1}$ in the case $A(u, v) = +1$.
    (Top right) Hamiltonian path from $A\sqcup S'_- \sqcup S''_{2^{\ell}-1}$ to $A\sqcup S'_-\sqcup S''_{1}$ in the case $A(u, v) = -1$.
    (Bottom left) Hamiltonian path from $A_1\sqcup S'_+ \sqcup S''_{\mu-1}$ to $A_2\sqcup S'_+\sqcup S''_{\mu -1}$ in the case $A_1(u, v) = A_2(u, v) = 0$.
    (Bottom right) Hamiltonian path from $A_1\sqcup S'_+ \sqcup S''_{\mu-1}$ to $A_2\sqcup S'_-\sqcup S''_{1}$ in the case $A_1(u, v) = A_2(u, v) = 0$.}
    \label{fig:Degenerate00DoubleOddPathGluing}
\end{figure}
\end{proof}

\subsubsection{Gluing a multipath of even total length}\label{sssec:MultiPathGluingEvenTotalLength}

Using \Cref{thm:PathGluingEvenIsGood,thm:PathGluing2OddIsGood}, we can prove the main result of this section.

\begin{corollary}\label{cor:MultiPathGluingIsGood}
Let $G = (\V, E)$ be a {\good} graph with $\b A = (A_1, \dots, A_s)$ a Hamiltonian cycle in $\AO$.
For $u, v\in \V$ in the same connected component of $G$ and integers $\b\ell = (\ell_1, \dots, \ell_m)$ for $m \geq 1$, if:
\begin{compactitem}
\item $\sum_{i = 1}^m \ell_i$ is even (\ie the number of odd $\ell_i$s is even), and
\item $\b A(u, v)$ is $0$-even, 
\end{compactitem}
then $\oplus_{u, v}^{\b\ell} G$ is {\good}.

Moreover, under these assumptions, there exists a Hamiltonian cycle $\b B = (B_1, \dots, B_t)$ in $\AO[\oplus_{u, v}^{\b\ell} G]$ such that $\b B(u, v)$ is $0$-even.
\end{corollary}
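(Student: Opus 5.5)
The plan is an induction on the number $m$ of paths, consuming them either one even-length path at a time or two odd-length paths at a time. The key point is that \Cref{thm:PathGluingEvenIsGood} and \Cref{thm:PathGluing2OddIsGood} do more than certify Hamiltonicity: they produce a new Hamiltonian cycle $\b B$ with $\b B(u, v)$ again $0$-even. That property is exactly the hypothesis needed to apply the theorems again at the same pair $u, v$.

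First I reorder the multipath. By the recursive definition, $\oplus_{u, v}^{\b\ell} G$ is obtained by gluing the paths one after another. Up to isomorphism the result does not depend on the order of the $\ell_i$, since all paths are internally disjoint and share only the endpoints $u$ and $v$. Since $\sum_i \ell_i$ is even, the number of odd $\ell_i$ is even. I therefore write $\b\ell$ as a list of even lengths $e_1, \dots, e_a$ followed by pairs $(o_1, o_1'), \dots, (o_b, o_b')$ of odd lengths.

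Next I define the intermediate graphs. Set $G_0 \eqdef G$, then $G_k \eqdef \oplus_{u, v}^{e_k} G_{k-1}$ for $k \le a$, then $G_{a+k} \eqdef \oplus_{u, v}^{o_k, o_k'} G_{a+k-1}$. The induction hypothesis is that $G_k$ is {\good} with a Hamiltonian cycle whose direction sequence at $(u, v)$ is $0$-even. This holds for $k = 0$ by assumption. The nodes $u, v$ stay in the same connected component of $G_k$, as they already are in $G$. Each even step then follows from \Cref{thm:PathGluingEvenIsGood}, and each odd-pair step from \Cref{thm:PathGluing2OddIsGood}. The final graph is $\oplus_{u, v}^{\b\ell} G$, and the preserved property is the claimed ``moreover'' statement.

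The only delicate point I see is the length-$1$ paths. A path of length $\ell_i = 1$ is the edge $uv$, and in a multipath at most one such path can occur, namely $\ell_1$. \Cref{thm:PathGluing2OddIsGood} covers the odd pair $(1, \ell_2)$ in the second part of its proof, but that argument uses $uv \notin E(G)$. I handle this by pairing the length-$1$ path with an odd partner in the very first odd step, before any other gluing could introduce the edge $uv$. Gluing later paths never adds the edge $uv$ itself, so I also need $uv \notin E(G)$ in the original $G$. When $uv \in E(G)$, gluing another edge $uv$ would create a multi-edge, which the simple-graph convention excludes, so this case simply does not arise. Checking this edge-case bookkeeping is where I expect the only obstacle; the rest is a direct iteration.
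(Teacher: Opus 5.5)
Your proposal is correct and follows the paper's proof. Both argue by induction, gluing either one even path via \Cref{thm:PathGluingEvenIsGood} or a pair of odd paths via \Cref{thm:PathGluing2OddIsGood}, and use the preserved $0$-evenness of $\b B(u,v)$ to meet the hypotheses again at the next step. Your extra bookkeeping is consistent with the paper's one-line argument and slightly more explicit. Reordering the lengths is harmless. A length-$1$ path forces $uv\notin E(G)$, which is what the $\ell_1 = 1$ case in the proof of \Cref{thm:PathGluing2OddIsGood} uses, even though that theorem is stated for $\ell_1,\ell_2\geq 2$.
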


\begin{proof}
Inductive application of \Cref{thm:PathGluingEvenIsGood,thm:PathGluing2OddIsGood}:
since each Theorem yields a Hamiltonian cycle whose sequence of directions on $(u, v)$ is $0$-even, the induction runs.
\end{proof}

\Cref{cor:MultiPathGluingIsGood} has several interesting consequences. First of all, the $0$-even-ness can be dropped if $uv$ is an edge of $G$, since it is impossible that $A(u, v) = 0$ in this case.

\begin{corollary}\label{cor:MultiPathGluingOnArcIsGood}
Let $G = (\V, E)$ be a {\good} graph, and let $uv\in E$.
For numbers $\b\ell = (\ell_1, \dots, \ell_s)$ if $\sum_{p=1}^s \ell_p$ is odd, then $\oplus_{u, v}^{\b\ell} G$ is {\good}.
\end{corollary}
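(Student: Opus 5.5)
The plan is to reduce the statement to \Cref{cor:MultiPathGluingIsGood}, whose two hypotheses become easy to check once $uv\in E$.

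The first point to settle is how the edge $uv$ is counted in $\b\ell$. Since $uv$ is already an edge of $G$, gluing a path of length $1$ at $u,v$ does nothing: the gluing operation trims multi-edges, so $\oplus_{u,v}^{1} G = G$. As in the definition of multipaths, where $\ell_1\geq 1$ is allowed, I read the entries $\ell_p = 1$ of $\b\ell$ as accounting for the existing edge $uv$. Let $\b\ell'$ be $\b\ell$ with these entries removed; then $\oplus_{u,v}^{\b\ell} G = \oplus_{u,v}^{\b\ell'} G$.

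This convention is forced by the non-Hamiltonicity results. If the edge $uv$ were not counted, then $\b\ell=(3)$ would produce a $2$-sum of $G$ with the even cycle $C_4$. That graph is not {\good}, by the parity results of \Cref{sec:NonHamiltonicity} (compare \Cref{thmA}~(iv)). With the convention, the edge $uv$ contributes exactly one entry equal to $1$, so ``$\sum_p \ell_p$ odd'' means that the lengths of the genuinely new paths in $\b\ell'$ sum to an even number. That is exactly the first hypothesis of \Cref{cor:MultiPathGluingIsGood}. The main bookkeeping obstacle of the whole proof is stating this convention carefully; once fixed, the argument is short.

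The second step is the $0$-evenness hypothesis. Fix any Hamiltonian cycle $\b A=(A_1,\dots,A_s)$ of $\AO$. For every acyclic orientation $A$ of $G$, the edge $uv$ itself is a directed path between $u$ and $v$, so $A(u,v)\in\{-1,+1\}$. Hence the sequence $\b A(u,v)$ has no zero entries. It is therefore vacuously $0$-even: between any two non-zero entries there are zero zeros, and zero is even. The nodes $u$ and $v$ lie in the same connected component because they are adjacent.

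Finally, if $\b\ell'$ is empty, then $\oplus_{u,v}^{\b\ell}G = G$ is {\good} by assumption. Otherwise, all hypotheses of \Cref{cor:MultiPathGluingIsGood} hold for $G$, $\b A$, $u,v$ and $\b\ell'$. That corollary gives that $\oplus_{u,v}^{\b\ell'} G = \oplus_{u,v}^{\b\ell} G$ is {\good}. Moreover, the resulting Hamiltonian cycle $\b B$ again satisfies that $\b B(u,v)$ is $0$-even, trivially, since $uv$ is still an edge. This allows the corollary to be iterated for further gluings on the same edge.
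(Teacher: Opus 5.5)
Your argument is correct and is the paper's own proof. Since $uv\in E$, every $A\in\AO$ has $A(u,v)\neq 0$, so $\b A(u,v)$ is vacuously $0$-even and \Cref{cor:MultiPathGluingIsGood} applies. Your bookkeeping (the odd total contains exactly one entry $1$ standing for the existing edge $uv$, which the gluing absorbs) is what reconciles the ``odd'' in the statement with the ``even'' hypothesis of \Cref{cor:MultiPathGluingIsGood}, a point the paper's one-line proof passes over.

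One side remark should be fixed. To see that the convention is forced, you only need $G=K_2$, which gives $C_4$. Your claim that $G\oplus_2 C_4$ is never {\good} is false in general. For example, if $G$ is the path $w,u,v$, the gluing gives $C_4$ with a pendant edge. Its graph of acyclic orientations is the prism over $\AO[C_4]$, which is Hamiltonian by \cite{PruesseRuskey1995-PrismOfAcylicOrientationGraph}.
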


\begin{proof}
As $uv\in E$, for any acyclic orientation $A$ of $G$, there is a directed path from $u$ to $v$ or from $v$ to $u$, \ie $A(u, v) \ne 0$.
Hence, for any Hamiltonian cycle $\b A = (A_1, \dots, A_r)$ of $\AO$, the sequence $\b A(u, v)$ is $0$-even.
Applying \Cref{cor:MultiPathGluingIsGood} yields the claim.
\end{proof}

Secondly, since a single path is {\good}, we can finish our characterization of {\good} multipaths, proving \Cref{thmA}.

\begin{corollary}\label{cor:MultiPathIsGood}
The $(\ell_1, \dots, \ell_s)$-multipath is {\good} if and only if $\sum_p\ell_p$ is odd.
\end{corollary}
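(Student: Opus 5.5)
The two directions are handled separately: the ``only if'' part follows from the acyclic signature, and the ``if'' part from the gluing corollaries applied to a base graph that is already known to be {\good}.

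\emph{Only if.} Suppose the $(\ell_1, \dots, \ell_s)$-multipath $P^{\b\ell}$ is {\good}. Then \Cref{cor:AOhamiltonianMultiPathImpliesSumLengthsEven} applies directly and gives that $\sum_p \ell_p$ is odd. Behind it is \Cref{thm:AcyclicSignatureMultiPaths}: an even total length makes $\sigma(D^{\b\ell}_{u\to v})$ equal to either $\pm(2^s-2)$ or $\pm 2$. Here $s\geq 2$ is needed for $2^s-2 \neq 0$, and $s=1$ with even $\ell_1$ is excluded because then the multipath is a path, which has $\sigma = 0$. I will check this small edge case against the statement of the corollary. In the nonzero case, \Cref{lem:ParityArgument} rules out Hamiltonicity.

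\emph{If.} Assume $\sum_p \ell_p$ is odd. Some $\ell_p$ must then be odd, and I reorder the paths so that $\ell_1$ is odd. This reordering is harmless because the multipath does not depend on the order of its paths; the only care needed is that if some $\ell_p = 1$, it is placed first so the definition's constraint $\ell_i \geq 2$ for $i\geq 2$ still holds.

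The path $P_{\ell_1}$ is a tree, so it is {\good} by \Cref{exm:KnownAOhamiltonicity}. Its endpoints $u, v$ lie in the same connected component.

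The remaining lengths $(\ell_2, \dots, \ell_s)$ have even sum, since $\sum_p \ell_p$ is odd and $\ell_1$ is odd. Gluing them onto $P_{\ell_1}$ at $u, v$ gives $\oplus_{u, v}^{(\ell_2, \dots, \ell_s)} P_{\ell_1} = P^{\b\ell}$. Hence \Cref{cor:MultiPathGluingIsGood} yields the claim, provided some Hamiltonian cycle $\b A$ of $\AO[P_{\ell_1}]$ has $\b A(u,v)$ that is $0$-even. If $s = 1$ there is nothing to glue and we are done.

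The main obstacle is exhibiting such a $0$-even Hamiltonian cycle of $Q_{\ell_1} \simeq \AO[P_{\ell_1}]$. Only two orientations of $P_{\ell_1}$ have nonzero direction: $S_\mu = \emptyset$ with direction $+1$ and $E(P_{\ell_1})$ with direction $-1$. On the cycle, these two nonzero entries split the remaining orientations into two arcs of zeros, and each arc must contain an even number of them.

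If $\ell_1 = 1$, the cycle has length $2$, there are no zeros, and the condition holds trivially; alternatively, \Cref{cor:MultiPathGluingOnArcIsGood} applies directly since $uv$ is an edge.

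If $\ell_1 \geq 3$, I use bipartiteness. Because $\ell_1$ is odd, $\emptyset$ and $E(P_{\ell_1})$ lie in different parts of $Q_{\ell_1}$. Along any Hamiltonian cycle, the positions of two vertices in different parts differ by an odd number. The number of vertices strictly between them is therefore even on one side, and on the other side it is $2^{\ell_1} - 2 - (\text{even})$, which is also even. So every Hamiltonian cycle of $Q_{\ell_1}$ is $0$-even, for instance the one of \Cref{exm:CubeGraphIsGood}.

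With this, \Cref{cor:MultiPathGluingIsGood} applies and $P^{\b\ell}$ is {\good}.
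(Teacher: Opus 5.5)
Your proof is correct, and for the ``if'' direction it takes a different route from the paper.

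The paper starts from $G_\circ=(\{u,v\},\emptyset)$, whose graph of acyclic orientations is a single node, and applies \Cref{cor:MultiPathGluingIsGood} to the whole sequence $\b\ell$ in one line. You instead start from the single odd path $P_{\ell_1}$. You show, by bipartiteness of $Q_{\ell_1}$, that \emph{every} Hamiltonian cycle of $\AO[P_{\ell_1}]$ is $0$-even with respect to its endpoints, and you then glue only the remainder $(\ell_2,\dots,\ell_s)$, whose sum is even.

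The paper's version is shorter, but as written it invokes \Cref{cor:MultiPathGluingIsGood} outside its hypotheses. That corollary needs the glued lengths to have \emph{even} sum, whereas here the sum is odd. It also needs $u,v$ in the same connected component, which fails in $G_\circ$. In fact, gluing even paths and pairs of odd paths onto $G_\circ$ can only produce multipaths of even total length, which are exactly the non-{\good} ones. So one odd path has to serve as the base.

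Your choice of base is what makes the hypotheses hold, and your parity observation is precisely the missing input. It also shows why the base must be odd: for $\ell_1$ even, the two non-zero entries lie at even distance on the cycle, so both runs of zeros have odd length.

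Two small points remain.
\begin{itemize}
\item For $s=1$ and $\ell_1$ even, the multipath is a tree, hence {\good}. So the ``only if'' direction fails there, and so does \Cref{cor:AOhamiltonianMultiPathImpliesSumLengthsEven}, since $2^s-2=0$ in that case. Rather than deferring this, say explicitly that the statement must be read with $s\geq 2$, which is where your signature argument works.
\item Your alternative appeal to \Cref{cor:MultiPathGluingOnArcIsGood} for $\ell_1=1$ does not match its printed hypothesis, which asks for odd sum. This appears to be a typo for even, as its proof via \Cref{cor:MultiPathGluingIsGood} shows. Your main argument already covers that case, since there $\b A(u,v)$ has no zeros at all.
\end{itemize}
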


\begin{proof}
The ``only if'' direction is proven in \Cref{cor:AOhamiltonianMultiPathImpliesSumLengthsEven}.
For the ``if'' direction, consider the graph $G_\circ = \bigl(\{u, v\}, \emptyset\bigr)$ composed of 2 isolated \nodes: Its $\AO$ is composed of a single \node.
Even if one might not consider this $\AO$ to be Hamiltonian, is it enough for applying \Cref{cor:MultiPathGluingIsGood}: if $\sum_p \ell_p$ is odd, then the $(\ell_1, \dots, \ell_s)$-multipath $\oplus_{u, v}^{\ell_1, \dots, \ell_s} G_\circ$ is {\good}.
\end{proof}

Thirdly, we can characterize {\good} graphs among certain families of graphs:

\begin{example}\label{exm:NewAOhamiltonianGraphs}
As an immediate consequence of \Cref{cor:MultiPathGluingOnArcIsGood,cor:MultiPathGluingIsGood,cor:MultiPathIsGood}, we get:
\begin{enumerate}[(i)]
\item As proven in \cite[Lem.~3.1 \& Thm.~4.2]{SavageSquireWest93-GrayCodesAcyclicOrientations},
the cycle graph $C_n$ is {\good} if and only if $n$ is odd: Indeed $C_n$ is isomorphic to $(n-1,\, 1)$-multipath.
\item Any $2$-sum of two cycle graphs $C_n$ and $C_m$ is {\good} if and only if $n + m$ is even: Indeed $C_n\oplus_2 C_m$ is isomorphic to the $(n-1,\, m-1,\, 1)$-multipath.
\item For a tree $T = ([s], E)$ and numbers $n_1, \dots, n_s \geq 3$, a \defn{$(T,\, n_1, \dots, n_s)$-tree of cycles} is any graph isomorphic to a $2$-sum $H \oplus_2 C_{n_s}$ where $H$ is a $(T\ssm s,\, n_1, \dots, n_{s-1})$-tree of cycles (of course for $s = 1$, a $(T,\, n_1)$-tree of cycle is just a cycle graph $C_{n_1}$).
By consecutive applications of \Cref{cor:MultiPathGluingOnArcIsGood}, one shows that if $n_p$ is odd for all $p\in [s]$, then any $(T,\, n_1, \dots, n_s)$-tree of cycles is {\good}, for any tree $T$.
\item If $uv\in E$ is an edge of $G = (\V, E)$, and $C_n$ is a cycle graph, then the $2$-sum $G\oplus_2 C_n$ is {\good} if and only if $n$ is odd: Indeed this is the case $s = 1$ of \Cref{cor:MultiPathGluingOnArcIsGood}.
\end{enumerate}
\end{example}

% \clearpage

\subsection{Adding a simplicial node and 0-even sequences}\label{ssec:AddingSimpliceNode}

Recall that a node $x$ in a graph $G$ is simplicial if the neighbors of $x$ form a clique, that is if every pair of neighbors of $x$ share an edge in $G$.
If $K$ is a clique in $G$, the \defn{simplicial augmentation} above $K$, denoted $\mathdefn{\odot_K G}$, is the graph obtained by adding a new (simplicial) node to $G$ connected to all the nodes of $K$, \ie $\odot_K G \eqdef G \oplus_n K_{n+1}$ where $n = |K|$ and $K_{n+1}$ is the clique on $[n+1]$.

%The authors \cite{SavageSquireWest93-GrayCodesAcyclicOrientations} proved that chordal graph are {\good}.
%Indeed, it is well-known that chordal graphs can be built from a single node by subsequently adding simplicial nodes.
%The ordering thus constructed is a so-called \defn{perfect elimination ordering} of the chordal graph.
In the details of the proof of \cite[Theorem~4.1]{SavageSquireWest93-GrayCodesAcyclicOrientations} that chordal graphs are {\good}, they prove the first sentence of the upcoming \Cref{thm:AddingSimplicialNodePreservesAOHamiltonian}.
We use the Steinhaus--Johnson--Trotter algorithm (see \Cref{exm:PermutahedralGraphIsGood}) in a similar way but we keep track of $0$-evenness:

\begin{theorem}\label{thm:AddingSimplicialNodePreservesAOHamiltonian}
If $G$ is {\good}, and $K$ is a clique of $G$, then $\odot_K G$ is {\good}.
Moreover, for any nodes $u, v$ of $G$, if $\b A$ is a Hamiltonian cycle of $\AO$ and either $\b A(u, v)$ is $0$-even or if $|K|$ is odd, then $\AO[\odot_K G]$ admits a Hamiltonian cycle $\b B$ such that $\b B(u, v)$ is $0$-even.
\end{theorem}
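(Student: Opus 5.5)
We would follow the zig-zag (Steinhaus--Johnson--Trotter) strategy, phrased in the lacing language of \Cref{ssec:LacingPatterns}. Let $x_\circ$ be the new node and $b \eqdef |K|$, and use the edge bipartition $\odot_K G = G\sqcup H'$, where $H'$ is the star joining $x_\circ$ to $K$. Fix $A\in\AO$. Since $K$ is a clique, $A$ induces a total order $k_1,\dots,k_b$ on $K$. As in the proof of \Cref{lem:AcyclicPol:SimplicialNodeAdd}, $\AO[H'|A]$ then consists of exactly $b+1$ orientations $A'_0,\dots,A'_b$, where in $A'_j$ the first $j$ nodes of $K$ point to $x_\circ$ and the others receive an arc from $x_\circ$. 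Equivalently, $x_\circ$ is inserted in position $j$. Consecutive $A'_j, A'_{j+1}$ differ in one arc, so $\M[{\b A}][H']$ restricted to $A$ is a path of length $b$. Its two endpoints, $j=0$ ($x_\circ$ is a source) and $j=b$ ($x_\circ$ is a sink), do not depend on $A$.

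Take a Hamiltonian cycle $\b A=(A_1,\dots,A_s)$ of $\AO$; $s$ is even since $\AO$ is bipartite. Traverse the fiber of $A_1$ from $j=0$ to $j=b$, then step to $A_2$ keeping $x_\circ$ a sink, traverse from $j=b$ back to $0$, then step to $A_3$ with $x_\circ$ a source, and so on. This is legitimate because the source and sink orientations of $H'$ are compatible with every $A$: they create no directed path between nodes of $G$, so they lie in $\AO[H'|A]$ for all $A$. This is exactly \Cref{thm:ConcatenatingReplacingPatternsWithSharedEndPoints} with $X'$ the source orientation, $Y'$ the sink orientation, and singleton splitting subsequences, each having a replacing pattern from $X'$ to $Y'$. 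The number of such blocks is $s$, which is even, so the path closes into a cycle and the first claim follows.

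For the $0$-evenness, let $u,v\in\V(G)$. Along $\b B$, the value $B(u,v)$ is constant inside each fiber, and it equals $A_i(u,v)$ with one exception. Adding arcs at $x_\circ$ can create a directed path $u\to x_\circ\to v$ only if $u,v\in K$, and in that case $A_i(u,v)\ne0$ already, because $uv$ is an edge. Hence $B(u,v)=A_i(u,v)$ throughout, and $\b B(u,v)$ is obtained from $\b A(u,v)$ by repeating each entry $b+1$ times. Consider a maximal run of zeros in $\b A(u,v)$ of length $z$. It becomes a run of length $z(b+1)$. This is even if $z$ is even, which covers the case where $\b A(u,v)$ is $0$-even, and it is even whenever $b+1$ is even, i.e.\ $|K|$ odd.

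The main obstacle is bookkeeping rather than substance. One must check the wrap-around of the cyclic sequence: the runs are read cyclically, and the blow-up argument is uniform there as well. One must also handle degenerate cases, namely when $u$ or $v$ lies outside the component, or when $\b A(u,v)$ is identically zero. In the all-zero case the $0$-even condition is vacuous, since there are no two nonzero entries, so nothing is lost. Finally, when $\AO$ has a single node (for instance $G$ with no edges), one needs to treat the ``cycle'' as the fiber path itself, in the same spirit as the proof of \Cref{cor:MultiPathIsGood}.
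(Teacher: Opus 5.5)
Your proposal is correct and is essentially the paper's own proof: the same Steinhaus--Johnson--Trotter zigzag through the fibres of $|K|+1$ orientations (you merely phrase it via \Cref{thm:ConcatenatingReplacingPatternsWithSharedEndPoints}, with the source and sink orientations of the star as $X'$ and $Y'$), followed by the same observation that $\b B(u,v)$ is $\b A(u,v)$ with every entry repeated $|K|+1$ times. The one point to tighten is the invariance $B(u,v)=A_i(u,v)$: a new directed path through $x_\circ$ need not have $u$ and $v$ as the neighbours of $x_\circ$ on it, so argue instead that any detour $k\to x_\circ\to k'$ with $k,k'\in K$ can be short-cut by the arc $k\to k'$, which compatibility forces to lie in $A_i$.
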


\begin{proof}
Let $K$ be a clique of $G = (\V, E)$ with $n = |K|$, and let $v_\circ$ be the added node. %\ie $v_\circ \in \V(\odot_K G) \ssm \V$.
Since $K$ is a clique, any acyclic orientation $A$ of $G$ induces a total order on $K$, denoted $\sigma_A : K \to [n]$.
Reciprocally, to a total order $\tau$ on $K\sqcup\{v_\circ\}$, we associate the orientation $\c O(\tau)$ of the neighboring edges of $v_\circ$ where we orient from $v_\circ$ to $u\in K$ if $\tau(v_\circ) < \tau(u)$, and conversely from $u$ to $v_\circ$ if $\tau(u) < \tau(v_\circ)$.

As in \Cref{exm:PermutahedralGraphIsGood}, for $i\in [n+1]$, we denote $\sigma_A^{(i)}$ the total order on $K\sqcup\{v_\circ\}$ defined by
$$\sigma_A^{(i)}(u) = \left\{\begin{array}{ll}
\sigma_A(u) & \text{if } \sigma_A(u) < i \\
i & \text{if } u = v_\circ \\
\sigma_A(u) + 1 & \text{if } \sigma_A(u) > i
\end{array}\right. \qquad \text{ for } u \in K\sqcup\{v_\circ\}.$$

There is a bijection between acyclic orientations $A'$ of $\odot_K G$ and pairs $(A, i)$ with $A\in \AO$ and $i\in [n+1]$ given by $(A, i) \mapsto A\sqcup \c O\bigl(\sigma_A^{(i)}\bigr)$.
Indeed, the edges of $\odot_K G$ are bipartite between the edges of $G$ and the edges adjacent to $v_\circ$, and the orientation $A\sqcup A'$ of $\odot_K G$ (split according to this bipartition) is acyclic if and only if the total order induced by $A'$ on $K\sqcup\{v_\circ\}$ which agrees on $K$ with the total order induced by $A$ on $K$.
Moreover, $A\sqcup \c O\bigl(\sigma_A^{(i)}\bigr)$ is adjacent to $B\sqcup \c O\bigl(\sigma_B^{(j)}\bigr)$ in $\AO[\odot_K G]$ if and only if either $A$ is adjacent to $B$ in $\AO$ and $i = j$, either $A = B$ and $j \in \{i-1, i+1\}$.

Consequently, given a Hamiltonian cycle $\b A = (A_1, \dots, A_s)$ of $\AO$ (recall that $s$ is even by \Cref{lem:CompletementIsInvolution}), the following is mapped to a Hamiltonian cycle $\b B$ of $\AO[\odot_K G]$:
$$\bigl((A_1, n+1), \dots, (A_1, 1), (A_2, 1), \dots, (A_2, n+1), (A_3, n+1), \dots, (A_3, 1), \dots, (A_s, 1), \dots, (A_s, n+1)\bigr) \,.$$

Note that in $(A_k, i)$ there is no directed path between $u$ and $v$ if and only if there is no directed path between $u$ and $v$ in $A_k$.
Hence, by construction, the number of non-zero entries of $\b B(u, v)$ between two non-zero entries is always $n+1$ times the corresponding number for $\b A(u, v)$.
Thus, if $\b A(u, v)$ is $0$-even or if $n$ is odd, then $\b B(u, v)$.
\end{proof}

\begin{example}
Consider the complete graph $K_3$ and let $K = \{2, 3\}$ be a clique of $K_3$.
The graph $\odot_{23} K_3$ is drawn in \Cref{fig:ExampleAddingSimplicialNode}.
A Hamiltonian cycle of $\AO[K_3]$ is given by $\b A = (123, 132, 312, 321,\linebreak 231, 213)$ where $ijk$ correspond to the orientation of $K_3$ with arcs $i\to j$, $j\to k$ and $i\to k$.
The (generalized) Steinhaus--Johnson--Trotter presented in the proof of \Cref{thm:AddingSimplicialNodePreservesAOHamiltonian} yields a Hamiltonian cycle of $\AO[\odot_{23} K_3]$ shown in \Cref{tab:SJTgeneral}.
Note that all the sequences $\b B(u, v)$ are $0$-even for $u, v\in [4]$ except for $\{u, v\} = \{1, 4\}$.
\end{example}

\begin{figure}
    \centering
    \includegraphics[width=0.5\linewidth]{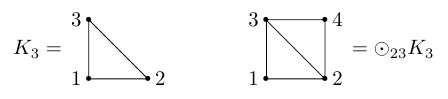}
    \caption[Simplicial augmentation]{(Left) The complete graph $K_3$, and (right) the simplicial augmentation of $K_3$ on its clique $\{2, 3\}$ (abbreviated as $\odot_{23}K_3$).}
    \label{fig:ExampleAddingSimplicialNode}
\end{figure}

\begin{table}[th!] %h!
    \centering 
    \begin{tabular}{cccc|cccc|cccc}
        
        $k$ & $i$ & $B_k^{(i)}$ & $B_k^{(i)}(1, 4)$ & $k$ & $i$ & $B_k^{(i)}$ & $B_k^{(i)}(1, 4)$ & $k$ & $i$ & $B_k^{(i)}$ & $B_k^{(i)}(1, 4)$ \\ 
        \midrule 
        
        1 & 3 & $1\textcolor{blue}{23}\textcolor{red}{\textbf{4}}$ & +1 & 3 & 3 & $\textcolor{blue}{3}1\textcolor{blue}{2}\textcolor{red}{\textbf{4}}$ & +1 & 5 & 3 & $\textcolor{blue}{2}\textcolor{blue}{3}1\textcolor{red}{\textbf{4}}$ & 0 \\
        
        1 & 2 & $1\textcolor{blue}{2}\textcolor{red}{\textbf{4}}\textcolor{blue}{3}$ & +1 & 3 & 2 & $\textcolor{blue}{3}1\textcolor{red}{\textbf{4}}\textcolor{blue}{2}$ & 0 & 5 & 2 & $\textcolor{blue}{2}\textcolor{red}{\textbf{4}}\textcolor{blue}{3}1$ & -1 \\
        
        1 & 1 & $1\textcolor{red}{\textbf{4}}\textcolor{blue}{2}\textcolor{blue}{3}$ & 0 & 3 & 1 & $\textcolor{red}{\textbf{4}}\textcolor{blue}{3}1\textcolor{blue}{2}$ & -1 & 5 & 1 & $\textcolor{red}{\textbf{4}}\textcolor{blue}{2}\textcolor{blue}{3}1$ & -1 \\

        2 & 1 & $1\textcolor{red}{\textbf{4}}\textcolor{blue}{3}\textcolor{blue}{2}$ & 0 & 4 & 1 & $\textcolor{red}{\textbf{4}}\textcolor{blue}{3}\textcolor{blue}{2}1$ & -1 & 6 & 1 & $\textcolor{red}{\textbf{4}}\textcolor{blue}{2}1\textcolor{blue}{3}$ & -1 \\
        
        2 & 2 & $1\textcolor{blue}{3}\textcolor{red}{\textbf{4}}\textcolor{blue}{2}$ & +1 & 4 & 2 & $\textcolor{blue}{3}\textcolor{red}{\textbf{4}}\textcolor{blue}{2}1$ & -1 & 6 & 2 & $\textcolor{blue}{2}1\textcolor{red}{\textbf{4}}\textcolor{blue}{3}$ & 0 \\
        
        2 & 3 & $1\textcolor{blue}{32}\textcolor{red}{\textbf{4}}$ & +1 & 4 & 3 & $\textcolor{blue}{32}1\textcolor{red}{\textbf{4}}$ & 0 & 6 & 3 & $\textcolor{blue}{2}1\textcolor{blue}{3}\textcolor{red}{\textbf{4}}$ & +1 \\
        
        \bottomrule  
    \end{tabular} 
    \caption{
    The Hamiltonian cycle of $\odot_{23}K_3$ obtained using the (generalized) Steinhaus--Johnson--Trotter algorithm where $B_k^{(i)} \eqdef A_k \sqcup \c O(\sigma_{A_k}^{(i)})$.
    There are $\bigl|\AO[K_3]\bigr|\cdot (|K|+1) = 6 \cdot 3 = 18$ acyclic orientations of $\odot_{23} K_3$.
    Each sequence $B_k^{(i)} = a_1a_2a_3a_4$ with $a_i\in [4]$ corresponds to the orientation $a_i \to a_j$ for $i < j$ and $\{a_i, a_j\} \ne \{1, 4\}$.
    The sequence $\b B(1, 4)$ is not $0$-even as $k = 3$, $i = 2$ indicates.
    }
    \label{tab:SJTgeneral} 
\end{table}

\begin{remark}
For $K$ a clique of the graph $G$, and any Hamiltonian cycle $\b B$ of $\AO[\odot_K G]$, note that $\b B(u, x_\circ)$ is $0$-even for all $u \in K$, where $x_\circ$ denotes the added simplicial node.
Indeed, there is an edge between $u$ and $x_\circ$ in $\odot_K G$, so $\b B(u, x_\circ)$ contains no $0$.
\end{remark}

We can combine \Cref{cor:MultiPathGluingIsGood,thm:AddingSimplicialNodePreservesAOHamiltonian} into a single corollary.
To this end, we introduce \defn{simplicial multi-augmentation} of a graph $G$ above a clique-sequence $\b K = (K_1, \dots, K_s)$, defined as $G$ if $s = 0$, otherwise as $\odot_{\b K} G \eqdef \odot_{K_s}\bigl(\odot_{(K_1, \dots, K_{s-1})} G\bigr)$ where $K_s$ is a clique of $\odot_{(K_1, \dots, K_{s-1})} G$.

We are going to construct a sequence of graphs where we alternate between gluing a multipath and adding a simplicial node.
To ease notation, we consider that we start by gluing a multipath, but one can easily start by adding a simplicial node instead.
In each step, we just need to state a condition ensuring that we glue the multipath between two nodes where we can guarantee that the Hamiltonian cycle constructed thus far is $0$-even.
This way we prove (the general version of) \Cref{thmB}.

\begin{corollary}\label{cor:AddingSimplicialNodesAndGluingMultiPath}
Let $G_0, \dots, G_p$ be a sequence of graphs such that $G_{2i+1} = \oplus^{\b \ell_i}_{u_i, v_i} G_{2i}$ for some $u_i, v_i \in \V(G_{2i})$ and some $\b\ell_i\in\N_{\geq 1}^{r_i}$ with $r_i \geq 1$ where $\b\ell_i$ contains at most one $1$, and $G_{2i+2} = \odot_{\b K_i} G_{2i+1}$ for some clique-sequence $\b K_i$ of length $s_i\geq 0$ on $G_{2i+1}$.
If
\begin{compactenum}
\item $\AO[G_0]$ admits a Hamiltonian cycle $\b A$ where $\b A(u_0, v_0)$ is $0$-even; and
\item for all $i$ we have that $\sum_{j=1}^{r_i} \ell_{i, j}$ is even, that $u_i$ and $v_i$ are in the same connected component of $G_{2i}$; and either:
\begin{compactitem}
\item $u_iv_i \in E(G_{2i})$; or
\item $s_i \geq 1$ and there is $j$ such that $|K_{i, j}|$ is odd, and, with $m_i = \max\{j ~;~ |K_{i, j}| \text{ is odd}\}$ and $(x_{i, 1}, \dots, x_{i, s_i})$ the simplicial nodes added in $\odot_{\b K_i} G_{2i+1}$, we have $u_i, v_i \in \V(G_{2i}) \ssm \{x_{i, m_i}, x_{i, m_i+1}, \dots, x_{i, s_i}\}$;
\end{compactitem}
\end{compactenum}
then $G_p$ is {\good}.
\end{corollary}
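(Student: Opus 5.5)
The plan is an induction along the sequence $G_0, G_1, \dots, G_p$. The statement carried through the induction is: for each $i$, $\AO[G_{2i}]$ admits a Hamiltonian cycle $\b A_i$ such that $\b A_i(u_i, v_i)$ is $0$-even. Once this holds for every even index, \Cref{cor:MultiPathGluingIsGood} produces the odd-indexed graphs, and \Cref{thm:AddingSimplicialNodePreservesAOHamiltonian} the even-indexed ones. The base case $i=0$ is hypothesis~(1).

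\textbf{Odd step.} Assume $\b A_i$ is a Hamiltonian cycle of $\AO[G_{2i}]$ with $\b A_i(u_i,v_i)$ $0$-even. Since $\sum_j \ell_{i,j}$ is even and $u_i,v_i$ lie in the same connected component, \Cref{cor:MultiPathGluingIsGood} applies. (Equivalently, one can apply \Cref{thm:PathGluingEvenIsGood,thm:PathGluing2OddIsGood} repeatedly, pairing up the odd lengths; the restriction that $\b\ell_i$ contains at most one $1$ guarantees that each glued multipath is a simple graph.) This gives a Hamiltonian cycle of $\AO[G_{2i+1}]$, so $G_{2i+1}$ is {\good}.

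\textbf{Even step.} Next apply \Cref{thm:AddingSimplicialNodePreservesAOHamiltonian} once for each clique $K_{i,1},\dots,K_{i,s_i}$ in turn. Each application preserves the existence of a Hamiltonian cycle, so $G_{2i+2}$ is {\good}; if $s_i=0$ there is nothing to do. What remains is to produce the $0$-evenness at the next pair $(u_{i+1},v_{i+1})$. The hypothesis on the next index must supply this, so I read the conditions in~(2) as governing how a $0$-even cycle is recovered at $(u_{i+1},v_{i+1})$ in $G_{2i+2}$.

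\textbf{Recovering $0$-evenness.} There are two cases.
\begin{itemize}
\item If $u_{i+1}v_{i+1}$ is an edge of $G_{2i+2}$, then every acyclic orientation has nonzero direction between $u_{i+1}$ and $v_{i+1}$. Hence any Hamiltonian cycle is $0$-even at this pair, as in the proof of \Cref{cor:MultiPathGluingOnArcIsGood}.
\item Otherwise, let $m_i$ be the last index with $|K_{i,m_i}|$ odd. When adding the simplicial node on $K_{i,m_i}$, \Cref{thm:AddingSimplicialNodePreservesAOHamiltonian} ensures $0$-evenness for \emph{any} pair of nodes already present, because $|K_{i,m_i}|$ is odd. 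This covers $u_{i+1},v_{i+1}$, since they avoid $x_{i,m_i},\dots,x_{i,s_i}$. For the later augmentations $K_{i,j}$ with $j>m_i$, the same theorem propagates $0$-evenness for pairs of old nodes: its hypothesis ``$\b A(u,v)$ is $0$-even'' is now satisfied. So after all $s_i$ augmentations, the final cycle $\b A_{i+1}$ is $0$-even at $(u_{i+1},v_{i+1})$, which closes the induction.
\end{itemize}

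\textbf{Main obstacle.} The delicate point is the bookkeeping of indices: the condition in~(2) is phrased with $u_i,v_i$ and $G_{2i}$, while it is really needed for the pair used at the next gluing step. I would first check that the intended reading is consistent with the statement. I would also check that the bookkeeping in \Cref{thm:AddingSimplicialNodePreservesAOHamiltonian} is correct: the zero runs are multiplied by $|K|+1$, and this count is even exactly when $|K|$ is odd. The rest is a direct chaining of the cited results.
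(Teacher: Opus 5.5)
Your argument is the paper's own proof, spelled out. You induct along the sequence, keeping a Hamiltonian cycle of $\AO[G_{2i}]$ that is $0$-even at $(u_i,v_i)$, using \Cref{cor:MultiPathGluingIsGood} for the gluings and \Cref{thm:AddingSimplicialNodePreservesAOHamiltonian} (plus the edge argument of \Cref{cor:MultiPathGluingOnArcIsGood}) for the augmentations. Your reading of the second bullet as a condition on $u_{i+1},v_{i+1}\in\V(G_{2i+2})\ssm\{x_{i,m_i},\dots,x_{i,s_i}\}$ is the correct repair of the index slip, which the paper's one-line proof passes over.

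There is one caveat, and it affects the paper's proof equally. Your parenthetical claim that ``at most one $1$'' keeps the glued multipath simple fails when $u_iv_i\in E(G_{2i})$. In that case the length-$1$ path duplicates an existing edge and is trimmed, so the effective total length becomes odd. For instance, $G_0=K_3$ with $\b\ell_0=(1,3)$ glued on an edge gives $C_3\oplus_2 C_4$. That graph has $6$ edges and $\psi=42\equiv 2\pmod{4}$, hence is not {\good}, although all hypotheses of the statement hold. This case has to be excluded by hypothesis, just as the $\ell_1=1$ case in the proof of \Cref{thm:PathGluing2OddIsGood} assumes $uv\notin E(G)$.
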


\begin{proof}
Direct combination of \Cref{cor:MultiPathGluingIsGood,cor:MultiPathGluingOnArcIsGood,thm:AddingSimplicialNodePreservesAOHamiltonian}:
the conditions ensure that $\AO[G_i]$ admits a Hamiltonian cycle $\b A_i$ such that $\b A_{2i}(u_i, v_i)$ is $0$-even for all $i$.
\end{proof}

% \clearpage

\subsection{Disconnected graphs, 1-sums, strongly even and internally even sequences}\label{ssec:StronglyInternallyEven}

It is well-known that if $H$ and $H'$ are Hamiltonian graphs, then the Cartesian product $H\times H'$ is also Hamiltonian:
for instance, it is the first sentence of \cite{TrotterErdos1978-ProductIsHamiltonian}.
Consequently, if $G$ has several connected components, each inducing a {\good} graph, then $G$ itself is also {\good} because it is straightforward to show that $\AO$ is the Cartesian product of the graph of acyclic orientations of its connected components.
Similarly, if $G = H\oplus_1 H'$ is a $1$-sum, then $\AO = \AO[H]\times \AO[H']$, so if $H$ and $H'$ are {\good}, then $G$ is also {\good}.

We can go further and keep track of the $0$-evenness.
For disconnected graphs, the question is trivial by definition:
if $u$ and $v$ are not in the same connected component, then $\b A(u, v) = (0, 0, \dots, 0)$ for any sequence of orientations $\b A$, since no orientation can induce a directed path between $u$ and~$v$.
(Note however that \Cref{cor:MultiPathGluingIsGood} does not apply if $u$ and $v$ are not in the same connected component.)
The case of $1$-sums is, strangely, richer.
We need two notions to further control evenness.

\begin{definition}\label{def:StronglyEven}
A sequence of integers $\b a = (a_1, \dots, a_r)$ is \defn{strongly even} if for all $x \in \{a_1, \dots, a_r\}$ with $x \ne 0$:
\begin{compactitem}
\item if $1 \leq i < j\leq r$ are such that $a_i = a_j \ne 0$ and $a_k \ne a_i$ for all $i < k < j$, then $i$ and $j$ have the same parity; and
\item $\min\{i ~;~ a_i = x\}$ is odd and $\max\{i ~;~ a_i = x\}$ is even.
\end{compactitem}
\end{definition}

\begin{definition}\label{def:InternallyEven}
A sequence of integers $\b a = (a_1, \dots, a_r)$ is \defn{internally even} if:
\begin{compactitem}
\item for any $x \in \{a_1, \dots, a_r\}$ with $x \ne 0$, if $1 \leq i < j\leq r$ are such that $a_i = a_j \ne 0$ and $a_k \ne a_i$ for all $i < k < j$, then $i$ and $j$ have the same parity; and
\item the parity of $\min\{i ~;~ a_i = x\}$ does not depend on $x\ne 0$; and the parity of $\max\{i ~;~ a_i = x\}$ does not depend on $x\ne 0$.
\end{compactitem}
\end{definition}

\begin{remark}
It could be that $\b a$ is strongly even but not $0$-even, \eg $\b a = (1, 0, 2, 1, 0, 2)$.
On the other hand, strongly even implies internally even.
\end{remark}

We first show that $1$-summing a {\good} graph with a strongly even {\good} graph yields a $0$-even {\good} graph.

\begin{proposition}\label{prop:1SumStronglyEven}
Let $G = H\oplus_1 H'$ be a $1$-sum with $w \in \V(H)\cap \V(H')$, and let any $u\in \V(H)\ssm\{w\}$ and $v\in \V(H')\ssm\{w\}$.
If $H'$ is {\good} and $\AO[H]$ admits a Hamiltonian cycle $\b A$ such that $\b A(u, w)$ is strongly even, then $\AO$ admits a Hamiltonian cycle $\b B$ such that $\b B(u, v)$ is both strongly even and $0$-even.
\end{proposition}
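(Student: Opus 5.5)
The plan is to build $\b B$ as an explicit ``snake'' Hamiltonian cycle in the Cartesian product $\AO = \AO[H]\times\AO[H']$, and then read off $\b B(u, v)$ blockwise from $\b A(u, w)$ and a Hamiltonian cycle of $\AO[H']$.

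\textbf{Step 1: product structure and the direction formula.} By \Cref{prop:AcyclicPol:1sums} (or \Cref{lem:BipartitionEdgesYieldsCartesianProductAO}), every acyclic orientation of $G$ is a pair $A\sqcup A'$ with $A\in\AO[H]$ and $A'\in\AO[H']$, with no compatibility condition. Since $w$ is a cut vertex, every directed path from $u$ to $v$ passes through $w$. Hence
\begin{equation*}
(A\sqcup A')(u, v) = \left\{\begin{array}{ll}
A(u, w) & \text{if } A(u, w) = A'(w, v) \ne 0,\\
0 & \text{otherwise.}
\end{array}\right.
\end{equation*}

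\textbf{Step 2: the snake cycle.} Let $\b A = (A_1, \dots, A_s)$ be the given cycle, where $s$ is even by \Cref{lem:CompletementIsInvolution}. Let $\b A' = (A'_1, \dots, A'_t)$ be any Hamiltonian cycle of $\AO[H']$, again with $t$ even. I would make $H'$ the outer coordinate and $H$ the inner one, so that inside each block $B(u, v)$ is a filtered copy of $\b A(u, w)$. Concretely, use the boustrophedon cycle: column $A'_1$ is reserved for the return trip, and for $j = 2, \dots, t$ row $A'_j$ traverses $A_1, \dots, A_s$ or its reverse, alternating with $j$. Since $t$ is even, the snake closes through column $A'_1$. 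If this layout breaks the parity bookkeeping below, I would instead swap which factor is inner; I would try the present choice first.

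\textbf{Step 3: the blocks.} By Step 1, in a row with $A'_j(w, v) = 0$ all entries of $\b B(u, v)$ vanish. In a row with $A'_j(w, v) = +1$, the block is $\b A(u, w)$ (possibly reversed) with every $-1$ replaced by $0$; symmetrically for $-1$.

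\textbf{Step 4: verifying the two properties.} For strong evenness, the defining conditions of $\b A(u, w)$ are exactly what is needed. Consecutive equal nonzero entries inside a block keep the parity of their positions, because each full block has even length $s$, so block offsets preserve parity. The first occurrence of a value is odd and the last is even within $\b A(u, w)$; reversal of a block of even length swaps odd and even positions, but also swaps first and last. This should make occurrences straddling two blocks (the last of one block and the first of the next nonzero block of the same sign) again land on positions of the same parity. For $0$-evenness, I would count zeros between consecutive nonzero entries in three cases:
\begin{compactitem}
\item both entries in the same block: the zeros come from $0$'s of $\b A(u, w)$ together with the erased entries of opposite sign;
\item entries in different blocks: the zeros include the intervening all-zero blocks, each of even length $s$;
\item entries separated by the return column $A'_1$.
\end{compactitem}

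\textbf{Main obstacle.} I expect Step 4 to be the crux, in particular reconciling ``same-parity positions'' (which naively leaves an odd number of intermediate entries) with the required even count of zeros. The way out must be that the entries between two consecutive equal values $x$ consist of zeros together with an odd number of entries of opposite sign, and these get erased into zeros or paired up; I would check this carefully using both clauses of \Cref{def:StronglyEven}. If the accounting fails, I would adjust the snake by choosing which rows run reversed according to the sign of $A'_j(w, v)$, or by choosing the return column to be an $A'_j$ with $A'_j(w, v) = 0$, so that the boundary effects cancel.
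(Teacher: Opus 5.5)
\paragraph{Verdict.} Your overall route is the paper's: you use $\AO = \AO[H]\times\AO[H']$, compute $(A\sqcup A')(u,v)$ through the cut vertex $w$, snake with $H'$ as the outer coordinate, note that all-zero rows have even length $s$, and observe that the other rows are $\b A(u,w)$ with the opposite sign erased. But there is a genuine gap. Step~4 is the whole content of the statement, and you leave it open. The mechanism you suggest for closing it cannot work.

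\paragraph{Why your Step~4 fails.} In a row with $A'_j(w,v)=x\ne 0$, every entry of $\b A(u,w)$ different from $x$ becomes $0$. So between two consecutive occurrences of $x$ at positions $i<j$ there are exactly $j-i-1$ zeros. Nothing gets ``paired up''. If $i$ and $j$ had the same parity, as you assume, this count would be odd and $0$-evenness would fail. Likewise, your target that straddling occurrences ``land on positions of the same parity'' is the wrong target.

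\paragraph{The reserved return column.} It is unnecessary. Since $t$ is even, the plain snake $A_1\sqcup A'_1,\dots,A_s\sqcup A'_1,\,A_s\sqcup A'_2,\dots,A_1\sqcup A'_t$ closes through the edge between $A_1\sqcup A'_t$ and $A_1\sqcup A'_1$. Every line then has even length $s$, so your third case disappears. A genuinely reserved line of the inner factor would instead leave rows of odd length $s-1$ and break the parity bookkeeping.

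\paragraph{The missing observation.} The clause ``$i$ and $j$ have the same parity'' in \Cref{def:StronglyEven} cannot be read literally. Chaining it forces all occurrences of $x$ to share the parity of the first one, which is odd. That contradicts the requirement that the last occurrence be even. So, read literally, only all-zero sequences would be strongly even. The paper's own example $(1,0,2,1,0,2)$ confirms this: its $1$'s sit at positions $1$ and $4$. The intended meaning, which is what the paper's proof uses, is the following. After replacing every entry $\ne x$ by $0$, the sequence consists of $x$'s separated by even numbers of zeros. It has an even number of leading zeros ($\min$ odd) and an even number of trailing zeros ($\max$ even, since $s$ is even).

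\paragraph{Completing the proof with that reading.}
\begin{itemize}
\item Reversing a row of even length $s$ preserves this shape. Your remark that reversal swaps parities and swaps first with last is exactly what is needed here.
\item All-zero rows have even length. So in the concatenation, every maximal run of zeros, including runs across rows and across the wrap-around, is a sum of even numbers. This gives $0$-evenness.
\item Every row offset is a multiple of the even number $s$, so global and local positions have the same parity. Hence consecutive occurrences of $x$ in $\b B(u,v)$, within a row or across rows, sit at positions of opposite parity. The first occurrence is odd and the last is even. This gives strong evenness.
\end{itemize}
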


\begin{proof}
Let $\b A' = (A'_1, \dots, A'_t)$ be a Hamiltonian cycle of $\AO[H']$ (where $\b A'(w, v)$ is not necessarily $0$-even).
We denote $\b A = (A_1, \dots, A_s)$.
Since $G$ is the $1$-sum $H \oplus_1 H'$, any two acyclic orientations $A_i \in \b A$, $A'_j \in \b A'$ give rise to an acyclic orientation of $G$, so the acyclic orientations of $G$ are $A_i\sqcup A'_j$ for all $i\in [s]$ and $j\in [t]$.
Besides, since the total number of acyclic orientations of a graph is always even (see \Cref{lem:CompletementIsInvolution}), both $s$ and $t$ are even.
Consider $\b B$ defined as:
\begin{equation}\label{eqn:HamiltonianCycleForCartesianProduct}
\begin{array}{rl}
\b B = &\bigl(A_1\sqcup A'_1,\, A_2\sqcup A'_1,\, \dots,\, A_s\sqcup A'_1,\\
&A_s\sqcup A'_2,\, A_{s-1}\sqcup A'_2,\, \dots,\, A_1\sqcup A'_2,\\
&A_1\sqcup A'_3,\, A_2\sqcup A'_3,\, \dots,\, A_s\sqcup A'_3,\\
&\dots,\\
&A_s\sqcup A'_t,\, A_{s-1}\sqcup A'_t,\, \dots,\, A_1\sqcup A'_t\bigr)\,.
\end{array}
\end{equation}

By construction, $\b B$ is a Hamiltonian cycle of $\AO$.
Moreover, $(A_i\sqcup A'_j)(u, v) \ne 0$ if and only if $A_i(u, w) = A'_j(w, v) \ne 0$.
If $A'_j(w, v) = 0$, then $(A_i\sqcup A'_j)(u, v) = 0$, so in evaluating the above definition to get $\b B(u, v)$ the $j^\text{th}$ row is $(0, \dots, 0)$ (and it is of even length).
If $A'_j(w, v) = +1$ (resp. $-1$), then $(A_i\sqcup A'_j)(u, v) = 0$ except if $A_i(u, w) = +1$ (resp. $-1$), so the $j^{\text{th}}$ row in the above is the sequence $\b A(u, w)$ with its $-1$ (resp. $+1$) replaced by $0$:
since $\b A(u, w)$ is strongly even, this sequence is formed by $1$ (resp. $-1$) separated by an even number of zeros with an even number of zeros at the beginning and at the end.
Therefore, by concatenation, the sequence $\b B(u, v)$ is $0$-even.

Furthermore, $\b B(u, v)$ is strongly even.
Indeed, if $(A_i\sqcup A'_j)(u, v) = +1$ and $(A_k\sqcup A'_\ell)(u, v) = +1$ are such that there is no $A_m\sqcup A'_n$ in between in $\b B$ with $(A_m\sqcup A'_n)(u, v) = +1$, then either $j = \ell$ and both orientations are on the same row and separated by an even number of zeros as we have seen; either $j < \ell$ and both orientations are separated by a certain number of rows (each of them of even length) and are an even number of zeros away from the endpoints of their respective rows.
Besides, the second condition for strong evenness of $\b B(u, v)$ straightforwardly follows from the one for $\b A(u, v)$.
\end{proof}

Secondly, $1$-summing a $0$-even {\good} graph with an internally even {\good} graph preserves both $\c{AO}$-Hamiltonicity and $0$-evenness.

\begin{proposition}\label{prop:1Sum0EvenAndInternallyEven}
Let $G = H\oplus_1 H'$ be a $1$-sum with $w \in \V(H)\cap \V(H')$, and let any $u\in \V(H)\ssm\{w\}$ and $v\in \V(H')\ssm\{w\}$.
If $\AO[H']$ admits a Hamiltonian cycle $\b A'$ such that $\b A'(w, v)$ is $0$-even and $\AO[H]$ admits a Hamiltonian cycle $\b A$ such that $\b A(u, w)$ is internally even, then $\AO$ admits a Hamiltonian cycle $\b B$ such that $\b B(u, v)$ is both internally even and $0$-even.
\end{proposition}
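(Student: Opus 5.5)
We mimic the proof of \Cref{prop:1SumStronglyEven}, but now the roles are reversed: the boustrophedon runs along the $0$-even cycle $\b A'$ of $H'$ in the outer loop. Write $\b A = (A_1, \dots, A_s)$ and $\b A' = (A'_1, \dots, A'_t)$; both $s$ and $t$ are even by \Cref{lem:CompletementIsInvolution}. We take $\b B$ as in \eqref{eqn:HamiltonianCycleForCartesianProduct}, but with the roles of the two factors exchanged: the rows are indexed by $i\in[s]$ (fixed $A_i$), and in row $i$ we traverse $\b A'$ forwards or backwards according to the parity of $i$. Because $s$ is even, this closes into a Hamiltonian cycle of $\AO = \AO[H]\times\AO[H']$.

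The key fact, already used before, is that $(A_i\sqcup A'_j)(u,v)$ is nonzero exactly when $A_i(u,w) = A'_j(w,v) \ne 0$, and then it equals this common value. So row $i$ of $\b B(u,v)$ is entirely zero when $A_i(u,w)=0$. If $A_i(u,w) = +1$ (resp. $-1$), row $i$ is $\b A'(w,v)$, possibly reversed, with its $-1$ entries (resp. $+1$ entries) replaced by $0$.

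The first step is to check that each such row is internally even. This needs a little care, since $0$-evenness of $\b A'(w,v)$ alone does not obviously control zeros created by erasing the opposite sign. Two observations handle it. First, between consecutive surviving entries of equal sign, the erased $-1$ entries together with the original zeros form blocks. Second, $\b A'$ is a walk in a bipartite graph in which a move between $+1$ and $-1$ is impossible in one step (flipping one arc cannot create a directed $v\to w$ path while destroying a $w\to v$ path unless the edge is $wv$ itself). I would handle the degenerate case $wv\in E$ separately. Otherwise each run of consecutive nonzero entries of the same sign is separated from the next by an even number of zeros. The remaining point is that a run of $-1$ entries is bracketed by even zero-blocks, so the total gap parity equals the parity of that run length. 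This is the main obstacle, and I would first try to show that maximal runs of equal nonzero values have length counted so that the gap between surviving entries of equal sign in the row is even.

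If this fails, I would instead use a direct index-parity argument: positions in $\b A'$ alternate in the bipartition $\AO[H']$, and two equal nonzero values with no equal value in between lie at positions of equal parity. This seems to be exactly the internally-even condition applied to $\b A'$ with respect to $(w,v)$, and $0$-evenness combined with the bipartiteness of $\AO[H']$ should imply it.

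Next we concatenate the rows. Between two consecutive nonzero rows, the intervening zero rows have even length $t$. The zeros at the end of row $i$ and at the start of the next nonzero row $i'$ are controlled by the min/max parity clause. For $\b A(u,w)$, the internally-even hypothesis says that consecutive indices $i<i'$ with $A_i(u,w)=A_{i'}(u,w)\neq 0$ have the same parity, so the two rows are traversed in the same direction and their boundary zero counts add up to an even number. Rows with opposite signs are handled through the second clause of \Cref{def:InternallyEven}: first and last occurrences of $\pm1$ have the same parity in each row. Putting these together gives $0$-evenness of $\b B(u,v)$. The same bookkeeping on positions yields the two clauses of internal evenness, since each row has even length.
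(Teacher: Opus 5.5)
\textbf{There is a genuine gap, and it comes from the arrangement you actually write down.}

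Your first sentence has the right idea: the outer loop should run along $\b A'$. But \Cref{eqn:HamiltonianCycleForCartesianProduct} already indexes rows by $A'_j$ and traverses $\b A$ inside each row. By ``exchanging the factors'' you get rows indexed by $A_i$ with $\b A'$ traversed inside, and then every parity check needs a hypothesis on the wrong factor.

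Inside a row with $A_i(u,w)=+1$, the zero blocks of $\b A'(w,v)$ are even. So the gap between two consecutive surviving $+1$'s has the parity of the number of $-1$'s between them. Nothing controls that number, and it can be odd.

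Concrete counterexample:
\begin{itemize}
\item Let $H$ be the path $u - x - w$, with $\b A(u,w)=(1,0,-1,0)$; this is internally even.
\item Let $H'$ be a triangle $wvz$ with a pendant edge $zy$, so $\AO[H']\cong C_6\times K_2$. Since $wv\in E(H')$, every Hamiltonian cycle of $\AO[H']$ is $0$-even.
\item Label the two copies of the $6$-cycle $\AO[K_3]$ as $T_1,\dots,T_6$ and $B_1,\dots,B_6$, in Steinhaus--Johnson--Trotter order, so that $w\to v$ exactly in the first three. The cycle $T_1T_2B_2B_1B_6B_5B_4B_3T_3T_4T_5T_6$ gives $\b A'(w,v)=(1,1,1,1,-1,-1,-1,1,1,-1,-1,-1)$.
\item The row of $A_1$ in your $\b B(u,v)$ is then $(1,1,1,1,0,0,0,1,1,0,0,0)$, which is not $0$-even.
\end{itemize}
No rotation or reversal of $\b A'$ helps. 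Both runs of three $-1$'s are flanked by $+1$'s cyclically, and any linear traversal leaves one of them interior.

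So the implication you hoped for (``$0$-evenness plus bipartiteness forces the right parity'') is false. Your target is also reversed: equal parity of positions puts an \emph{odd} number of entries, hence an odd zero block, between the two values.

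The between-row step has the same defect. Two rows traversed in the same direction pair a trailing (max-type) count with a leading (min-type) count of $\b A'(w,v)$, which no hypothesis relates. The first/last-occurrence clause you invoke is a hypothesis on $\b A(u,w)$, not on $\b A'(w,v)$.

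\textbf{The fix, which is what the paper does, is to keep \Cref{eqn:HamiltonianCycleForCartesianProduct} as it stands.} Row $j$ pairs $A'_j$ with $\b A$, traversed forwards for odd $j$ and backwards for even $j$. Each hypothesis then lands on the right factor.
\begin{itemize}
\item If $u,w$ lie in different components of $H$, then $\b B(u,v)$ is identically zero.
\item Otherwise, row $j$ is a zero row iff $A'_j(w,v)=0$. By $0$-evenness of $\b A'(w,v)$, consecutive non-zero rows $j<j'$ are separated by an even number of zero rows, each of length $s$. Hence $j'-j$ is odd, and the two rows are traversed in opposite directions.
\item Inside a non-zero row of sign $\epsilon$, the surviving entries are the occurrences of $\epsilon$ in $\b A(u,w)$. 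The first clause of internal evenness makes every internal gap even. Read that clause as ``an even number of entries lie strictly between $i$ and $j$'': this is what the example $(1,0,2,1,0,2)$ and the proof of \Cref{prop:1SumStronglyEven} require.
\item Across the boundary, opposite directions make the trailing and leading zero counts either $s-\max_\epsilon$ and $s-\max_{\epsilon'}$, or $\min_\epsilon-1$ and $\min_{\epsilon'}-1$. Here $\max_\epsilon,\min_\epsilon$ are the extremal positions of $\epsilon$ in $\b A(u,w)$. The sum is even by the second clause.
\item The wrap-around gap is handled the same way, since $t$ is even.
\end{itemize}
The paper's write-up of this step says ``same parity'' and swaps the names $\b A$ and $\b A'$ in two places, but this is the intended argument.
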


\begin{proof}
The proof is very similar to the one of \Cref{prop:1SumStronglyEven}, only the last part of the argument needs to be modified.

We construct $\b B$ as in \Cref{eqn:HamiltonianCycleForCartesianProduct}.
We call the $i^{\text{th}}$ row of $\b B(u, v)$ a $0$-row if $(A_k\sqcup A'_i)(u, v) = 0$ for all $k\in [s]$.
Suppose that the $i^{\text{th}}$ and the $j^{\text{th}}$ rows of $\b B(u, v)$ are non-$0$-rows but that the $k^{\text{th}}$ row is a $0$-row for all $i < k < j$.
Since $\b A(u, w)$ is $0$-even, the $0$-rows come in consecutive sequences of even length: $i$ and $j$ have the same parity.
Let $f_i$ be the number of final $0$s in the $i^{\text{th}}$ row of $\b B(u, v)$ and $\ell_j$ the number of leading $0$s in its $j^{\text{th}}$ row.
As $i$ and $j$ have the same parity, $f_i$ and $\ell_j$ are either both a minimum $k$ such that $A'_k(w, v) \ne 0$ or both a maximum $k$ such that $A'_k(w, v) \ne 0$.
Since $\b A'(w, v)$ is internally even, $f_i$ and $\ell_j$ have the same parity (\ie $f_i + \ell_j$ is even).
Thus, the number of consecutive $0$s between the last non-zero value of the $i^{\text{th}}$ row of $\b B(u, v)$ and the first non-zero value of its $j^{\text{th}}$ row is even.
Consequently, $\b B(u, v)$ is $0$-even.

A similar reasoning as in the proof of \Cref{prop:1SumStronglyEven} for strong evenness shows that $\b B(u, v)$ is also internally even.
\end{proof}

With some case-by-case analysis, the reader can adapt \Cref{prop:1SumStronglyEven,prop:1Sum0EvenAndInternallyEven} for $1$-summing several graphs at once.
We do not state a general theorem here since it is unbearably notation-heavy.
However, we give the following corollary.
To prove it, we need the following notion.

\begin{definition}
For $A\in \AO$ and four nodes $u, v, x, y\in \V$, their \defn{combined direction} is
$$\mathdefn{A(u, v) \combdir A(x, y)} \eqdef \begin{cases}
A(u, v) & \text{if } A(u, v) = A(x, y) \\
0 & \text{otherwise}
\end{cases} \,.$$

As per usual, for a sequence of orientations $\b A = (A_1, \dots, A_r)$, we denote $\b A(u, v)\combdir\b A(x, y) \eqdef \bigl(A_1(u, v)\combdir A_1(x, y),\, \dots,\, A_r(u, v)\combdir A_r(x, y)\bigr)$.
\end{definition}

\begin{corollary}\label{cor:AtADistance0EvennessControl}
Let $G = (\V, E)$ be a {\good} graph with $\b A = (A_1, \dots, A_s)$ a Hamiltonian cycle in $\AO$ and let $u, v \in \V$ such that $\b A(u, v)$ is $0$-even. 
Let $H$ be the $1$-sum at $u$ of $G$ with a path $P_m$ with endpoint $x$, and let $G'$ be the $1$-sum at $v$ of $H$ with a path $P_n$ with endpoint $y$.

Then both $H$ and $G'$ are {\good}.
If, moreover, $m$ and $n$ have the same parity, then there exists a Hamiltonian cycle $\b B = (B_1, \dots, B_t)$ in $\AO[G']$ such that $\b B(x, y)$ is $0$-even.
\end{corollary}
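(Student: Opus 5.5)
The plan is to view $H$ and $G'$ as iterated $1$-sums and to use the Cartesian-product construction of \Cref{eqn:HamiltonianCycleForCartesianProduct}, tracking the direction sequences on the new endpoints. \AO-Hamiltonicity of $H = G\oplus_1 P_m$ and $G' = H\oplus_1 P_n$ is immediate. Paths are trees, so $\AO[P_m]\simeq Q_{m'}$ is Hamiltonian (with $m'$ the number of edges of $P_m$). The graph of acyclic orientations of a $1$-sum is the Cartesian product of those of the summands, and a product of Hamiltonian graphs is Hamiltonian.

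For the $0$-evenness claim, I would first record the direction sequences coming from a path. Orient $P_m$ from $u$ to $x$, and let $\b S$ be a Hamiltonian cycle of $Q_{m'}$. Then $\b S(u,x)$ is $+1$ exactly at $\emptyset$, $-1$ exactly at the full edge set, and $0$ elsewhere. The two nonzero entries sit at positions of the same parity iff $m'$ is even, since the cube is bipartite by cardinality. So $\b S(u,x)$ and, similarly, $\b S'(v,y)$ are very sparse sequences. Next I would express $B(x,y)$ for $B = A\sqcup S\sqcup S'$. A directed path from $x$ to $y$ must pass through $u$ and $v$, so
$$B(x,y) = S(x,u)\combdir A(u,v)\combdir S'(v,y),$$
where $S(x,u)=-S(u,x)$ flips the sign of each nonzero entry. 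Thus the question becomes whether a combined direction of the three sequences, read along a product cycle, is $0$-even.

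I would build $\b B$ in two stages using \Cref{eqn:HamiltonianCycleForCartesianProduct} twice. First form the product cycle of $\b A$ with $\b S$, then form the product of that with $\b S'$. In each stage I would choose the placement carefully: which factor is the "row" and which the "column", and the cyclic starting points of $\b S$ and $\b S'$. Concretely, I would rotate $\b S$ so that its two nonzero entries occupy the first and last positions, or are symmetric. The zero runs then come in blocks of the form (even length of $\b A$-rows) $\times$ (counts determined by the positions of nonzero entries in $\b S$). Since $s=|\AO|$ is even by \Cref{lem:CompletementIsInvolution}, each full row contributes an even number of zeros. The delicate part is the partial rows around the nonzero entries. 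There the zero counts are the gaps in $\b A(u,v)$, which are even by $0$-evenness, plus boundary terms at the start and end of a row.

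The main obstacle is this boundary bookkeeping. Gaps straddle row boundaries and the reversal of every other row. The hypothesis that $m$ and $n$ have the same parity should be exactly what makes the two boundary contributions, one from the $x$-side cube and one from the $y$-side cube, add to an even number. It plays the same role as the matching $\min/\max$ parities in \Cref{prop:1Sum0EvenAndInternallyEven}.

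I would try to streamline this by first proving an intermediate claim. Applying the product construction to $\b A$ and $\b S$, with $\b S$ rotated so that $\emptyset$ comes first, should yield a cycle in $\AO[H]$ on which $\b A(u,v)\combdir\b S(x,u)$ is internally even, with min/max index parities depending only on the parity of $m$. The second stage would then be an instance of \Cref{prop:1Sum0EvenAndInternallyEven}-type reasoning with the roles adapted: the final parity check reduces to comparing parities of $m$ and $n$. If the direct reduction fails, I would fall back on an explicit row-by-row count of the zero runs.
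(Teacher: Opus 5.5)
\paragraph{Verdict.} Your argument for the Hamiltonicity of $H$ and $G'$ is fine. The $0$-evenness part, however, has a genuine gap.

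\paragraph{Where the layout fails.} You describe rows indexed by $\b S$, each row traversing $\b A$, and this layout does not work. In the row where $S(x,u)=+1$, every entry with $A(u,v)=-1$ is killed by $\combdir$. So the run of zeros between two consecutive $+1$'s is not just a zero-gap of $\b A(u,v)$: it also contains all the intervening $-1$'s, and $0$-evenness says nothing about how many there are.

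Concretely, take $G=K_4$ with the cycle of \Cref{tab:PermutahedralGraph4}, and $u=1$, $v=4$. Since $14$ is an edge, $\b A(1,4)$ has no zeros and is trivially $0$-even. Yet the cyclically consecutive $+1$'s at $A_{10},A_{16}$, and also at $A_{17},A_{23}$, are each separated by five $-1$'s.

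In your layout, the $+1$'s of $\b B(x,y)$ form a single contiguous block among the nonzero entries. They are listed in the cyclic order of $\b A$, cut at one point. So one of these two pairs gives consecutive nonzero entries of $\b B(x,y)$. These two nodes have the same rank parity, so they sit at positions of the same parity in any Hamiltonian cycle. Hence an odd run of zeros separates them, whatever second stage or rotations you choose.

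\paragraph{The intermediate claim is false.} For the same reason, your claim that $\b A(u,v)\combdir\b S(x,u)$ is internally even fails for either placement. The pairs $(A_3,A_6)$ and $(A_{10},A_{16})$ give spacings of opposite parity between equal nonzero values.

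\paragraph{Rotating $\b S$.} You cannot rotate $\b S$ so that $\emptyset$ and the full edge set are first and last. They are not adjacent in a cube of dimension at least $2$.

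\paragraph{The missing idea.} Put the ``internally even'' requirement on the path factor, and make $\b A$ the outer, row-indexing factor. This is what the paper does, in the manner of \Cref{prop:1Sum0EvenAndInternallyEven}.

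\begin{enumerate}
\item Take a product cycle $\b T$ of $\AO[P_m]\times\AO[P_n]$. Along $\b T$, the sequence $\b S(x,u)\combdir\b S'(v,y)$ has a single $+1$ and a single $-1$, at positions $i_+$ and $i_-$.
\item The two relevant pairs of path orientations are reverses of each other, so they differ on every edge of both paths. Since the cube is bipartite, $i_+\equiv i_-$ holds exactly when $m\equiv n$.
\item Let the rows be indexed by $\b A$, each row traversing $\b T$. A row $A_j$ with $A_j(u,v)\neq0$ contributes exactly one nonzero entry of $\b B(x,y)$. The other rows are zero rows of even length.
\item Position parity in this product is $i+j+1$. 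Hence consecutive nonzero rows $j<j'$ are separated by an even number of zeros if and only if $(j'-j)+(i_{\epsilon'}-i_{\epsilon})$ is odd.
\item By $0$-evenness of $\b A(u,v)$, $j'-j$ is odd. So the condition holds automatically for equal signs, and for opposite signs it is exactly $i_+\equiv i_-$.
\end{enumerate}
Your intuition about the role of $m\equiv n$ is right; it just enters through $\b T$.

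\paragraph{Rescuing the two-stage scheme.} Your scheme works if $\b A$ is outermost at both stages. But the invariant to carry through stage one is not internal evenness. It is this: consecutive nonzero entries of equal sign are separated by an even number of zeros, and those of opposite sign by a number of zeros with the parity of the number of edges of $P_m$.
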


\begin{proof}
Since $H$ is a $1$-sum, the graph $\AO[H]$ is isomorphic to $\AO\times\AO[P_m]$, while the graph $\AO[G']$ is isomorphic to $\AO\times\AO[P_m]\times\AO[P_n]$.
The Cartesian product of Hamiltonian graphs being Hamiltonian, and $\AO[P_k]$ being isomorphic to the (Hamiltonian) cube-graph $Q_k$, we get that both $H$ and $G'$ are {\good}.

Let $\b S$ (resp. $\b S'$) be a Hamiltonian cycle in $\AO[P_m]$ with $S_1(x, u) = 1$ and $S_\mu(x, u) = -1$ (resp. $S'_1(v, y) = 1$ and $S'_{\mu'}(v, y) = -1$).
Since $m$ and $n$ have the same parity, $\mu$ and $\mu'$ also have the same parity (which is the parity of $m+1$).
Consider the usual Hamiltonian cycle $\b T$ in $\AO[P_m]\times\AO[P_n]$ constructed via \Cref{eqn:HamiltonianCycleForCartesianProduct} on $\b S$ and $\b S'$.
% We write $A'\sqcup A''\in \AO[P_m]\times\AO[P_n]$ with $A'\in \AO[P_m]$ and $A''\in \AO[P_n]$.
For an acyclic orientation $A\in \AO$, we have $(A\sqcup S\sqcup S')(x, y) = 1$ (resp. $-1$) if and only if $S(x, u) = A(u, v) = S'(v, y) = 1$ (resp. $-1$).
The sequence $\bigl(S(x, u)\combdir S'(v, y) ~;~ S\sqcup S'\in \b T\bigr)$ is internally even.
Indeed, it has a unique $1$ at the first position, and a unique $-1$ at position either $m\cdot(\mu-1) + \mu'$ if $m$ is even, or $m\cdot(\mu-1) + n-\mu'$ if $m$ is odd: this position is odd in both cases.

It is left to the reader as an exercise to prove that a variant of \Cref{prop:1Sum0EvenAndInternallyEven} implies that the Hamiltonian cycle $\b B$ of $\AO[G']$ constructed using \Cref{eqn:HamiltonianCycleForCartesianProduct} on $\b A$ and $\b T$ satisfies that $\b B(u, v)$ is $0$-even.
This concludes the proof.
\end{proof}

\begin{corollary}\label{cor:AtADistanceMultiPathGluing}
Let $G = (\V, E)$ be a {\good} graph with $\b A = (A_1, \dots, A_s)$ a Hamiltonian cycle in $\AO$ and let $u, v \in \V$ such that $\b A(u, v)$ is $0$-even. 
Let $H$ be the $1$-sum at $u$ of $G$ with a path $P_m$ with endpoint $x$, and let $G'$ be the $1$-sum at $v$ of $H$ with a path $P_n$ with endpoint $y$.

If $m$ and $n$ have the same parity and $\sum_k\ell_k$ is even, then $\oplus_{x, y}^{\b \ell} G'$ is {\good}.
\end{corollary}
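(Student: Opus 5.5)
The plan is to reduce the statement to \Cref{cor:MultiPathGluingIsGood}, applied to the graph $G'$ with the pair of nodes $x, y$. That corollary needs three inputs: $G'$ is {\good}, with a Hamiltonian cycle $\b B$ of $\AO[G']$; the nodes $x$ and $y$ lie in the same connected component of $G'$; and $\b B(x,y)$ is $0$-even. Given these, and since $\sum_k \ell_k$ is even by assumption, \Cref{cor:MultiPathGluingIsGood} gives directly that $\oplus_{x,y}^{\b\ell} G'$ is {\good}. It even provides a new Hamiltonian cycle whose direction sequence on $(x,y)$ is again $0$-even.

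The key steps, in order, are as follows.

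\begin{enumerate}
\item Invoke \Cref{cor:AtADistance0EvennessControl}. Its hypotheses are exactly ours: $G$ is {\good} with $\b A(u,v)$ $0$-even, $H$ and $G'$ are built by $1$-summing paths at $u$ and $v$, and $m,n$ have the same parity. It yields that $G'$ is {\good} and that some Hamiltonian cycle $\b B$ of $\AO[G']$ has $\b B(x,y)$ $0$-even.
\item Check connectivity. $x$ is joined to $u$ by the path $P_m$ and $y$ to $v$ by $P_n$, so $x$ and $y$ lie in the same component of $G'$ provided $u$ and $v$ lie in the same component of $G$.
\item Apply \Cref{cor:MultiPathGluingIsGood} with $\b\ell$ and the nodes $x,y$ in $G'$.
\end{enumerate}

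The main obstacle is the connectivity hypothesis in step 2. The statement does not explicitly assume that $u$ and $v$ lie in the same component of $G$, and \Cref{cor:MultiPathGluingIsGood} requires it. I would first try to dispose of the disconnected case directly. If $u$ and $v$ are in different components, then $G$, and hence $G'$, splits as a disjoint union in which $x$ and $y$ lie in different components $C_x \ni x$ and $C_y \ni y$. Gluing a multipath between $x$ and $y$ then connects them through the multipath only.

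In that case the resulting graph is (up to $1$-sums) a gluing along two cut nodes. I would treat it by viewing $\oplus_{x,y}^{\b\ell}(C_x \sqcup C_y)$ as obtained from the multipath $P_{\b\ell}$ by $1$-summing $C_x$ at $x$ and $C_y$ at $y$. Two facts are needed:
\begin{itemize}
\item $C_x$ and $C_y$ are {\good}, since $\AO[G']$ is the Cartesian product of the graphs of acyclic orientations of the components of $G'$, and each factor inherits Hamiltonicity from the product. Should that inheritance fail, I would fall back to adding the hypothesis implicitly used by the authors, namely that $u,v$ lie in the same connected component.
\item The multipath is {\good} only when its total length is odd (\Cref{cor:MultiPathIsGood}). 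Here it has even total length, so it is not {\good}, and the $1$-sum argument at the start of \Cref{ssec:StronglyInternallyEven} does not apply.
\end{itemize}

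Hence I expect the disconnected case to actually fail or to be excluded. The intended proof is steps 1--3, under the standing assumption that $u,v$ lie in the same component. In the write-up I would make that assumption explicit, noting it is inherited from \Cref{cor:MultiPathGluingIsGood}.
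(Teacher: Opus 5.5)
Your steps 1--3 are exactly the paper's proof. The paper combines \Cref{cor:AtADistance0EvennessControl}, which gives a Hamiltonian cycle $\b B$ of $\AO[G']$ with $\b B(x,y)$ $0$-even, with \Cref{cor:MultiPathGluingIsGood} applied at $x,y$.

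You are right that this silently requires $u,v$ to lie in the same component of $G$, which the paper never states. However, you do not need to add that hypothesis, and your expectation that the disconnected case fails is wrong.

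Suppose $u,v$ lie in different components. Then $\oplus_{x,y}^{\b\ell}G'$ is a chain of $1$-sums at $u,x,y,v$ together with the remaining components of $G$. Its graph of acyclic orientations is therefore
\[
\AO\times Q_{m+n}\times\AO[P_{\b\ell}]\cong\AO\times Q_{m+n-1}\times\bigl(\AO[P_{\b\ell}]\times K_2\bigr),
\]
using $m,n\geq 1$. The prism $\AO[P_{\b\ell}]\times K_2$ is Hamiltonian by \cite{PruesseRuskey1995-PrismOfAcylicOrientationGraph}. Taking Cartesian products with $\AO$ (Hamiltonian by hypothesis) and with the cube $Q_{m+n-1}$ preserves Hamiltonicity, so the statement holds in this case too.

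Also, your claim that each factor of a Hamiltonian Cartesian product is Hamiltonian is false in general. For example, $\AO[C_4\sqcup K_2]\cong\AO[C_4]\times K_2$ is Hamiltonian by the same prism result, while $\AO[C_4]$ is not, since $\sigma(C_4)\neq 0$.
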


\begin{proof}
Direct application of \Cref{cor:MultiPathGluingIsGood,cor:AtADistance0EvennessControl}.
\end{proof}

\Cref{cor:AtADistanceMultiPathGluing} is our third result (with \Cref{cor:MultiPathGluingOnArcIsGood,cor:AddingSimplicialNodesAndGluingMultiPath}) explaining how we can glue a multipath at a pair of nodes $(u, v)$, then glue another multipath at another pair of nodes $(x, y)$.
These three statements are of different nature and suggest the following problem for future research.

\begin{question}
Find conditions (such as $0$-even, strongly even and internally even) that a graph $G$ or (a Hamiltonian cycle in) its graph of acyclic orientations $\AO$ should satisfy to ensure that one can glue multipaths at the different pairs of nodes of $G$.
\end{question}

Note that this question is only an intermediate step.
If one wants to exploit the (open) ear decomposition of $2$-connected graphs in order to inductively certify whether a graph is {\good} or not, then he or she would not only need to solve the above question where the pairs of nodes are taken in the same starting graph $G$, but also to consider pairs of nodes where one or both lie on the glued ears (\ie multipaths) of the previous steps.

\appendix
% Show only sections in the ToC for the appendix
\addtocontents{toc}{\protect\setcounter{tocdepth}{1}}

\section{General statements about pattern lacing}\label{appendix:GeneralPatternLacing}

% \subsectionnotoc{Loose lacing}\label{appendix:LooseLacing}
\subsection{Loose lacing}\label{appendix:LooseLacing}

In this section, we state more general methods of pattern (self-)lacing as compared to \Cref{prop:LacingIsHamiltonianPath,cor:SelfLacingIntoAnAOcycle}. 
For a replacing pattern $\b P$ of $(A_1, \dots, A_s)$ and a replacing pattern $\b Q$ of $(A_s, \dots, A_t)$, consider the graph, illustrated in \Cref{fig:LooseLacing}, whose node set is $\AO[H' |A_s]$ and where there are three types of edges:
\begin{compactitem}
\item \textcolor{red}{red edges} between $A'$ and $B'$ such that $\omega^{\b P}_s(B') = \omega^{\b P}_s(A') + 1$ and $\omega^{\b P}_s(A') \notin \cons_s(\b P)$; and
\item \textcolor{blue}{blue edges} between $A'$ and $B'$ such that $\omega^{\b Q}_s(B') = \omega^{\b Q}_s(A') + 1$ and $\omega^{\b Q}_s(A') \notin \cons_s(\b Q)$; and
\item \textcolor{green}{green edges} between $A'$ and $B'$ such that $\omega^{\b P}_s(B') = \omega^{\b P}_s(A') + 1$ and $\omega^{\b P}_s(A') \in \cons_s(\b P)$ or such that $\omega^{\b Q}_s(B') = \omega^{\b Q}_s(A') + 1$ and $\omega^{\b Q}_s(A') \in \cons_s(\b Q)$.
\end{compactitem}

We say that $\b P$ and $\b Q$ \defn{can loosely lace} if there exists a (directed) path $\b\rho$ from $A'$ with $\omega^{\b P}_s(A') = 1$ to $B'$ with $\omega^{\b Q}_s(B') = m$ such that $\b\rho$ contains all the \textcolor{red}{red edges} and all the \textcolor{blue}{blue edges} (and some, perhaps none, of the \textcolor{green}{green edges}) of this graph.
One has to be careful, since several such paths may exist.
The \defn{loose lace} of $\b P$ and $\b Q$ according to $\b \rho$, denoted $\mathdefn{\b P\wedge_{\b\rho} \b Q}$, is obtained by replacing each \textcolor{red}{red arc} (resp. \textcolor{blue}{blue arc}) of $\b \rho$ by the corresponding directed path in $\b P$ (resp. in $\b Q$), keeping each \textcolor{green}{green arc}, and adding $\b P_0$ and $\b Q_m$ (defined in the usual lacing $\b P\wedge \b Q$).

A proof similar to the proof of \Cref{prop:LacingIsHamiltonianPath} ensures that:
If $\b P$ and $\b Q$ can loosely lace, then any loose lace $\b P\wedge_{\b\rho} \b Q$ is a replacing pattern of $\M[A_1, \dots, A_t]$.

In the case of usual lacing, the graph at stake is a path with two edges between consecutive nodes, one of them being \textcolor{green}{green}.

\begin{figure}[h]
    \centering
    \includegraphics[width=\linewidth]{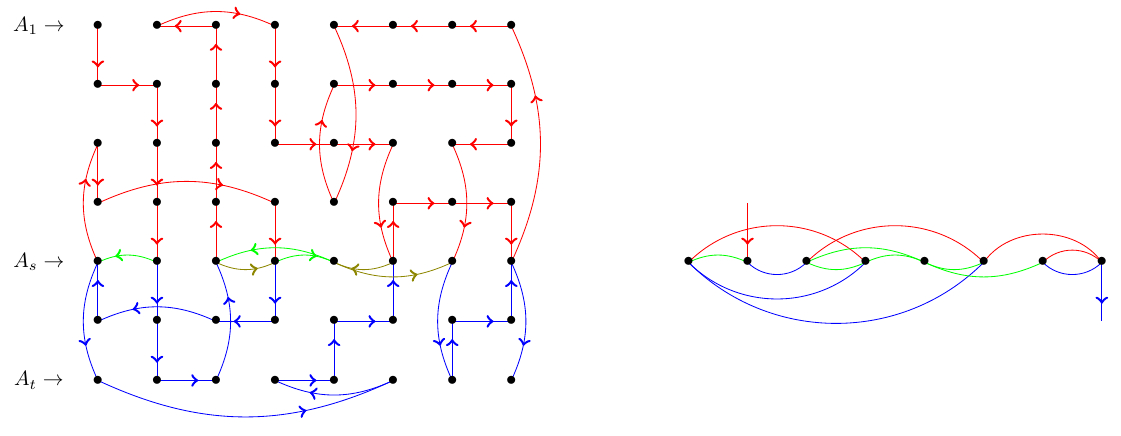}
    \caption{(Left) A replacing pattern $\b P$ of $(A_1, \dots, A_s)$ in \textcolor{red}{red} and a replacing pattern $\b Q$ of $(A_s, \dots, A_t)$ in \textcolor{blue}{blue}: for sake of clarity of the drawing, we have assumed that all $\AO[H'|A_i]$ are the same, but in general many black dots $\bullet$ would not be visited by replacing patterns.
    Note that arcs in these replacing patterns can only be horizontal or vertical (not diagonal) since $\b P$ and $\b Q$ are directed paths inside the product $\AO[H]\times \AO[H']$.
    The \textcolor{green}{green} arcs belong to $\b P$ while the \textcolor{olive}{olive} arcs belong to $\b Q$.
    (Right) The graph on $\AO[H'|A_s]$ obtained by substituting directed paths in $\b P$ and $\b Q$ by single edges in order to decide if $\b P$ and $\b Q$ can loosely lace (here, they cannot).
    \textcolor{green}{Green} arcs and \textcolor{olive}{olive} arcs are \textcolor{green}{green} edges here since we no longer differentiate them.}
    \label{fig:LooseLacing}
\end{figure}

\medskip

Besides, for a replacing pattern $\b P$ of $(A_1, \dots, A_s)$ with $A_1 = A_s$, consider the graph whose node set is $\AO[H' |A_1]$ and where there are four types of edges:
\begin{compactitem}
\item \textcolor{red}{red edges} between $A'$ and $B'$ such that $\omega^{\b P}_s(B') = \omega^{\b P}_s(A') + 1$ and $\omega^{\b P}_s(A') \notin \cons_s(\b P)$; and
\item \textcolor{blue}{blue edges} between $A'$ and $B'$ such that $\omega^{\b P}_1(B') = \omega^{\b P}_1(A') + 1$ and $\omega^{\b P}_1(A') \notin \cons_1(\b P)$; and
\item \textcolor{purple}{purple edges} between $A'$ and $B'$ if there is a path between $A_1 \sqcup A'$ and $A_s \sqcup B'$ in $\b P$ and no $A_1 \sqcup B'$ nor $A_s \sqcup B'$ lie on this path; and
\item \textcolor{green}{green edges} between $A'$ and $B'$ such that $\omega^{\b P}_s(B') = \omega^{\b P}_s(A') + 1$ and $\omega^{\b P}_s(A') \in \cons_s(\b P)$ or such that $\omega^{\b P}_1(B') = \omega^{\b P}_1(A') + 1$ and $\omega^{\b P}_1(A') \in \cons_1(\b P)$.
\end{compactitem}
We say that $\b P$ \defn{can loosely self-lace} if this graph is loop-free and admits a (directed) cycle using all its \textcolor{red}{red}, \textcolor{blue}{blue} and \textcolor{purple}{purple} edges (and some, perhaps none, of its \textcolor{green}{green edges}).
The \defn{loose self-lacing} according to $\b \rho$, denoted $\mathdefn{\wedge_{\b\rho} \b P}$ is defined accordingly.

A proof similar to the proof of \Cref{cor:SelfLacingIntoAnAOcycle} ensures that:
If $\b A$ is a Hamiltonian cycle for $\AO[H]$ and $\b P$ a replacing pattern for $\b A$ that can loosely self-lace, then any loose self-lace $\wedge_{\b \rho} \b P$ is a Hamiltonian cycle for $\AO$.

In the case of usual self-lacing, the graph at stake contains no \textcolor{blue}{blue edges}, but only is a path of \textcolor{green}{green edges} from some $A'$ to some $B'$ together with another path of \textcolor{red}{red} and \textcolor{green}{green} edges from the same $A'$ to $B'$, and a unique \textcolor{purple}{purple edge} between $A'$ and $B'$.

% \begingroup
% \let\addcontentsline\relax
% \subsection{Splitting subsequences with varying endpoints}\label{appendix:SplittingSubsequences}
% \endgroup

\subsection{Splitting subsequences with varying endpoints}\label{appendix:SplittingSubsequences}

One can easily generalize \Cref{thm:ConcatenatingReplacingPatternsWithSharedEndPoints} as follows, in order to weaken its hypotheses.
% As it makes the hypotheses less concrete, we stayed with \Cref{thm:ConcatenatingReplacingPatternsWithSharedEndPoints} as stated.

Let $G = H\sqcup H'$ be a bipartition of the edges of $G$.
Suppose that there exist $X_1, \dots, X_r\in \AO[H']$ and a Hamiltonian path $\b A = (A_1, \dots, A_s)\in \AO[H]$ such that:
\begin{compactitem}
\item one can split the sequence $\b A$ into splitting subsequences $\b A_j = (A_{i_j}, A_{i_j+1}, \dots, A_{i_{j+1}-1})$ for $j \in [r]$ with $i_1 = 1$ and $i_{r}-1 = s$; and
\item $X_j \in \AO[H'|A_{i_j}] \cap \AO[H'|A_{i_j-1}]$ is such that $\b A_j$ admits a replacing pattern which starts at $A_{i_j} \sqcup X_j$ and ends at $A_{i_{j+1}-1} \sqcup X_{j+1}$.
\end{compactitem}
Then there exists a Hamiltonian path in $\AO$ obtained as the concatenation of these replacing patterns.
Moreover, if $\b A$ is a Hamiltonian cycle of $\AO[H]$ and if $X_r = X_1$, then $G$ is {\good}.

This version of \Cref{thm:ConcatenatingReplacingPatternsWithSharedEndPoints} applies in a broader context, but has the drawback of making the pivotal orientations $X_j$ depend on the splitting subsequences $\b A_j$.

% \clearpage 
\section{A note on complete bipartite graphs}\label{app_bipartite}

The \defn{complete bipartite graph} $\mathdefn{K_{m, n}}$ is the graph on nodes $(a_1, \dots, a_m)$ and $(b_1, \dots, b_n)$ with edges $a_ib_j$ for all $i\in [m]$ and $j\in [n]$.
The longstanding question of which $K_{m, n}$ are {\good} already appears in \cite[Sections~3 \& 5]{SavageSquireWest93-GrayCodesAcyclicOrientations}.
In particular, the authors proved that if $K_{m, n}$ is {\good}, then both $m$ and $n$ are odd, see \cite[Theorem~3.3]{SavageSquireWest93-GrayCodesAcyclicOrientations} where they essentially prove that $\sigma(K_{m, n}) = 0$ if and only if both $m$ and $n$ are odd.
We conjecture (as other authors conducting research on the subject) that the reciprocal is true, \ie that
% \begin{conjecture}
the complete bipartite graph $K_{m, n}$ is {\good} if and only if both $m$ and $n$ are odd.
% \end{conjecture}

\begin{example}
As already explained in \cite[below Thm.~3.3]{SavageSquireWest93-GrayCodesAcyclicOrientations}, the graphs $K_{1, n}$ are stars (hence, trees), so the graph of acyclic orientations $\AO[K_{1, n}]$ is isomorphic to the cube-graph $Q_n$.
In particular, by \Cref{exm:CubeGraphIsGood}, the graphs $K_{1, n}$ are {\good}.

The graph $\AO[K_{3, 5}]$ has $4\,718$ nodes and $20\,610$ edges, while the graph $\AO[K_{5, 5}]$ has $329\,462$ nodes and $1\,902\,750$ edges, so a powerful computer and a good implementation of Hamiltonian cycle search might decide whether $K_{3, 5}$ and $K_{5, 5}$ are {\good} or not.

Via a computer experiment (on a normal laptop), we confirmed that $K_{3, 3}$ is {\good}, which was left open in \cite{SavageSquireWest93-GrayCodesAcyclicOrientations}: the graph $\AO[K_{3, 3}]$ has $230$ nodes and $702$ edges.
To showcase the compactness of Gray codes, we give explicitly a Hamiltonian cycle in $\AO[K_{3, 3}]$, namely:

\noindent[3 1 3 4 3 6 1 9 1 2 9 6 5 7 5 8 4 5 3 9 3 1 3 9 2 9 7 9 5 7 8 1 2 7 1 3 6 5 1 8 1 4 1 7 8 5 2 5 7 9 1 7 1\\
2 5 2 1 8 3 5 6 2 9 6 9 7 2 3 2 7 2 1 5 2 6 3 4 1 7 5 3 6 8 1 4 1 9 1 3 2 6 7 1 8 7 2 1 7 8 6 2 8 1 8 7 3\\
4 3 5 3 1 9 1 6 7 1 7 2 1 6 1 9 6 7 2 4 3 9 8 3 8 2 4 6 2 1 3 4 6 8 9 8 6 1 7 1 9 4 6 8 3 2 1 7 1 2 1 6 1\\
3 6 8 1 8 9 4 5 9 2 3 9 7 4 9 6 4 7 4 1 5 1 7 1 2 3 5 2 7 2 3 7 8 3 4 7 1 5 8 5 2 9 7 1 2 4 2 8 4 6 1 5 7\\
8 4 1 7 1 3 1 5 8 1 3 8 1 8 2 4 5 4]

To read this Hamiltonian cycle, one proceeds as follows.
The $i^{\text{th}}$ digit $s_i$ (read from left to right, \ie $s_1 = 3$, $s_2 = 1$, etc.) of the above sequence corresponds to an acyclic orientation $A_i$ defined by:
\begin{compactitem}
\item $A_0$ is the acyclic orientation depicted in \Cref{fig:StartOrientationK33};
\item $A_i$ is obtained from $A_{i-1}$ by changing the orientation of the arc labeled $s_i$.
\end{compactitem}
Besides, to compute directly $A_i$ from $A_0$, one can count for each $j\in [9]$ how many times it appears among $s_1, \dots, s_i$: if $j$ appears an even number of times, then the arcs labeled $j$ are oriented the same way in $A_0$ and in $A_i$, otherwise they are not.
One can check that $A_{230}$ (defined from $A_{229}$ by changing the arc labeled $4$) is indeed $A_0$, so the above sequence is indeed a cycle.
\end{example}

\begin{figure}[h]
    \centering
    \includegraphics[width=0.33\linewidth]{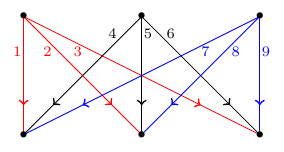}
    \caption{The acyclic orientation $A_0$ of $K_{3, 3}$ at the start of the Hamiltonian cycle of $\AO[K_{3, 3}]$.
    Colors are meaningless, they only help to read the labels of the arcs.}
    \label{fig:StartOrientationK33}
\end{figure}

Let $\c P(i, j)$ be the set of ordered partitions of $[i]$ into $j$ parts.
In \cite{SavageSquireWest93-GrayCodesAcyclicOrientations}, it is established that the acyclic orientations of the complete bipartite graph $K_{m, n}$ are in one-to-one correspondence with triples $(Q, Q', \delta)$ where $\delta \in\{0, 1\}$, $Q\in \c P(m, k)$, $Q'\in \c P(n, k + \varepsilon)$ for some $k \geq 1$ and $\varepsilon  \in \{0,1\}$.
Given an acyclic orientation $A$ of $K_{m, n}$, one obtains such a triple as follows (see illustration in \Cref{fig:Example_graph_to_partition}).
Let $S_1$ be the set of sources of $A$, let $S_2$ be the set of sources of $A\ssm S_1$, and recursively let $S_i$ be the set of sources of $A \ssm (S_1 \cup \dots \cup S_{i-1})$ (where $S_i\ne \emptyset$ since $A$ is acyclic), and let $S_{\kappa}$ be the last (non-empty) source set.
Note that each $S_i$ must be contained exclusively in one of the two parts of $K_{m, n}$ because $K_{m, n}$ is the \emph{complete} bipartite graph.
We set $\varepsilon = 1$ and $k = \frac{\kappa-1}{2}$ if $\kappa$ is odd, and $\varepsilon = 0$ and $k = \frac{\kappa}{2}$ otherwise.
We set $\delta = 0$ if $S_1$ is contained in the part of $K_{m, n}$ of size $m$, and $\delta = 1$ otherwise.
If $\delta = 1$, then $Q = (S_1, S_3, S_5, \dots, S_{2(k+\varepsilon) - 1})$ is our ordered partition of $[m]$, and $Q' = (S_2, S_4, S_6, \dots, S_{2k})$ is our ordered partition of $[n]$; if $\delta = 1$, the opposite holds.

Equivalently, $S_i$ is the set of elements of rank $i+1$ in the poset induced by $A$: this poset is ranked because $K_{m, n}$ is the complete bipartite graph.
The reciprocal bijection can be found in \cite{SavageSquireWest93-GrayCodesAcyclicOrientations}. 

We call the sequence $(S_1, \dots, S_\kappa)$ and its encoding as triple $(Q, Q', \delta)$ a \defn{bicolored $(m,n)$-partition}.
Two elements $p\in [n]$, $q\in[m]$ are \defn{neighboring} in $(S_1, \dots, S_\kappa)$ if there exists an index $i \in [\kappa-1]$, such that $p \in S_i$ and $q \in S_{i+1}$. 

\begin{figure}[h]
    \centering
    \includegraphics[width=0.55\linewidth]{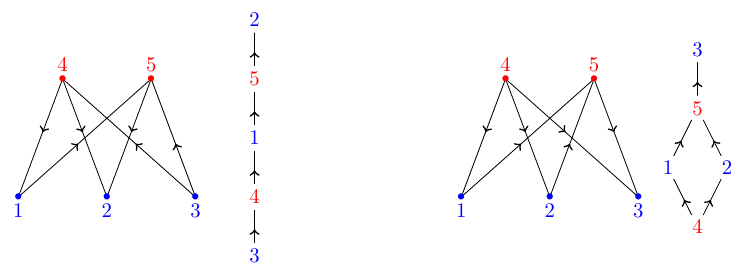}
    \caption{Two acyclic orientations of $K_{2, 3}$ together with their representations as posets.
    The left one corresponds to the sequence of source sets $\textcolor{blue}{3}|\textcolor{red}{4}|\textcolor{blue}{1}|\textcolor{red}{5}|\textcolor{blue}{2}$ yielding the triple $(4|5,\, 3|1|2,\, 1)$ where $\varepsilon = 1$, while the right one corresponds to $\textcolor{red}{4}|\textcolor{blue}{12}|\textcolor{red}{5}|\textcolor{blue}{3}$ yielding the triple $(4|5,\, 12|3,\, 0)$ where $\varepsilon = 0$.}
    \label{fig:Example_graph_to_partition}
\end{figure}

In \cite{SavageSquireWest93-GrayCodesAcyclicOrientations}, the authors only focus on counting the number of bicolored $(m, n)$-partitions because they want to discuss a parity argument.
We go slightly further by describing the graph $\c{AO}(m, n)$ thanks to bicolored $(m, n)$-partitions.

Let $\rho = (Q,Q', \delta)$ be a bicolored partition and $p, q$ be two neighboring elements in $\rho$, with $p \in S_i$ and $q \in S_{i+1}$.
Consider the sequence
$\b S(p, q) = (S_1, \dots, S_{i-1}, S_i \ssm \{p\}, \{q\}, \{p\}, S_{i+1}\ssm \{q\}, S_{i+2}, \dots, S_\kappa)\,.$
This sequence $\b S(p, q)$ may contain some empty sets (possibly $S_i \ssm \{p\}$ or $S_{i+1} \ssm \{q\}$), so we \emph{compress} it by removing all its empty sets and then merging adjacent sets of the same color together, that is by replacing all sub-sequences $(S_{a-1}, S_a, S_{a+1})$ where $S_a = \emptyset$ by a sub-sequence $(S_{a-1}\cup S_{a+1})$.
\begin{definition}
Denoting $\rho'$ the compressed version of $\b S(p, q)$ as described above, we say that $\rho, \rho'$ \defn{differ by a flip} at $(p, q)$.
The \defn{graph of flips} of bicolored $(m, n)$-partitions is the graph whose nodes are the bicolored $(m, n)$-partitions and where two nodes share an edge if they differ by some flip.
\end{definition}

\begin{lemma}
The graph $\c {AO}(m, n)$ is isomorphic to the graph of flips of bicolored $(m, n)$-partitions.
\end{lemma}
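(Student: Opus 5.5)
The plan is to exhibit an explicit bijection between acyclic orientations of $K_{m, n}$ and bicolored $(m, n)$-partitions, namely the source-layer decomposition $A \mapsto (S_1, \dots, S_\kappa)$ recalled above. I will then check that two acyclic orientations differ in a single arc exactly when their partitions differ by a flip. I take the bijection itself from \cite{SavageSquireWest93-GrayCodesAcyclicOrientations}, but I recall why it works. Since $K_{m, n}$ is complete bipartite, each layer $S_i$ lies in one color class, and consecutive layers alternate colors; otherwise two same-colored consecutive layers would have to be merged. Moreover, the orientation is recovered as $x \to y$ for all $x\in S_i$, $y\in S_j$ of different colors with $i<j$. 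Hence an acyclic orientation $A$ is the same as the ranked poset whose $i$-th rank is $S_i$, with all cross-color pairs comparable.

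For the adjacency statement, I will prove both directions. First, take a flip at neighboring $p\in S_i$, $q\in S_{i+1}$. In the sequence $\b S(p, q)$, every cross-color pair keeps its relative order except $(p, q)$, which is now ordered $q$ before $p$. Compression (removing empty sets and merging same-colored adjacent blocks) changes no relative order between elements of different colors. So the orientation $\rho'$ encodes $A$ with only the arc $pq$ reversed. Furthermore, $\rho'$ is a genuine bicolored partition, because after compression the layers are exactly the source layers of the new orientation. This needs a short check: in the new orientation, an element's layer index equals the length of the longest alternating chain ending at it, and the compressed sequence realizes that.

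Conversely, suppose $A$ and $B = A$ with the arc $pq$ reversed are both acyclic, with $p\to q$ in $A$, $p\in S_i$ and $q\in S_j$, $i<j$. The key step is that $j = i+1$. If $j\ge i+2$, pick any $r\in S_{i+1}$. It has the opposite color to $p$, so it is the same color as $q$. Since $r$ lies in a layer strictly after the source layer it depends on, the layer $S_{i+1}\ne\emptyset$ and $S_{i+2}$ supply an element $r'$ of $p$'s color. Then either $p\to r \to \dots$ or a chain $p \to r \to r' \to q$ (choosing $r'$ in a layer between $i+1$ and $j$ with the color of $p$) gives a directed path from $p$ to $q$ of length $\ge 2$. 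Reversing $pq$ would then create a cycle, which is a contradiction. The case $j = i+2$, where no intermediate element of $p$'s color exists, needs care. I will handle it by noting that layers alternate colors, so $S_{i+2}$ has $p$'s color and cannot contain $q$. Hence $j - i$ is odd, and $j\ge i+3$ gives the chain $p \to S_{i+1}\to S_{i+2}\to q$. Thus $p$ and $q$ are neighboring, and by the first direction together with injectivity of the bijection, $B$ corresponds to the flip of $\rho$ at $(p, q)$.

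I expect the main obstacle to be the verification that the compressed sequence is exactly the source-layer decomposition of the flipped orientation. In particular, I must show that elements of $S_{i+2}, \dots$ do not shift layers incorrectly when $S_i\ssm\{p\}$ or $S_{i+1}\ssm\{q\}$ becomes empty and merging happens. I would argue this through the characterization of layer index as longest-chain length, which is preserved under compression.
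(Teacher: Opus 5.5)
Your proof is correct and follows essentially the same route as the paper's: the Savage--Squire--West source-layer bijection, and for the converse the observation that $j-i$ is odd, so a non-neighboring pair has $j\ge i+3$ and the path $p\to r\to r'\to q$ through $S_{i+1},S_{i+2}$ closes a directed $4$-cycle once $pq$ is reversed. Your forward direction reads the flipped orientation directly off the compressed sequence, which gives acyclicity for free and is a bit more complete than the paper's (which only rules out a directed $p$--$q$ path of length at least $3$); the step you flag as the main obstacle is easy, since any sequence of nonempty, monochromatic, color-alternating blocks is its own source-layer decomposition (every element of a later block has an in-neighbor in the preceding block).
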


\begin{proof}
Let $A\in \c {AO}(m, n)$, and $\rho = (Q, Q', \delta)$ the associated triple.
We first need to show that flipping an edge $(p, q)$ where $p, q$ are neighboring in $\rho$ will not result in an oriented cycle.
Let $S_i$ be the block containing $p$ and $S_{i+1}$ the neighboring block of the partition $\rho$ (Because of the orientation of $(p, q)$, $p$ comes before $q$).
If there were an oriented cycle after flipping this edge, then there would be a directed path from $p$ to $q$ in $A$.
Since the graph is bipartite, the path has length at least $3$ and is thus of the form $(p, r_1, r_2, q)$.
By construction of $A$, this means that the node $r_1$ is in a block between $S_i$ and $S_{i+1}$, which is a contradiction. 

Conversely, consider an edge $(p, q)$ between two non-neighboring blocks.
It follows that there are at least two blocks between $S_i$ containing $p$ and $S_j$ containing $q$.
Take one node of each block, then these $p, q$ and these two nodes form an oriented path in $A$ and would therefore form an oriented $4$-cycle with the edge $(p, q)$.
This finishes the proof.
\end{proof} 

Thus, the problem of finding a Hamiltonian cycle in $\AO[K_{m, n}]$ is equivalent to finding a Gray code for these bicolored $(m,n)$-partitions with respect to the flipping operation described above.

This latter problem has already been explored in the unlabeled case, so bicolored partitions where the elements in each block are not integers in $[m]$ and $[n]$, but rather indistinguishable elements.
Such an unlabeled colored $(m, n)$-partition is a string of $m+n$ elements, namely $m$ zeros and $n$ ones.
Two such strings $s$ and $s'$ differ by a flip if and only if $s'$ is obtained from $s$ by swapping a (consecutive) sub-string $01$ for a (consecutive) sub-string $10$, or the converse\footnote{Although this graph of flips has $\binom{m+n}{m}$ elements, it is not the Johnson graph $J(m+n, m)$ since it is not regular.}.

For example, the two unlabeled partitions corresponding to the bicolored $(m, n)$-partitions in \Cref{fig:Example_graph_to_partition} are respectively $10101$ and $01101$. 

\begin{theorem}[\cite{buckwiedemanngraycodes,eadeshickeyreadhamiltonpaths} independently]
If $n \leq 1$ or $m \leq 1$ or both $n$ and $m$ are odd, then the graph of flips of unlabeled bicolored $(m, n)$-partitions admits a Hamiltonian path starting at $11\dots 1 00\dots0$ and $00\dots 0 11\dots 1$ with $m$ copies of $0$ and $n$ copies of $1$.
\end{theorem}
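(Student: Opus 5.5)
The strategy is to prove a strengthened statement by induction on $m+n$. The strengthened statement prescribes Hamiltonian paths between specific ``extreme'' strings for \emph{every} parity pattern of $(m,n)$, not only the one in the theorem.

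\emph{Setup and base cases.} Encode a node as a binary word $w$ with $m$ zeros and $n$ ones, where a flip exchanges an adjacent factor $01\leftrightarrow 10$. The graph is bipartite, with parts given by the parity of the number of inversions (pairs $1$ before $0$). The endpoints $1^n0^m$ and $0^m1^n$ have $mn$ and $0$ inversions respectively. Since the number of nodes is $\binom{m+n}{m}$, \Cref{prop:Bipartite(Path)Hamilitonian} tells us which endpoint pairs are even conceivable, and this guides the choice of hypothesis. For $m\le 1$ (symmetrically $n\le 1$) the graph is a path: the single $0$ slides through the ones, giving $1^n0,1^{n-1}01,\dots,01^n$. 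This settles the base cases.

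\emph{Decomposition.} Split the nodes according to the last letter. Words ending in $0$ form a copy of the $(m-1,n)$ flip graph, and words ending in $1$ form a copy of the $(m,n-1)$ flip graph. The only edges between the two blocks are $x10\sim x01$. The plan is to run a Hamiltonian path of the first block from $1^n0^{m}$ to some word $x10$, cross to $x01$, and then traverse the second block ending at $0^m1^n$. The crossing words should be chosen among ``near-extreme'' words such as $0^{m-1}1^{n}0$ and $0^{m-1}1^{n-1}01$. If both $m,n$ are odd, then $(m-1,n)$ has $m-1$ even and $(m,n-1)$ has $n-1$ even. So the inductive hypothesis must provide paths for mixed parities with endpoints like $1^n0^m \to 0^{m-1}1^n0$ or $1^{n-1}0^m1\to 0^m1^{n}$. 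To get these, split each of those blocks again by the last letter, or by the last two letters, one level further, and glue the pieces at the same $x01\sim x10$ edges.

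\emph{Hypothesis and main obstacle.} Concretely, I would formulate the hypothesis as a list $\mathcal H(m,n)$ of endpoint pairs, one list for each parity class of $(m,n)$ (odd/odd, odd/even, even/odd, even/even with $m,n\ge 2$), in the spirit of Eades--Hickey--Read. Each pair is drawn from the four words $1^n0^m$, $0^m1^n$, $1^{n-1}0^m1$, $01^n0^{m-1}$ and their one-flip neighbours. I would then verify that each entry of $\mathcal H(m,n)$ decomposes, via the last-letter split, into entries of $\mathcal H(m-1,n)$ and $\mathcal H(m,n-1)$ joined by a single crossing edge. The main obstacle is exactly finding a list $\mathcal H$ that is closed under this recursion. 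The even/even case is the delicate one: there the part sizes differ by more than one, so the graph has no Hamiltonian path at all, and the recursion must avoid ever calling it. My first attempt would be to peel two letters at once, $00$, $11$, and the mixed $01/10$ block, so that only the parity-preserving reductions $(m-2,n)$, $(m,n-2)$ and $(m-1,n-1)$ occur. This keeps odd/odd reducing to odd/odd together with base cases having a $1$-sized part, and I would check the small cases $K$ with $(m,n)=(3,3)$ by hand as a sanity test.
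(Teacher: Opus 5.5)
The paper does not prove this statement; it quotes Buck--Wiedemann and Eades--Hickey--Read. So your argument has to stand on its own, and it does not. The strengthened hypothesis $\mathcal H$ is never written down, and its closure under the recursion is the entire content of the theorem.

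Worse, the mechanism you sketch cannot work. Write $\Gamma(a,b)$ for the flip graph on words with $a$ zeros and $b$ ones. The difference between the two sides of its bipartition is the Gaussian binomial $\binom{a+b}{a}_q$ evaluated at $q=-1$. This vanishes when $a,b$ are both odd. Otherwise it equals $\binom{\lfloor (a+b)/2\rfloor}{\lfloor a/2\rfloor}$, which is at least $2$ as soon as $\min(a,b)\geq 2$. So the mixed-parity classes are obstructed just like the even/even one. Your own $(3,3)$ sanity check exposes this: $\Gamma(2,3)$ has $6$ words with an even and $4$ with an odd number of inversions.

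Consequently, for $m,n\geq 3$ odd, neither block $\Gamma(m-1,n)$ nor $\Gamma(m,n-1)$ of your last-letter split has a Hamiltonian path, by \Cref{prop:Bipartite(Path)Hamilitonian}. The paths such as $1^n0^m\to 0^{m-1}1^n0$ that you want the hypothesis to supply do not exist. Every mixed-parity entry of $\mathcal H$ with both parameters $\geq 2$ is necessarily empty, and ``traverse one block, cross once, traverse the other'' is impossible. You treat the even/even class as the only obstructed one, but non-traceability of all classes with $\min(a,b)\geq 2$ and $a,b$ not both odd is exactly the ``only if'' half of the Buck--Wiedemann/Eades--Hickey--Read characterization.

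The two-letter peel is closer to something workable, but you misidentify what it produces. The words ending in $10$ or $01$ induce the prism $\Gamma(m-1,n-1)\times K_2$. For $m,n$ odd the pair $(m-1,n-1)$ is even/even, which is precisely the class you say the recursion must never call; it is neither odd/odd nor a base case.

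The $00$-block and the $11$-block are not adjacent, so the natural route is $00$-block, then prism, then $11$-block. The theorem's own endpoints for $(m-2,n)$ and $(m,n-2)$ do fit this route, via the crossing edges $0^{m-2}1^n00\sim 0^{m-2}1^{n-1}010$ and $1^{n-2}0^{m-1}101\sim 1^{n-2}0^m11$. With this route, everything hinges on a statement absent from your plan: a Hamiltonian path of $\Gamma(m-1,n-1)\times K_2$ from $0^{m-2}1^{n-1}0$ in the $10$-layer to $1^{n-2}0^{m-1}1$ in the $01$-layer. Such a path exists for $m=n=3$, but in general this prism lemma needs its own inductive proof.

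Formulating and proving it, or an equivalent family of endpoint-controlled statements, is the missing idea. As it stands, the proposal is a heuristic, not a proof.
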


Note that the above Gray code must start and end at the combinations $11\dots 1 00\dots0$ and $00\dots 0 11\dots 1$ with $m$ copies of $0$ and $n$ copies of $1$.
Consequently, a possible strategy to get a Gray code for the unlabeled case by lacing patterns into this existing Gray code could, at best, produce a Hamiltonian path, not a Hamiltonian cycle.
This strategy in general might not be fruitful, as the cardinality of unlabeled partitions is not easily relatable to the cardinality of the labeled case.

\begin{question}
Is there a Gray code for the labeled bicolored $(m, n)$-partitions described above when both $m, n$ are odd?
Equivalently, is $K_{m, n}$ {\good} when both $m$ and $n$ are odd?
\end{question}

% \clearpage
\section{Parity argument via the count of acyclic orientations for multipaths}\label{app:ParityArgumentMultiPathDirectCount}

We refer to \Cref{ssec:SignatureMultiPath} for the definitions concerning multipaths, no result from \Cref{ssec:SignatureMultiPath} is needed.
We give an alternative proof that if a multipath is {\good} then its number of edges is even.
The first step is to count the number of (a)cyclic orientations of a multipath according to their rank with respect to the usual orientation $D^{\b\ell}_{u\to v}$ from $u$ to $v$.

\begin{proposition}\label{prop:NbrAOmultiPath}
For $\b \ell = (\ell_1, \dots, \ell_s)$, the number of acyclic orientations $A$ of $P^{\b \ell}$ such that $\rank_{D^{\b \ell}_{u\to v}} A = k$ is:
$$|\AODk[{D^{\b \ell}_{u\to v}}]| = \binom{\sum_{p=1}^s \ell_p}{k} - \sum_{\substack{S, T\subseteq[s]\\ S\cap T = \emptyset,~ S\ne \emptyset,~ T\ne \emptyset}} (-1)^{|S\sqcup T|}\cdot \binom{\sum_{p\notin S\sqcup T} \ell_p}{k - \sum_{p\in S} \ell_p}$$
\end{proposition}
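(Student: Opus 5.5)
We count the \emph{cyclic} orientations of $P^{\b\ell}$ of each rank and subtract them from $|\c O_k(D^{\b\ell}_{u\to v})| = \binom{\sum_p \ell_p}{k}$ (recall from \Cref{ssec:Prelim:GeneralOrientations} that all orientations of rank $k$ number $\binom{|E|}{k}$, and $|E| = \sum_p\ell_p$). The sum in the formula should then be an inclusion--exclusion count of cyclic orientations of rank $k$.

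The first step is to describe the directed cycles of a multipath. Every cycle of $P^{\b\ell}$ is the union of two distinct internal paths $P_{\ell_p}, P_{\ell_q}$ between $u$ and $v$. It is directed exactly when one of them is oriented entirely from $u$ to $v$ and the other entirely from $v$ to $u$. So an orientation $A$ is cyclic if and only if the set $S(A)$ of paths oriented fully $v\to u$ and the set $T(A)$ of paths oriented fully $u\to v$ are both non-empty. These sets are automatically disjoint, except when $\ell_p = 1$, but a single edge is oriented one way only, so disjointness still holds.

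The second step is to count orientations of rank $k$ with prescribed subsets of these coherent paths. Fix disjoint $S, T\subseteq[s]$ and count the orientations in which every path of $S$ is fully reversed and every path of $T$ is fully kept, with the other paths unconstrained. Relative to $D^{\b\ell}_{u\to v}$, the paths of $S$ contribute $\sum_{p\in S}\ell_p$ reversed edges and those of $T$ contribute none. The remaining edges are free, which gives $N(S,T) = \binom{\sum_{p\notin S\sqcup T}\ell_p}{k-\sum_{p\in S}\ell_p}$.

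The third step is inclusion--exclusion over the events $E_p$ (path $p$ fully reversed) and $F_q$ (path $q$ fully forward). The number of orientations with at least one $E$ and at least one $F$ equals $N(\emptyset,\emptyset) - \#(\text{no }E) - \#(\text{no }F) + \#(\text{no }E,\text{ no }F)$. Expanding each term by inclusion--exclusion gives the signed sum $\sum_{S,T}(-1)^{|S|+|T|}N(S,T)$ over $S,T$ both non-empty. Since an event $E_p$ and the event $F_p$ for the same path are incompatible, only disjoint pairs $S,T$ contribute. The overall sign has to be matched carefully: the number of cyclic orientations is $\sum_{S,T\neq\emptyset}(-1)^{|S|+|T|}N(S,T)$, and subtracting it from $\binom{|E|}{k}$ yields exactly the displayed formula. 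I expect this sign and index bookkeeping to be the main obstacle, in particular checking that the incompatible pairs ($S\cap T\neq\emptyset$) are correctly excluded rather than contributing zero terms with the wrong weight. To confirm the sign I would check the cycle case $s=2$: it gives cyclic count $\binom{0}{k-\ell_1}$ plus the symmetric term, which reproduces \Cref{exm:AcyclicPolynomialCycles}.
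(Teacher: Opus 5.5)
Your proposal is correct and follows the paper's proof. Both arguments count the cyclic orientations of rank $k$ and subtract them from $\binom{|E|}{k}$, with your $N(S,T)$ equal to the paper's $|\mathcal{X}_{k,S,T}|$, and apply inclusion--exclusion over disjoint non-empty pairs $(S,T)$. Your derivation of the sign $(-1)^{|S|+|T|}$ as the product of two inclusion--exclusion expansions (``some path fully reversed'' and ``some path fully forward'') justifies that step more carefully than the paper, which simply cites the inclusion--exclusion principle. The pairs with $S\cap T\neq\emptyset$ need no further bookkeeping, since $N(S,T)=0$ for them whatever weight they carry.
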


\begin{proof}
Recall that $\AODk = \c O_{k}(D) \ssm \CODk$, and that $|\c O_{k}(D)| = \binom{|E|}{k}$ for all digraphs $D = (\V, E)$.
Hence, it is enough to count $\CODk[{D^{\b \ell}_{u\to v}}]$, remembering that $D^{\b\ell}_{u\to v}$ has $\sum_{p=1}^s \ell_p$ arcs.

We denote by $E_p$ the set of edges of the path $P^{\ell_p} = (u, i_{p, 2}, \dots, i_{p, \ell_p}, v)$, for $p\in [s]$.
For a subset $X$ of edges of $D^{\b\ell}_{u\to v}$, the orientation $(D^{\b\ell}_{u\to v})^X$ is cyclic if and only if $E_p\subseteq X$ and $E_q\cap X = \emptyset$ for some $p\ne q$, \ie if and only if one of the paths in the multipath is directed from $u$ to $v$ while another one is directed from $v$ to $u$.
We count such sets $X$ according to their size, using inclusion-exclusion.

For $k\geq 0$, and $S, T \subseteq[s]$ with $S\cap T = \emptyset$, and $S\ne \emptyset$ and $T\ne \emptyset$, let $\c X_{k, S, T}$ be the set of subsets $X$ of edges of $D^{\b\ell}_{u\to v}$ with $|X| = k$ such that $E_p\subseteq X$ for $p\in S$ and $E_q\cap X = \emptyset$ for $q\in T$.
Such a subset corresponds to picking $\sum_{p\in S}\ell_p$ prescribed edges, then choosing the remaining $k - \sum_{p\in S}\ell_p$ among the edges that are not in $\bigsqcup_{p\in {S\sqcup T}} E_p$.
Thus (by convention, if $a < 0$ or $b < 0$ or $a < b$, then $\binom{a}{b} = 0$): $|\c X_{k, S, T}| = \binom{\sum_{p \notin S\sqcup T} \ell_p}{k - \sum_{p\in S}\ell_p}$. 
% The only important property is that $|\c X_{k S, T}|$ is a binomial coefficient, its value does not matter.

The set of cyclic orientations of rank $k$, namely $\CODk[D^{\b\ell}_{u\to v}]$, is in bijection with $\bigcup_{S, T\subseteq[s]} \c X_{k, S, T}$ where the union is on all $S, T\subseteq[s]$ with $S\cap T = \emptyset$, $S\ne\emptyset$ and $T\ne \emptyset$.
The latter union is not disjoint, but according to the inclusion-exclusion principle, we get the following, proving the claim:
$$\Bigl|\CODk[D^{\b\ell}_{u\to v}]\Bigr| = \left|\bigcup_{\substack{S, T\subseteq[s] \\ S\cap T = \emptyset,~ S, T\ne\emptyset}} \c X_{k, S, T}\right| = \sum_{\substack{S, T\subseteq[s] \\ S\cap T = \emptyset,~ S, T\ne\emptyset}} (-1)^{|S\sqcup T|}\cdot|\c X_{k, S, T}|\hspace{0.5cm} \qedhere$$
\end{proof}

\begin{proposition}\label{prop:MultiPathIsNotGood}
If $\sum_p \ell_p$ is even, then the $(\ell_1, \dots, \ell_s)$-multipath is not {\good}.
\end{proposition}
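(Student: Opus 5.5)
The plan is to apply \Cref{lem:ParityArgument}: it suffices to show that $\sigma(D^{\b\ell}_{u\to v}) = \sum_k (-1)^k |\AODk[{D^{\b\ell}_{u\to v}}]| \ne 0$ when $L \eqdef \sum_p \ell_p$ is even. We start from the formula of \Cref{prop:NbrAOmultiPath}, multiply by $(-1)^k$, and sum over $k$. The first term contributes $\sum_k (-1)^k\binom{L}{k} = 0$ by \Cref{rmk:SumEvenOddBinomNewton}, since $L\geq 1$. Hence
\[
\sigma(D^{\b\ell}_{u\to v}) = -\sum_{\substack{S, T\subseteq[s]\\ S\cap T = \emptyset,~ S, T\ne\emptyset}} (-1)^{|S\sqcup T|}\sum_k (-1)^k \binom{\sum_{p\notin S\sqcup T}\ell_p}{k-\sum_{p\in S}\ell_p}.
\]

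For fixed $(S,T)$, substitute $j = k - \sum_{p\in S}\ell_p$. The inner sum becomes $(-1)^{\sum_{p\in S}\ell_p}(1-1)^{M}$, where $M = \sum_{p\notin S\sqcup T}\ell_p$. It therefore vanishes unless $S\sqcup T = [s]$, in which case it equals $(-1)^{\sum_{p\in S}\ell_p}$. The sum thus collapses to ordered partitions $[s] = S\sqcup T$ with both parts nonempty:
\[
\sigma = -(-1)^{s}\sum_{\emptyset\ne S\subsetneq [s]} \prod_{p\in S}(-1)^{\ell_p}.
\]
The sum over all $S$, including $\emptyset$ and $[s]$, is $\prod_p(1+(-1)^{\ell_p})$. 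Subtracting the $S=\emptyset$ term gives $-1$, and subtracting the $S=[s]$ term gives $-(-1)^{L} = -1$, because $L$ is even. So
\[
\sigma = (-1)^{s+1}\Bigl(\prod_p (1+(-1)^{\ell_p}) - 2\Bigr).
\]

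If some $\ell_p$ is odd, the product vanishes and $\sigma = (-1)^{s}\cdot 2 \ne 0$. If all $\ell_p$ are even, then $\sigma = (-1)^{s+1}(2^s-2)$, which is nonzero for $s\geq 2$. The degenerate case $s=1$ is a tree and needs to be excluded or treated separately; a multipath presumably has $s\geq 2$, matching the setting of \Cref{thm:AcyclicSignatureMultiPaths}. Finally, \Cref{lem:ParityArgument} concludes.

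The main obstacle I expect is the inclusion--exclusion sign convention in \Cref{prop:NbrAOmultiPath}. The correct coefficient should be $(-1)^{|S|+|T|}$ with an overall sign fixing the $|S|=|T|=1$ terms to count positively. I would therefore first sanity-check the collapsed formula on $\b\ell=(2,2)$, the cycle $C_4$, where the answer must be $\pm 2$, and adjust the global sign if needed. Nonvanishing is unaffected by such a sign.
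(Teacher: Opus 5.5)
Your proposal is correct and follows essentially the paper's route: the paper also feeds the inclusion--exclusion count of \Cref{prop:NbrAOmultiPath} into the parity lemma (through cyclic orientations, which is equivalent to your subtraction of $\sum_k(-1)^k\binom{L}{k}=0$) and collapses the sum to ordered partitions $S\sqcup T=[s]$; the sign $(-1)^{|S\sqcup T|}$ there is already correct, so no adjustment is needed, and your closed form agrees with \Cref{thm:AcyclicSignatureMultiPaths} (\eg $\sigma=-2$ for $C_4$). Your final evaluation via $\prod_p(1+(-1)^{\ell_p})$ is more careful than the paper's, which asserts $\sum_{S\subseteq[s]}(-1)^{\sum_{p\in S}\ell_p}=0$ even when every $\ell_p$ is even (where the sum is actually $2^s$). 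Your exclusion of $s=1$ is genuinely necessary: a path of even length is a tree and hence {\good}.
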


\begin{proof}
Let $\b\ell = (\ell_1, \dots, \ell_s)$.
We will apply \Cref{lem:ParityArgument} (in the form discussed in \Cref{rmk:PartiyArgumentForCyclicOrientations}):
if $P^{\b\ell}$ is {\good}, then $\sum_{k} (-1)^k\cdot |\CODk[D^{\b\ell}_{u\to v}]| = 0$.
By \Cref{prop:NbrAOmultiPath}, we get:

\begin{align*}
\sum_{k} (-1)^k\cdot |\CODk[D^{\b\ell}_{u\to v}]| &= \sum_{\substack{S, T\subseteq[s] \\ S\cap T = \emptyset,~ S, T \ne\emptyset}} (-1)^{|S\sqcup T|}\cdot\left(\sum_{k} (-1)^k\cdot\binom{\sum_{p\notin S\sqcup T} \ell_p}{k - \sum_{p\in S} \ell_p}\right) \\
&= \sum_{\substack{S, T\subseteq[s] \\ S\cap T = \emptyset,~ S, T \ne\emptyset}} (-1)^{|S\sqcup T|}\cdot\left(\sum_{k} (-1)^{k + \sum_{p\in S}\ell_p}\cdot\binom{\sum_{p\notin S\sqcup T} \ell_p}{k}\right) 
\end{align*}

Recall that $\sum_k (-1)^k\cdot \binom{m}{k} = 0$ if $m\geq 1$ and equals $1$ if $m = 0$.
Hence, the rightmost sum is non-zero if and only if $S\sqcup T = [s]$.
We obtain

\[\sum_{k} (-1)^k\cdot |\CODk[D^{\b\ell}_{u\to v}]|
= \sum_{\substack{S\sqcup T = [s] \\ S\cap T = \emptyset,~ S, T \ne\emptyset}} (-1)^{|S\sqcup T|}\cdot (-1)^{\sum_{p\in S} \ell_p} = (-1)^s \cdot \sum_{\substack{S\subseteq [s]\\ S\ne\emptyset,~ S\ne[s]}} (-1)^{\sum_{p\in S} \ell_p}\] 

Note that $(-1)^{\sum_{p\in S}\ell_p} = 1$ if and only if $S$ contains an even number of odd values $\ell_p$.
Hence, denoting $A \eqdef \{p\in [s] ~;~ \ell_p \text{ is odd}\}$ and $B \eqdef \{p\in [s] ~;~ \ell_p \text{ is even}\}$, we get:
$\sum_{S\subseteq [s]} (-1)^{\sum_{p\in S} \ell_p} = \sum_{\substack{X\subseteq A \\ Y\subseteq B}} (-1)^{|X|} = 2^{|B|}\cdot\sum_{j=0}^{|A|}\binom{|A|}{j}(-1)^j = 2^{|B|}\cdot 0 = 0$.
Substituting in the above, we obtain:
\begin{align*}
\sum_{k} (-1)^k\cdot |\CODk[D^{\b\ell}_{u\to v}]| &= (-1)^s\cdot\left(\sum_{S\subseteq [s]} (-1)^{\sum_{p\in S}\ell_p} ~-~ (-1)^{0} - (-1)^{\sum_{p=1}^s \ell_p}\right) \\
&= (-1)^{s+1}\cdot\left(1 + (-1)^{\sum_{p=1}^s \ell_p}\right)
\end{align*}

By \Cref{lem:ParityArgument} (see \Cref{rmk:PartiyArgumentForCyclicOrientations}), if $D^{\b\ell}_{u\to v}$ is {\good}, then $\sum_{k} (-1)^k\cdot |\CODk[D^{\b\ell}_{u\to v}]| = 0$, implying that $(-1)^{\sum_{p=1}^s \ell_p} = -1$, \ie implying that $\sum_{p=1}^s \ell_p$ is odd.
\end{proof}

\begin{remark}
Note that, contrarily to \Cref{ssec:SignatureMultiPath}, the present reasoning does not allow computing the acyclic signature of multipaths, only determining whether it is $0$ or not.
\end{remark}

\section{Acyclic polynomials and \texorpdfstring{$\c{AO}$}{AO}-Hamiltonicity of some small graphs}\label{app:GraphExamples}

To motivate further research, for $22$ graphs (from the named-graph database of SageMath~\cite{Sage}), we computed the number of acyclic orientations $\psi(G)$, the acyclic signature $\sigma(G)$, and the acyclic polynomial $\Psi(D; t)$.
The chosen orientation $D$ of the graph $G$ is obtained by orienting each edge $ij$ of $G$ from $i$ to $j$ if $i < j$.
% We list the coefficients of $\Psi(D; t)$, \eg ``$1, 3, 5, 5, 3, 1$'' stands for $1 + 3t + 5t^2 + 5t^3 + 3t^4 + t^5$.
% Since these coefficients form a palindromic sequence, the reader cannot be mistaken.
For each graph, we address $\c{AO}$-Hamiltonicity.
If $\sigma(G) \ne 0$, then $G$ is not {\good}; otherwise we need to construct the graph of acyclic orientations $\AO$ and search for a Hamiltonian cycle in it.
The latter is a \textbf{NP}-hard problem.
Our computations took minutes for the Moser spindle, and did not finish within 24 hours for the Wagner graph.
% \clearpage
\input{graph_table_landscape}

% Restore normal ToC depth (if there is anything after the appendix)
\addtocontents{toc}{\protect\setcounter{tocdepth}{2}}

% \clearpage
\printbibliography

% \newpage
%\input{CubePrismETC}
\end{document}